\theoremstyle{plain}%
    \newtheorem{thm}{Theorem}[section]%
    \newtheorem{prop}[thm]{Proposition}%
    \newtheorem{lem}[thm]{Lemma}%
	\newtheorem{cor}[thm]{Corollary}%
    \newtheorem{con}[thm]{Conjecture}%
\theoremstyle{definition}%
    \newtheorem{defn}[thm]{Definition}%
\theoremstyle{remark}%
    \newtheorem{rem}[thm]{Remark}%
    \newtheorem*{ack}{Acknowledgements}%
\def\endpiece{xxx}%
\def\makeop[#1]{\xmakeop#1,xxx,}
\def\mkop#1{\expandafter\def\csname #1\endcsname{{\mathrm{#1}}}} %
\def\xmakeop#1,{\def\temp{#1}\ifx\temp\endpiece\else\mkop{#1}\expandafter\xmakeop\fi}%
\def\N{{\mathbb{N}}}%
\def\Z{{\mathbb{Z}}}%
\def\Q{{\mathbb{Q}}}%
\def\F{{\mathbb{F}}}%
\def\R{{\mathbb{R}}}%
\def\C{{\mathbb{C}}}%
\def\A{{\mathbb{A}}}%
\def\p{$p$\nobreakdash}%
\def\q{$q$\nobreakdash}%
\def\pK{\widetilde{K}}%
\def\id{\mathrm{id}}%
\def\iw#1{\Lambda (#1)}%
\def\riw#1{\Omega (#1)}%
\def\piw#1{\widetilde{\Lambda} (#1)}%
\def\conj#1{\mathrm{Conj}(#1)}%
\def\mod#1{\mathrm{mod}\, #1}%
\def\pmod#1{\widetilde{\mathrm{mod}}\, #1}%
\def\ind#1#2#3{\mathrm{Ind}^{#1}_{#2}(#3)}%
\def\cyc{\mathrm{cyc}}%
\def\Spec{\mathrm{Spec}\,}%
\begin{document}

\setcounter{tocdepth}{1} 
\numberwithin{equation}{section}


\title[Inductive construction of $p$-adic zeta functions]{Inductive
construction of the $p$-adic zeta functions for non-commutative
$p$-extensions of totally real fields with exponent $p$}
\author{Takashi Hara}
\thanks{The author was supported by Japan Society for the Promotion of Science 
(JSPS), Grant-in-Aid for JSPS fellows ($21\cdot 7079$) 
and partially supported by JSPS Core-to-Core Program 18005,
``New Developments of Arithmetic Geometry, Motive, Galois Theory, and
Their Practical Applications.''}
\address{Graduate School of Mathematical Sciences, The University of
Tokyo, 8-1 Komaba 3-chome, Meguro-ku, Tokyo, 153-8914, Japan}
\email{thara@ms.u-tokyo.ac.jp}
\date{\today}

\begin{abstract}
We construct the \p-adic zeta function for
a one-dimensional (as a \p-adic Lie extension) 
non-commutative \p-extension $F_{\infty}$ 
of a totally real number field $F$ such that the finite part
 of its Galois group $G$ is a \p-group with exponent~$p$. We first 
 calculate the Whitehead groups of the Iwasawa algebra $\iw{G}$ and 
 its canonical Ore localisation $\iw{G}_S$ 
 by using Oliver-Taylor's theory upon integral logarithms. 
 This calculation reduces the existence of the non-commutative 
 \p-adic zeta function
 to certain congruence conditions among abelian \p-adic zeta pseudomeasures. 
 Then we finally verify these
 congruences by using Deligne-Ribet's theory and certain inductive technique.
 As an application we shall prove a special case of (the \p-part of) 
 the non-commutative equivariant Tamagawa number conjecture for critical 
 Tate motives. The main results of this paper give generalisation of 
 those of the preceding paper of the author.
\end{abstract}

\subjclass[2010]{Primary 11R23; Secondary 11R42, 11R80, 19B28}
\maketitle
\setcounter{section}{-1}

%
%
\section{Introduction} \label{sc:intro}
%
%

One of the most important topics in non-commutative Iwasawa theory is to 
construct the \p-adic zeta function and to verify the main conjecture, 
as well as in classical theory. 
Up to the present, there have been several
successful examples upon this topic for
\p-adic Lie extensions of totally real number fields: 
the results of J\"urgen Ritter and Alfred Weiss \cite{RW7}, 
Kazuya Kato \cite{Kato2}, Mahesh
Kakde \cite{Kakde1} and so on. In this article, we shall 
construct different type
of example for certain non-commutative \p-extensions 
of totally real number fields.

Let $p$ be a positive odd prime number and $F$ a totally real number field. 
Let $F_{\infty}$ be a totally real \p-adic Lie extension of $F$
which contains the cyclotomic $\Z_p$-extension $F_{\cyc}$ of $F$, and 
assume that all but finitely many primes of $F$ ramify in $F_{\infty}$. 
For a moment we admit Iwasawa's $\mu=0$
conjecture to simplify conditions (see Section \ref{ssc:review-iwa} 
($F_{\infty}$-3) for more general $\mu=0$ condition). 
The aim of this article is to prove the following theorem 
under these conditions:

\begin{thm}[$=$Theorem~$\ref{thm:maintheorem}$] \label{thm:mt}
Let $G$ denote the Galois group of $F_{\infty}/F$.  
 Then for $F_{\infty}/F$ 
 the \p-adic zeta function $\xi_{F_{\infty}/F}$ exists and 
 the Iwasawa main conjecture is true  
 if $G$ is isomorphic to
 the direct product of a finite \p-group $G^f$ with exponent~$p$ and 
 the abelian \p-adic Lie group $\Gamma$ isomorphic
 to $\Z_p$.
\end{thm}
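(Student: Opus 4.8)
The plan is to follow the strategy initiated by Kato and developed by Ritter--Weiss, Kakde and the author: first reduce the existence of $\xi_{F_\infty/F}$ to a finite list of congruences among abelian $p$-adic zeta pseudomeasures, and then verify those congruences by combining Deligne--Ribet's theory with an inductive argument on the finite part of $G$. The first step is to recall that, by the Fukaya--Kato formalism of localised $K_1$, the $p$-adic zeta function $\xi_{F_\infty/F}$ is to be realised as an element of $K_1(\iw{G}_S)$ whose image under the connecting homomorphism $\partial\colon K_1(\iw{G}_S)\to K_0(\iw{G},\iw{G}_S)$ is the class of the relevant Iwasawa-theoretic complex, and which interpolates the special $L$-values of $F_\infty/F$ at negative integers. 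Granting Iwasawa's $\mu=0$ hypothesis --- which guarantees that the relevant Iwasawa module is $S$-torsion --- the main conjecture follows once this class has been produced; thus the whole problem is reduced to constructing $\xi_{F_\infty/F}\in K_1(\iw{G}_S)$, for which one needs an explicit description of $K_1(\iw{G})$ and $K_1(\iw{G}_S)$.

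The second step is to compute these Whitehead groups. Since $G\cong G^f\times\Gamma$ with $G^f$ a finite $p$-group of exponent $p$, I would apply Oliver--Taylor's theory of integral logarithms to describe $K_1(\iw{G})$, and likewise $K_1(\iw{G}_S)$, in terms of a product $\prod_{U} K_1(\iw{U^{\mathrm{ab}}})$ (resp.\ of the corresponding $S$-localisations) taken over a suitable finite family of subgroups $U\le G$, the comparison being effected by the norm maps together with the conjugation action of $G$. The outcome is an exact description of the image: a tuple $(x_U)_U$ comes from $K_1(\iw{G})$ exactly when it is conjugation-invariant, compatible under the norm maps, and satisfies a finite list of congruences modulo powers of $p$ produced by the integral logarithm. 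The exponent-$p$ hypothesis on $G^f$ enters essentially here: every non-trivial element then has order $p$, so the abelianisations $U^{\mathrm{ab}}$ that occur are of the form $(\Z/p)^{k}\times\Gamma$, which keeps the relevant combinatorics and the additive congruences tractable.

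The third step is to feed in the known abelian pseudomeasures. For each $U$ in the family, Deligne--Ribet's theory attaches to the abelian extension with Galois group $U^{\mathrm{ab}}$ a $p$-adic zeta pseudomeasure, hence a class in $K_1(\iw{U^{\mathrm{ab}}}_S)$; write $(\xi_U)_U$ for the resulting collection. The conjugation-invariance of this collection, and part of the norm-compatibility, hold by the functoriality of the Deligne--Ribet construction, so by the second step the collection descends to the desired class $\xi_{F_\infty/F}\in K_1(\iw{G}_S)$ --- with the interpolation property inherited from the abelian case --- precisely when the remaining relations and the congruences modulo powers of $p$ coming from the integral logarithm are verified.

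The last step, which is where the real work lies, is to verify those congruences. Here I would combine Deligne--Ribet's $q$-expansion principle and the basic congruences between partial zeta values with an induction on $\lvert G^f\rvert$: exploiting the exponent-$p$ structure, the congruence attached to $G^f$ should split into a part that reduces to the analogous congruences for proper subquotients (disposed of by the induction hypothesis) and a genuinely new part, which is to be established directly from the additivity and the integrality of Deligne--Ribet pseudomeasures. Managing the interplay between the group theory of exponent-$p$ $p$-groups and the arithmetic congruences of Deligne--Ribet --- in particular isolating precisely which congruences are not formal consequences of the abelian case --- is the main obstacle and the technical heart of the argument.
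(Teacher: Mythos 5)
Your overall plan agrees with the paper's: Burns' patching strategy to reduce to congruences among abelian pseudomeasures, Oliver--Taylor integral logarithms to describe the image of the norm (theta) map, verification of the congruences via Deligne--Ribet's $q$-expansion principle, and an induction on $|G^f|$ exploiting exponent~$p$. However there is a genuine gap in your Step~2 and it propagates into Step~4.

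You claim that the integral logarithm gives an ``exact description'' of the image of the theta map on $K_1(\iw{G})$. It does not: the integral logarithm $\Gamma_G$ has as its kernel $\mu_{p-1}(\Z_p)\times G^{f,\mathrm{ab}}\times SK_1(\Z_p[G^f])$, so the additive computation only pins down $K_1(\iw{G})$ \emph{modulo its $p$-power torsion}, i.e.\ it yields an isomorphism onto a congruence subgroup from $\pK_1(\iw{G})=K_1(\iw{G})/K_1(\iw{G})_{p\text{-tors}}$, not from $K_1(\iw{G})$. Consequently the element you produce at the end of Step~3 is only determined up to $p$-power torsion in $K_1(\iw{G})$, and the resulting $\xi$ only interpolates $L_{\Sigma}(1-r;F_\infty/F,\rho)$ up to an undetermined $p$-power root of unity $\zeta_{\rho,r}$. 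Killing this ambiguity is a substantial separate task, and your proposal does not address it. The paper closes the gap by a second induction: one corrects the abelianisation component of the candidate by a torsion element, and then shows, case by case according to the position of a chosen central element $c\in[G,G]$ relative to $(U,V)$, that the remaining torsion discrepancies vanish. The crucial non-formal input there is the Ritter--Weiss theorem that for an extension whose Galois group is (abelian-by-$\Z/p$)$\times\Gamma$ the genuine $p$-adic zeta function already exists in $K_1(\iw{U'/V}_S)$; this is what lets one propagate ``honest'' (torsion-free) equalities down the norm maps. Without something playing this role, your argument stops one step short of Theorem~\ref{thm:mt}.

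A smaller but real issue in Step~4: the Deligne--Ribet/Ritter--Weiss machinery gives, directly, only \emph{weak} congruences of the form $\xi_{U,V}\equiv c_{U,V}\pmod{J_{S,U,V}}$ (resp.\ $\pmod{I'_{S,U}}$) for some \emph{unknown} unit $c_{U,V}\in\piw{\Gamma}_{(p)}^\times$, and these hold a priori only in the localised ring $\iw{U/V}_S$. Passing from these to the \emph{strong} integral congruences $\tilde w_{U,V}\equiv\varphi(\tilde w_{\mathrm{ab}})^{(G:U)/p}\pmod{J_{U,V}}$ (resp.\ $I_U$) requires the inductive elimination of the $c_{U,V}$'s by projecting to $G/\langle c\rangle$, and the choice of $c$ inside $[G,G]$ is essential for the induction to close (Lemma on exponent-$p$ $p$-groups: if no non-trivial central element lies in $[G,G]$ then $G$ is a direct product and one can split off an abelian factor). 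Your sketch says ``split into a part disposed of by the induction hypothesis and a genuinely new part'' but leaves implicit both the integrality upgrade and the need to place $c$ in the derived subgroup; these are exactly where the proof would otherwise fail.
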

 
We shall review the characterisation of the \p-adic zeta function 
and the precise statement of the non-commutative Iwasawa 
main conjecture in Section~\ref{ssc:review-iwa}.
In the preceding paper \cite{H}, we constructed the \p-adic zeta
function and verified the main conjecture when the Galois group 
$\Gal(F_{\infty}/F)$ 
is isomorphic to the pro-$p$ group
\begin{align*}
\begin{pmatrix} 1 & \F_p & \F_p & \F_p \\ 0 & 1 & \F_p & \F_p \\
 0 & 0 & 1 & \F_p \\ 0 & 0 & 0 & 1 \end{pmatrix} \times \Gamma
\end{align*}
and $p$ is not equal to either $2$ or $3$. Theorem~\ref{thm:mt} 
generalises this result. 

Philosophically the Iwasawa main conjecture 
is closely related to the special values of $L$-functions (as implied by
many people including Kazuya Kato, Annette Huber-Klawitter, Guido Kings, 
David Burns, Mathias Flach,......); hence 
our main theorem (Theorem~\ref{thm:mt}) should also suggest
validity of conjectures upon these values in some sense even
in non-commutative coefficient cases. 
In fact we may verify a special case of (the \p-part of) the equivariant 
Tamagawa number conjecture 
for critical Tate motives with non-commutative coefficient 
combining Theorem~\ref{thm:mt} with descent theory 
established by David Burns and Otmar Venjakob \cite{BurVen}.

\begin{cor}[=Corollary~$\ref{cor:ETNC}$] \label{cor:etnc}
Let $F_{\infty}$ be a \p-adic Lie extension of a totally real number
 field $F$ as in Theorem~$\ref{thm:mt}$ and $F'$ an arbitrary
 finite Galois subextension of $F_{\infty}/F$. 
 Then for an arbitrary natural
 number $r$ divisible by $p-1$, the \p-part of the equivariant 
 Tamagawa number conjecture for $\Q(1-r)_{F'/F}$ is 
 true $($here $\Q(1-r)_{F'/F}=h^0(\Spec F')(1-r)$ denotes 
 the $(1-r)$-fold Tate motive regarded as defined over $F$$)$.
\end{cor}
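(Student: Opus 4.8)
The plan is to deduce Corollary~\ref{cor:etnc} from the main theorem (Theorem~\ref{thm:mt}) by an application of the descent formalism of Burns and Venjakob \cite{BurVen}; essentially no new arithmetic is needed, and the argument is one of translation between the Iwasawa-theoretic and the motivic pictures. First I would recall the standard reformulation of the equivariant Tamagawa number conjecture due to Burns and Flach. Fix a finite Galois subextension $F'/F$ of $F_\infty/F$ and put $\Delta=\Gal(F'/F)$. Since $r$ is a positive multiple of the even number $p-1$, the integer $1-r$ is negative and odd, so $\Q(1-r)_{F'/F}$ is a critical motive, its motivic cohomology and hence the archimedean regulator are trivial, and its equivariant leading term at $s=0$ is just the collection of the Artin $L$-values $L(\rho,1-r)$, $\rho\in\mathrm{Irr}(\Delta)$, which are non-zero algebraic numbers by the theorems of Siegel, Klingen and Shintani. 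The \p-part of ETNC for $\Q(1-r)_{F'/F}$ then amounts to an identity in the relative algebraic $K$-group $K_0(\Z_p[\Delta],\Q_p)$ between this equivariant leading term and the Euler characteristic of the perfect complex of \p-adic \'etale cohomology $R\Gamma_c\bigl(\Spec\mathcal{O}_{F'}[1/p],\Z_p(1-r)\bigr)$ attached to the motive (up to duality and the usual normalisations).

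Next I would invoke the Iwasawa-theoretic input provided by Theorem~\ref{thm:mt}. Under the standing hypothesis $\mu=0$, that theorem yields the \p-adic zeta function $\xi_{F_\infty/F}\in K_1(\iw{G}_S)$ together with the non-commutative Iwasawa main conjecture for $F_\infty/F$: writing $C_{F_\infty/F}$ for the relevant Iwasawa main conjecture complex (roughly, the \'etale cohomology complex of $\Z_p(1)$ over $F_\infty$, which is $S$-torsion exactly because $\mu=0$), one has $\partial(\xi_{F_\infty/F})=-[C_{F_\infty/F}]$ in $K_0(\iw{G},\iw{G}_S)$, and $\xi_{F_\infty/F}$ interpolates the abelian \p-adic $L$-values in the sense recalled in Section~\ref{ssc:review-iwa}. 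The passage to finite level proceeds by twisting by an appropriate power of the cyclotomic character and evaluating at the virtual Artin representations of $\Delta$; here the hypothesis $(p-1)\mid r$ is exactly what ensures that the resulting specialisations recover the genuine Artin $L$-values $L(\rho,1-r)$ predicted by ETNC rather than their Teichm\"uller-twisted analogues interpolated by Deligne and Ribet.

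I would then apply the main descent result of \cite{BurVen}. Because the non-commutative main conjecture for $F_\infty/F$ has been established in the precise formulation involving the canonical Ore localisation $\iw{G}_S$, that theorem descends the underlying zeta isomorphism from $\iw{G}$ to $\Z_p[\Delta]$. On the one hand, by the interpolation property of $\xi_{F_\infty/F}$ the descended object computes the equivariant leading term of $\Q(1-r)_{F'/F}$; on the other hand, via $\partial(\xi_{F_\infty/F})=-[C_{F_\infty/F}]$ together with the control (base-change) isomorphisms relating $C_{F_\infty/F}$ after the twist to $R\Gamma_c\bigl(\Spec\mathcal{O}_{F'}[1/p],\Z_p(1-r)\bigr)$, it computes the Euler characteristic on the arithmetic side. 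Identifying the two gives the \p-part of ETNC for $\Q(1-r)_{F'/F}$; since $F'$ and $r$ were arbitrary (subject to $(p-1)\mid r$), the corollary follows.

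The main obstacle is not the formal deduction, which once set up is automatic, but checking that the hypotheses of the Burns--Venjakob machinery are met here: that our formulation of the main conjecture --- through the canonical Ore localisation of $\iw{G}$ as set up in Section~\ref{ssc:review-iwa} --- is precisely the input their descent result demands; that $C_{F_\infty/F}$ is perfect and $S$-torsion, which is where $\mu=0$ is used; and that the \'etale cohomology of $\Z_p(r)$ over the finite layers of $F_\infty/F$ has no exceptional zeros and satisfies the semisimplicity conditions required to run the descent for the specific group $G\cong G^f\times\Gamma$. All of these are standard for critical Tate motives over totally real fields, so once they are verified the corollary is immediate.
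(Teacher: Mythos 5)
Your overall strategy coincides with the paper's: it factors Corollary~\ref{cor:ETNC} through an intermediate proposition asserting that the \p-part of ETNC for $\Q(1-r)_{F'/F}$ follows from Conjecture~\ref{conj:iwasawa} for $F_{\infty}/F$, and you correctly assemble almost all of that proposition's ingredients (criticality forcing triviality of $\Xi(\Q(1-r)_{F'/F})$ and of $\vartheta_{\infty}$, rationality of the leading term via Klingen--Siegel, the Poitou--Tate/Artin--Verdier and Shapiro identifications turning $C_{F_{\infty}/F}$ into the projective limit of compactly supported cohomologies, the cyclotomic twist $\varrho_{\kappa}^{r}$, acyclicity of $\Q_p\otimes_{\Z_p}\Delta(1-r)_{F'}$, and the appeal to the Burns--Venjakob descent theorem).

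There is, however, one genuine omission at the point where you ``identify the two.'' The \p-adic zeta function $\xi_{F_{\infty}/F}$ interpolates the $\Sigma$-truncated values $L_{\Sigma}(1-r;F'/F,\rho)$, but Conjecture~\ref{con:tamagawa} is stated with the leading terms of the \emph{full} Artin $L$-functions $L^{*}(\Q(1-r)_{F'/F})$. Correspondingly, the trivialisation that the Iwasawa-theoretic descent naturally hands you is the acyclicity trivialisation of $\det_{\overline{\Q}_p[G_{F'/F}]}(\overline{\Q}_p\otimes_{\Z_p}\Delta(1-r)_{F'})$, whereas $T\Omega(\Q(1-r)_{F'/F})_p$ is built from $\vartheta_p^{-1}$. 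These differ by the product $\prod_{v\in\Sigma}\phi_v^{-1}$ of local comparison isomorphisms, and under the Wedderburn decomposition each $\phi_v^{-1}$ corresponds exactly to the tuple of local Euler factors $(L_v(1-r;F'/F,\rho))_{\rho}$ (see \cite[Section~1.2]{BurFl1} and \cite[Section~2.4.2]{FK}). It is precisely this cancellation between the $\Sigma$-truncation of the interpolated $L$-values and the $\phi_v$'s that makes $T\Omega(\Q(1-r)_{F'/F})_{p,\overline{\Q}_p}$ vanish; without accounting for it, the two sides of your identification carry different $L$-functions and do not match. Your closing remark about ``semisimplicity conditions'' and ``exceptional zeros'' does not cover this: the $\phi_v$-comparison is not a regularity hypothesis on the arithmetic but an explicit bookkeeping step that must appear in the computation. (There is also a sign/normalisation discrepancy between $K_0(\mathscr{C}^{\mathrm{Perf}}_S(R),\mathrm{qis})$ and $\pi_0(V(R,S^{-1}R))$ as used in \cite{BurVen} versus \cite{FK}, but that is a routine convention issue rather than a gap in the argument.)
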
 

This may also be regarded as an analogue of the 
cohomological Lichtenbaum conjecture (in special cases), 
which was proven by Barry Mazur
and Andrew Wiles \cite{MW, Wiles} when $F'$ is 
the same field as $F$---the Bloch-Kato conjecture case--- 
as the direct consequence of the main conjecture 
(for commutative cases) which they verified.
Later we shall give a brief review upon
the formulation of the equivariant Tamagawa number conjecture
for Tate motives (see Section~\ref{ssc:review-tama}).  

Now let us summarise the main idea to prove Theorem~\ref{thm:mt}. 
Consider the family $\mathfrak{F}_B$ of 
all pairs $(U,V)$ 
such that $U$ is an open subgroup of $G$ 
containing $\Gamma$ and 
$V$ is the commutator subgroup of $U$. By classical induction theorem 
of Richard Brauer \cite[Th\'eor\`em~22]{Serre1}, 
an arbitrary Artin representation of $G$
is isomorphic to a $\Z$-linear combination of representations induced by  
characters of abelian groups $U/V$ (as a virtual representation) 
where each $(U,V)$ is in $\mathfrak{F}_B$. Let $\theta_{U,V}$
and $\theta_{S,U,V}$ denote the composite maps 
\begin{align*}
 K_1(\iw{G}) \xrightarrow{\text{norm}} K_1(\iw{U})
 \xrightarrow{\text{canonical}} \iw{U/V}^{\times}, \\
K_1(\iw{G}_S) \xrightarrow{\text{norm}} K_1(\iw{U}_S)
 \xrightarrow{\text{canonical}} \iw{U/V}_S^{\times}
\end{align*}
for each $(U,V)$ in $\mathfrak{F}_B$ where $\iw{G}_S$ (resp.\
$\iw{U}_S$, $\iw{U/V}_S$) is 
{\em the canonical Ore localisation} of the Iwasawa algebra $\iw{G}$ 
(resp.\ $\iw{U}$, $\iw{U/V}$) introduced 
in \cite[Section~2]{CFKSV} (see also Section~1 in this article). Set
$\theta=(\theta_{U,V})_{(U,V)\in \mathfrak{F}_B}$ and  
$\theta_S=(\theta_{S,U,V})_{(U,V)\in \mathfrak{F}_B}$. 

Let $F_U$ (resp.\ $F_V$) be the maximal subfield of $F_{\infty}$ fixed
by $U$ (resp.\ $V$). Then {\em the \p-adic zeta function} exists 
for each {\em abelian} extension $F_V/F_U$; 
Pierre Deligne and Kenneth Alan Ribet first constructed it 
\cite{DR}, and by using their results 
Jean-Pierre Serre reconstructed it as a unique element 
$\xi_{U,V}$ in the total quotient ring of $\iw{U/V}$ 
which satisfies certain {\em interpolation formulae} \cite{Serre2}. 
Now suppose that there
exists an element $\xi$ in $K_1(\iw{G}_S)$ which satisfies the equation
\begin{align} \label{eq:brauer}
\theta_S(\xi)=(\xi_{U,V})_{(U,V)\in \mathfrak{F}_B}.
\end{align} 
Then we may verify by Brauer induction that $\xi$ satisfies 
an interpolation formula which characterises $\xi$ 
as {\em the \p-adic zeta 
function for $F_{\infty}/F$}. 
This observation motivates us to prove that $(\xi_{U,V})_{(U,V)\in
\mathfrak{F}_B}$ is contained in the image of $\theta_S$. It 
seems, however, to be difficult in general 
to characterise the image of the theta map $\theta_S$ completely
for the localised Iwasawa algebra $\iw{G}_S$. 
Therefore we shall first determine the image of 
the theta map $\theta$ for the (integral) Iwasawa algebra $\iw{G}$, 
and then construct an element $\xi$ satisfying (\ref{eq:brauer}) 
by using this calculation and certain diagram chasing. 
The strategy which we introduced here was first proposed by
David Burns (and hence 
we call this method {\em Burns' technique} in this article). 
We shall discuss its details in Section~\ref{sc:burns}.

Let $(U,\{e\})$ be an element in $\mathfrak{F}_B$ such that the 
cardinality of the finite part of $U$ is at most $p^2$ (and $U$ is
hence abelian). Let $I_{S,U}$ denote the image of $\theta_{S,U,\{e\}}$ for 
each of such  $(U,\{e\})$'s. 
By virtue of Burns' technique, we may reduce the condition 
for $(\xi_{U,V})_{(U,V)\in \mathfrak{F}_B}$ to be contained in the
image of $\theta_S$ to the following type of congruence:
\begin{align*}
\xi_{U,\{e \}} & \equiv \varphi(\xi_{G,[G,G]})^{(G:U)/p} &
 \mod{I_{S,U}}
\end{align*}
where $\varphi$ is the Frobenius endomorphism $\varphi \colon
\iw{G^{\mathrm{ab}}}_S \rightarrow \iw{\Gamma}_{(p)}$ induced by the group
homomorphism $G^{\mathrm{ab}}\rightarrow \Gamma; g\mapsto g^p$. Kato, 
Ritter, Weiss and Kakde verified such type of congruence when the index 
$(G:U)$ exactly equals~$p$ \cite{Kato2, RW6, Kakde1} 
by using the theory of Deligne and Ribet upon Hilbert modular forms 
\cite{DR}. It seems, however, 
to be almost impossible to deduce such congruences 
only from Deligne-Ribet's theory when the index 
$(G:U)$ is strictly larger than $p$. 
Nevertheless in Sections~\ref{sc:cong} and~\ref{sc:construction} we 
shall verify these 
congruences by combining Deligne-Ribet's theory with certain {\em inductive 
technique} which was first introduced in \cite{H}.

In computation of the images of $\theta$ and $\theta_S$ 
we use theory upon \p-adic logarithmic homomorphisms.
This causes ambiguity upon 
\p-power torsion elements in the whole calculation, and hence we
have to eliminate this ambiguity as the final step of the proof. 
We shall complete this step by utilising  
the existence of the \p-adic zeta functions 
for Ritter-Weiss-type extensions \cite{RW7} and 
certain inductive arguments.

\medskip
The detailed content of this article is as follows. We shall briefly review 
the basic formulations of the non-commutative Iwasawa main conjecture 
and the equivariant Tamagawa number conjecture in
Section~\ref{sc:review}. Then we  discuss David Burns' 
outstanding strategy for construction 
of the \p-adic zeta function in Section~\ref{sc:burns}.
The precise statement  of our main theorem and its application
will be dealt with in Section~\ref{sc:maintheorem}. 
Sections~\ref{sc:additive}, \ref{sc:prelim} and \ref{sc:trans} are
devoted to computation of the image of (a certain variant of) 
the theta map $\tilde{\theta}$; we first construct 
``the additive theta isomorphism'' $\theta^+$ in
Section~\ref{sc:additive}, and then translate it into the
multiplicative morphism $\tilde{\theta}$ by utilising  
logarithmic homomorphisms in
Section~\ref{sc:trans}. Section~\ref{sc:prelim} is the preliminary
section for Section~\ref{sc:trans}. We study the image of the norm map 
$\tilde{\theta}_S$ for the localised Iwasawa algebra $\iw{G}_S$ 
in Section~\ref{sc:localise}, and derive certain ``weak congruences'' 
upon abelian \p-adic zeta
pseudomeasures in Section~\ref{sc:cong} 
by applying Deligne-Ribet's \q-expansion principle 
\cite{DR} and Ritter-Weiss' approximation technique \cite{RW6}. 
In Section~\ref{sc:construction}  
we refine the congruences obtained in the previous section by using
induction, and construct the \p-adic zeta function ``modulo
\p-torsion.'' We shall finally eliminate ambiguity of 
the \p-power torsion part. 

\tableofcontents

%
\subsection*{Notation}
%

In this article, $p$ always denotes a positive prime number. 
We denote by $\N$ the set of natural numbers (the set of {\em
strictly} positive integers). We also denote by $\Z$ (resp.\ $\Z_p$) 
the ring of integers
(resp.\ \p-adic integers).
The symbol $\Q$ (resp.\ $\Q_p$) denotes 
the rational number field (resp.\ the \p-adic number field). 
For an arbitrary group $G$ let $\conj{G}$ denote the set of all
conjugacy classes of $G$.
For an arbitrary pro-finite group $P$, we always denote by $\iw{P}$ 
its Iwasawa algebra over $\Z_p$ and by $\riw{P}$ its Iwasawa algebra
over $\F_p$. More specifically, $\iw{P}$ and $\riw{P}$ are defined by
\begin{align*}
\iw{P}&=\varprojlim_{U} \Z_p[P/U], & \riw{P} &=\varprojlim_U \F_p[P/U]
\end{align*}
where $U$ runs over all open normal subgroups of $P$.
Let $\Gamma$ denote the commutative \p-adic Lie group  
isomorphic to $\Z_p$ (corresponding to the Galois group of the
cyclotomic $\Z_p$-extension). Throughout this paper, we fix a topological 
generator $\gamma$ of $\Gamma$. In other words, we fix Iwasawa-Serre 
isomorphisms
\begin{align*}
\iw{\Gamma} &\xrightarrow{\sim}  \Z_p[[T]], & \riw{\Gamma}
 &\xrightarrow{\sim} \F_p[[T]] \\
\gamma & \mapsto  1+T  & \gamma & \mapsto  1+T
\end{align*}
where $\Z_p[[T]]$ (resp.\ $\F_p[[T]]$) is the formal power series ring
over $\Z_p$ (resp.~$\F_p$). For an arbitrary \p-adic Lie group $W$
isomorphic to the direct product of a finite \p-group and $\Gamma$,
$W^f$ denotes the finite part of $W$.
We always assume that every associative ring has unity. 
The centre of an associative ring $R$ is denoted by $Z(R)$.
For an associative ring $R$, we denote by $\M_n(R)$ 
the ring of $n\times n$-matrices with entries
 in $R$ and by $\GL_n(R)$ the multiplicative group of $\M_n(R)$. 
We always consider that all Grothendieck groups are 
{\em additive} abelian groups, 
whereas all Whitehead groups are {\em multiplicative} abelian groups.
For an arbitrary multiplicative abelian group $A$, let
$A_{p\text{-tors}}$ (resp.\ $A_{\text{tors}}$) denote
the \p-power torsion part (resp.\ the torsion part) of $A$. 
We set $\pK_1(R)=K_1(R)/K_1(R)_{p\text{-tors}}$ 
for an arbitrary associative ring $R$. 
Similarly we set 
$\piw{P}^{\times}= \iw{P}^{\times}/\iw{P}^{\times}_{p\text{-tors}}$
for an arbitrary pro-finite group~$P$.

\begin{ack}
 The author would like to express his sincere gratitude to Professor 
 Takeshi Tsuji for much fruitful discussion and many helpful comments 
 (especially the suggestion
 that we use augmentation ideals in the translation of the additive theta 
 isomorphism, see Section~\ref{ssc:aug} for details). 
 He is also grateful to Mahesh
 Kakde for useful comments upon the preliminary version of this article.
\end{ack}

%
%
\section{Reviews upon non-commutative arithmetic theory} \label{sc:review}
%
%

In this section we shall review the formulations of 
the non-commutative Iwasawa main conjecture (only for the cases of 
totally real number fields) and the equivariant Tamagawa number
conjecture (only for the Tate motives). 
Refer to \cite{Bass, Swan} for basic results upon (low-dimensional) 
algebraic \mbox{$K$-theory} used in this section.

%
\subsection{Non-commutative Iwasawa theory for totally real fields} \label{ssc:review-iwa}
%

In this subsection we review the formulation of 
the non-commutative Iwasawa main conjecture
for totally real number fields following 
John Henry Coates, Takako Fukaya, Kazuya Kato, Ramdorai Sujatha and Otmar 
Venjakob \cite{CFKSV, FK}. 
We remark that J\"urgen Ritter and Alfred Weiss also formulated 
the non-commutative Iwasawa main conjecture 
---``the `main conjecture' of equivariant Iwasawa theory'' in their 
terminology--- in somewhat different manner \cite{RW1, RW2, RW3, RW4}.
Let $F$ be a totally real number field and $p$ a positive odd prime
number. Let $F_{\infty}$ be a totally real \p-adic Lie extension of $F$
 satisfying the following three conditions:
\begin{enumerate}[$(F_{\infty}\text{-}1)$]
\item the cyclotomic $\Z_p$-extension $F_{\cyc}$
      of $F$ is contained in $F_{\infty}$;
\item only finitely many primes of $F$ ramify in $F_{\infty}$;
\item there exists a finite subextension $F'$ of $F_{\infty}/F$ such
      that $F_{\infty}/F'$ is pro-$p$ and 
	  the Iwasawa $\mu$-invariant for its cyclotomic
      $\Z_p$-extension ${F'}_{\cyc}/F'$ equals zero.
\end{enumerate}
Fix a finite set $\Sigma$ of finite places of $F$ which contains all of those 
ramifying in~$F_{\infty}$. For an arbitrary algebraic extension
$E$ of $F$, we shall denote $\Sigma^{\vee}$ the union of 
the set $\Sigma_{\infty}$ of all infinite places and 
the set $\Sigma_E$ of all finite places above $\Sigma$.
Set $G=\Gal(F_{\infty}/F)$,
$H=\Gal(F_{\infty}/F_{\cyc})$ and $\Gamma=\Gal(F_{\cyc}/F)$. 
The pro-$p$ group $\Gamma$ is isomorphic to $\Z_p$ by definition. 

Let $S$ be the subset of $\iw{G}$ consisting of all elements $f$ such
 that the quotient module $\iw{G}/\iw{G}f$ is finitely generated as a left
 $\iw{H}$-module. The set $S$ is a left and right Ore set of $\iw{G}$
 with no zero divisors 
 \cite[Theorem~2.4]{CFKSV}, which is called {\em the canonical Ore set for
 $F_{\infty}/F$} (refer to \cite{MR, Sten} for general theory upon 
Ore localisation). The Ore localisation $\iw{G}\rightarrow \iw{G}_S$
 induces the following localisation exact sequence in algebraic \mbox{$K$-theory}
 (due to Charles Weibel, Dongyuan Yao, Alan Johnathan Berrick 
 and Michael Keating \cite{WeibYao, BerKeat}):
\begin{align*}
K_1(\iw{G})\rightarrow K_1(\iw{G}_S) \xrightarrow{\partial}
 K_0(\iw{G},\iw{G}_S)\rightarrow 0.
\end{align*}
Surjectivity of the connecting homomorphism $\partial$ was proven in
\cite[Proposition~3.4]{CFKSV}. Now let $\mathscr{C}^{\mathrm{Perf}}(\iw{G})$ 
denote the category of perfect complexes of finitely generated left
$\iw{G}$-modules (that is, the category of complexes of finitely generated left $\iw{G}$-modules which are quasi-isomorphic to bounded complexes of finitely
generated {\em projective} left $\iw{G}$-modules), and let
$\mathscr{C}^{\mathrm{Perf}}_S (\iw{G})$ denote the full subcategory of
$\mathscr{C}^{\mathrm{Perf}}(\iw{G})$ generated by all objects whose
cohomology groups are $S$-torsion left $\iw{G}$-modules. 
The category $\mathscr{C}^{\mathrm{Perf}}(\iw{G})$ 
(resp.\ $\mathscr{C}^{\mathrm{Perf}}_S(\iw{G})$) is regarded as a 
Waldhausen category $(\mathscr{C}^{\mathrm{Perf}}(\iw{G}), \mathrm{qis})$ 
(resp.\ $(\mathscr{C}^{\mathrm{Perf}}_S(\iw{G}), \mathrm{qis})$) 
equipped with weak equivalences consisting of all quasi-isomorphisms.
Then it is well known that the relative
Grothendieck group $K_0(\iw{G},\iw{G}_S)$ is canonically
identified with the
Waldhausen-Grothendieck group $K_0(\mathscr{C}^{\mathrm{Perf}}_S(\iw{G}),
\mathrm{qis})$. Set 
\begin{align*}
C_{F_{\infty}/F}=R\Hom(R\Gamma_{\mathrm{\acute{e}t}}(\Spec
 \mathcal{O}^{\Sigma^{\vee}}_{F_{\infty}}, \Q_p/\Z_p),
 \Q_p/\Z_p)
\end{align*}
where $\Gamma_{\mathrm{\acute{e}t}}$ denotes the global section functor
for the \'etale topology and
$\mathcal{O}_{F_{\infty}}^{\Sigma^{\vee}}$ denotes the
$\Sigma^{\vee}$-integer ring of $F_{\infty}$. 
Its cohomology groups are calculated as follows:
\begin{align} 
H^i(C_{F_{\infty}/F})= \begin{cases} \Z_p & \text{if }i=0, \\
X_{\Sigma} & \text{if }i=-1, \\ 0 & \text{otherwise}.
\end{cases}
\end{align}
Here $X_{\Sigma}=\Gal(M_{\Sigma}/F_{\infty})$ is the Galois group of
the maximal abelian pro-$p$ extension $M_{\Sigma}$ of $F_{\infty}$
unramified outside $\Sigma$. Note that $\Z_p$ is an $S$-torsion module
since it is finitely generated as a left
$\iw{H}$-module (see \cite[Proposition~2.3]{CFKSV} for details). 
The Galois group $X_{\Sigma}$ is also an
$S$-torsion module by condition $(F_{\infty}\text{-}3)$ 
(due to the lemma of Yoshitaka Hachimori and Romyar Sharifi 
\cite[Lemma~3.4]{HS}, see also \cite[Section~1.2]{H}). Therefore
we may regard $C_{F_{\infty}/F}$ as an object of
$\mathscr{C}^{\mathrm{Perf}}_S(\iw{G})$, and by surjectivity of $\partial$ 
there exists an element
$f_{F_{\infty}/F}$ in $K_1(\iw{G}_S)$ satisfying
\begin{align} \label{eq:char}
\partial(f_{F_{\infty}/F})=-[C_{F_{\infty}/F}],
\end{align}
which is called {\em a characteristic element for $F_{\infty}/F$}. 
Characteristic elements are determined uniquely up to multiplication 
by elements in the image of the canonical homomorphism 
$K_1(\iw{G})\rightarrow K_1(\iw{G}_S)$ (due to  
the localisation exact sequence).

\medskip
We next consider the \p-adic zeta function for
$F_{\infty}/F$. From now on we fix 
an algebraic closure of the \p-adic number field $\overline{\Q}_p$,
and we also fix embeddings of the algebraic closure
$\overline{\Q}$ of the rational number field $\Q$ 
into $\C$ ---the complex number field--- and $\overline{\Q}_p$ 
till the end of this subsection.
By the argument in \cite[p.p.~172--173]{CFKSV}, 
we may define {\em the evaluation map}
\begin{align*}
K_1(\iw{G}_S) \rightarrow \overline{\Q}_p\cup \{ \infty\} ; f
 \mapsto f(\rho)
\end{align*}
for an arbitrary continuous
representation $\rho \colon G\rightarrow \GL_d(\mathcal{O})$ (where
$\mathcal{O}$ is the ring of integers of a certain finite extension of $\Q_p$). 
Now let $L_{\Sigma}(s;F_{\infty}/F,\rho)$ be the complex 
Artin $L$-function associated to an Artin representation~$\rho$ 
(recall that $\rho$ is an Artin representation if its image is finite) 
whose local factors at places belonging to $\Sigma$ are removed.
If there exists an element $\xi_{F_{\infty}/F}$ in $K_1(\iw{G}_S)$ which
satisfies {\em the interpolation formula}
\begin{align} \label{eq:interp}
\xi_{F_{\infty}/F} (\rho\kappa^r)=L_{\Sigma}(1-r; F_{\infty}/F, \rho) 
\end{align}
for an arbitrary Artin representation $\rho$ of $G$ and an arbitrary 
natural number 
$r$ divisible by $p-1$, we call $\xi_{F_{\infty}/F}$ {\em the \p-adic
zeta function for $F_{\infty}/F$}. 
The Iwasawa main conjecture for totally real number fields is 
then formulated as follows:

\begin{con} \label{conj:iwasawa}
Let $p$, $F$ and $F_{\infty}/F$ be as above. Then
\begin{enumerate}[$(1)$]
\item $($existence and uniqueness of the \p-adic zeta function$)$

the \p-adic zeta function $\xi_{F_{\infty}/F}$ for $F_{\infty}/F$ exists uniquely$;$

\item $($the Iwasawa main conjecture$)$

the equation $\partial(\xi_{F_{\infty}/F})=-[C_{F_{\infty}/F}]$ holds.
\end{enumerate}
\end{con}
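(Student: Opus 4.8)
\emph{Sketch of the argument for the case covered by Theorem~\ref{thm:mt}.} We prove Conjecture~\ref{conj:iwasawa} under the extra hypothesis that $G\cong G^f\times\Gamma$ with $G^f$ a finite \p-group of exponent~$p$, following the strategy of the Introduction. The starting point is that it suffices to construct one element $\xi_{F_{\infty}/F}\in K_1(\iw{G}_S)$ satisfying the interpolation formula~(\ref{eq:interp}): uniqueness in~(1) is then formal, and the main conjecture~(2) follows from the commutative main conjecture of Deligne--Ribet and Mazur--Wiles \cite{DR, MW, Wiles} for the abelian layers $F_V/F_U$ together with a standard Brauer-induction and functoriality argument, which forces $\partial(\xi_{F_{\infty}/F})+[C_{F_{\infty}/F}]$ to vanish in $K_0(\iw{G},\iw{G}_S)$. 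Thus everything reduces to the construction of $\xi_{F_{\infty}/F}$.

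By Brauer's induction theorem applied to $\mathfrak{F}_B$, it is enough to produce $\xi\in K_1(\iw{G}_S)$ with $\theta_S(\xi)=(\xi_{U,V})_{(U,V)\in\mathfrak{F}_B}$, where $\xi_{U,V}$ denotes the Deligne--Ribet--Serre \p-adic zeta pseudomeasure of the abelian extension $F_V/F_U$; equation~(\ref{eq:brauer}) together with the interpolation properties of the $\xi_{U,V}$ then propagates to~(\ref{eq:interp}) for $\xi$. A direct description of the image of $\theta_S$ seems unattainable, so I would first determine the image of the integral theta map $\theta$ on $K_1(\iw{G})$: from the decomposition of its target into abelian pieces $\iw{U/V}$ (and conjugacy-class modules) I would build an ``additive theta isomorphism'' $\theta^+$, and then transport it, via the Oliver--Taylor integral logarithm, into a multiplicative description $\tilde{\theta}$ of that image \emph{modulo \p-power torsion}---this torsion ambiguity is exactly why one must work with the quotients $\pK_1$ and $\piw{\cdot}^{\times}$ throughout. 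Localising at the canonical Ore set $S$ then yields the corresponding description of the image of $\tilde{\theta}_S$ for $\iw{G}_S$.

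Next comes \textbf{Burns' technique}: a diagram chase relating $\theta$ and $\theta_S$---using surjectivity of $\partial$, the existence of a characteristic element $f_{F_{\infty}/F}$, and the fact that each $\xi_{U,V}$ is itself a characteristic element for $F_V/F_U$---reduces the membership of $(\xi_{U,V})_{(U,V)\in\mathfrak{F}_B}$ in the image of $\theta_S$ to a finite list of congruences among the abelian pseudomeasures, the decisive one being
\begin{align*}
\xi_{U,\{e\}} & \equiv \varphi(\xi_{G,[G,G]})^{(G:U)/p} & \mod{I_{S,U}}
\end{align*}
for those $(U,\{e\})\in\mathfrak{F}_B$ with $|U^f|\le p^2$, where $\varphi\colon\iw{G^{\mathrm{ab}}}_S\to\iw{\Gamma}_{(p)}$ is induced by $g\mapsto g^p$ and $I_{S,U}$ is the image of $\theta_{S,U,\{e\}}$. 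When $(G:U)=p$ this is precisely the congruence proved by Kato, Ritter--Weiss and Kakde \cite{Kato2, RW6, Kakde1} via Deligne--Ribet's \q-expansion principle \cite{DR}. When $(G:U)>p$---the genuinely new range---Deligne--Ribet alone does not suffice, and I would invoke the \textbf{inductive technique} of \cite{H}: first one extracts from the \q-expansion principle and the Ritter--Weiss approximation method \cite{RW6} a family of weaker congruences, and then one bootstraps the required congruence at index $(G:U)$ from congruences at strictly smaller index by passing to suitable subgroups and quotients of $G$---which are again a direct product of a finite \p-group of exponent~$p$ with $\Gamma$, so that an induction on $|G^f|$ closes; it is here that the exponent-$p$ hypothesis on $G^f$ is essential. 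I expect \textbf{this inductive verification of the higher-index congruences to be the main obstacle}.

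Finally, the steps above only pin down $\xi$ up to \p-power torsion---they produce a solution of~(\ref{eq:brauer}) in $\pK_1(\iw{G}_S)$---so it remains to \textbf{eliminate the \p-torsion ambiguity} and obtain an honest $\xi\in K_1(\iw{G}_S)$ satisfying~(\ref{eq:brauer}). I would do this by combining the already-established existence of the \p-adic zeta function for the Ritter--Weiss-type \p-extensions \cite{RW7} with a further induction over $\mathfrak{F}_B$, showing that the \p-torsion discrepancy is realised by an element of $K_1(\iw{G}_S)$ annihilated by all evaluation maps and hence can be divided out. This produces the genuine \p-adic zeta function $\xi_{F_{\infty}/F}$, establishing~(1); assertion~(2) then follows from the descent argument of the first paragraph.
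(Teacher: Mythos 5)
Your sketch reproduces the paper's own strategy faithfully: Burns' technique with the Brauer family $\mathfrak{F}_B$, the additive theta isomorphism transported to a multiplicative one via the Oliver--Taylor integral logarithm, Deligne--Ribet's \q-expansion principle and Ritter--Weiss' approximation to get the base congruences, the inductive bootstrap from $\bar G=G/\langle c\rangle$ (exploiting the exponent-$p$ hypothesis) for the higher-index congruences, and the appeal to \cite{RW7} to remove the residual \p-torsion ambiguity. Those are all the right ingredients in the right order, and your identification of the high-index congruences as the main obstacle is exactly the paper's own assessment.

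There is, however, a genuine overclaim in your first paragraph. You assert that once an interpolating element $\xi_{F_\infty/F}\in K_1(\iw{G}_S)$ is produced, ``uniqueness in (1) is then formal.'' It is not. The interpolation formula only controls the images of $\xi$ under the evaluation maps $f\mapsto f(\rho\kappa^r)$, and the simultaneous kernel of these maps can be nontrivial; in particular the image of $SK_1(\Z_p[G^f])$ in $K_1(\iw{G}_S)$ sits in that kernel and is not known to vanish. This is precisely why Theorem~\ref{thm:maintheorem} asserts existence only ``uniquely up to multiplication by an element of $SK_1(\Z_p[G^f])$,'' with the remark immediately afterwards emphasising that full uniqueness is open unless the image of $SK_1(\Z_p[G^f])$ in $K_1(\iw{G}_S)$ is shown to be trivial. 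So the method you sketch (which is the paper's) does not establish the uniqueness part of Conjecture~\ref{conj:iwasawa} as stated; it establishes only the weaker uniqueness modulo $SK_1$. Related to this, your claim that assertion~(2) ``follows by Brauer-induction and functoriality'' from the abelian main conjectures is imprecise on its own: the norm map on relative $K_0$-groups is not injective, so compatibility of $\partial(\xi)$ with the abelian $[C_{U,V}]$ at every layer does not by itself force $\partial(\xi)=-[C_{F_\infty/F}]$. In the paper (and in your third paragraph, where you get it right) assertion~(2) holds \emph{by construction}: one takes $\xi=fw$ with $f$ a characteristic element and $w\in K_1(\iw{G})$, whence $\partial(\xi)=\partial(f)=-[C_{F_\infty/F}]$ automatically.
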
 

%
\subsection{The equivariant Tamagawa number conjecture for Tate motives}
\label{ssc:review-tama}
%

The Tamagawa number conjecture, which predicts the special values of 
the $L$-functions associated to motives in terms of 
``motivic Tamagawa numbers,'' was first formulated by 
Spencer Bloch and Kazuya Kato \cite{BlKat} 
following the earlier results of Pierre Deligne 
\cite{Deligne1} and Alexander Be\u\i linson \cite{Beilinson}. 
Then Jean-Marc Fontaine, Berdenette Perrin-Riou \cite{FPR} and 
Kazuya Kato \cite{Kato1} gave its reformulation in a somewhat
sophisticated way. The equivariant version of 
the Tamagawa number conjecture was finally 
formulated by David Burns and Mathias Flach \cite{BurFl3} 
which included cases with non-commutative coefficient.
In this subsection we shall give a brief review 
of the formulation of the equivariant Tamagawa number conjecture
for Tate motives.

First recall the notion of {\em the determinant functor} 
\begin{align}
\det_R \colon \mathscr{C}^{\mathrm{Perf}}(R)_{\mathrm{qis}} \rightarrow
 V(R)
\end{align}
which was constructed by Pierre Deligne \cite{Deligne2} 
for an arbitrary associative ring 
(and by Finn Faye Knudsen and David Mumford  \cite{KnudMum} 
when $R$ is commutative)
where $\mathscr{C}^{\mathrm{Perf}}(R)_{\mathrm{qis}}$ denotes 
the subcategory of $\mathscr{C}^{\mathrm{Perf}}(R)$
---the category of perfect complexes of
finitely generated left $R$-modules--- whose objects are the same 
as $\mathscr{C}^{\mathrm{Perf}}(R)$ and morphisms are restricted to
quasi-isomorphisms. The (small) Picard category 
$V(R)$ constructed by Deligne is called {\em the category of virtual objects
associated to $R$}, which satisfies 
associative and commutative constraints. 
In fact  Deligne has constructed the category of 
virtual objects for an arbitrary exact category in \cite{Deligne2} 
by using Daniel Quillen's $S$-construction \cite{Quillen}, 
but we shall omit the details. 
Here we just remark that Takako Fukaya and Kazuya Kato  
gave an alternative and direct construction of $V(R)$ \cite{FK}.
In this article we shall never use the explicit description of $V(R)$.

The determinant functor $\det_R$ enjoys the following properties 
(here we denote the product structure of the Picard category $V(R)$ 
by $\cdot$\ ):
\begin{enumerate}[I)]
\item for an arbitrary exact sequence 
$0 \rightarrow C^{\prime} \rightarrow C \rightarrow
 C^{\prime \prime} \rightarrow 0$ 
in the category $\mathscr{C}^{\mathrm{Perf}}(R)$, 
the determinant functor $\det_R$ induces a canonical isomorphism
\begin{align*}
\det_R(C) \xrightarrow{\sim} \det_R (C^{\prime}) \cdot
 \det_R(C^{\prime \prime})
\end{align*}
in $V(R)$, which is functorial and satisfies so-called ``the 9-terms relation;''
\item if an object $C$ of $\mathscr{C}^{\mathrm{Perf}}(R)$ is acyclic, 
the quasi-isomorphism $0\rightarrow C$ induces 
a canonical isomorphism $\mathbf{1}_R \xrightarrow{\sim} \det_R(C)$
where $\mathbf{1}_R=\det_R(0)$ denotes the unit object 
of the Picard category $V(R)$;

\item the equation $\det_R(C[r])=\det^{(-1)^r}_R(C)$ holds 
for an arbitrary object $C$ of $\mathscr{C}^{\mathrm{Perf}}(R)$ 
and an arbitrary integer $r$. 
Here $C[r]$ is the $r$-th translation of $C$ 
(that is, the cochain complex defined by $C[r]^i=C^{i+r}$ 
with an appropriate differential);

\item the functor $\det_R$ factorises 
$\mathscr{D}^{\mathrm{Perf}}(R)$, the image of $\mathscr{C}^{\mathrm{Perf}}(R)$ 
in the derived category $\mathscr{D}^{\mathrm{b}}(R)$ of bounded complexes 
of finitely generated left $R$-modules.
Moreover it extends to $\mathscr{D}^{\mathrm{Perf}}(R)_{\mathrm{isom}}$, 
the subcategory of $\mathscr{D}^{\mathrm{Perf}}(R)$ whose morphisms are 
restricted to isomorphisms;

\item if an object $C$ of $\mathscr{C}^{\mathrm{Perf}}(R)$ 
is {\em cohomologically perfect} 
(that is, if all cohomology groups $H^i(C)[0]$ 
belongs to $\mathscr{D}^{\mathrm{Perf}}(R)$), the equation
\begin{align*}
\det_R(C)=\prod_{i\in \Z} \det^{(-1)^i}_R(H^i(C))
\end{align*}
holds; 

\item the determinant functor $\det_R$ is stable under arbitrary base change;
that is, the diagram
\begin{align*}
\xymatrix{
\mathscr{D}^{\mathrm{Perf}}(R)_{\mathrm{isom}} \ar[rr]^(0.6){\det_R}
 \ar[d]_{R' \otimes^{\mathbb{L}}_R -} & & V(R) \ar[d]^{R' \otimes_R -} \\
\mathscr{D}^{\mathrm{Perf}}(R')_{\mathrm{isom}} \ar[rr]_(0.6){\det_{R'}} & & V(R')
}
\end{align*}
commutes for an arbitrary $R$-algebra $R'$.
\end{enumerate}
We denote the group of isomorphism classes of $V(R)$ by $\pi_0(V(R))$ 
and the group of isomorphisms of the unit object $\mathbf{1}_R$ 
by $\pi_1(V(R))$ respectively. Then there exist canonical isomorphisms
\begin{align*}
K_0(R) \xrightarrow{\sim} \pi_0(V(R)); & \quad [P] \mapsto [\det_R(P)], \\
K_1(R) \xrightarrow{\sim} \pi_1(V(R)); & \quad [f\colon P\xrightarrow{\sim} P] \mapsto [\mathbf{1}_R \xrightarrow{\det_R(f)} \mathbf{1}_R]
\end{align*}
where we abbreviate to $\det_R(f)$ the isomorphism
\begin{align*}
\mathbf{1}_R=\det_R(P)\cdot \det^{-1}_R(P) \xrightarrow{\det_R(f) \cdot \mathrm{id}_{\det^{-1}_R(P)}} \det_R(P)\cdot \det^{-1}_R(P)=\mathbf{1}_R
\end{align*}
induced by $f$. 
For an arbitrary $R$-algebra $R'$, we define
the category $V(R,R')$ as the fibre-product 
category $V(R)\times_{V(R')} V_0$ if we 
let $V_0$ denote the trivial Picard
category consisting of a unique object $\mathbf{1}$ equipped with 
the trivial automorphism group $\{\mathrm{id}_{\mathbf{1} }\}$; 
in particular an object of $V(R,R')$
is a pair $(L,\lambda)$ with $L$ an object of $V(R)$ and $\lambda$ 
an isomorphism $R' \otimes_R L \xrightarrow{\sim} \mathbf{1}_{R'}$ in
$V(R')$ (we call $\lambda$ {\em a trivialisation of $R'\otimes_R L$}). 
In fact the category $V(R,R')$ is a Picard category and 
there exists a canonical isomorphism between $K_0(R,R')$ and
$\pi_0(V(R,R'))$. For details see \cite[Section~5]{BrBur}.

Next let $F$ be a number field and $F'$ its finite Galois extension 
with the Galois group $G_{F'/F}$. 
For an arbitrary {\em strictly negative} integer $m$, 
consider the $m$\nobreakdash-fold Tate motive  
$\Q(m)_{F'/F}=h^0(\Spec\, F')(m)$ 
endowed with equivariant action of the Galois group $G_{F'/F}$ 
(regarded as defined over $F$). 
The Betti realisation of $\Q(m)_{F'/F}$ is explicitly described as 
\begin{align*}
\Q(m)_{F'/F,B}=\prod_{\tau \colon F' \hookrightarrow \C} \Q 
 (2\pi \sqrt{-1})^m
\end{align*}
where $\tau$ runs over all embeddings of $F'$ into $\C$. Note that 
the complex conjugate acts upon both $\tau$'s and $(2\pi \sqrt{-1})^m$
in a natural way. Let 
$\Q(m)_{F'/F,B}^+$ denote the maximal submodule of $\Q(m)_{F'/F,B}$ 
fixed by the action of the complex conjugate and set
\begin{align*}
\Xi(\Q(m)_{F'/F})=\det_{\Q[G_{F'/F}]}(K_{1-2m}(F')_{\Q}^*) \cdot 
\det_{\Q[G_{F'/F}]}^{-1}(\Q(m)_{F'/F,B}^+)
\end{align*}
where we denote by $X^*$ the $\Q$-dual of 
a left $\Q[G_{F'/F}]$-projective module $X$ endowed with contragredient 
$G_{F'/F}$-action (the virtual object $\Xi(\Q(m)_{F'/F})$ is nothing but 
{\em the fundamental line} for the Tate motive 
in the terminology of \cite{FPR}). 
As is well known, 
Be\u\i linson's regulator map (or Borel's regulator map) induces 
an isomorphism between $K_{1-2m}(F')\otimes_{\Z} \R$ and 
the $\R$-dual of $\Q(m)_{F'/F,B}^+\otimes_{\Q} \R$; hence
we obtain an isomorphism in $V(\R[G_{F'/F}])$ 
({\em the period-regulator isomorphism} in the terminology of \cite{FK})
\begin{align*}
\vartheta_{\infty}  \colon \Xi(\Q(m)_{F'/F}) \otimes_{\Q} \R 
\xrightarrow{\sim} \mathbf{1}_{\R[G_{F'/F}]}.
\end{align*}
We may regard the pair $(\Xi(\Q(m)_{F'/F}), \vartheta_{\infty})$ 
as the object of the relative Picard category $V(\Q[G_{F'/F}], \R[G_{F'/F}])$.

Now let $R\Gamma_{c,\acute{e}t}(\mathcal{O}_{F'}^{\Sigma^{\vee}},
\Q_p(m))$ (or $R\Gamma_{c,\acute{e}t}(\Spec 
\mathcal{O}_{F'}^{\Sigma^{\vee}},\Q_p(m))$) be the compactly
supported \'etale cohomology of $\Spec \mathcal{O}_{F'}^{\Sigma^{\vee}}$ 
with coefficient in $\Q_p(m)$ (regarded as an object of the derived
category) characterised by the distinguished triangle
\begin{align*}
R\Gamma_{c,\acute{e}t}(\mathcal{O}_{F'}^{\Sigma^{\vee}},
 \Q_p(m)) \rightarrow 
R\Gamma_{\acute{e}t}(\mathcal{O}_{F'}^{\Sigma^{\vee}}, \Q_p
 (m)) \rightarrow 
\bigoplus_{\mathfrak{P}\in \Sigma_{F'}} R\Gamma (G_{F'/F,\mathfrak{P}}, \Q_p(m)) 
\rightarrow
\end{align*}
where $R\Gamma(G_{F'/F,\mathfrak{P}}, \Q_p(m))$ denotes 
the $\Q_p(m)$-coefficient Galois cohomology of 
the decomposition group of $G_{F'/F}$ at $\mathfrak{P}$. 
Then we may construct 
an isomorphism in $V(\Q_p[G_{F'/F}])$ 
({\em the \p-adic period-regulator isomorphism} in the
terminology of \cite{FK})
\begin{align*}
\vartheta_p: \Xi(\Q(m)_{F'/F}) \otimes_{\Q} \Q_p \xrightarrow{\sim}
 \det_{\Q_p[G_{F'/F}]}
 (R\Gamma_{c,\acute{e}t}(\mathcal{O}_{F'}^{\Sigma^{\vee}}, \Q_p(m)))
\end{align*}
which is essentially derived from the \p-adic Chern class 
isomorphism
\begin{align*}
K_{1-2m}(\mathcal{O}_{F'})\otimes_{\Z} \Q_p \xrightarrow{\sim}
 H^1(\mathcal{O}_{F'}[1/p], \Q_p(m)).
\end{align*}
Refer to \cite[Sections~1.2.2 and 1.6]{BurFl1} or 
\cite[Section~2]{BurFl2} for details.

We finally consider the leading terms of the equivariant Artin $L$-functions. 
Let $\mathrm{Irr}(G_{F'/F})$
denote the set of all isomorphism classes of 
irreducible finite dimensional ($\overline{\Q}$-valued) 
representations of $G_{F'/F}$. 
Then by Wedderburn's decomposition theorem
we obtain an isomorphism
\begin{align} \label{eq:wed}
Z(\C[G_{F'/F}]) \xrightarrow{\sim} 
\prod_{\rho \in \text{Irr}(G_{F'/F})} \C \cdot \id_{V_{\rho}} \, ; \quad 
x\mapsto (\rho(x))_{\rho \in \mathrm{Irr}(G_{F'/F})}
\end{align}
where $V_{\rho}$ denotes the representation space of $\rho$. 
We identify the right-hand side of (\ref{eq:wed}) with 
the direct product $\prod_{\rho \in \mathrm{Irr}(G_{F'/F})}\C$ 
in an obvious way.
Then we may define {\em the leading term of the equivariant Artin 
$L$-function $L^*(\Q(m)_{F'/F})$ associated 
to the Tate motive $\Q(m)_{F'/F}$} 
as the element in $Z(\C[G_{F'/F}])^{\times}$ corresponding to 
$(L^*(m,\rho))_{\rho \in \text{Irr}(G_{F'/F})}$ via the isomorphism 
(\ref{eq:wed}) (we denote by
$L^*(m,\rho)$ the leading term of the complex Artin $L$-function 
$L(s; F'/F, \rho)$ at $s=m$). In fact $L^*(\Q(m)_{F'/F})$ is 
contained in $Z(\R[G_{F'/F}])^{\times}$ and there exists an element 
$\lambda$ in $Z(\Q[G_{F'/F}])^{\times}$ such that $\lambda
L^*(\Q(m)_{F'/F})$ is contained in the image of the reduced norm map
\begin{align*}
\text{nrd}_{\R[G_{F'/F}]} \colon K_1(\R[G_{F'/F}]) \rightarrow
 Z(\R[G_{F'/F}])^{\times}.
\end{align*}
See \cite[Lemmata~8,9]{BurFl3} for details. 

\begin{con}[Rationality conjecture, {\cite[Conjecture~5]{BurFl3}}] \label{conj:rational}
Let $\partial$ be the connecting homomorphism 
$K_1(\R[G_{F'/F}]) \rightarrow K_0(\Q[G_{F'/F}], \R[G_{F'/F}])$ 
appearing in the localisation exact
 sequence associated to the localisation $\Q[G_{F'/F}]\rightarrow
 \R[G_{F'/F}]$. 
 Then in $K_0(\Q[G_{F'/F}],\R[G_{F'/F}])$ the equation 
\begin{align*}
\partial(\mathrm{nrd}_{\R[G_{F'/F}]}^{-1}(\lambda L^*(\Q(m)_{F'/F})))+[\Xi(\Q(m)_{F'/F}), \vartheta_{\infty}]=0
\end{align*}
holds  $($via the canonical isomorphism as already explained, 
we identify the element $[\Xi(\Q(m)_{F'/F}), \vartheta_{\infty}]$ 
 in $\pi_0(V(\Q[G_{F'/F}], \R[G_{F'/F}]))$ with the corresponding one 
in the relative Grothendieck group 
 $K_0(\Q[G_{F'/F}],\R[G_{F'/F}])$$)$. 
\end{con}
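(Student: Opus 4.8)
The plan is to establish the rationality conjecture in the case at hand --- the Tate motives $\Q(m)_{F'/F}$ with $m<0$ --- by reducing it, via the functional equation of Artin $L$-functions, to Borel's theorem on the higher $K$-groups of rings of integers, equivalently to Be\u\i linson's conjecture for the motive $\Q(1-m)_{F'}$ (whose rank and rationality assertions are known since $1-m\ge 2$), and then performing an equivariant descent. First I would rephrase the assertion so that it may be checked one irreducible representation at a time. Since $K_0(\Q[G_{F'/F}])\to K_0(\R[G_{F'/F}])$ is injective and the period--regulator isomorphism $\vartheta_{\infty}$ identifies $K_{1-2m}(F')_{\Q}^{*}\otimes_{\Q}\R$ with $\Q(m)^{+}_{F'/F,B}\otimes_{\Q}\R$ as $\R[G_{F'/F}]$-modules, the class $[\Xi(\Q(m)_{F'/F}),\vartheta_{\infty}]$ maps to $0$ in $K_0(\Q[G_{F'/F}])$ and hence lies in the image of the connecting map $\partial$. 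Thus the conjectured identity is equivalent to the statement that $\lambda L^{*}(\Q(m)_{F'/F})$ and the element of $Z(\R[G_{F'/F}])^{\times}$ determined by the trivialisation $\vartheta_{\infty}$ of $\Xi(\Q(m)_{F'/F})\otimes_{\Q}\R$ --- the latter computed with respect to the natural $\Q$-structures on $K_{1-2m}(F')_{\Q}^{*}$ and $\Q(m)^{+}_{F'/F,B}$ --- define the same class modulo the image of the reduced norm $\mathrm{nrd}_{\Q[G_{F'/F}]}\colon K_{1}(\Q[G_{F'/F}])\to Z(\Q[G_{F'/F}])^{\times}$. By Wedderburn's decomposition $(\ref{eq:wed})$ both sides are then determined by their components at each $\rho\in\mathrm{Irr}(G_{F'/F})$, governed respectively by the leading term $L^{*}(m,\rho)$ and by Borel's regulator of $K_{1-2m}(F')$.

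Next I would carry out the analytic comparison at each irreducible $\rho$. Set $n=1-m\ge 2$, so that $2n-1=1-2m$ and the Euler product gives $L(n,\rho^{\vee})\neq 0$, where $\rho^{\vee}$ is the contragredient of $\rho$. Be\u\i linson's conjecture for $\Q(n)_{F'}$ --- a theorem in this range by the work of Borel (as refined by Be\u\i linson, Rapoport and others on the rationality of the regulator and its compatibility with the natural $\Q$-lattices) --- asserts that Borel's regulator map on $K_{2n-1}(F')\otimes_{\Z}\R$ is an isomorphism onto the pertinent real Betti datum and computes $L(n,\rho^{\vee})$ up to an element of $\overline{\Q}^{\times}$; since $2n-1=1-2m$, this is precisely the $K$-group and the Betti datum occurring in $\Xi(\Q(m)_{F'/F})$. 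The functional equation of the Artin $L$-function relates the leading term $L^{*}(m,\rho)$ at $s=m$ to $L(n,\rho^{\vee})$ at $s=n$: in the passage the archimedean $\Gamma$-factors produce the order of vanishing of $L(s,\rho)$ at $s=m$, which equals the dimension of the $\rho$-isotypic component of $\Q(m)^{+}_{F'/F,B}$, and contribute explicit powers of $\pi$ matching the periods $(2\pi\sqrt{-1})^{m}$ appearing in $\Q(m)_{F'/F,B}$, while the root number and the conductor (discriminant) factors are algebraic. Combining these, the $\rho$-component of $\lambda L^{*}(\Q(m)_{F'/F})$ and that of the $\vartheta_{\infty}$-image of $\Xi(\Q(m)_{F'/F})$ coincide up to $\overline{\Q}^{\times}$.

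The hard part will be to upgrade this character-by-character equality over $\overline{\Q}^{\times}$ to the equivariant statement over $\Q$, i.e.\ to show that the discrepancy lies in the image of $\mathrm{nrd}_{\Q[G_{F'/F}]}$ and not merely of $\mathrm{nrd}_{\overline{\Q}[G_{F'/F}]}$. There are two ingredients. The first is $\Gal(\overline{\Q}/\Q)$-equivariance: one checks that for $\sigma\in\Gal(\overline{\Q}/\Q)$ one has $L(s,\rho^{\sigma})=L(s,\rho)^{\sigma}$ and a corresponding identity for the root numbers, and that the Betti realisation together with the Be\u\i linson/Borel regulator are defined over $\Q$; this forces the collected datum to be Galois-stable, hence to lie in $Z(\Q[G_{F'/F}])^{\times}$. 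The second, genuinely delicate ingredient is that $\mathrm{nrd}_{\Q[G_{F'/F}]}$ does \emph{not} surject onto $Z(\Q[G_{F'/F}])^{\times}$: by the Hasse--Schilling--Maass norm theorem its image is cut out by positivity conditions at the real places ramified in the simple components of $\Q[G_{F'/F}]$, and verifying these sign conditions amounts to pinning down the signs of the archimedean $\epsilon$- and $\Gamma$-factors in the functional equation, equivalently the precise form of Stark's conjecture at the ramified infinite places. This last point is the technical heart of the matter; it is carried out by Burns and Flach \cite{BurFl3} (with alternative treatments of the Tate-motive case due to Benois--Nguyen Quang Do and to Huber--Kings), so the argument concludes by appealing to their analysis.
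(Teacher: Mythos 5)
Your proposal aims at the general rationality conjecture for all $m<0$ via the functional equation, the Borel--Be\u\i linson regulator, and the Hasse--Schilling--Maass theorem for reduced norms. That is indeed the general Burns--Flach strategy, but it is not what the paper does and it is far more machinery than is needed for the case the paper actually verifies. The paper only establishes the rationality conjecture in the \emph{critical} case $m=1-r$ with $r$ an even natural number, the only case used in the sequel (Proposition~\ref{prop:ETNC}). There the decisive observation is that the fundamental line $\Xi(\Q(1-r)_{F'/F})$ is trivial: $K_{2r-1}(F')_{\Q}=0$ by Borel's rank computation (since $F'$ is totally real and $r$ is even), and $\Q(1-r)^{+}_{F'/F,B}=0$ because $F,F'$ are totally real and $r$ is even. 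Hence $\vartheta_{\infty}$ degenerates to the identity on the unit object, the class $[\Xi(\Q(1-r)_{F'/F}),\vartheta_{\infty}]$ vanishes, and the conjectured identity collapses to the assertion that $\mathrm{nrd}^{-1}_{\R[G_{F'/F}]}(\lambda L^{*}(\Q(1-r)_{F'/F}))$ lies in the image of $K_{1}(\Q[G_{F'/F}])\to K_{1}(\R[G_{F'/F}])$. This follows as soon as one knows $L^{*}(\Q(1-r)_{F'/F})\in Z(\Q[G_{F'/F}])^{\times}$ and chooses $\lambda$ accordingly, which is exactly Lemma~\ref{lem:rational}: $L(s;F'/F,\rho)$ does not vanish at the critical point $s=1-r$, the critical values are algebraic by Klingen--Siegel, and $L(1-r;F'/F,\tau\rho)=\tau L(1-r;F'/F,\rho)$ for $\tau\in\Aut(\C)$.

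Two concrete problems with your argument as written. First, it never exploits the criticality of $\Q(1-r)_{F'/F}$, which is precisely what makes the statement tractable: both $\Q$-structures entering the period--regulator comparison you set up are zero, so the functional equation, the Be\u\i linson comparison, and the $\Gamma$-factor bookkeeping all become vacuous, and the only analytic input required is the Klingen--Siegel rationality of the critical values. Second, the ``technical heart'' you identify --- the Hasse--Schilling--Maass positivity analysis --- is not carried out but simply deferred to Burns--Flach, so as written this is not a self-contained proof; the non-critical equivariant rationality statement you are sketching is genuinely harder and is not needed anywhere in the paper. You should isolate the critical case from the outset and exploit the vanishing of the fundamental line, as the paper does.
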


It is known that Conjecture~\ref{conj:rational} 
for the $m$-fold Tate motive $\Q(m)_{F'/F}$ with negative $m$ is 
equivalent to the central conjecture of Benedict Hyman Gross \cite{Gross}. 
Conjecture~\ref{conj:rational} implies that there exists an isomorphism
\begin{align*}
\vartheta^{(\lambda)} \colon \Xi(\Q(m)_{F'/F}) \xrightarrow{\sim}
 \mathbf{1}_{\Q[G_{F'/F}]}
\end{align*}
such that the scalar extension $\vartheta^{(\lambda)}_{\R}=
\vartheta^{(\lambda)}\otimes_{\Q} \R$ coincides with 
the trivialisation of $\Xi (\Q(m)_{F'/F})\otimes_{\Q} \R$ described as 
$\mathrm{nrd}_{\R[G_{F'/F}]}^{-1}(\lambda L^*(\Q(m)_{F'/F}))
\circ \vartheta_{\infty}$. 

\begin{con}[$p$-part of the  Tamagawa number conjecture] \label{con:tamagawa}
Let $\partial_p$ denote the connecting homomorphism 
appearing in the localisation exact sequence 
associated to the localisation $\Z_p[G_{F'/F}]\rightarrow \Q_p[G_{F'/F}]$.  
Then the element 
$T\Omega(\Q(m)_{F'/F})_p$ in $K_0(\Z_p[G_{F'/F}], \Q_p[G_{F'/F}])$ 
defined by
\begin{align*}
T\Omega&(\Q(m)_{F'/F})_p \\
&=[\det_{\Z_p[G_{F'/F}]}(R\Gamma_{c,\acute{e}t}(\mathcal{O}_{F'}^{\Sigma^{\vee}},
 \Z_p(m))), \vartheta^{(\lambda)}_{\Q_p} \circ
 \vartheta_p^{-1}]-\partial_p(\mathrm{nrd}_{\Q_p[G_{F'/F}]}^{-1}(\lambda))
\end{align*}
vanishes where $\vartheta^{(\lambda)}_{\Q_p}$ denotes the scalar extension 
$\vartheta^{(\lambda)}\otimes_{\Q} \Q_p$ 
$($see also \cite[Conjecture~6]{BurFl3}$)$.
\end{con}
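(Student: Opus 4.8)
Throughout we place ourselves in the setting where the machinery of this article is available: $F$ totally real, $p$ odd, $F_{\infty}/F$ as in Theorem~\ref{thm:mt} (so that $(F_{\infty}\text{-}1)$--$(F_{\infty}\text{-}3)$ hold), $F'=F_{\infty}^{U}$ for an open normal subgroup $U$ of $G$ with $G_{F'/F}=G/U$, and $m=1-r$ for a natural number $r$ divisible by $p-1$ (hence $r$ is even); this is precisely the range in which Conjecture~\ref{con:tamagawa} can be established, and the resulting statement is recorded as Corollary~\ref{cor:etnc}. The plan is to \emph{deduce} the vanishing of $T\Omega(\Q(1-r)_{F'/F})_{p}$ from the non-commutative Iwasawa main conjecture for $F_{\infty}/F$, which is part of the conclusion of Theorem~\ref{thm:mt}: that theorem furnishes the \p-adic zeta function $\xi_{F_{\infty}/F}\in K_{1}(\iw{G}_{S})$ satisfying the interpolation formula (\ref{eq:interp}) \emph{together with} the identity $\partial(\xi_{F_{\infty}/F})=-[C_{F_{\infty}/F}]$, and the whole content of Conjecture~\ref{con:tamagawa} at finite level will be extracted from these two statements by the descent formalism of David Burns and Otmar Venjakob \cite{BurVen}.

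The first step is to identify the finite-level cohomological object. By Artin--Verdier (Poitou--Tate) duality the complex $R\Gamma_{c,\acute{e}t}(\mathcal{O}_{F'}^{\Sigma^{\vee}},\Q_{p}(1-r))$ is, up to shift and $\Q_{p}$-linear dual, quasi-isomorphic to $R\Gamma_{\acute{e}t}(\mathcal{O}_{F'}^{\Sigma^{\vee}},\Q_{p}(r))$; the latter is computed \emph{integrally}, again up to shift, by the cyclotomic twist $(C_{F_{\infty}/F}\otimes^{\mathbb{L}}_{\iw{G}}\iw{G/U})(r)$, the descent of $X_{\Sigma}$ yielding the degree-one Galois cohomology and that of the constant term $\Z_{p}$ the degree-zero part (this identification is standard; see \cite{FK}). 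For $r\ge 2$ the vanishing of $H^{2}(\mathcal{O}_{F'}^{\Sigma^{\vee}},\Q_{p}(r))$ --- a consequence of hypothesis $(F_{\infty}\text{-}3)$, under which $X_{\Sigma}$ is $S$-torsion and its descent to $G/U$ is finite --- together with Soulé's theorem and Borel's rank computation shows that $R\Gamma_{c,\acute{e}t}(\mathcal{O}_{F'}^{\Sigma^{\vee}},\Q_{p}(1-r))$ is acyclic (here we use that $r$ is even and $F'$ totally real, so that $K_{2r-1}(F')$ is finite); hence the fundamental line $\Xi(\Q(1-r)_{F'/F})$ is canonically the unit object, the \p-adic Chern class isomorphism underlying $\vartheta_{p}$ is an isomorphism between unit objects, and no higher cohomology obstructs descent.

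The second step transports $\xi_{F_{\infty}/F}$ down. Specialising along $\iw{G}_{S}\to\iw{G/U}_{S}$ and evaluating gives an element of $K_{1}(\Q_{p}[G/U])$ whose value at each irreducible representation $\rho$ of $G/U$ is, by (\ref{eq:interp}) and the divisibility $(p-1)\mid r$ (which guarantees that no Teichmüller twist intervenes), the value $L_{\Sigma}(1-r;F'/F,\rho)$; this is a nonzero rational number by Siegel--Klingen--Shintani, so the leading term $L^{*}(1-r,\rho)$ equals the value, $\vartheta_{\infty}$ carries no transcendental regulator, and the required $\lambda\in Z(\Q[G_{F'/F}])^{\times}$ is available because the equivariant $L$-value is itself rational. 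Reinstating the Euler factors at $\Sigma$ is a purely local determinant computation: the removed factors account precisely for $\det^{-1}_{\Q_{p}[G/U]}$ of $\bigoplus_{\mathfrak{P}\in\Sigma_{F'}}R\Gamma(G_{F'/F,\mathfrak{P}},\Q_{p}(1-r))$, the term by which $R\Gamma_{c,\acute{e}t}$ differs from $R\Gamma_{\acute{e}t}$ in its defining triangle. Putting the two steps together, the descent of $\xi_{F_{\infty}/F}$ realises the trivialisation $\vartheta^{(\lambda)}_{\Q_{p}}\circ\vartheta_{p}^{-1}$ appearing in Conjecture~\ref{con:tamagawa}, while the main-conjecture identity $\partial(\xi_{F_{\infty}/F})=-[C_{F_{\infty}/F}]$ descends to the relation $[\det_{\Z_{p}[G_{F'/F}]}(R\Gamma_{c,\acute{e}t}(\mathcal{O}_{F'}^{\Sigma^{\vee}},\Z_{p}(1-r))),\vartheta^{(\lambda)}_{\Q_{p}}\circ\vartheta_{p}^{-1}]=\partial_{p}(\mathrm{nrd}^{-1}_{\Q_{p}[G_{F'/F}]}(\lambda))$ in $K_{0}(\Z_{p}[G_{F'/F}],\Q_{p}[G_{F'/F}])$, i.e. $T\Omega(\Q(1-r)_{F'/F})_{p}=0$. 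The Rationality Conjecture~\ref{conj:rational} formally required for $\vartheta^{(\lambda)}$ to exist over $\Q$ is, in this totally real range, a theorem: it reduces to rationality of the values (Siegel--Klingen) together with Gross's conjecture \cite{Gross}, which is forced here by the absence of a zero of the Artin $L$-function at $1-r$ and Borel's regulator computation.

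The main obstacle is the descent step itself: one must verify that the hypotheses under which \cite{BurVen} deduces the finite-level equivariant Tamagawa number conjecture from the $F_{\infty}/F$ main conjecture hold in our situation --- in particular the ``semisimplicity'' condition ensuring that the base-change complex $(C_{F_{\infty}/F}\otimes^{\mathbb{L}}_{\iw{G}}\iw{G/U})(r)$ has the cohomology asserted above and that the attendant spectral sequence degenerates with no spurious error term. This uses, besides the vanishing theorems already quoted, the \p-adic $\mu=0$ hypothesis $(F_{\infty}\text{-}3)$ and the fact --- built into Theorem~\ref{thm:mt} --- that the \p-power torsion ambiguity inherent in the logarithmic construction of $\xi_{F_{\infty}/F}$ has already been removed, so that the descended class is unambiguous. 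I expect the remaining work to be essentially bookkeeping: tracking signs and shifts, controlling the local terms at $\Sigma$, and checking compatibility of the determinant, evaluation and reduced-norm maps with the canonical identifications $K_{i}(R)\xrightarrow{\sim}\pi_{i}(V(R))$. None of this is conceptually new, but it is where a complete proof becomes long.
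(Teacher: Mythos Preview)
Your proposal is correct and follows essentially the same route as the paper's proof of Proposition~\ref{prop:ETNC}: both deduce this special case of Conjecture~\ref{con:tamagawa} from the Iwasawa main conjecture via Burns--Venjakob descent, establishing rationality through Siegel--Klingen, identifying $C_{F_{\infty}/F}$ with compactly supported \'etale cohomology via Poitou--Tate/Artin--Verdier duality, and accounting for the removed Euler factors at~$\Sigma$. The paper is a bit more explicit about the cyclotomic-twist automorphism $\varrho_{\kappa}^{r}$ on $\iw{G}_S$ and the passage to $\overline{\Q}_{p}$-coefficients (needed to invert the reduced norm), but the overall architecture is the same.
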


\begin{rem}
If $F'/\Q$ is a finite abelian Galois extension and $F$ is a subfield of $F'$, 
the equivariant Tamagawa number conjecture for the Tate motives $\Q(m)_{F'/F}$ 
has been proven for an arbitrary prime number $p$ 
and an arbitrary integer $m$ (not 
necessarily negative) by David Burns, Cornelius Greither and Mathias Flach.
Refer to \cite{BurGr} (for negative $m$ and odd $p$), 
\cite{Flach} (for negative $m$ and arbitrary $p$) and 
\cite{BurFl4} (for arbitrary $m$ and $p$).
Independently Annette Huber-Klawitter, Guido Kings and 
 Kensuke Itakura have also proven 
 the Bloch-Kato conjecture for Dirichlet motives
 \nobreakdash---a somewhat weaker conjecture than the equivariant 
 Tamagawa number conjecture\nobreakdash--- 
 by using rather different technique.
 See \cite{HK2} (for $p\neq 2$) and \cite{Itakura} (for $p=2$) for details.
 In spite of such great development upon commutative cases, 
 very little seems to be known for non-commutative cases.
\end{rem}

%
%
\section{Burns' technique} \label{sc:burns}
%
%

There exists a standard strategy to construct the
\p-adic zeta functions for non-commutative extensions
by ``patching'' Serre's \p-adic
zeta pseudomeasures for abelian extensions. 
It was first observed by David Burns 
and applied by Kazuya Kato to his pioneering work \cite{Kato2}. 
Here we shall introduce this outstanding technique 
in a little generalised way. 

Throughout this section we fix 
embeddings of $\overline{\Q}$ into $\C$ and $\overline{\Q}_p$.
Let $p$ be a positive odd prime number, $F$ a totally real number field and
$F_{\infty}/F$ a totally real \p-adic Lie extension satisfying 
conditions $(F_{\infty}\text{-}1)$, $(F_{\infty}\text{-}2)$ and $(F_{\infty}\text{-}3)$ in
the previous section. Let $G$, $H$ and $\Gamma$ be \p-adic Lie
groups defined as in Section~\ref{ssc:review-iwa}. 

\begin{defn}[Brauer families]
Let $\mathfrak{F}_B$ be a family consisting of
 a pair $(U,V)$ where $U$ is an open subgroup of $G$ and $V$ is that of
 $H$ such that $V$ is normal in $U$ and the quotient group $U/V$ is
 abelian. We call $\mathfrak{F}_B$ {\em a Brauer family for the group
 $G$} if it satisfies the following
 condition~$(\sharp)_B$$:$

\begin{quotation}
\noindent $(\sharp)_B$ \,
 an arbitrary Artin representation of $G$ is
 isomorphic to {\em a $\Z$-linear combination} (as a virtual representation) 
 of induced representations
 $\ind{G}{U}{\chi_{U/V}}$, where each 
 $(U,V)$ is an element 
 in $\mathfrak{F}_B$ and $\chi_{U/V}$ is a finite-order character
 of the abelian group $U/V$.
\end{quotation}
\end{defn}

Suppose that there exists a Brauer family $\mathfrak{F}_B$ for $G$. 
Let $\theta_{U,V}$ be the composition 
\begin{align*}
K_1(\iw{G}) \xrightarrow{\Nr_{\iw{G}/\iw{U}}} K_1(\iw{U})
 \xrightarrow{\text{canonical}} K_1(\iw{U/V}) \xrightarrow{\sim} \iw{U/V}^{\times}
\end{align*} 
for each $(U,V)$ in $\mathfrak{F}_B$ 
where $\Nr_{\iw{G}/\iw{U}}$ is the norm map in algebraic 
\mbox{$K$-theory}. Set
\begin{align*}
\theta=(\theta_{U,V})_{(U,V)\in \mathfrak{F}_B} \colon K_1(\iw{G}) \rightarrow \prod_{(U,V)\in \mathfrak{F}_B} \iw{U/V}^{\times}.
\end{align*}
Similarly we may construct the map\footnote{We use the same symbol
$S$ for the canonical Ore set for $F_V/F_U$ by abuse of notation.} 
\begin{align*}
\theta_S=(\theta_{S,U,V})_{(U,V)\in \mathfrak{F}_B} \colon K_1(\iw{G}_S)
 \rightarrow \prod_{(U,V)\in \mathfrak{F}_B} \iw{U/V}_S^{\times}
\end{align*}
for the localised Iwasawa algebra $\iw{G}_S$.
Then we obtain the following
commutative diagram with exact rows:
\[
\footnotesize
 \xymatrix{
    K_1(\iw{G}) \ar[r] \ar@{->}[d]_(0.5){\theta}   &   K_1(\iw{G}_S)  \ar[r]^(0.45){\partial} \ar[d]^(0.5){\theta_S}    &   K_0(\Lambda(G),
\Lambda(G)_S) \ar[r] \ar[d]^(0.5){\mathrm{norm}}  &  0 \\
   \prod_{\mathfrak{F}_B} \iw{U/V}^{\times} \ar@{^(->}[r]
 &  \prod_{\mathfrak{F}_B} \iw{U/V}_S^{\times}
 \ar[r]_(0.475){\partial\qquad \quad}  &  \prod_{\mathfrak{F}_B} K_0(\iw{U/V}, \iw{U/V}_S) \ar[r] & 0.}
\]
Let $f$ be an arbitrary characteristic element for $F_{\infty}/F$ (that
is, an element in $K_1(\iw{G}_S)$ satisfying the
relation~(\ref{eq:char})) and $(f_{U,V})_{(U,V)\in \mathfrak{F}_B}$ 
its image under the map $\theta_S$. Then each $f_{U,V}$ satisfies the
relation $\partial(f_{U,V})=-[C_{U,V}]$ by the functoriality of the connecting
homomorphism $\partial$. 
Now recall that for each pair $(U,V)$ in $\mathfrak{F}_B$, 
the \p-adic zeta pseudomeasure $\xi_{U,V}$ 
for the abelian extension $F_V/F_U$ exists 
as an element in $\iw{U/V}_S^{\times}$  
(see \cite{Serre2} for details) and 
the Iwasawa main conjecture $\partial(\xi_{U,V})=-[C_{U,V}]$ holds 
(due to the honourable results of Andrew Wiles \cite{Wiles}). 
Each \p-adic zeta pseudomeasure
$\xi_{U,V}$ is characterised by the interpolation property
\begin{align} \label{eq:interpab}
\xi_{U,V}(\chi \rho^r)=L_{\Sigma}(1-r;F_V/F_U, \chi)
\end{align}
for an arbitrary finite-order character of the abelian group $U/V$ 
and an arbitrary natural number $r$ divisible by $p-1$.
Let $w_{U,V}$ be the element defined as $\xi_{U,V}f_{U,V}^{-1}$ 
which is in fact an element in $\iw{U/V}^{\times}$ 
by the localisation exact sequence,
and consider the following assumption:
\begin{quotation}
\noindent {\bfseries Assumption} $(\flat)$\ 
 the element $(w_{U,V})_{(U,V)\in \mathfrak{F}_B}$ is contained 
 in the image of $\theta$.
\end{quotation}
Then under Assumption $(\flat)$ there exists an element $w$ in
$K_1(\iw{G})$ which satisfies
$\theta(w)=(w_{U,V})_{(U,V)\in \mathfrak{F}_B}$. Let $\xi$
be the element in $K_1(\iw{G}_S)$ defined as $fw$. Then
$\xi$ readily satisfies the following two conditions by easy diagram chasing:
\begin{enumerate}[$(\xi\text{-}1)$]
\item the equation $\partial(\xi)=-[C_{F_{\infty}/F}]$ holds;
\item the equation $\theta_S(\xi)=(\xi_{U,V})_{(U,V)\in \mathfrak{F}_B}$
      holds.
\end{enumerate}
By using condition $(\sharp)_B$, condition $(\xi\text{-}2)$ and 
the interpolation formulae (\ref{eq:interpab}), we may
verify that $\xi$ satisfies the interpolation formula (\ref{eq:interp})
as follows:
\begin{align*}
\xi(\rho\kappa^r) &=\xi \left( \kappa^r \sum_{(U,V)\in \mathfrak{F}_B} a_{U,V}
 \ind{G}{U}{\chi_{U/V}}\right)  \qquad (\text{by }(\sharp)_B)\\
&=\prod_{(U,V)\in \mathfrak{F}_B}\Nr_{\iw{G}_S/\iw{U}_S}(\xi)(\chi_{U/V}\kappa^r)^{a_{U,V}} \\
&=\prod_{(U,V)\in
 \mathfrak{F}_B}\xi_{U,V}(\chi_{U/V}\kappa^r)^{a_{U,V}} \qquad (\text{by
 } (\xi\text{-}2)) \\
&= \prod_{(U,V) \in \mathfrak{F}_B} L_{\Sigma}(1-r; F_V/F_U,
 \chi_{U/V})^{a_{U,V}} \qquad (\text{by } (\ref{eq:interpab})) \\
&=L_{\Sigma}(1-r; F_{\infty}/F, \sum_{(U,V)\in \mathfrak{F}_B} a_{U,V} \ind{G}{U}{\chi_{U/V}})=L_{\Sigma}(1-r; F_{\infty}/F,\rho) 
\end{align*}
where $\rho$ is an arbitrary Artin representation of $G$ and $r$ is an
arbitrary natural number divisible by $p-1$. Therefore $\xi$
is the desired \p-adic zeta function.
Furthermore $(\xi\text{-}1)$ implies that $\xi$ is also a characteristic 
element for $F_{\infty}/F$; in other words the Iwasawa
main conjecture~\ref{conj:iwasawa} holds for $F_{\infty}/F$. 

By virtue of Burns' technique, both construction of the \p-adic zeta
function and verification of the Iwasawa main conjecture are 
reduced to the following two tasks:
\begin{itemize}
\item characterisation of the images of $\theta$ and $\theta_S$;
\item verification of Assumption $(\flat)$.
\end{itemize}
In general, there are so many pairs in a Brauer family
$\mathfrak{F}_B$ that it is hard to compute and characterise the image of the
norm maps $\theta$ and $\theta_S$. Therefore we shall use not 
only Brauer families but also {\em Artinian families} in arguments
of the rest of this article.

\begin{defn}[Artinian families]
If a family $\mathfrak{F}_A$ consisting of
 an abelian open subgroup of~$G$ satisfies the following
 condition~$(\sharp)_A$, we
 call $\mathfrak{F}_A$ {\em an Artinian family for the group $G$}$:$

\begin{quotation}
\noindent $(\sharp)_A$ \, 
 an arbitrary Artin representation of~$G$ is
 isomorphic to 
 {\em a $\Z[1/p]$-linear combination} 
 (as a virtual representation) of induced representations
 $\ind{G}{U}{\chi_U}$, where each $U$
 is an element in $\mathfrak{F}_A$ 
 and $\chi_U$ is a finite-order character of the abelian group $U$.
\end{quotation}
\end{defn}
Artinian families tend to contain much fewer elements than Brauer
families, which often makes computation remarkably simpler.

%
%
\section{The main theorem and its application} \label{sc:maintheorem}
%
%

%
\subsection{The main theorem}
%

The precise statement of the main result in this article is as follows:

\begin{thm}[Main theorem]  \label{thm:maintheorem}
Let $p$ be a positive odd prime number and $F$ a totally real number field, 
and let $F_{\infty}$ be a totally real \p-adic Lie extension of $F$
 satisfying conditions $(F_{\infty}\text{-}1)$, $(F_{\infty}\text{-}2)$ and
 $(F_{\infty}\text{-}3)$ in Section~$\ref{ssc:review-iwa}$.  
Suppose that the Galois group of $F_{\infty}/F$ is isomorphic to the direct
      product of a finite \p-group $G^f$ with exponent $p$ and the
 commutative \p-adic Lie
      group $\Gamma$. Then the \p-adic zeta function $\xi_{F_{\infty}/F}$ for $F_{\infty}/F$
 exists uniquely up to multiplication by an element in 
 $SK_1(\Z_p[G^f])$. Moreover, the Iwasawa main conjecture
 $($Conjecture~$\ref{conj:iwasawa}$~$(2))$ is true for $F_{\infty}/F$ 
 $($for arbitrary fixed embeddings 
$\overline{\Q} \hookrightarrow \C$ and  
$\overline{\Q} \hookrightarrow \overline{\Q}_p)$.
\end{thm}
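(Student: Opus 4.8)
The plan is to execute Burns' technique from Section~\ref{sc:burns} in a form adapted to the situation where the relevant Brauer families are too large to analyse directly, replacing the direct determination of the image of $\theta$ by a two-stage argument that first handles the \emph{integral} theta map and then descends to the localised one. Concretely, I would proceed as follows.

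\medskip

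\emph{Step 1: Reduction to Assumption $(\flat)$.} By the discussion in Section~\ref{sc:burns}, once we produce a characteristic element $f_{F_\infty/F}$ (which exists by surjectivity of $\partial$ and the $S$-torsion property of $C_{F_\infty/F}$ guaranteed by condition $(F_\infty\text{-}3)$), the existence of $\xi_{F_\infty/F}$ and the main conjecture follow as soon as the tuple $(w_{U,V})_{(U,V)\in\mathfrak{F}_B}$ lies in the image of $\theta$, where $w_{U,V}=\xi_{U,V}f_{U,V}^{-1}\in\iw{U/V}^\times$. So the whole problem is to verify Assumption $(\flat)$ for a suitable Brauer family $\mathfrak{F}_B$ attached to $G=G^f\times\Gamma$, using that $G^f$ has exponent $p$.

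\medskip

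\emph{Step 2: Additive description of (a variant of) the theta map.} Using Oliver--Taylor's integral logarithm, I would translate the multiplicative problem of computing $\operatorname{Im}\theta$ inside $\prod_{\mathfrak{F}_B}\iw{U/V}^\times$ into an additive one. The key technical device (announced in the introduction and in the Acknowledgements) is to build ``the additive theta isomorphism'' $\theta^+$ on the level of (quotients of) Iwasawa algebras by $\Z_p$-linear/trace-type maps, with the translation mediated by augmentation ideals, and then recover the multiplicative image modulo $p$-power torsion via the logarithm. This identifies $\operatorname{Im}\tilde\theta$ (the $p$-torsion-free version) as the set of tuples satisfying an explicit, checkable list of \emph{norm/trace congruences} among the components indexed by $\mathfrak{F}_B$ — the non-commutative analogues of the Kato--Ritter--Weiss--Kakde congruences, but now with indices $(G:U)$ possibly a higher power of $p$. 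Here exponent~$p$ is essential: it makes all the commutator subgroups and quotients $U/V$ elementary enough that Oliver--Taylor's theory applies cleanly and the congruences take a manageable shape.

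\medskip

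\emph{Step 3: Verifying the congruences.} For pairs $(U,V)$ with $(G:U)=p$ the required congruence $\xi_{U,\{e\}}\equiv\varphi(\xi_{G,[G,G]})$ follows from Deligne--Ribet's $q$-expansion principle together with Ritter--Weiss' approximation technique, exactly as in the rank-one abelian-by-$p$ case. The new difficulty, and the main obstacle, is the congruences for pairs of larger index, $(G:U)=p^k$ with $k\ge 2$: these cannot be extracted from Deligne--Ribet alone. Here I would run the \emph{inductive technique} of \cite{H}: pass to an intermediate extension, invoke the already-established existence of the $p$-adic zeta function for a smaller/Ritter--Weiss-type layer, and bootstrap the higher-index congruence from the lower-index ones by comparing the $\varphi$-twisted images of the abelian pseudomeasures. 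This reduces the proof of $(\flat)$ to the index-$p$ case plus a diagram chase, giving the existence of an element $\xi$ satisfying $(\xi\text{-}1)$ and $(\xi\text{-}2)$ \emph{modulo $p$-power torsion}.

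\medskip

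\emph{Step 4: Eliminating $p$-torsion ambiguity.} Because Steps~2--3 only control things in $\pK_1$ and $\piw{\cdot}^\times$, the element produced is well-defined only up to $K_1(\iw{G})_{p\text{-tors}}$, which for $G=G^f\times\Gamma$ comes down to $SK_1(\Z_p[G^f])$. To pin the element down, I would use the existence of genuine $p$-adic zeta functions for Ritter--Weiss-type quotients of $G$ \cite{RW7} as a set of ``anchors'': comparing our candidate against these along the norm maps forces the torsion discrepancy to vanish, or at worst to lie in $SK_1(\Z_p[G^f])$, which is exactly the indeterminacy asserted in the statement. Uniqueness up to $SK_1(\Z_p[G^f])$ then follows since two solutions of the interpolation formula differ by an element of $K_1(\iw{G})$ with trivial evaluation at all Artin representations, hence by an element of $\ker(\pK_1\to\text{evaluations})$, which is $SK_1(\Z_p[G^f])$-torsion. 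Finally, $(\xi\text{-}1)$ gives $\partial(\xi_{F_\infty/F})=-[C_{F_\infty/F}]$, i.e. Conjecture~\ref{conj:iwasawa}~(2).

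\medskip

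I expect Step~3 — the higher-index congruences — to be the crux: Deligne--Ribet's machinery is intrinsically a ``one step at a time'' tool, and making the induction of \cite{H} work here requires carefully choosing the intermediate fields so that at each stage one is genuinely in a situation already handled (either abelian, or Ritter--Weiss-type, or a lower layer of the present induction), and then checking that the $\varphi$-compatibilities propagate. Step~4 is conceptually routine but technically delicate because the precise $p$-torsion subgroup of $K_1(\iw{G})$ must be identified with $SK_1(\Z_p[G^f])$ and shown to be exactly the ambiguity — this is where the Whitehead-group computation via Oliver--Taylor (advertised in the abstract) does its final work.
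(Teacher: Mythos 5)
Your proposal follows essentially the same four-stage route as the paper: Burns' reduction to Assumption~$(\flat)$, the additive theta isomorphism plus logarithmic translation via Oliver--Taylor and augmentation ideals, verification of the required congruences by combining Deligne--Ribet/Ritter--Weiss with an induction on the order of $G^f$ (passing to $G/\langle c\rangle$ for a central $c\in[G,G]$), and finally anchoring against Ritter--Weiss-type subquotients \cite{RW7} to remove the $p$-power-torsion ambiguity down to $SK_1(\Z_p[G^f])$. The one technical imprecision worth flagging: Deligne--Ribet and Ritter--Weiss by themselves yield only the ``weak'' congruences $\xi_{U,V}\equiv c_{U,V}$ with an unidentified $c_{U,V}\in\piw{\Gamma}_{(p)}^{\times}$, valid for \emph{every} proper $U$; it is the induction step (not the $q$-expansion argument) that replaces $c_{U,V}$ by $\varphi(\xi_{\mathrm{ab}})^{(G:U)/p}$, even in the index-$p$ case, so the dichotomy ``index $p$ handled directly, higher index by induction'' in your Step~3 is not quite how the argument is organised, though the end result is the same.
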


\begin{rem}
If the image of $SK_1(\Z_p[G^f])$ under the canonical localisation homomorphism
 $K_1(\iw{G})\rightarrow K_1(\iw{G}_S)$ vanishes, 
 we may establish the uniqueness result upon 
 the \p-adic zeta function for $F_{\infty}/F$ in
 Theorem~\ref{thm:maintheorem}. 
 However the author does not have any ideas either $SK_1(\Z_p[G^f])$ always 
 vanishes or not in $K_1(\iw{G}_S)$.
\end{rem}

Now let us consider an easy but interesting application of our main
theorem. Let $B^N(\F_p)$ be a multiplicative \p-group 
defined as the subgroup of the general linear group $\GL_{N+1}(\F_p)$ of
degree $N+1$ generated by 
all strongly upper-triangular matrices; that is,

\begin{align*}
B^N(\F_p)=\bordermatrix{ & 1 & 2 & \dotsc & \dotsc & N & N+1 \cr
\quad 1 & 1 & \F_p & \F_p & \dotsc & \F_p & \F_p \cr
\quad 2 & 0 & 1 & \F_p & \dotsc & & \F_p \cr
\quad \vdots & 0 & 0 & 1 & \ddots &  & \vdots \cr
\quad \vdots & \vdots & \vdots & & \ddots & \ddots &  \vdots \cr
\quad N & 0 &  &  &   &  1  &  \F_p \cr
N+1 & 0 & 0 & \dotsc & \dotsc & 0 & 1
}.
\end{align*}

\begin{cor} \label{cor:matrix}
Let $p$ be a positive odd prime number, $F$ a totally real number field 
and $F_B/F$ a totally real \p-adic Lie extension 
satisfying conditions $(F_{\infty}\text{-}1), (F_{\infty}\text{-}2)$ 
and $(F_{\infty}\text{-}3)$ in Section $\ref{ssc:review-iwa}$. 
Assume also that
\begin{enumerate}[$($\upshape i$)$]
\item there exists a certain non-negative integer $N$ such that the
      Galois group of $F_B/F$ is isomorphic to the direct
      product of $B^N(\F_p)$ and the commutative \p-adic Lie
      group $\Gamma;$
\item the prime number $p$ is strictly larger than $N$.
\end{enumerate}
Then the \p-adic zeta function $\xi_{F_B/F}$ for $F_B/F$
 exists uniquely up to multiplication by an element in 
 $SK_1(\Z_p[B^N(\F_p)])$. Moreover the Iwasawa main conjecture
 $($Conjecture~$\ref{conj:iwasawa}$~$(2))$ is true for $F_B/F$
 $($for arbitrary fixed embeddings
$\overline{\Q} \hookrightarrow \C$ and 
$\overline{\Q} \hookrightarrow \overline{\Q}_p)$.
\end{cor}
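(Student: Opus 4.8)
The plan is to deduce the corollary directly from the Main Theorem (Theorem~\ref{thm:maintheorem}) by verifying its group-theoretic hypothesis for $B^N(\F_p)$. Concretely, I would check that under assumption~(ii)---that $p > N$---the finite \p-group $B^N(\F_p)$ has exponent~$p$; once this is established, the Galois group of $F_B/F$ is the direct product of a finite \p-group of exponent~$p$ with $\Gamma$, so Theorem~\ref{thm:maintheorem} applies verbatim and yields both the existence and uniqueness (up to $SK_1(\Z_p[B^N(\F_p)])$) of the \p-adic zeta function $\xi_{F_B/F}$ and the validity of the Iwasawa main conjecture (Conjecture~\ref{conj:iwasawa}~(2)) for $F_B/F$. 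The arithmetic hypotheses $(F_{\infty}\text{-}1)$, $(F_{\infty}\text{-}2)$, $(F_{\infty}\text{-}3)$ are carried over unchanged, so nothing further is needed on that side.

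The only genuine point to verify is thus the purely group-theoretic claim: \emph{if $p > N$ then every element of $B^N(\F_p)$ has order dividing $p$}. First I would write an arbitrary element of $B^N(\F_p)$ as $g = I + n$, where $n$ is a strictly upper-triangular $(N+1)\times(N+1)$ matrix over $\F_p$; such $n$ is nilpotent with $n^{N+1} = 0$, and in fact $n^{k}$ has zero entries on and below the $(k-1)$-st superdiagonal. Using the binomial expansion in the (commutative) subalgebra $\F_p[n]$,
\begin{align*}
g^{p} = (I+n)^{p} = \sum_{k=0}^{p} \binom{p}{k} n^{k}
= I + \sum_{k=1}^{N} \binom{p}{k} n^{k},
\end{align*}
where the sum is truncated at $k = N$ because $n^{k} = 0$ for $k \geq N+1$. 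For $1 \le k \le N < p$ the prime $p$ divides the binomial coefficient $\binom{p}{k}$, so every term $\binom{p}{k} n^{k}$ vanishes in $\M_{N+1}(\F_p)$; hence $g^{p} = I$. Since $g$ was arbitrary and $g \neq I$ forces the order of $g$ to be exactly $p$ (it is a power of $p$ and divides $p$), the group $B^N(\F_p)$ has exponent~$p$, as desired. Note the hypothesis $p > N$ is exactly what is needed: for $p \le N$ one has $\binom{p}{k} \not\equiv 0$ for suitable $k$ and the exponent can exceed~$p$.

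I do not anticipate any serious obstacle here: the entire content of the corollary beyond the Main Theorem is the elementary nilpotency-plus-binomial computation above, and the bookkeeping of which superdiagonals $n^{k}$ annihilates is routine. The one thing to state carefully is the appeal to Theorem~\ref{thm:maintheorem} itself: one must observe that the isomorphism $\Gal(F_B/F) \cong B^N(\F_p) \times \Gamma$ in hypothesis~(i) is precisely of the form required there, with $G^f = B^N(\F_p)$, so that the conclusion---including the precise ambiguity $SK_1(\Z_p[G^f]) = SK_1(\Z_p[B^N(\F_p)])$ and the dependence on the fixed embeddings $\overline{\Q} \hookrightarrow \C$ and $\overline{\Q} \hookrightarrow \overline{\Q}_p$---transfers without change.
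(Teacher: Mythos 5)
Your proposal is correct and follows exactly the same route as the paper: the paper's proof simply asserts that for $p > N$ the group $B^N(\F_p)$ has exponent $p$ and then invokes Theorem~\ref{thm:maintheorem}, while you supply the straightforward binomial/nilpotency verification of that assertion. Nothing to add.
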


\begin{proof}
For each $p$ strictly larger than $N$, the exponent of $B^N(\F_p)$ 
 equals~$p$. Therefore the claim is directly deduced from 
 Theorem~\ref{thm:maintheorem}.
\end{proof}

\begin{rem}
The author is grateful to Peter Schneider and Otmar Venjakob 
for kindly informing him that they have verified  
triviality of the group $SK_1(\Z_p[B^N(\F_p)])$ for 
arbitrary $N$ in their ongoing project upon the ``non-commutative 
Coleman map'' (the case where $N$ equals $2$ has been already known 
by the results of Robert Oliver \cite[Proposition~12.7]{Oliver}). 
By combining their results with
 Corollary~\ref{cor:matrix}, we may verify uniqueness of 
the \p-adic zeta function for $F_B/F$.
\end{rem}

\begin{rem}
The case where $N$ is equal to $2$ is a special case of 
Kato's Heisenberg-type extensions
\cite{Kato2}. The case where $N$ is equal to $3$ is nothing but 
the main results of the preceding paper \cite{H}. 
Our original motivation upon this study was 
to generalise these results to the cases where $N$ is larger than $4$, 
and it was convenient to consider the problem
under more general conditions as in Theorem~\ref{thm:maintheorem}.
\end{rem}

In the rest of this article we mainly deal with cases under the
conditions of our main theorem (Theorem~\ref{thm:maintheorem}).

%
\subsection{Application to the equivariant Tamagawa number conjecture 
for critical Tate motives} \label{ssc:application}
%

The non-commutative Iwasawa main conjecture should be deeply related to 
the (non-commutative) equivariant Tamagawa number conjecture, as was 
pointed out in, for example, \cite{HK1} and \cite{FK}. 
In this subsection, we shall show that 
the \p-part of the (non-commutative) equivariant Tamagawa number 
conjecture for critical Tate motives follows from 
the Iwasawa main conjecture (Conjecture~\ref{conj:iwasawa}) 
by applying a standard descent argument. 
This can be regarded as evidence implying mystic relationship 
between the non-commutative Iwasawa main conjecture and 
the non-commutative Tamagawa number conjecture.

\begin{prop} \label{prop:ETNC}
Let $p$ be a positive odd prime number and $F$ a totally real number field.
Let $F_{\infty}$ be a totally real \p-adic Lie extension of $F$
 satisfying conditions $(F_{\infty}\text{-}1)$, $(F_{\infty}\text{-}2)$ and
 $(F_{\infty}\text{-}3)$ in Section~$\ref{ssc:review-iwa}$.  
Suppose also that Conjectures~$\ref{conj:iwasawa}$ $(1),\, (2)$ are true
 for $F^{\infty}/F$. Then the \p-part of 
 the equivariant Tamagawa number conjecture 
 $($Conjecture~$\ref{con:tamagawa})$ for
 $\Q(1-r)_{F'/F}$ is true for an arbitrary finite Galois subextension $F'$ 
 of $F_{\infty}/F$ and an arbitrary natural number $r$ divisible by $p-1$.
\end{prop}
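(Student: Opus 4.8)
The plan is to deduce Proposition~\ref{prop:ETNC} from the Iwasawa main conjecture for $F_{\infty}/F$ by a standard Burns--Venjakob descent argument, following \cite{BurVen}. First I would fix a finite Galois subextension $F'/F$ of $F_{\infty}/F$, set $G' = \Gal(F'/F)$, and choose $r$ a natural number divisible by $p-1$, so that $m = 1-r$ is strictly negative and $r$ is even; the interpolation formula \eqref{eq:interp} then relates $\xi_{F_{\infty}/F}$ to the $\Sigma$-truncated Artin $L$-values $L_{\Sigma}(1-r; F_{\infty}/F, \rho)$ at $s = 1-r$, and since $r$ is divisible by $p-1$ the twist $\kappa^r$ is trivial modulo the relevant local conditions. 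The first step is to observe that $\xi_{F_{\infty}/F} \in K_1(\iw{G}_S)$ together with the congruence $\partial(\xi_{F_{\infty}/F}) = -[C_{F_{\infty}/F}]$ from Conjecture~\ref{conj:iwasawa}~(2) gives a canonical element which, under the natural localisation and base-change maps $K_1(\iw{G}_S) \to K_1(\Q_p[G']) $ (through the appropriate quotient and completed group ring), specialises to a candidate for the leading-term object; one has to check that the complex $C_{F_{\infty}/F}$ descends, via $\Z_p[G'] \otimes^{\mathbb{L}}_{\iw{G}} -$ (after the appropriate Tate twist by $r$), to a complex computing $R\Gamma_{c,\acute{e}t}(\mathcal{O}_{F'}^{\Sigma^{\vee}}, \Z_p(1-r))$ up to the contributions at the places in $\Sigma$ and the difference between $\Sigma^{\vee}$-cohomology with and without the truncation set.

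The second step is the comparison of $L$-values on the two sides. On the Iwasawa side the interpolation property \eqref{eq:interp} of $\xi_{F_{\infty}/F}$ computes $\xi_{F_{\infty}/F}(\rho \kappa^r)$ for every Artin representation $\rho$ of $G$ factoring through $G'$; on the ETNC side the relevant quantity is the leading term $L^*(1-r, \rho) = L(1-r; F'/F, \rho)$, which is a genuine value (not merely a leading term) precisely because $1-r$ is a point of non-vanishing/holomorphy for the $\Sigma$-truncated Artin $L$-function at negative integers divisible by $p-1$ — here one invokes the theorem of Siegel--Klingen (rationality of these values) together with the fact that for these $r$ the motive $\Q(1-r)_{F'/F}$ is critical and $K_{1-2m}(F') = K_{2r-1}(\mathcal{O}_{F'})$ has the expected rank, so the regulator $\vartheta_{\infty}$ and the $p$-adic regulator $\vartheta_p$ are both defined over $\Q$ up to the Soulé/Beilinson comparison, making $\vartheta^{(\lambda)}$ of Conjecture~\ref{conj:rational} available. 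One then matches the $S$-truncation in the Iwasawa $L$-function with the Euler factors removed in the compactly supported cohomology via the distinguished triangle defining $R\Gamma_{c,\acute{e}t}$, so that removing the places above $\Sigma$ produces exactly the local terms $R\Gamma(G_{F'/F,\mathfrak{P}}, \Q_p(1-r))$ appearing there.

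The third step is to run the descent formalism: Burns--Venjakob show that if a characteristic element $\xi_{F_{\infty}/F}$ exists, satisfies the interpolation formula \eqref{eq:interp}, and the main conjecture $\partial(\xi_{F_{\infty}/F}) = -[C_{F_{\infty}/F}]$ holds, then applying the canonical functor $\Z_p[G'] \otimes^{\mathbb{L}}_{\iw{G}} -$ to the identity in $K_0(\iw{G}, \iw{G}_S)$ and tracking it through $V(\iw{G}, \iw{G}_S) \to V(\Z_p[G'], \Q_p[G'])$ yields precisely the vanishing of $T\Omega(\Q(1-r)_{F'/F})_p$ in $K_0(\Z_p[G'], \Q_p[G'])$, once one knows Conjecture~\ref{conj:rational} (equivalently Gross's conjecture, known for these motives). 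I expect the main obstacle to be bookkeeping the compatibility of the various trivialisations — reconciling $\vartheta^{(\lambda)}_{\Q_p} \circ \vartheta_p^{-1}$ with the trivialisation coming from $\xi_{F_{\infty}/F}$ under base change, and controlling the finite $\Sigma$-truncation terms and the Euler characteristic contributions from the $H^0$-part (the $\Z_p$ in degree $0$ of $C_{F_{\infty}/F}$, which after the Tate twist by $r$ becomes the $\Z_p(1-r)$ contribution and is where the Tate twist $\kappa^r$ of the evaluation map enters). These are exactly the points addressed in \cite[Sections~1.2.2, 1.6]{BurFl1} and \cite{BurVen}, so the proof should amount to verifying that the hypotheses of the cited descent theorem are met and invoking it; the genuinely arithmetic input beyond Conjecture~\ref{conj:iwasawa} is only the rationality statement (Siegel--Klingen / Gross), which holds unconditionally in this range.
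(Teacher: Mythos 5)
Your proposal follows essentially the same route as the paper: both deduce the statement from Conjecture~\ref{conj:iwasawa} via the Burns--Venjakob descent theorem \cite{BurVen}, both invoke Klingen--Siegel rationality to verify the rationality conjecture (Conjecture~\ref{conj:rational}) and the criticality of $\Q(1-r)_{F'/F}$ to trivialize $\Xi(\Q(1-r)_{F'/F})$ and $\vartheta_{\infty}$, both use Artin--Verdier/Shapiro to relate $C_{F_{\infty}/F}$ to $R\Gamma_{c,\acute{e}t}(\mathcal{O}_F^{\Sigma^{\vee}},\iw{G}^{\sharp}(1))$ together with the twist by $\kappa^r$, and both account for the $\Sigma$-truncation by the local Euler factors $\phi_v$ as in \cite{BurFl1,BurFl2}. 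The only points you leave more implicit than the paper are the precise mechanism of the Tate twist (the paper realises it as the $\varrho_{\kappa}^r$-semilinear isomorphism $\iw{G}\otimes_{\iw{G},\varrho_{\kappa}^r}R\Gamma_{c,\acute{e}t}(\mathcal{O}_F^{\Sigma^{\vee}},\iw{G}^{\sharp}(1))\xrightarrow{\sim}R\Gamma_{c,\acute{e}t}(\mathcal{O}_F^{\Sigma^{\vee}},\iw{G}^{\sharp}(1-r))$) and the reduction to $\overline{\Q}_p$-coefficients via the injection $K_0(\Z_p[G_{F'/F}],\Q_p[G_{F'/F}])\hookrightarrow K_0(\Z_p[G_{F'/F}],\overline{\Q}_p[G_{F'/F}])$, but these are details of execution rather than a change of method.
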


Note that the Tate motive $\Q(1-r)_{F'/F}$ is a {\em critical} motive 
in the sense of Pierre Deligne \cite[Definition~1.3]{Deligne1}
since both $F$ and $F'$ are totally real and $r$ is even.
Combining this proposition with Theorem~\ref{thm:maintheorem}, 
we obtain:

\begin{cor} \label{cor:ETNC}
Let $p, F$ and $F_{\infty}/F$ be as in Proposition~$\ref{prop:ETNC}$. 
Suppose that the Galois group of $F_{\infty}/F$ is isomorphic to the direct
 product of a finite \mbox{\p-group} $G^f$ with exponent $p$ and the
 commutative \p-adic Lie group $\Gamma$. 
 Then the \p-part of the equivariant Tamagawa number conjecture for
 $\Q(1-r)_{F'/F}$ is true for an arbitrary finite Galois subextension $F'$ 
 of $F_{\infty}/F$ and an arbitrary natural number $r$ divisible by $p-1$.
\end{cor}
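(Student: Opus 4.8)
The plan is to deduce this corollary by combining the two main inputs already at our disposal: Theorem~\ref{thm:maintheorem} and Proposition~\ref{prop:ETNC}. Concretely, I would first invoke Theorem~\ref{thm:maintheorem}: under the standing hypotheses $(F_{\infty}\text{-}1)$, $(F_{\infty}\text{-}2)$, $(F_{\infty}\text{-}3)$ together with the assumption that $\Gal(F_{\infty}/F)$ is isomorphic to the direct product of a finite \p-group $G^f$ of exponent $p$ and the commutative \p-adic Lie group $\Gamma$, that theorem produces the \p-adic zeta function $\xi_{F_{\infty}/F}$ (so Conjecture~\ref{conj:iwasawa}~(1) holds, at least in its existence part) and establishes the Iwasawa main conjecture Conjecture~\ref{conj:iwasawa}~(2) for $F_{\infty}/F$. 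Since $F_{\infty}/F$ also satisfies the three running hypotheses, all assumptions of Proposition~\ref{prop:ETNC} are in force, and that proposition then immediately yields the \p-part of the equivariant Tamagawa number conjecture (Conjecture~\ref{con:tamagawa}) for $\Q(1-r)_{F'/F}$ for every finite Galois subextension $F'$ of $F_{\infty}/F$ and every natural number $r$ divisible by $p-1$.

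The one place that needs a short argument is the mismatch between the uniqueness assertions: Theorem~\ref{thm:maintheorem} only gives uniqueness of $\xi_{F_{\infty}/F}$ up to multiplication by an element of $SK_1(\Z_p[G^f])$, whereas Conjecture~\ref{conj:iwasawa}~(1) as formulated requires genuine uniqueness. I would dispose of this by recalling that the descent machinery of Burns and Venjakob \cite{BurVen} underlying Proposition~\ref{prop:ETNC} uses only a characteristic element $\xi_{F_{\infty}/F}$ satisfying the interpolation formula~$(\ref{eq:interp})$ and the main conjecture relation $\partial(\xi_{F_{\infty}/F})=-[C_{F_{\infty}/F}]$, and that the element $T\Omega(\Q(1-r)_{F'/F})_p$ whose vanishing is at stake is defined intrinsically. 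Two admissible choices of $\xi_{F_{\infty}/F}$ differ by an element of $SK_1(\Z_p[G^f])\subseteq K_1(\iw{G})$, and the image of $SK_1(\Z_p[G^f])$ in $K_1(\Q_p[G^f])$ is trivial because $SK_1$ of a semisimple $\Q_p$-algebra vanishes; hence, after passing through the localisation and the specialisation maps to $\Q_p[\Gal(F'/F)]$ that enter the descent, this ambiguity disappears and the descended ETNC element is independent of the choice. Thus Proposition~\ref{prop:ETNC} applies verbatim.

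Finally I would record, as the paper already notes, that $\Q(1-r)_{F'/F}=h^0(\Spec F')(1-r)$ is a critical motive in the sense of Deligne \cite{Deligne1}: both $F$ and $F'$ are totally real and $r$ is even (it is divisible by $p-1$ with $p$ odd), so $1-r$ lies in the critical range; consequently no hypothesis of Conjecture~\ref{con:tamagawa} is vacuous and the conclusion is precisely the \p-part of the equivariant Tamagawa number conjecture for these critical Tate motives. I do not expect any genuine obstacle in this step: all of the arithmetic content has been absorbed into Theorem~\ref{thm:maintheorem} (proved over the remainder of the article) and into the descent of Proposition~\ref{prop:ETNC}, so the only point requiring care here is the harmless $SK_1$-ambiguity treated above.
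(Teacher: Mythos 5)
Your proof is correct and follows exactly the route the paper intends: the paper introduces the corollary with the single sentence ``Combining this proposition with Theorem~\ref{thm:maintheorem}, we obtain:'' and leaves the details implicit, so your job was simply to check that the hypotheses of Proposition~\ref{prop:ETNC} really are met under the conclusion of Theorem~\ref{thm:maintheorem}. You are right to flag the mismatch between Conjecture~\ref{conj:iwasawa}~(1) (genuine uniqueness) and the weaker uniqueness up to $SK_1(\Z_p[G^f])$ that Theorem~\ref{thm:maintheorem} actually delivers, a gap the paper does not address beyond the remark following the theorem; the clean way out, which you also identify, is that the proof of Proposition~\ref{prop:ETNC} uses only the existence of a characteristic element satisfying the interpolation formula and $\partial(\xi_{F_{\infty}/F})=-[C_{F_{\infty}/F}]$, both of which are $SK_1$-insensitive (boundaries of $K_1(\iw{G})$-images vanish, and evaluation at Artin representations factors through $K_1(\Q_p[G^f])$). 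One small verbal slip: the image of $SK_1(\Z_p[G^f])$ in $K_1(\Q_p[G^f])$ is trivial directly by the \emph{definition} of $SK_1$ of a $\Z_p$-order as the kernel of $K_1(\Z_p[G^f])\to K_1(\Q_p[G^f])$, not because $SK_1$ of a semisimple $\Q_p$-algebra vanishes (that is a different, though also true, statement); the conclusion stands either way.
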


This corollary gives a simple but non-trivial example 
strongly suggesting validity of 
the equivariant Tamagawa number conjecture for motives with 
non-commutative coefficient. 
Proposition~\ref{prop:ETNC} is just the direct consequence 
of the Iwasawa main conjecture and descent theory established by
David Burns and Otmar Venjakob \cite{BurVen}, and all materials used in 
the proof should be essentially contained 
in \cite{BurVen}. There, however, does not seem 
to exist explicit suggestion upon 
critical Tate motives there, and thus we shall give the proof of 
Proposition~\ref{prop:ETNC} in the rest of this subsection.

\medskip
First we may easily see that both $\Q(1-r)_{F'/F,B}^+$ and 
$K_{2r-1}(F')_{\Q}^*$ are trivial because $F$ and $F'$ are
totally real fields and $r$ is an even natural number 
(triviality of $K_{2r-1}(F')_{\Q}$ 
is due to Armand Borel \cite{Borel1, Borel2}), and thus
$\Xi(\Q(1-r)_{F'/F})$ is also trivial  
and the period-regulator map 
$\vartheta_{\infty}$ degenerates to the identity map 
on the unit object $\mathbf{1}_{\R[G_{F'/F}]}$. 

\begin{lem} \label{lem:rational}
The leading term $L^*(\Q(1-r)_{F'/F})$ of the equivariant Artin
 $L$-function for $\Q(1-r)_{F'/F}$ is contained in
 $Z(\Q[G_{F'/F}])^{\times}$.
\end{lem}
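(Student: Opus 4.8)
The plan is to establish rationality of $L^*(\Q(1-r)_{F'/F})$ directly from the classical results on special values of Artin $L$-functions at negative integers, rather than invoking the general framework of Conjecture~\ref{conj:rational}. First I would note that since $r$ is divisible by $p-1$ (hence in particular $r\geq 2$ is even), the complex Artin $L$-function $L(s;F'/F,\rho)$ is holomorphic and non-vanishing at $s=1-r$ for every irreducible representation $\rho$ of $G_{F'/F}$: the totally real field $F'$ forces the relevant Gamma-factors to contribute neither a zero nor a pole at the negative integer $1-r$ when $r$ is even, and the Artin $L$-function itself has no pole there since $1-r<0$. Consequently the leading term $L^*(1-r,\rho)$ is simply the value $L(1-r;F'/F,\rho)$, and under the isomorphism~(\ref{eq:wed}) the element $L^*(\Q(1-r)_{F'/F})$ is the tuple $(L(1-r;F'/F,\rho))_{\rho\in\mathrm{Irr}(G_{F'/F})}$.

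Next I would invoke the Siegel–Klingen rationality theorem together with its equivariant refinement: for each $\rho$ the value $L_{\Sigma}(1-r;F'/F,\rho)$ lies in the field $\Q(\rho)$ generated by the character values of $\rho$, and moreover it transforms under $\mathrm{Gal}(\Q(\rho)/\Q)$ in the same way as $\rho$ does (this is exactly the statement underlying Deligne–Ribet's construction of the $p$-adic zeta function, and is already cited in the excerpt via \cite{DR, Serre2}). Removing the finitely many Euler factors at places in $\Sigma$ only multiplies by values of the form $\det(1-\mathrm{Frob}\cdot N\mathfrak{p}^{r-1}\,|\,V_\rho^{I_\mathfrak{p}})$, which are algebraic integers in $\Q(\rho)$ with the same Galois-equivariance property, so the same conclusion holds for $L(1-r;F'/F,\rho)$ itself. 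This Galois-equivariance of the tuple $(L^*(1-r,\rho))_\rho$ across all $\rho\in\mathrm{Irr}(G_{F'/F})$ is precisely the condition characterising when an element of $Z(\C[G_{F'/F}])$ actually lies in the subalgebra $Z(\Q[G_{F'/F}])$, via Wedderburn theory over $\Q$: the center $Z(\Q[G_{F'/F}])$ is the subalgebra of $Z(\overline\Q[G_{F'/F}])$ fixed by $\mathrm{Gal}(\overline\Q/\Q)$ acting simultaneously by permuting the Wedderburn components (through its action on $\mathrm{Irr}(G_{F'/F})$) and by acting on coordinates.

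I would then conclude: since $L^*(\Q(1-r)_{F'/F})$ is fixed by this Galois action and each component is non-zero, it defines an element of $Z(\Q[G_{F'/F}])^{\times}$, which is the assertion of the lemma. The main obstacle — really the only subtle point — is supplying clean references for the \emph{equivariant} form of the Siegel–Klingen rationality (the statement that $L(1-r;F'/F,\rho)\in\Q(\rho)$ with the correct Galois action, uniformly in $\rho$), as opposed to the non-equivariant Siegel–Klingen theorem for a single totally real field; but this is standard and follows either from Deligne–Ribet \cite{DR} directly or from the formalism of Serre \cite{Serre2}, both of which are already in play in this paper. I would also remark in passing that this lemma is the $m=1-r$ instance of the general fact (noted in the excerpt) that Conjecture~\ref{conj:rational} for negative Tate twists is equivalent to Gross's conjecture \cite{Gross}, specialised to the case where the fundamental line $\Xi(\Q(1-r)_{F'/F})$ is trivial, so that the rationality conjecture reduces exactly to the statement that the $L$-value tuple is $\Q$-rational.
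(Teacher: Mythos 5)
Your proof is correct and follows essentially the same route as the paper's: you first observe that $1-r$ is a critical point so the $L$-function is non-vanishing there (hence leading term equals value), then invoke the Klingen--Siegel rationality together with its $\Aut(\C)$-equivariance, and finally descend from $Z(\C[G_{F'/F}])$ to $Z(\Q[G_{F'/F}])$ via the Galois-fixed-point description of the rational Wedderburn centre. The paper's own proof cites Klingen, Siegel and Coates--Lichtenbaum for the equivariant rationality rather than Deligne--Ribet/Serre, and is terser (it does not spell out the Euler-factor removal at $\Sigma$ or the Galois-descent characterisation of $Z(\Q[G_{F'/F}])$ as carefully as you do), but the underlying argument is the same.
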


\begin{proof}
We may verify the claim by an argument similar to that in 
 \cite[Proposition~6.7]{Deligne1}; first note that 
$L(s; F'/F, \rho)$ does not vanish at $s=1-r$ for each $\rho$ in
 $\mathrm{Irr}(G_{F'/F})$ because $1-r$ is a
 critical value for $L(s; F'/F, \rho)$. Therefore $L^*(\Q(1-r)_{F'/F})$
 corresponds to $(L(1-r; F'/F, \rho))_{\rho \in \mathrm{Irr}(G_{F'/F})}$ 
 via the isomorphism (\ref{eq:wed}). Since the action of an automorphism 
 $\tau$ of $\C$ upon $\prod_{\rho \in \mathrm{Irr}(G_{F'/F})} \C$ is given
 by 
\begin{align*}
(x_{\rho})_{\rho\in \mathrm{Irr}(G_{F'/F})} \qquad \mapsto \qquad
 (\tau(x_{\tau^{-1} \rho}))_{\rho \in \mathrm{Irr}(G_{F'/F})},
\end{align*}
 it suffices to prove that 
 $L(1-r; F'/F, \tau \rho)=\tau L(1-r; F'/F, \rho)$ holds 
 for an arbitrary automorphism $\tau$ of $\C$; 
 this follows from the classical results of Helmut Klingen \cite{Klingen} and 
 Carl Ludwig Siegel \cite{Siegel} combining with Serre's assertion
 (see also \cite[Section~1.1]{CL}). 
\end{proof}

Lemma~\ref{lem:rational} implies that the rationality
conjecture~\ref{conj:rational} is true for the Tate motive $\Q(1-r)_{F'/F}$.
Choose an element $\lambda$ in $Z(\Q[G_{F'/F}])^{\times}$ 
such that $\lambda L^*(\Q(1-r)_{F'/F})$ is contained in the image of the
reduced norm map $\mathrm{nrd}_{\Q[G_{F'/F}]}$ (this is injective; see 
\cite[(45.3)]{MR} and \cite[Proposition~2.2]{BurFl3}),
then the map $\vartheta^{(\lambda)}$ introduced in
Section~\ref{ssc:review-tama} coincides with
$\mathrm{nrd}_{\Q[G_{F'/F}]}^{-1} (\lambda L^*(\Q(1-r)_{F'/F}))$. 
Set $\Delta(1-r)_{F'}=R\Gamma_{c,\acute{e}t}(\mathcal{O}_{F'}^{\Sigma^{\vee}},
 \Z_p(1-r))$. We can easily check that 
the natural homomorphism 
\begin{align*}
K_0(\Z_p[G_{F'/F}], \Q_p[G_{F'/F}]) \rightarrow K_0(\Z_p[G_{F'/F}], \overline{\Q}_p[G_{F'/F}])
\end{align*}
induced by the canonical embedding $\Q_p \hookrightarrow
\overline{\Q}_p$ is injective and can calculate 
the image $T\Omega(\Q(1-r)_{F'/F})_{p, \overline{\Q}_p}$ 
of $T\Omega(\Q(1-r)_{F'/F})_p$ under this map as 
\begin{align*} 
\bar{\partial}_p(\mathrm{nrd}_{\overline{\Q}_p[G_{F'/F}]}^{-1}(L^*(\Q(1-r)_{F'/F}))+[\Delta(1-r)_{F'}, \vartheta_{p,\overline{\Q}_p}^{-1}]
\end{align*}
where $\bar{\partial}_p$ is the connecting homomorphism appearing in the
localisation exact sequence associated 
to $\Z_p[G_{F'/F}]\rightarrow \overline{\Q}_p[G_{F'/F}]$. 
Here we use the following relations:
\begin{align*}
[\Delta(1-r)_{F'}, \vartheta^{(\lambda)}_{\Q_p}\circ \vartheta^{-1}_p]
 = [\Delta(1-r)_{F'}, \vartheta^{-1}_p] +
 \partial_p(\vartheta^{(\lambda)}_{\Q_p}), 
\end{align*}
\begin{align*}
\bar{\partial}_p(\vartheta^{(\lambda)}_{\overline{\Q}_p}) 
&=\bar{\partial}_p(\mathrm{nrd}_{\overline{\Q}_p[G_{F'/F}]}^{-1}(\lambda
 L^*(\Q(1-r)_{F'/F}))) \\
&=\bar{\partial}_p(\mathrm{nrd}_{\overline{\Q}_p[G_{F'/F}]}^{-1}(L^*(\Q(1-r)_{F'/F})))
 +\bar{\partial}_p(\mathrm{nrd}^{-1}_{\overline{\Q}_p[G_{F'/F}]}(\lambda)).
\end{align*}
Therefore in order to prove Proposition~\ref{prop:ETNC}
it suffices to show that 
the element $T\Omega(\Q(1-r)_{F'/F})_{p,\overline{\Q}_p}$ vanishes.

\begin{proof}[Proof of Proposition~$\ref{prop:ETNC}$]
Let $\xi_{F_{\infty}/F}$ be the \p-adic zeta function for
 $F_{\infty}/F$ and assume that the Iwasawa main conjecture
 $\partial(\xi_{F_{\infty}/F})=-[C_{F_{\infty}/F}]$ is valid
 (for an arbitrary fixed embedding $j_p \colon \overline{\Q}\hookrightarrow
 \overline{\Q}_p$).  
 Since
 $R\Gamma_{\acute{e}t}(\mathcal{O}_{F_{\infty}}^{\Sigma^{\vee}}, \Q_p/\Z_p)$ 
 is identified with the injective limit of complexes
 $R\Gamma_{\acute{e}t} (\mathcal{O}_L^{\Sigma^{\vee}},
 \Q_p/\Z_p)$ for all finite Galois subextensions $L/F$ of
 $F^{\infty}/F$, 
 we may easily see that $C_{F_{\infty}/F}$ is isomorphic to the complex 
 $\varprojlim_L R\Gamma_{c, \acute{e}t}(\Spec
 \mathcal{O}_L^{\Sigma^{\vee}}, \Z_p(1))[3]$ by virtue of the 
 Poitou-Tate/Artin-Verdier duality theorem. 
 Furthermore for each $L$ the complex 
$R\Gamma_{c, \acute{e}t}(\mathcal{O}_L^{\Sigma^{\vee}},
 \Z_p(1))$ is isomorphic to $R\Gamma_{c,
 \acute{e}t}(\mathcal{O}_F^{\Sigma^{\vee}}, \Z_p[G_{L/F}]^{\sharp}(1))$ 
 by Shapiro's lemma (here $\Z_p[G_{L/F}]^{\sharp}$ denotes 
 $\Z_p[G_{L/F}]$ regarded as a left $G_{F}$\nobreakdash-module whose $G_F$-action is
 given by the right multiplication of the inverse element). Refer to, for
 example, \cite[Appendix~B]{HorKin} for details.
 Hence the following equation holds 
in $K_0(\mathscr{C}^{\mathrm{Perf}}_S(G), \mathrm{qis})$:
\begin{align*}
\partial(\xi_{F_{\infty}/F})& =[\varprojlim_L R\Gamma_{c, \acute{e}t}(\Spec
 \mathcal{O}_F^{\Sigma^{\vee}}, \Z_p[G_{L/F}]^{\sharp}(1))] \\
 &= [R\Gamma_{c, \acute{e}t}(\Spec \mathcal{O}_F^{\Sigma^{\vee}},
 \iw{G}^{\sharp}(1))].
\end{align*}

Now for each natural number $r$ divisible by $p-1$ 
consider the $\Z_p$-linear 
map $\varrho_{\kappa}^r \colon \iw{G}\rightarrow \iw{G}$ 
induced by $\sigma \mapsto \kappa^r(\sigma) \sigma$ 
for $\sigma$ in $G$, which is in fact a ring automorphism 
because $\kappa^r(\sigma)$ is an element in the centre of $\iw{G}$.  
This map also induces a ring automorphism $\varrho_{S,\kappa}^r$ 
on the canonical Ore localisation $\iw{G}_S$ of $\iw{G}$ and 
its composition with the homomorphism 
$\iw{G}_S\rightarrow \mathrm{Frac}(\iw{\Gamma})$ induced 
by the projection $G\rightarrow \Gamma$ coincides with the morphism
$\Phi_{\kappa^r}$ introduced in \cite[Lemma~3.3]{CFKSV}. 
Hence the definition of the evaluation map asserts that the
 interpolation formula
\begin{align*}
\varrho_{S,\kappa}^r(\xi_{F_{\infty}/F})(\rho)=L_{\Sigma}(1-r;F_{\infty}/F,
 \rho)
\end{align*}
holds for an arbitrary Artin representation $\rho$ of $G$.
On the other hand the Tate twist $\iw{G}^{\sharp}(1)\rightarrow
 \iw{G}^{\sharp}(1-r)$ defines a $\varrho_{S,\kappa}^r$-semilinear 
isomorphism (due to the $\iw{G}$-module structure of 
$\iw{G}^{\sharp}$), and in $\mathscr{D}^{\mathrm{Perf}}(\iw{G})$ 
we therefore obtain the isomorphism 
\begin{align*}
\iw{G}\otimes_{\iw{G}, \varrho_{\kappa}^r} R\Gamma_{c, \acute{e}t}
 (\mathcal{O}_{F}^{\Sigma^{\vee}}, \iw{G}^{\sharp}(1))
 \xrightarrow{\sim}
 R\Gamma_{c,\acute{e}t}(\mathcal{O}_F^{\Sigma^{\vee}},
 \iw{G}^{\sharp}(1-r))
\end{align*}
(we remark that this argument is a non-commutative variant 
of that in \cite[Lemma~5.13~a)]{Flach}). Then we obtain
\begin{align} \label{eq:variwa}
 \partial(\varrho_{S,\kappa}^r(\xi_{F_{\infty}/F})) =
 [R\Gamma_{c,\acute{e}t}(\mathcal{O}_F^{\Sigma^{\vee}},
 \iw{G}^{\sharp}(1-r))]
\end{align}
by the functoriality of the connecting homomorphism. Moreover 
\begin{align*}
& \quad \Z_p[G_{F'/F}]\otimes_{\iw{G}}^{\mathbb{L}}
 R\Gamma_{c,\acute{e}t}(\mathcal{O}_F^{\Sigma^{\vee}},
 \iw{G}^{\sharp}(1-r)) \\
&=R\Gamma_{c,\acute{e}t}(\mathcal{O}_F^{\Sigma^{\vee}},
 \Z_p[G_{F'/F}]^{\sharp}(1-r))
= R\Gamma_{c,\acute{e}t}(\mathcal{O}_{F'}^{\Sigma^{\vee}}, \Z_p(1-r)) =\Delta(1-r)_{F'}
\end{align*}
holds for an arbitrary finite Galois subextension $F'/F$ of $F_{\infty}/F$. 
Since the localisation $\Q_p \otimes_{\Z_p} \Delta(1-r)_{F'}$ is 
acyclic (essentially due to the criticalness of $\Q(1-r)_{F'/F}$; 
refer to \cite[(9),(10)]{BurFl2}),
the equation (\ref{eq:variwa}) descends to 
\begin{equation} \label{eq:tamagawa}
\begin{aligned}
K_1(\overline{\Q}_p[G_{F'/F}]) \quad \qquad & \xrightarrow{\quad \partial\quad }  K_0(\Z_p[G_{F'/F}],
 \overline{\Q}_p[G_{F'/F}]) \\
(L_{\Sigma}(1-r; F'/F, j_p \rho))_{\rho \in \mathrm{Irr}(G_{F'/F})}
 & \quad \mapsto  \quad \qquad [\Delta(1-r)_{F'}] 
\end{aligned}
 \end{equation}
by the results of Burns and Venjakob \cite[Theorem~2.2]{BurVen}. 
Here we remark that the element $[\Delta(1-r)_{F'}]$ 
in the relative Grothendieck group can be naturally identified with 
the element $-[\det_{\Z_p[G_{F'/F}]}(\Delta(1-r)_{F'}), \mathrm{acyc}]$ 
in $\pi_0(\Z_p[G_{F'/F}], \overline{\Q}_p[G_{F'/F}])$ where 
``$\mathrm{acyc}$'' denotes the trivialisation induced by 
 acyclicity of $\overline{\Q}_p\otimes_{\Z_p}\Delta(1-r)_{F'}$ 
 (see Remark~\ref{rem:sign} for the problem upon sign
 convention). The difference between two
 trivialisations $\vartheta_{p,\overline{\Q}_p}^{-1}$ 
and $\mathrm{acyc}$ is calculated 
 in \cite{BurFl2} as 
\begin{align*}
\mathrm{acyc}=\vartheta_{p,\overline{\Q}_p}^{-1} \cdot \prod_{v\in \Sigma}\phi_v^{-1} 
\end{align*}
where each $\overline{\Q}_p$-isomorphism $\phi_v \colon V\rightarrow V$
 is defined as in \cite[Section~1.2]{BurFl1} or
 \cite[Section~2.4.2]{FK} which we regard as an element in
 $K_1(\overline{\Q}_p[G_{F'/F}])$. Then by definition 
 the image of $\phi_v^{-1}$
 under the Wedderburn decomposition
\begin{align*} 
\xymatrix{
K_1(\overline{\Q}_p[G_{F'/F}])
 \ar[rr]^{\sim}_{\mathrm{nrd}_{\overline{\Q}_p[G_{F'/F}]}} & &
 Z(\overline{\Q}_p[G_{F'/F}])^{\times} \ar[r]^{\sim} & \prod_{\rho \in
 \mathrm{Irr}(G_{F'/F})} \overline{\Q}_p^{\times}
 }
\end{align*}
 coincides with 
 $(L_v(1-r; F'/F, \rho))_{\rho \in \mathrm{Irr}(G_{F'/F})}$, 
 the local factor of the equivariant Artin $L$-function at $v$. 
 Combining this fact with the relation (\ref{eq:tamagawa}), 
 we can easily verify that $T\Omega(\Q(1-r)_{F'/F})_{p,\overline{\Q}_p}$ 
 vanishes.
\end{proof}

\begin{rem}
If we take $F'=F$, Proposition~\ref{prop:ETNC} is equivalent to 
the \p-part of the cohomological Lichtenbaum conjecture
\begin{align*}
|\zeta_F(1-r)|_p^{-1}=
\frac{|\sharp H^2_{c,\acute{e}t}(\mathcal{O}_{F}^{\Sigma^{\vee}},
 \Z_p(1-r))|_p^{-1}}{|\sharp H^1_{c,\acute{e}t}(\mathcal{O}_F^{\Sigma^{\vee}},
 \Z_p(1-r))|_p^{-1}} 
\end{align*}
via certain specialisation (here $\zeta_F(s)$ is the complex Dedekind
 zeta function for $F$ and $|\cdot |_p$ is the \p-adic
 valuation normalised by $|p|_p=1/p$). This is directly deduced from the
 (classical) Iwasawa main conjecture for totally real number fields 
 verified by Andrew Wiles \cite{Wiles}. 
 Proposition~\ref{prop:ETNC} gives its certain generalisation 
 for cases with non-commutative coefficient.
\end{rem}

\begin{rem}[sign convention] \label{rem:sign}
Let $R$ be an associative ring and $S$  a left Ore subset of $R$.
We let $S^{-1}R$ denote the left Ore localisation of $R$ 
with respect to $S$. 
In \cite{Swan}, the relative Grothendieck group 
$K_0(R, S^{-1}R)$ is defined as 
a certain quotient of the free abelian group generated by all
triples $[P,\lambda, Q]$ where each $P$ and $Q$ are finitely generated 
projective left $R$-modules and $\lambda$ is an
 $S^{-1}R$-isomorphism $\lambda \colon S^{-1}R\otimes_R P
 \xrightarrow{\sim} S^{-1}R\otimes_R Q$. 
Then we may identify a homomorphism $P\rightarrow Q$ 
induced by $\lambda$ with a cochain complex concentrated in terms of 
degree $0$ and $1$, and we use this identification as the 
normalisation of the isomorphism between $K_0(R,S^{-1}R)$ and 
$K_0(\mathscr{C}^{\mathrm{Perf}}_S(R), \mathrm{qis})$ 
(this normalisation is the same one as used in \cite{FK}). In \cite{BurVen},
 however, they identify $K_0(R,S^{-1}R)$ with $\pi_0(V(R,S^{-1}R))$ 
in the following manner: when both 
$\mathrm{Ker}(\lambda)$ and $\mathrm{Coker}(\lambda)$ are projective, 
the element $[P,\lambda, Q]$ in $K_0(R,S^{-1}R)$ 
 is identified with the element in $\pi_0(V(R,S^{-1}R))$ defined as 
 $[\det^{-1}_R(P)\cdot \det_R(Q), \det_R^{-1}(\lambda)
 \cdot \mathrm{id}_{\det_R(Q)}]$; in other words they implicitly regard 
 $P\rightarrow Q$ as a complex concentrated in terms of
 degree $-1$ and $0$. Hence there appears difference in sign convention
\begin{eqnarray*}
K_0(\mathscr{C}^{\mathrm{Perf}}_S(R), \mathrm{qis}) & \quad \xrightarrow{\sim}
 \quad & \quad \pi_0(V(R,S^{-1}R))  \\ \nonumber
[C] \qquad \quad &\quad \leftrightarrow \quad & -[\det_R(C), \mathrm{acyc}]
\end{eqnarray*}
(on the other hand they used, in \cite{BrBur}, the different normalisation
\begin{align*}
[P,\lambda, Q] \quad \leftrightarrow \quad [\det_R(P)\cdot
 \det^{-1}_R(Q), \det_R(\lambda) \cdot \mathrm{id}_{\det^{-1}_R(Q)}],
\end{align*}
and the element $[C]$ in $K_0(\mathscr{C}^{\mathrm{Perf}}_S(R),
 \mathrm{qis})$ therefore corresponds to the element 
$[\det(C), \mathrm{acyc}]$ in $\pi_0(V(R,S^{-1}R))$).
\end{rem}

%
%
\section{Construction of the theta isomorphism I ---additive theory---}
\label{sc:additive}
%
%

In this section we first define the Artinian families $\mathfrak{F}_A$,
$\mathfrak{F}_A^{c}$ and the Brauer family $\mathfrak{F}_B$ 
(see Section~\ref{ssc:family} for definition), which will play important
roles in the following arguments. We then 
construct the additive version of the theta isomorphism
(see Section~\ref{ssc:addtheta}). Later we shall translate it into the
multiplicative morphism in Section~\ref{sc:trans}. We remark that 
Mahesh Kakde has recently established more general construction of
the additive theta isomorphism \cite{Kakde2} (his
construction can be applied 
to case sin which $G^f$ is an arbitrary finite \p-group 
not necessarily with exponent~$p$).

%
\subsection{Artinian families $\mathfrak{F}_A$, $\mathfrak{F}_A^{c}$ and
  Brauer family $\mathfrak{F}_B$} \label{ssc:family}
%

Let $p$, $F$ and $F_{\infty}/F$ be as in
Theorem~$\ref{thm:maintheorem}$. Let $G$ be the Galois group of
$F_{\infty}/F$ and $p^N$ the order of the finite part $G^f$ of $G$
(and $N$ is hence a non-negative integer).
The finite \p-group $G^f$ acts upon the set of all its cyclic subgroups 
by conjugation. Choose a set of representatives of the
orbital decomposition under this action, and choose also a generator 
for each representative cyclic group. 
Let $\mathfrak{H}$ denote the set of these fixed
generators, and for each $h$ in $\mathfrak{H}$ let $U_h^f$ be the cyclic
subgroup of $G^f$ generated by $h$. Since the exponent of $G^f$ is $p$, 
the degree of each $U_h^f$
exactly equals $p$ except for $U_e^f=\{e\}$ 
(here we denote the unit of~$G^f$ by $e$). 
Let $U_h$ be the
open subgroup of $G$ isomorphic to the direct product of $U_h^f$
and $\Gamma$ for each $h$ in $\mathfrak{H}$, and  
consider the family of open subgroups of $G$ consisting of
all such $U_h$ which we denote by
$\mathfrak{F}_A$ (we always identify $U_e$ with $\Gamma$).

\begin{prop} \label{prop:artfamily}
The family $\mathfrak{F}_A$ satisfies condition
 $(\sharp)_A$. In other words, the family $\mathfrak{F}_A$ is 
 an Artinian family for the group $G$.
\end{prop}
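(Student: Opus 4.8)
The plan is to reduce the claim to a statement purely about the finite group $G^f$, and then invoke the classical Artin induction theorem in the form that permits $\Z[1/p]$-coefficients over $p$-groups. First I would observe that an arbitrary Artin representation $\rho$ of $G$ factors through a finite quotient $G/W$ for some open normal subgroup $W$ contained in $\Gamma$; since $W$ is central and torsion-free, the quotient $G/W$ is again (isomorphic to) the direct product of $G^f$ with a finite cyclic $p$-group, and every open subgroup $U_h$ maps onto the corresponding subgroup of $G/W$. Thus it suffices to prove the finite-level statement: every $\overline{\Q}$-valued representation of a finite group of the form $\Pi = G^f \times C$ with $C$ cyclic is, virtually and with $\Z[1/p]$-coefficients, a $\Z[1/p]$-linear combination of representations induced from characters of the subgroups $U_h^f \times C$.

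The key input is Artin's induction theorem: for a finite group $\Pi$, the cokernel of the induction map from characters of cyclic subgroups is annihilated by $|\Pi|$. Therefore, after inverting $|\Pi|$, every virtual character of $\Pi$ is a $\Z[1/|\Pi|]$-combination of characters induced from \emph{cyclic} subgroups of $\Pi$. Since $\Pi = G^f \times C$ is a $p$-group (here $C$ has $p$-power order, and I would note that one may harmlessly enlarge $C$ or pass to a $p$-power quotient so that the whole group is a $p$-group), inverting $|\Pi|$ amounts to inverting $p$, which is exactly the latitude permitted in condition $(\sharp)_A$. So one gets a $\Z[1/p]$-expression in terms of characters induced from cyclic subgroups of $\Pi$, and each such cyclic subgroup is conjugate into one of the $U_h^f \times C$ by the defining property of $\mathfrak{H}$ (the set $\mathfrak{H}$ was chosen so that the $U_h^f$ exhaust the conjugacy classes of cyclic subgroups of $G^f$, and a cyclic subgroup of a direct product $G^f \times C$ with $C$ cyclic is contained in $U^f \times C$ for a cyclic $U^f \le G^f$). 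Combining this with the standard fact that an induced representation from a conjugate subgroup is isomorphic to the induced representation from the subgroup itself, one rewrites everything in terms of induction from the $U_h$, which is precisely $(\sharp)_A$.

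The main obstacle I anticipate is the passage, inside the direct product $G^f \times C$, from an arbitrary cyclic subgroup to one of the chosen form $U_h^f \times C$: a cyclic subgroup of $G^f \times C$ need not split as a product, so one must argue that it lies \emph{inside} some $U^f \times C'$ with $U^f$ cyclic in $G^f$, and then expand the character of $C'$ into characters of $C$ and re-induce in stages (using transitivity of induction). A secondary, more bookkeeping-level point is to check that the $\Z[1/p]$-coefficients produced by Artin's theorem descend compatibly as $W$ shrinks, so that the expression is genuinely one of Artin representations of $G$ rather than of a fixed finite quotient; this is routine since induction commutes with inflation. No completely new idea is needed beyond Artin's induction theorem and the choice of $\mathfrak{H}$.
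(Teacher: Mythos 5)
Your argument is correct and unpacks in detail exactly what the paper's one-line proof invokes: the paper simply cites Artin's classical induction theorem, observing that for the finite $p$-group $G^f \times \Gamma/\Gamma^{p^n}$ the annihilator of the cokernel of induction from cyclic subgroups is a power of $p$, which is precisely the $\Z[1/p]$ latitude permitted in condition $(\sharp)_A$. Your further reduction (that a cyclic subgroup of $G^f \times C$ sits inside $\langle g\rangle \times C$ with $\langle g\rangle$ conjugate to some $U_h^f$, followed by transitivity of induction and decomposing the intermediate abelian induction into characters) is exactly the routine content the paper leaves implicit.
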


\begin{proof}
The claim is directly deduced from the classical induction theorem 
of Emil Artin (see, for example, \cite[Corollaire de~Th\'eor\`eme~15]{Serre1}).
\end{proof}

For the usage of induction in Section~\ref{sc:construction}, 
we now introduce another Artinian family
$\mathfrak{F}_A^c$. When $N$ equals zero, we set 
$\mathfrak{F}_A^c=\mathfrak{F}_A=\{(\Gamma, \{ e\})\}$. 
When $N$ is larger than $1$, 
choose a central element $c\neq e$ in $\mathfrak{H}$ 
and fix it (there exists such an
element $c$ because $G^f$ is a \p-group). 
For each $h$ in $\mathfrak{H}$, 
let $U_{h,c}^f$ be the abelian subgroup of $G^f$ 
generated by $h$ and $c$, and let
$U_{h,c}$ be the open subgroup of $G$ isomorphic to the 
direct product of~$U_{h,c}^f$ and $\Gamma$. Let $\mathfrak{F}_A^c$ denote
the family of open subgroups of $G$ consisting of all elements in
$\mathfrak{F}_A$ and all $U_{h,c}$ (we 
identify both $U_{e,c}$ and $U_{c,c}$ with $U_c$).  
Then the family $\mathfrak{F}_A^c$ is also an Artinian family for $G$
because $\mathfrak{F}_A^c$ contains the Artinian family $\mathfrak{F}_A$.

We finally define $\mathfrak{F}_B$ as the family consisting of all pairs 
$(U,V)$ such that $U$ is an open
subgroup of $G$ containing $\Gamma$ and  
$V$ is the commutator subgroup of $U$. Then the family 
$\mathfrak{F}_B$
satisfies condition $(\sharp)_B$ by Richard Brauer's
induction theorem \cite[Th\'eor\`em~22]{Serre1} 
(note that for an arbitrary finite
\p-group, the family of all its Brauer
elementary subgroups coincides with that of all its subgroups by definition);
hence $\mathfrak{F}_B$ is a Brauer family.

%
\subsection{Calculation of the images of trace homomorphisms} \label{ssc:calculation}
%

First recall the definition of {\em trace homomorphisms}; for an
arbitrary finite group $\Delta$, let $\Z_p[\conj{\Delta}]$ be the free
$\Z_p$-module of finite rank with basis $\conj{\Delta}$, and for an
arbitrary pro-finite group $P$, let $\Z_p[[\conj{P}]]$ be 
the projective limit of free $\Z_p$-modules
$\Z_p [\conj{P_{\lambda}}]$ over finite quotient groups $P_{\lambda}$ of
$P$.

\begin{defn}[trace homomorphisms] 
Let $P$ be an arbitrary pro-finite group and $U$ its arbitrary open
 subgroup. Let $\{ a_1, a_2, \dotsc , a_s\}$ be one of the representatives of the left coset decomposition $P/U$. For an
 arbitrary conjugacy class $[g]$ of $P$ and for each integer 
 $1\leq j\leq s$, define $\tau_j([g])$ as 
\begin{align*}
\tau_j([g])=
\begin{cases} [a_j^{-1}ga_j] & \text{if }a_j^{-1}ga_j \text{ is
 contained in } U, \\
0 & \text{otherwise}.
\end{cases}
\end{align*}
Then the element $\Tr_{\Z_p[[\conj{P}]]/\Z_p[[\conj{U}]]}([g]) =\sum_{j=1}^s
 \tau_j([g])$ is determined independently of the choice of representatives
 $\{a_j\}_{j=1}^s$. We call the induced $\Z_p$-module homomorphism
\begin{align*}
\Tr_{\Z_p[[\conj{P}]]/\Z_p[[\conj{U}]]} \colon \Z_p[[\conj{P}]]
 \rightarrow \Z_p[[\conj{U}]]
\end{align*}
{\em the trace homomorphism from $\Z_p[[\conj{P}]]$ to
 $\Z_p[[\conj{U}]]$}.
\end{defn}

Let $c$ be the fixed central element of $G^f$ as in the previous
subsection and let $\theta^+_U$ denote the trace homomorphism
$\Tr_{\Z_p[[\conj{G}]]/\Z_p[[U]]}$ for each $U$ in~$\mathfrak{F}_A^c$.
We now calculate each image $I_U$ of $\theta^+_U$. 
Let $NU^f$ denote the normaliser of $U^f$ in $G^f$ 
for each $U$ in $\mathfrak{F}_A$. 
We denote by $p^{n_h}$ the cardinality of $NU_h^f$ 
for each $h$ in $\mathfrak{H}$.

\medskip
\noindent
\textbf{Calculation of $I_{\Gamma}(=I_{U_e})$.} When $N$ is equal to zero,
the $\Z_p$-module $I_{\Gamma}$ obviously coincides with 
$\iw{G}=\iw{\Gamma}$. Now suppose
that $N$ is larger than~$1$. In this case, $\theta_{\Gamma}^+([g])$ 
does not vanish if and only if $g$ is contained in $\Gamma$. We may 
regard the finite part $G^f$ as a set of representatives of the left
coset decomposition $G/\Gamma$, and for each $\gamma$ in $\Gamma$ and
$a$ in $G^f$, the element $a^{-1}\gamma a$ equals $\gamma$ 
(note that $\gamma$ and $a$ commute). Therefore we have 
\[
 I_{\Gamma} =p^N\Z_p[[\Gamma]]
\]
(this equality is also valid for the case in which $N$ equals zero).

\medskip
\noindent
\textbf{Calculation of $I_{U_h}$ for $h$ in
$\mathfrak{H}$ except for $e$} ($N\geq 1$). When $N$ is equal to $1$,
the $\Z_p$-module $I_{U_h}$ obviously coincides with
$\iw{G}=\iw{U_h}$. Hence suppose that 
$N$ is larger than $2$. In this case $\theta_{U_h}^+([g])$ 
does not vanish if and
only if $g$ is contained in one of the conjugates of $U_h$, and 
we may therefore assume that $g$ is contained in $U_h$ 
without loss of generality. 
The normaliser $NU_h^f$ acts upon $U^f_h$ by conjugation, which
induces a group antihomomorphism 
$\inn \colon (NU_h^f)^{\mathrm{op}} \rightarrow \Aut(U_h^f) \cong
\F_p^{\times}$. Note that it is trivial 
since $NU_h^f$ is a \p-group. 
Therefore for every $u$ in~$U_h$ not contained in
$\Gamma$, its conjugate $a^{-1} u a$ is
equal to $u$ if $a$ is contained in $NU_h^f$ and 
is not contained in $U_h$ 
otherwise. For each $\gamma$ in $\Gamma$, its conjugate $a^{-1}\gamma a$ 
always equals $\gamma$ as in the previous case. 
Consequently we have
\begin{align*}
 I_{U_h} =p^{N-1}\Z_p[[\Gamma]] \oplus \bigoplus_{i=1}^{p-1} p^{n_h-1} h^i \Z_p[[\Gamma]]
\end{align*}
(this equality is also valid when $N$ equals $1$).

\medskip
\noindent
\textbf{Calculation of $I_{U_{h,c}}$ for $h$ in
$\mathfrak{H}$ except for $e$ and $c$} ($N\geq 2$). 
We obtain a group antihomomorphism 
\begin{align*}
\inn \colon (NU_{h,c}^f)^{\mathrm{op}} \rightarrow \Aut(U_{h,c}^f)
\end{align*} 
in the same argument as that in the previous case. Since the automorphism group
$\Aut(U_{h,c}^f)$  is isomorphic to $\GL_2(\F_p)$ and 
its cardinality is equal to $p(p-1)^2(p+1)/2$, 
we have to consider the following two cases:
\begin{enumerate}[\bfseries {Case}\,(a)]
\item the image of $\inn$ is trivial;
\item the image of $\inn$ is a cyclic group of degree $p$.
\end{enumerate}

In Case (a) it is easy to see that $NU_{h,c}^f$ coincides with $NU_h^f$
(in particular the cardinality of $NU_{h,c}^f$ is equal to $p^{n_h}$). 
Therefore we may calculate $I_{U_{h,c}}$ in the same way 
as $I_{U_h}$, and we obtain
\begin{align*}
I_{U_{h,c}}=p^{N-2}\Z_p[[U_c]] \oplus \bigoplus_{i=1}^{p-1} p^{n_h-2} h^i \Z_p[[U_c]].
\end{align*}

In Case (b) we may readily show by easy computation that the image of the map 
$\inn$ is generated by automorphisms induced by 
$h^ic^j \mapsto h^i c^{ki+j}$ for each 
$0\leq k\leq p-1$. Its kernel obviously coincides with
$NU_h^f$, and the cardinality of $NU_{h,c}^f$ is thus equal to
$p^{n_h+1}$. This enables us to calculate $I_{U_{h,c}}$ as 
\begin{align*}
I_{U_{h,c}}=p^{N-2} \Z_p[[U_c]] \oplus \bigoplus_{i=1}^{p-1} p^{n_h-2} h^i (1+c+\dotsc +c^{p-1}) \Z_p[[\Gamma]].
\end{align*}

%
\subsection{Additive theta isomorphisms} \label{ssc:addtheta}
%

Now set
\begin{align*}
 \theta_A^+=(\theta^+_U)_{U\in \mathfrak{F}_A}
\colon \Z_p[[\conj{G}]] \rightarrow \prod_{U\in \mathfrak{F}_A}
\Z_p[[U]]
\end{align*}
and let $\Phi$ be the $\Z_p$-submodule of 
$\prod_{U\in \mathfrak{F}_A} \Z_p[[U]]$ consisting of 
all elements $y_{\bullet}$ satisfying the following two conditions:
\begin{itemize}
  \item (trace relation) the equation 
	$\Tr_{\Z_p[[U_h]]/\Z_p[[\Gamma]]}y_h=y_e$ holds 
	for each 
	$\Z_p[[U_h]]$-component $y_h$ of $y_{\bullet}$ 
	(see Figure~\ref{fig:trace});
 \item each $\Z_p[[U]]$-component $y_U$ of $y_{\bullet}$
       is contained in $I_U$.
\end{itemize}

\begin{figure}
\begin{align*}
\xymatrix{
 \Z_p[[U_h]] \ar[d] \\
  \parbox{2cm}{\leavevmode \hbox{$\quad \Z_p[[\Gamma]]$} \\  
  \hbox{$(=\Z_p[[U_e]])$}}
}
\end{align*}
\caption{Trace and norm relation for $\mathfrak{F}_A$.} 
\label{fig:trace}
\end{figure}

\begin{prop} \label{prop:addtheta}
The map $\theta_A^+$ induces an isomorphism of $\Z_p$-modules
\begin{align*}
\theta_A^+ \colon \Z_p[[\conj{G}]] \xrightarrow{\sim} \Phi.
\end{align*}
We call the induced isomorphism $\theta_A^+$ 
{\em the additive theta isomorphism for $\mathfrak{F}_A$}. 
\end{prop}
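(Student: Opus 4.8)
The plan is to reduce the statement to a computation over the finite \p-group $G^f$ and then to exhibit an explicit $\Z_p[[\Gamma]]$-basis of $\Phi$ on which $\theta_A^+$ is manifestly bijective.

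First I would determine $\theta_A^+$ on a convenient basis. Since $G=G^f\times\Gamma$ with $\Gamma$ central, $\Z_p[[\conj{G}]]$ is free over $\Z_p[[\Gamma]]$ with basis $\conj{G^f}$, and the exponent-$p$ hypothesis together with the triviality of $\inn\colon(NU_h^f)^{\mathrm{op}}\to\Aut(U_h^f)$ (the source is a \p-group, the target has order prime to $p$) identifies $\conj{G^f}$ with $\{[e]\}\cup\{[h^i]:h\in\mathfrak{H}\setminus\{e\},\ 1\le i\le p-1\}$. The same two facts---already used in Section~\ref{ssc:calculation} to compute the $I_U$---give the values of the trace maps on this basis: $\theta^+_{U_e}([e])=p^N$ and $\theta^+_{U_e}([h^i])=0$; for $h\neq e$ one has $\theta^+_{U_h}([e])=p^{N-1}$, $\theta^+_{U_h}([h^i])=p^{n_h-1}h^i$, and $\theta^+_{U_{h'}}([h^i])=0$ whenever $h'\in\mathfrak{H}\setminus\{e\}$ with $h'\neq h$. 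In particular $\theta_A^+$, the homomorphisms $\Tr_{\Z_p[[U_h]]/\Z_p[[\Gamma]]}$ occurring in the trace relation, and the submodules $I_U$ are all obtained from their finite-level analogues for $G^f$ by applying $-\otimes_{\Z_p}\Z_p[[\Gamma]]$. As $\Z_p[[\Gamma]]\cong\Z_p[[T]]$ is faithfully flat over $\Z_p$ and the formation of both kernels and images commutes with flat base change, it suffices to prove the analogous assertion for $G^f$: with $\Phi_{\mathrm{fin}}$ the corresponding submodule of $\prod_{U\in\mathfrak{F}_A}\Z_p[U^f]$, one must show that the finite-level map $\theta^+_{A,\mathrm{fin}}\colon\Z_p[\conj{G^f}]\to\Phi_{\mathrm{fin}}$ is an isomorphism.

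At finite level I would describe $\Phi_{\mathrm{fin}}$ explicitly. The integrality conditions say that the $U_e$-component lies in $p^N\Z_p$, that the $e$-coefficient of the $U_h$-component lies in $p^{N-1}\Z_p$, and that its $h^i$-coefficients ($1\le i\le p-1$) lie in $p^{n_h-1}\Z_p$; the trace relation $\Tr_{\Z_p[U^f_h]/\Z_p}(y_h)=y_e$ amounts to $p$ times the $e$-coefficient of $y_h$ being equal to $y_e$ for every $h$, so these $e$-coefficients all coincide and pin down $y_e$. Hence $\Phi_{\mathrm{fin}}$ is free over $\Z_p$ with basis given by the element whose $e$-coefficient in each $y_h$ is $p^{N-1}$, whose $U_e$-component is $p^N$, and whose other coefficients vanish, together with, for each pair $(h,i)$ with $h\neq e$ and $1\le i\le p-1$, the element whose $h^i$-coefficient in $y_h$ is $p^{n_h-1}$ and whose other coefficients vanish. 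By the formulas of the previous paragraph these basis vectors are precisely $\theta^+_{A,\mathrm{fin}}([e])$ and $\theta^+_{A,\mathrm{fin}}([h^i])$, so $\theta^+_{A,\mathrm{fin}}$ carries the standard $\Z_p$-basis of $\Z_p[\conj{G^f}]$ bijectively onto a $\Z_p$-basis of $\Phi_{\mathrm{fin}}$ and is therefore an isomorphism; base-changing back along $\Z_p\to\Z_p[[\Gamma]]$ yields the proposition. (The degenerate case $N=0$, in which $\mathfrak{F}_A=\{(\Gamma,\{e\})\}$ and all the maps are identities, is subsumed.)

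The group-theoretic input behind the trace formulas has essentially been carried out in Section~\ref{ssc:calculation}, so the only delicate point is the last step: checking that the integrality and trace conditions cut $\Phi$ down to exactly the free module on the images of the chosen basis, with the exponents $p^N$, $p^{N-1}$ and $p^{n_h-1}$ fitting together so that $\theta_A^+$ loses no index onto $\Phi$. This becomes transparent once $\Phi_{\mathrm{fin}}$ and the images $\theta^+_{A,\mathrm{fin}}([e])$, $\theta^+_{A,\mathrm{fin}}([h^i])$ are written side by side, but it is precisely where an arithmetic slip would be easiest to make.
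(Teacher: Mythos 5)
Your proof is correct, but it takes a genuinely different route from the paper's. The paper proves injectivity abstractly via Artin's induction theorem: any $y$ killed by all the $\theta^+_U$ has $\chi_\rho(y)=0$ for every Artin representation $\rho$ because $\chi_\rho$ is a $\Z[1/p]$-combination of $\chi_U\circ\Tr$'s, and a class distribution annihilated by all class functions is zero. Surjectivity is proved by writing down an explicit preimage $y=p^{-N}[y_e]+\sum_{h\in\mathfrak{H}\setminus\{e\}}p^{-n_h+1}([y_h]-p^{-1}[y_e])$ in $\Q_p[[\conj{G}]]$ and checking, using the $I_U$-conditions, that it actually lies in $\Z_p[[\conj{G}]]$ and maps to $y_\bullet$. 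You instead peel off the central $\Gamma$-factor by flat base change along $\Z_p\to\Z_p[[\Gamma]]$ (valid, since $\Phi$ is an intersection of a kernel and a product of images, both of which commute with flat base change and all of which are $\Z_p[[\Gamma]]$-linear extensions of their finite-level counterparts), and then prove the finite-level statement by exhibiting a $\Z_p$-basis of $\Phi_{\mathrm{fin}}$ which is visibly the image of the standard basis of $\Z_p[\conj{G^f}]$. Your trace-map computations ($\theta^+_{U_e}([e])=p^N$, $\theta^+_{U_h}([e])=p^{N-1}$, $\theta^+_{U_h}([h^i])=p^{n_h-1}h^i$, vanishing otherwise, via triviality of $\inn$ on $\Aut(U_h^f)\cong\F_p^\times$) are exactly those underlying the paper's $I_U$-description, and your identification of $\Phi_{\mathrm{fin}}$ is correct: the trace relations $p a_h=y_e$ force all $e$-coefficients to agree and make the $I_{U_e}$-constraint redundant, leaving exactly the stated free module. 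The two approaches are of comparable length; the paper's character-theoretic injectivity argument has the side benefit of directly giving Corollary~4.4 (determination by trace images) and being reusable in Proposition~4.5, while your explicit-basis argument avoids the need to verify integrality of a rational lifting formula and makes it transparent that $\theta^+_A$ loses no index onto $\Phi$.
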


\begin{proof} It is easy to see that $\Phi$ contains the image of
 $\theta_A^+$ by construction.

\medskip
\noindent \textbf{Injectivity}. Take an element $y$ from the kernel of
 $\theta_A^+$ and let $\rho$ be an arbitrary Artin representation of
 $G$. Note that $\rho$ is isomorphic to
 a \mbox{$\Z[1/p]$-linear} combination $\sum_{U\in \mathfrak{F}_A} a_U
 \mathrm{Ind}^G_U \chi_U$ by condition $(\sharp)_A$
 where each $\chi_U$ is a finite-order character of the abelian
 group $U$. If we let $\chi_{\rho}$ denote the character
 associated to the Artin representation $\rho$, 
 we obtain the equation
\begin{align*}
\chi_{\rho} (y) =\sum_{U\in \mathfrak{F}_A} a_U \chi_U \circ \Tr_{\Z_p[[\conj{G}]]/\Z_p[[U]]} (y)
\end{align*}
by the explicit formula for induced characters 
 \cite[Section~7.2]{Serre1}. This implies that  
 $\chi_{\rho} (y)$ vanishes (recall that $y$ is an element 
 in the kernel of $\theta_A^+$); in other words, 
 the evaluation at $y$ of an arbitrary class function on $G$
 is equal to zero. Therefore $y$ itself is trivial.

\medskip
\noindent \textbf{Surjectivity}. For an arbitrary element $y_{\bullet}$ in $\Phi$, let $y$ be the element in $\Z_p[[\conj{G}]]$ defined by
\begin{align*}
y=p^{-N} [y_e]+\sum_{h\in \mathfrak{H}\setminus\{e\}} p^{-n_h+1} ([y_h]-p^{-1}[y_e])
\end{align*}
(we use the bracket $[\, \cdot \,]$ for the corresponding 
 element in $\Z_p[[\conj{G}]]$).
Then it is not difficult at all to check that 
 the image of $y$ under the map $\theta_A^+$ coincides with
 $y_{\bullet}$.
\end{proof}

\begin{cor} \label{cor:determine}
Every element $y$ in $\Z_p[[\conj{G}]]$ is completely determined by
 its trace images $\{\theta_{U}^+(y)\}_{U\in \mathfrak{F}_A}$.
\end{cor}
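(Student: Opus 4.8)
The plan is to read off the statement directly from Proposition~\ref{prop:addtheta}. That proposition asserts that $\theta_A^+ = (\theta^+_U)_{U\in\mathfrak{F}_A}$ is an \emph{isomorphism} of $\Z_p$-modules from $\Z_p[[\conj{G}]]$ onto $\Phi \subseteq \prod_{U\in\mathfrak{F}_A}\Z_p[[U]]$; in particular it is injective. Hence if $y$ and $y'$ in $\Z_p[[\conj{G}]]$ satisfy $\theta^+_U(y) = \theta^+_U(y')$ for every $U$ in $\mathfrak{F}_A$, then $\theta_A^+(y) = \theta_A^+(y')$ and therefore $y = y'$, which is exactly the assertion. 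So the first (and essentially only) step is to invoke injectivity of $\theta_A^+$.

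If one prefers a self-contained argument not relying on surjectivity, I would simply rerun the injectivity argument from the proof of Proposition~\ref{prop:addtheta}. Suppose $\theta^+_U(y) = 0$ for all $U \in \mathfrak{F}_A$. By Proposition~\ref{prop:artfamily} the family $\mathfrak{F}_A$ is Artinian, so an arbitrary Artin representation $\rho$ of $G$ decomposes as a $\Z[1/p]$-linear combination $\sum_{U\in\mathfrak{F}_A} a_U\, \ind{G}{U}{\chi_U}$ of characters induced from finite-order characters $\chi_U$ of the abelian groups $U$. The explicit formula for induced characters \cite[Section~7.2]{Serre1} is compatible with the trace homomorphisms, in the sense that $\chi_{\ind{G}{U}{\chi_U}}(y) = \chi_U\bigl(\Tr_{\Z_p[[\conj{G}]]/\Z_p[[U]]}(y)\bigr)$, so evaluating at $y$ gives $\chi_{\rho}(y) = \sum_{U\in\mathfrak{F}_A} a_U\, \chi_U(\theta^+_U(y)) = 0$. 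Thus every class function on $G$ vanishes at $y$; since such class functions (valued in a sufficiently large finite extension of $\Q_p$) separate elements of $\Z_p[[\conj{G}]]$, we conclude $y = 0$, and the general case follows by applying this to $y - y'$.

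There is no real obstacle here: the corollary is a formal consequence of injectivity, which has already been established. The only point worth keeping in mind is that one must work with the Artinian family $\mathfrak{F}_A$, so that the induction formula involves $\Z[1/p]$-coefficients rather than $\Z$-coefficients; this is harmless because the natural pairing between class functions on $G$ and $\Z_p[[\conj{G}]]$ becomes perfect after inverting $p$, and $\Z_p[[\conj{G}]]$ is $p$-torsion-free.
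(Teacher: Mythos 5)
Your argument is correct and is exactly what the paper intends: the corollary is an immediate consequence of the injectivity of $\theta_A^+$ established in Proposition~\ref{prop:addtheta}, and the paper itself offers no further proof for that reason. Your optional self-contained rerun of the injectivity argument is also accurate and matches the one given in the proof of Proposition~\ref{prop:addtheta}.
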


Next we extend the notion of the additive theta isomorphism 
to the~Brauer family $\mathfrak{F}_B$; for each $(U,V)$ in $\mathfrak{F}_B$
let $\theta_{U,V}^+$ be the composite map 
\begin{align*}
\Z_p[[\conj{G}]] \xrightarrow{\Tr_{\Z_p[[\conj{G}]]/\Z_p[[\conj{U}]]}}
 \Z_p[[\conj{U}]] \xrightarrow{\mathrm{canonical}} \Z_p[[U/V]]
\end{align*}
and set
$\theta^+_B=(\theta_{U,V}^+)_{(U,V)\in \mathfrak{F}_B}$. 
We define the $\Z_p$-submodule $\Phi_B$ of the direct product 
$\prod_{(U,V)\in \mathfrak{F}_B} \Z_p[[U/V]]$ 
as the submodule consisting of all elements 
$(y_{U,V})_{(U,V)\in
\mathfrak{F}_B}$ satisfying the following {\em trace compatibility condition}
(TCC, see Figure~\ref{fig:tcc}):
\begin{quote}
 the equation 
 $\Tr_{\Z_p[[U/V]]/\Z_p[[U'/V]]}(y_{U,V})=\can^{V'}_V(y_{U',V'})$ holds for
 arbitrary pairs
 $(U,V)$ and $(U',V')$ in $\mathfrak{F}_B$ such that $U$ contains $U'$ 
 and $U'$ contains $V$ respectively 
 (we denote by $\can^{V'}_V$ the canonical surjection 
 $\Z_p[[U'/V']]\rightarrow \Z_p[[U'/V]]$);
\end{quote}
and the following {\em conjugacy compatibility condition} (CCC$+$):
\begin{quote}
the equation 
 $y_{U',V'}=\psi_a(y_{U,V})$ holds 
 if $(U,V)$ and $(U',V')$ are elements in
 $\mathfrak{F}_B$ such that $U'=a^{-1}Ua$ and $V'=a^{-1}Va$ hold for 
 a certain element $a$ in $G$ 
 (we denote by $\psi_a$ the isomorphism 
 $\Z_p[[U/V]] \xrightarrow{\sim} \Z_p[[U'/V']]$ induced 
 by the conjugation $U/V
 \rightarrow U'/V'; u \mapsto a^{-1}ua$).
\end{quote}
Note that we may naturally regard $\mathfrak{F}_A$ as a subfamily of
$\mathfrak{F}_B$ (by identifying $U$ in~$\mathfrak{F}_A$ with the pair 
$(U,\{e\})$ in~$\mathfrak{F}_B$).

\begin{figure}
\begin{align*}
\xymatrix{
\Z_p[[U/V]] \ar[dr]_{\mathrm{trace}} &  &  \Z_p[[U'/V']]
 \ar[dl]^{\can^{V'}_V}  \\
 & \Z_p[[U'/V]]
}
\end{align*}
\caption{Trace compatibility condition for $\mathfrak{F}_B$.}
\label{fig:tcc}
\end{figure}

\begin{prop} \label{prop:addtheta-B}
Let $(y_{U,V})_{(U,V)\in \mathfrak{F}_B}$ be an element in
 $\Phi_B$ and assume that $(y_{U,\{e\}})_{U\in \mathfrak{F}_A}$ is contained in
 $\Phi$. Then there exists a unique element $y$ in
 $\Z_p[[\conj{G}]]$ which satisfies $\theta_B^+(y)=(y_{U,V})_{(U,V)\in
 \mathfrak{F}_B}$. 
\end{prop}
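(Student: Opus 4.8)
The plan is to reduce the statement to Proposition~\ref{prop:addtheta} and Corollary~\ref{cor:determine} for the Artinian family $\mathfrak{F}_A$, which I regard as the subfamily $\{(U,\{e\})\}$ of $\mathfrak{F}_B$. Uniqueness is then immediate: if $\theta_B^+(y)=\theta_B^+(y')$ then $\theta_U^+(y)=\theta_U^+(y')$ for all $U\in\mathfrak{F}_A$, whence $y=y'$ by Corollary~\ref{cor:determine}. For existence, the hypothesis that $(y_{U,\{e\}})_{U\in\mathfrak{F}_A}$ lies in $\Phi$ lets me apply Proposition~\ref{prop:addtheta} to obtain $y\in\Z_p[[\conj{G}]]$ with $\theta_A^+(y)=(y_{U,\{e\}})_{U\in\mathfrak{F}_A}$; it remains to verify $\theta_{U,V}^+(y)=y_{U,V}$ for every $(U,V)\in\mathfrak{F}_B$. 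I would establish this by proving two facts: (a) the image of $\theta_B^+$ is contained in $\Phi_B$; and (b) an element of $\Phi_B$ is uniquely determined by its components indexed by $\mathfrak{F}_A$. Given (a) and (b), the elements $\theta_B^+(y)$ and $(y_{U,V})_{(U,V)}$ of $\Phi_B$ agree on $\mathfrak{F}_A$ and hence coincide.

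Step (a) is a routine formal check: TCC for $\theta_B^+$ follows from transitivity of the trace homomorphisms together with their compatibility with the canonical quotient maps (one must keep track of the $p$-power multiplicities appearing upon passage to an abelian quotient), and CCC$+$ follows because the trace homomorphism intertwines the conjugation isomorphisms $\psi_a$, as one sees by conjugating a chosen system of coset representatives.

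For step (b) I write each group in $\mathfrak{F}_B$ as $U^f\times\Gamma$ (legitimate since $U\supseteq\Gamma$) and argue in three stages. First, every cyclic subgroup of $G^f$ is $G^f$-conjugate to a unique $U_h^f$ with $h\in\mathfrak{H}$, so CCC$+$ determines the component at $(\langle h'\rangle\times\Gamma,\{e\})$ for every cyclic subgroup $\langle h'\rangle$ of $G^f$ from the $\mathfrak{F}_A$-data. Next, for an arbitrary abelian $W=W^f\times\Gamma$ in $\mathfrak{F}_B$ I expand its component as $\sum_{a\in W^f}a\,z_a$ with $z_a\in\Z_p[[\Gamma]]$; since $W^f$ has exponent $p$ its distinct cyclic subgroups meet only in $\{e\}$ and cover $W^f$, so the traces of the component to the various $\Z_p[[\langle h'\rangle\times\Gamma]]$ — which by TCC equal the components just determined, up to harmless $p$-power multiples, $\Z_p[[\Gamma]]$ being $p$-torsion free — recover every $z_a$. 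Finally, I would induct on $|U^f|$ to treat non-abelian $U^f$: exponent $p$ forces $U^f/[U^f,U^f]\cong\F_p^d$ with $d\geq 2$ (were $d=1$ the Burnside basis theorem would make $U^f$ cyclic, hence abelian); by TCC the traces of the component $y_{U,V}$ (with $V=[U^f,U^f]$) to the subalgebras $\Z_p[[(Q/V)\times\Gamma]]$, for $Q$ ranging over the proper subgroups of $U^f$ containing $V$, equal the canonical images of the inductively determined components $y_{Q\times\Gamma,[Q,Q]}$; and since an $\F_p$-space of dimension at least $2$ is the union of its hyperplanes, reading off coefficients from these traces over the proper subgroups $Q/V\subsetneq\F_p^d$ determines $y_{U,V}$.

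The main obstacle is precisely this last stage, and it is where the exponent-$p$ hypothesis genuinely enters: one cannot simply restrict $y_{U,V}$ along abelian subgroups of $U$ surjecting onto the abelianisation $U/V$, because when $U^f$ has nilpotency class greater than $2$ the preimage in $U$ of a subgroup of $U/V$ need not be abelian. Recovering $y_{U,V}$ instead from its traces to the (possibly non-abelian) proper subgroups $Q\supseteq[U^f,U^f]$ works only because $d\geq 2$ guarantees that the proper subgroups of $U^f/[U^f,U^f]$ still cover it — a one-dimensional $\F_p$-space is not the union of its proper subspaces, whereas a higher-dimensional one is — and exponent $p$ is exactly what forces $d\geq 2$ outside the abelian case.
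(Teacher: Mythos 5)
Your argument is correct and follows essentially the same strategy as the paper's proof: reduce to showing that the projection onto the $\mathfrak{F}_A$-components is injective on $\Phi_B$, then establish this by induction on $\sharp U^f$ using (TCC), (CCC$+$), and the $p$-torsion-freeness of $\Z_p[[\Gamma]]$. The only cosmetic difference is that you trace to arbitrary proper subgroups $Q\supseteq V^f$ and cover $W=U^f/V^f$ by hyperplanes, whereas the paper traces to the preimages $U_x$ of cyclic subgroups $\langle x\rangle\subseteq W$; both hinge on $d=\dim_{\F_p}W\geq 2$. One remark in your closing paragraph should be corrected: $d\geq 2$ is a consequence of $U^f$ being non-cyclic (Burnside basis theorem), not of the exponent-$p$ hypothesis; exponent $p$ only guarantees that $W$ is elementary abelian, which is convenient but not what makes the covering argument work.
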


\begin{proof}
Consider the following commutative diagram (we denote 
the canonical projection by $\mathrm{proj}$):
\begin{align*}
\xymatrix{
\Z_p[[\conj{G}]] \ar[rr]^(0.45){\theta_B^+} \ar@{=}[d] & &
 \prod_{(U,V)\in \mathfrak{F}_B} \Z_p[[U/V]]
 \ar[d]^{\mathrm{proj}} \\
\Z_p[[\conj{G}]] \ar[r]_(0.7){\theta_A^+}^(0.7){\sim} & \Phi \ar@{^(->}[r] &
 \prod_{U\in \mathfrak{F}_A} \Z_p[[U]].
}
\end{align*}
Then 
 $\mathrm{proj}((y_{U,V})_{(U,V)\in \mathfrak{F}_B})$ is contained in
 $\Phi$ by assumption, and thus there exits a unique element $y$ 
 in $\Z_p[[\conj{G}]]$ which corresponds to 
 the element $\mathrm{proj}((y_{U,V})_{(U,V)\in\mathfrak{F}_B})$ via 
 the additive theta isomorphism $\theta_A^+$ for $\mathfrak{F}_A$
 (Proposition~\ref{prop:addtheta}). We have to show that 
 $\theta_B^+(y)$ coincides with $(y_{U,V})_{(U,V)\in \mathfrak{F}_B}$,
 and for this purpose it suffices to
 show that $\mathrm{proj}$ induces an injection on $\Phi_B$ (note
 that the element $(\theta^+_{U,V}(y))_{(U,V)\in \mathfrak{F}_B}$ 
 obviously satisfies both (TCC) and (CCC$+$)
 by construction; hence $\theta_B^+(y)$ is  an element in $\Phi_B$).
Let $(z_{U,V})_{(U,V)\in \mathfrak{F}_B}$ be an element in $\Phi_B$
 satisfying the following equation:
\begin{align} \label{eq:indfirst}
\mathrm{proj}((z_{U,V})_{(U,V)\in \mathfrak{F}_B})=(z_{U,\{e\}})_{U\in \mathfrak{F}_A}=0.
\end{align}
 We shall prove that $z_{U,V}=0$ holds for each $(U,V)$ in $\mathfrak{F}_B$ 
 by induction
 on the cardinality of $U^f$. 
 First note that $z_{U,\{e\}}=0$ holds for $(U,\{ e\})$ 
 if the cardinality of $U^f$ is smaller than $p$ 
 (use (\ref{eq:indfirst}) and
 the conjugacy compatibility condition (CCC$+$)). Now let $(U,V)$ be  
 an element in $\mathfrak{F}_B$ such that the degree of $U^f$ 
 is equal to $p^k$ for certain $k$ larger than $2$ and set $W=U^f/V^f$. 
 Then the abelian group
 $W$ is isomorphic to $(\Z/p\Z)^{\oplus d}$ 
 for a certain natural number $d$ smaller than $k$
 (the structure theorem of finite abelian groups). Moreover
 we may assume that $d$ is 
 larger than $2$.\footnote{We may easily verify 
 that the cardinality of $V^f$ is 
 always smaller than $p^{k-2}$ by induction on the cardinality of $U^f$.}
 Since the element $z_{U,V}$ is represented 
 as a $\iw{\Gamma}$-linear combination $\sum_{w\in W} a_w w$, 
 it suffices to prove that $a_w$ equals zero for every $w$ in $W$.
 Take an arbitrary element $x$ of degree $p$ in $W$, and let~$U_x^f$
 denote the inverse image of $\langle x \rangle$---the cyclic subgroup
 of $W$ generated by $x$--- under the canonical
 surjection $U^f \rightarrow W$. Obviously the cardinality of $U_x^f$ 
 is strictly smaller than $p^k$. 
 If we set $U_x=U_x^f \times \Gamma$, we may explicitly calculate the image 
 of $z_{U,V}$ under the trace map from $\Z_p[[U/V]]$ to $\Z_p[[U_x/V]]$
 as $\sum_{i=0}^{p-1} p^{d-1} a_{x^i} x^i$. On the other
 hand the element $z_{U_x,V_x}$ is equal to zero by
 induction hypothesis (here $V_x$ denotes the commutator subgroup of $U_x$). 
 Therefore $a_{x^i}=0$ holds for each $i$ by (TCC). 
 Replacing $x$ appropriately, we may verify that
 $a_w=0$ holds for every $w$ in $W$.
\end{proof}

%
%
\section{Preliminaries for logarithmic translation} \label{sc:prelim}
%
%

This section is devoted to technical preliminaries 
for arguments in Section~\ref{sc:trans}.

%
\subsection{Augmentation theory} \label{ssc:aug}
%

For each $(U,V)$ in $\mathfrak{F}_B$, let $\aug_{U,V}$ denote 
the augmentation map
from $\iw{U/V}$ to $\iw{\Gamma}$ (namely it is 
a ring homomorphism induced by the projection $U/V\rightarrow \Gamma$), 
and let $\overline{\aug}_{U,V} \colon
\riw{U/V} \rightarrow \riw{\Gamma}$ be its reduction modulo $p$. 
Let $\varphi \colon \iw{G}\rightarrow \iw{\Gamma}$
denote ``the Frobenius endomorphism'' on $\iw{G}$ defined as the composition
\begin{align*}
\iw{G} \xrightarrow{\aug_G} \iw{\Gamma} \xrightarrow{\varphi_{\Gamma}}
 \iw{\Gamma}
\end{align*}
where $\aug_G$ is the canonical augmentation map and 
$\varphi_{\Gamma}$ is the Frobenius endomorphism on $\iw{\Gamma}$ induced
by $\gamma \mapsto \gamma^p$. 
Let $\theta_{U,V}$ denote the composition of the norm map
 $\Nr_{\iw{G}/\iw{U}}$ with the canonical map $K_1(\iw{U})\rightarrow
 \iw{U/V}^{\times}$.

The author is grateful to Takeshi Tsuji for presenting the following 
useful proposition to him.

\begin{prop} \label{prop:cong-j}
Let $(U,V)$ be an element in $\mathfrak{F}_B$ and $J_{U,V}$ 
the kernel of the composite map
\begin{align*}
\iw{U/V} \xrightarrow{\aug_{U,V}} \iw{\Gamma} \xrightarrow{\mathrm{mod}\, p}
 \riw{\Gamma}.
\end{align*}
Then the element $\varphi(x)^{-(G:U)/p} \theta_{U,V}(x)$ is contained 
in $1+J_{U,V}$
 for each $x$ in $K_1(\iw{G})$ if $U$ is a {\em proper} subgroup of $G$.
In other words, the congruence 
$\theta_{U,V}(x) \equiv \varphi(x)^{(G:U)/p} \, \mod{J_{U,V}}$ 
holds unless $U$ coincides with $G$.
\end{prop}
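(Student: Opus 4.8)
The plan is to reduce the statement to the case of a single-generator argument on $K_1$ and then track what the norm, canonical and augmentation maps do to an explicit generator. First I would recall that $K_1(\iw{G})$ is generated by elements of $\GL_n(\iw{G})$, and by a standard Morita/stabilisation argument together with the fact that $\iw{G}$ is a semilocal ring (a product of matrix rings over local rings modulo its Jacobson radical), it suffices to treat $x=[u]$ for $u$ a unit of $\iw{G}$; moreover, since $1+J_{U,V}$ is a subgroup of $\iw{U/V}^{\times}$ and $\varphi$, $\theta_{U,V}$ are group homomorphisms on $K_1$, both sides are multiplicative in $x$, so it is enough to check the congruence on a generating set. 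Reducing modulo $p$, one is comparing the image of $u$ under $K_1(\iw{G})\to K_1(\riw{G})$ pushed through the mod-$p$ norm and canonical maps with $\overline{\varphi}(u)^{(G:U)/p}$ in $\riw{\Gamma}^\times$, so the whole statement becomes a congruence in characteristic $p$.

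Next I would make the norm map explicit. Writing $G=G^f\times\Gamma$ and choosing coset representatives of $U$ in $G$, the norm $\Nr_{\iw{G}/\iw{U}}$ on $K_1$ is computed (after reduction mod $p$, where $\riw{G}$ and $\riw{U}$ are more tractable) by the determinant of the regular representation of $\iw{G}$ as a free $\iw{U}$-module of rank $(G:U)$. The key point is the interaction with the augmentation: after applying $\aug_{U,V}$ and reducing mod $p$, the target $\riw{\Gamma}$ is a commutative local ring, conjugacy collapses, and the regular-representation determinant of a $p$-block of coset representatives permuted cyclically by a $p$-element contributes a $p$-th power (indeed an $(G:U)/p$-fold product of such), which is exactly where the exponent $(G:U)/p$ and the Frobenius $\gamma\mapsto\gamma^p$ enter: for $a$ a $p$-element the matrix of multiplication by $a$ on $\F_p[\langle a\rangle]$ has determinant whose image under $\gamma\mapsto\gamma^p$-type identification recovers $a^p$. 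Working this out on each $U$-orbit of coset representatives, the leading mod-$p$/mod-$J_{U,V}$ term of $\theta_{U,V}(u)$ is $\overline{\varphi}(u)^{(G:U)/p}$, and the properness of $U$ guarantees $(G:U)/p$ is a positive integer so the statement is not vacuous; the discrepancy between the honest norm and this leading term lies in $1+J_{U,V}$ precisely because the off-diagonal (non-$p$-stable) coset contributions die modulo the augmentation ideal mod $p$.

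I expect the main obstacle to be making the "regular-representation determinant collapses to a $p$-th power modulo $J_{U,V}$" step rigorous and uniform in $(U,V)$, rather than case-by-case: one must argue that for any unit $u$ of $\iw{G}$, after writing $\iw{G}$ as an $\iw{U}$-module with basis indexed by $G/U$ and passing to $\riw{U/V}\to\riw{\Gamma}$, the matrix of multiplication by $u$ becomes, modulo $J_{U,V}$, block-diagonal with each block a companion-type matrix for a $p$-cycle, whose determinant is congruent mod $J_{U,V}$ to the image of $u$ twisted by $p$-th power on $\Gamma$. A clean way to organise this is to factor the norm through $U'=U^f\Gamma$-intermediate steps or to use the well-known formula for the norm on $K_1$ of a group ring under an index-$p$ inclusion (already exploited by Kato, Ritter--Weiss and Kakde), and then iterate; the subtlety is bookkeeping the augmentation kernel $J_{U,V}$ correctly at each stage and checking it is stable under the maps involved. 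Once that determinant computation is in hand, the congruence $\theta_{U,V}(x)\equiv\varphi(x)^{(G:U)/p}\ \mod{J_{U,V}}$ follows by multiplicativity from the generators, and the hypothesis $U\subsetneq G$ is used only to ensure $p\mid (G:U)$.
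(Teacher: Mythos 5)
Your opening reduction (pass to units $u\in\iw{G}^{\times}$ by semilocality, then reduce modulo $p$ using $p$-adic completeness) matches the paper's first step, but the core of your argument diverges from the paper's and stalls at the point you yourself flag. You propose to compute the norm determinant explicitly by writing $\iw{G}$ as a free $\iw{U}$-module on coset representatives and arguing that, modulo $J_{U,V}$, the matrix of multiplication by $u$ becomes block-diagonal with $p$-cycle companion blocks. This picture is literally correct only when $u$ is a single group element $g\in G$; for a general unit $u=\sum_{g}a_g g$ the coset matrix has cross-terms indexed by all of $G$, and you never establish that these collapse modulo $J_{U,V}$ to produce $\bar\varphi(u)^{(G:U)/p}$. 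That is the entire content of the proposition, so the sketch is genuinely incomplete rather than merely informal.

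The paper avoids this computation altogether by a simple factorisation. Work mod $p$, set $z=\overline{\aug}_G(x)\in\riw{\Gamma}^{\times}$ (a central scalar) and $y=xz^{-1}$, so $\overline{\aug}_G(y)=1$. Multiplicativity gives $\bar\theta_{U,V}(x)=\bar\theta_{U,V}(y)\,\bar\theta_{U,V}(z)$, and the central piece is trivial to handle: $\bar\theta_{U,V}(z)=z^{(G:U)}$ because $z$ is a scalar, while $\bar\varphi(x)=\bar\varphi(z)=z^p$ because the Frobenius on $\F_p[[T]]$ is the literal $p$-th power map. These cancel exactly, leaving $\bar\varphi(x)^{-(G:U)/p}\bar\theta_{U,V}(x)=\bar\theta_{U,V}(y)$. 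One then shows $\bar\theta_{U,V}(y)\in 1+\bar J_{U,V}$ with no determinant bookkeeping: since $G^f$ is a finite $p$-group and $\overline{\aug}_G(y)=1$, Lemma~\ref{lem:nilp} (nilpotence of the augmentation ideal of $K[\Delta]$ in characteristic $p$) gives $y^{p^N}=1$, hence $\bar\theta_{U,V}(y)^{p^N}=1$, and applying the same lemma in $\riw{U/V}$ yields $\overline{\aug}_{U,V}(\bar\theta_{U,V}(y))^{p^N}=1$; since $\riw{\Gamma}\cong\F_p[[T]]$ is a domain this forces $\overline{\aug}_{U,V}(\bar\theta_{U,V}(y))=1$. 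The factorisation $x=yz$ is exactly the device that makes the cross-terms you were worried about disappear before any determinant is written down. Your remark that properness of $U$ serves only to make $(G:U)/p$ a positive integer is correct and is the reason $U=G$ is excluded, as the paper notes after Proposition~\ref{prop:cong-jmodp}.
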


Before the proof we remark that 
the image of an element $x$ in $K_1(\iw{G})$ under the map $\theta_{U,V}$ 
can be calculated 
as follows: since 
the Iwasawa algebra $\iw{G}$ is regarded as a left free
$\iw{U}$-module of finite rank $r=(G:U)$, the right-multiplication-$x$ map
is represented by an invertible matrix $A_x$ 
with entries in $\iw{U}$.\footnote{By abuse of notation, 
we use the same symbol $x$ for an arbitrary
lift of $x$ to $\iw{G}^{\times}$.}
The element $\theta_{U,V}(x)$ then coincides with the determinant of 
the image of $A_x$ under the canonical map 
$\GL_r(\iw{U}) \rightarrow \GL_r(\iw{U/V})$.

\begin{proof}
 The claim is equivalent to the following
 Proposition \ref{prop:cong-jmodp} since both $\iw{G}$ and $\iw{U/V}$ are
 \p-adically complete.
\end{proof}

Let $\bar{\varphi} \colon \riw{G} \rightarrow \riw{\Gamma}$ denote the
Frobenius endomorphism on $\riw{G}$ defined as $\varphi \otimes_{\Z_p}
\F_p$ and 
let $\bar{\theta}_{U,V}$ denote the composition of the norm map
 $\Nr_{\riw{G}/\riw{U}}$ with the canonical map $K_1(\riw{U})\rightarrow
 \riw{U/V}^{\times}$.

\begin{prop} \label{prop:cong-jmodp}
Let $\bar{J}_{U,V}$ be the kernel of the augmentation map
\begin{align*}
\riw{U/V} \xrightarrow{\overline{\aug}_{U,V}} \riw{\Gamma}.
\end{align*}
Then the element defined as $\bar{\varphi}(x)^{-(G:U)/p}
 \bar{\theta}_{U,V}(x)$ is contained in $1+\bar{J}_{U,V}$
 for each $x$ in $K_1(\riw{G})$. 
\end{prop}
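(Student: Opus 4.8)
The plan is to reduce the claim over the (complicated, non-commutative) group $G$ to the already-known abelian situation by using the Artinian family $\mathfrak{F}_A$ and the compatibility of the norm maps $\bar\theta_{U,V}$ with further restriction. Observe first that it suffices to treat the case $V=\{e\}$, i.e.\ $U$ abelian: indeed, the canonical surjection $\riw{U}^{\times}\to\riw{U/V}^{\times}$ carries $\bar J_{U,\{e\}}$ onto $\bar J_{U,V}$ and is compatible with $\bar\varphi$ and with the two norm maps, so the general congruence follows by applying this surjection to the congruence for $(U,\{e\})$. Hence from now on $U$ is an abelian open subgroup of $G$ containing $\Gamma$, with finite part $U^f$ of order dividing $p^N$.

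Next I would reduce $G$ to a smaller subgroup. Pick a chain of open subgroups $G=W_0\supsetneq W_1\supsetneq\dots\supsetneq W_m=U$, each of index $p$ in the previous one and each normal in the previous one (possible since $G^f$ is a $p$-group, so every subgroup sits in such a chain). Because the norm map in $K$-theory is transitive, $\bar\theta_{U,\{e\}}$ factors through $\bar\theta_{W_1,[W_1,W_1]}\colon K_1(\riw{G})\to\riw{W_1/[W_1,W_1]}^{\times}$, and similarly $\bar\varphi$ factors through the augmentation to $\riw{W_1/[W_1,W_1]}$; so by induction on $|G^f|$ it is enough to prove two things: (i) the base case where $(G:U)=p$ and $G/[G,G]$ is abelian, and (ii) that the congruence modulo $\bar J_{?,?}$ for the pair one step down implies the congruence modulo $\bar J_{U,\{e\}}$ for $U$, which is again just the functoriality/surjectivity remark of the previous paragraph together with transitivity of norms. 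This packages the whole statement into the single case $(G:U)=p$, $G$ replaced by a group $\tilde G$ with $\tilde G/[\tilde G,\tilde G]$ abelian and $U\triangleleft\tilde G$ of index $p$.

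For that base case I would argue directly with the matrix description recalled just before the proof. Fix coset representatives $1,g,g^2,\dots,g^{p-1}$ of $U$ in $\tilde G$ with $g^p\in U$, so that $\riw{\tilde G}$ is free over $\riw{U}$ on this basis, and for a lift $x\in\riw{\tilde G}^{\times}$ let $A_x\in\GL_p(\riw{U})$ represent right multiplication by $x$; then $\bar\theta_{U,\{e\}}(x)=\det(\bar A_x)$ with $\bar A_x$ the image in $\GL_p(\riw{U})$. Reducing modulo $\bar J_{U,\{e\}}$ means applying $\overline\aug_{U}\circ(\mathrm{mod}\,p)$, i.e.\ pushing all entries to $\riw{\Gamma}=\F_p[[T]]$ via $u\mapsto \bar u$ where $\bar u$ is the image of $u$ in $\Gamma$. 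Under this map the matrix $A_x$ becomes, up to conjugation, the companion-type matrix of the action of $x$ on the $p$-dimensional $\riw{\Gamma}$-space $\riw{\tilde G}\otimes_{\riw{U}}\riw{\Gamma}$; since $\riw{\Gamma}$ has characteristic $p$ and the coset action of $g$ has order $p$, a short computation (Frobenius-linearity: the $p\times p$ cyclic shift has determinant $\pm1$ and, more to the point, $\det$ of the image matrix equals $\overline\aug_{\tilde G}$ applied to the "norm" $\prod_{i}g^{-i}xg^{i}$, whose augmentation is $\bar x^{\,p}$ essentially because conjugation acts trivially on $\Gamma$) shows $\det(\bar A_x)\equiv \bar\varphi(x)^{(\tilde G:U)/p}$, i.e.\ $\bar\varphi(x)^{1}$, modulo $\bar J_{U,\{e\}}$. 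The key points that make this work are that $\bar J_{U,\{e\}}$ is exactly the augmentation kernel into $\riw{\Gamma}$, that $\Gamma$ is central-and-commutes-with-everything so conjugation is invisible after augmentation, and that we are in characteristic $p$ so the $p$-th-power map is additive.

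The main obstacle I expect is the bookkeeping in this last step: identifying $\det(\bar A_x)$ with $\bar\varphi(x)$ cleanly. The honest way is probably to note that $\bar\theta_{U,\{e\}}(x)\bmod \bar J_{U,\{e\}}$ and $\bar\varphi(x)$ are both multiplicative in $x$, so it suffices to check the congruence on a generating set of $K_1(\riw{\tilde G})$ — e.g.\ on images of group elements $\sigma\in\tilde G$ and on $1+(\text{units close to }1)$ — and for a group element the matrix $A_\sigma$ is a monomial (permutation-scaled) matrix whose determinant is immediately $\prod_i \sigma^{(i)}$ with $\sigma^{(i)}$ the "diagonal-ized" conjugates, and whose augmentation is $\bar\sigma^{p}$ if $\sigma\notin U$ and $\bar\sigma^{\,?}$-type expression reducing to the same thing if $\sigma\in U$; meanwhile $\bar\varphi(\sigma)=\gamma$-image of $\sigma^p$, which matches. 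For the $1+($small$)$ generators one uses that such elements lie in the kernel of everything in sight modulo the relevant ideal. Carrying this out carefully — and making sure the index-exponent $(\tilde G:U)/p$ is tracked correctly through the induction so that one recovers exactly $(G:U)/p$ at the end — is the only real work; everything else is formal functoriality of norms, augmentations, and the Frobenius.
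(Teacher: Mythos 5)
Your plan and the paper's proof rest on the same underlying fact---that over a characteristic-$p$ coefficient ring the group ring of a $p$-group acts unipotently, so that after applying $\overline{\aug}_{U,V}$ the only visible data is a power of $\overline{\aug}_G(x)$---but the two preliminary reductions you set up are flawed and the base-case computation is left incomplete. The ``reduction to $V=\{e\}$'' is vacuous: a pair $(U,\{e\})$ belongs to $\mathfrak{F}_B$ only when $U$ is abelian (since by definition $V$ is the commutator subgroup of $U$), and for non-abelian $U$ there is simply no map $\bar\theta_{U,\{e\}}\colon K_1(\riw{G})\to\riw{U}^{\times}$ to which one could apply the canonical surjection. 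The chain-induction step is also wrong as stated: by transitivity of norms $\bar\theta_{U,V}^G$ factors through $K_1(\riw{W_1})$, \emph{not} through $\riw{W_1/[W_1,W_1]}^{\times}$; passing to the abelianisation of $W_1$ discards exactly the information that the subsequent norm $K_1(\riw{W_1})\to K_1(\riw{U})$ retains. To repair the induction you would instead have to apply the hypothesis to $\Nr_{\riw{G}/\riw{W_1}}(x)\in K_1(\riw{W_1})$ and independently establish $\overline{\aug}_{W_1}(\Nr_{\riw{G}/\riw{W_1}}(x))=\overline{\aug}_G(x)^p$; but that identity already contains the entire difficulty, and once you have it a chain is superfluous. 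Finally, the base-case determinant argument is only carried out for group elements $\sigma\in\tilde G$, with a gesture at ``$1+(\text{small})$'' generators; you would need an explicit generating set for $K_1(\riw{\tilde G})$ and a case-by-case check, which is where the real work lives.

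The paper avoids all of this. It writes $x=yz$ with $z=\overline{\aug}_G(x)$ central and $\overline{\aug}_G(y)=1$, so that $\bar\varphi(x)^{-(G:U)/p}\bar\theta_{U,V}(x)=\bar\theta_{U,V}(y)$; then Lemma~\ref{lem:nilp} (precisely the upper-triangularisation over $K=\mathrm{Frac}(\riw\Gamma)$ you allude to) gives $y^{p^N}=\overline{\aug}_G(y)^{p^N}=1$, hence $\bar\theta_{U,V}(y)^{p^N}=1$, and likewise $\overline{\aug}_{U,V}(\bar\theta_{U,V}(y))^{p^N}=\bar\theta_{U,V}(y)^{p^N}=1$; since $\riw\Gamma\cong\F_p[[T]]$ is a domain this forces $\overline{\aug}_{U,V}(\bar\theta_{U,V}(y))=1$, i.e.\ $\bar\theta_{U,V}(y)\in 1+\bar J_{U,V}$. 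No generating set of $K_1$, no coset bookkeeping, and no induction on subgroups are ever needed. If you want to keep your matrix picture, the cleanest route is to prove in one step that $\det(\bar A_x)=\overline{\aug}_G(x)^{(G:U)}$ for \emph{every} $x\in\riw{G}^{\times}$ by triangularising the rank-$(G:U)$ module $\riw{G}\otimes_{\riw{U}}\riw{\Gamma}$ over $K$---that is equivalent to Lemma~\ref{lem:nilp}---rather than checking on group elements and then trying to propagate through a chain.
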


\begin{rem}
Proposition~\ref{prop:cong-jmodp} is valid even if $U$ coincides with $G$ 
(indeed $\bar{\varphi}(x)$ can be described 
as a \p-th power of a certain element, 
see the proof of Proposition~\ref{prop:cong-jmodp}). 
However there exists an obstruction for taking the projective limit 
if the exponent $(G:U)/p$ of $\varphi(x)$ is {\em not} integral. 
Therefore the case where $U$ coincides with $G$ remains 
as an exception to Proposition~\ref{prop:cong-j}.
\end{rem}

Proposition~\ref{prop:cong-jmodp} is deduced 
from the following elementary lemma in modular representation theory.

\begin{lem} \label{lem:nilp}
Let $K$ be a field of positive characteristic $p$, $\Delta$ a finite
 \p-group and $V$ a finite dimensional representation space of $\Delta$ over
 $K$. Let $\aug$ denote the canonical augmentation map $K[\Delta]\rightarrow K$.
Take a natural number $n$ such that $p^n$ is larger 
than the $K$-dimension of $V$. 
Then for each $x$ in $K[\Delta]$, the action of $x^{p^n}$ upon $V$
 coincides with the multiplication by $\aug(x)^{p^n}$.
In particular the equation $x^{\sharp \Delta}=\aug(x)^{\sharp \Delta}$ holds.
\end{lem}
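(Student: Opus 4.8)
The plan is to exploit two standard facts about $K[\Delta]$ for a finite \p-group $\Delta$ over a field $K$ of characteristic $p$: that a scalar is central in $K[\Delta]$, so that a Frobenius-type identity is available; and that the augmentation ideal $I_{\Delta}=\Ker(\aug)$ acts nilpotently on any finite-dimensional module, with an explicit bound on the nilpotency degree coming from the dimension.

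First I would write $x=\aug(x)\cdot 1+y$ with $y=x-\aug(x)\cdot 1\in I_{\Delta}$. Since $\aug(x)\cdot 1$ lies in $K\cdot 1\subseteq Z(K[\Delta])$ it commutes with $y$, and since $K$ ---hence $K[\Delta]$--- has characteristic $p$, the binomial theorem (all intermediate coefficients $\binom{p^{n}}{k}$ with $0<k<p^{n}$ vanishing in $K$) yields the Frobenius identity $x^{p^{n}}=\aug(x)^{p^{n}}+y^{p^{n}}$ in $K[\Delta]$, valid for every natural number $n$.

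Next I would show that $I_{\Delta}^{\dim_{K}V}\cdot V=0$. The input here is that the trivial representation is the unique simple $K[\Delta]$-module for a \p-group $\Delta$ in characteristic $p$; I would either cite this or reprove it by induction on $\sharp\Delta$, using a central subgroup $Z$ of order $p$ together with the fact that $z-1$ is central and satisfies $(z-1)^{p}=0$. Consequently $V$ admits a composition series $0=V_{0}\subset V_{1}\subset\cdots\subset V_{\ell}=V$ with $\ell\leq\dim_{K}V$ and every subquotient trivial, so $I_{\Delta}V_{i}\subseteq V_{i-1}$ and therefore $I_{\Delta}^{\ell}V=0$; a fortiori $I_{\Delta}^{p^{n}}V=0$, because $p^{n}>\dim_{K}V\geq\ell$. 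As $y^{p^{n}}\in I_{\Delta}^{p^{n}}$, the operator $y^{p^{n}}$ annihilates $V$, and combining with the previous paragraph the operator $x^{p^{n}}$ acts on $V$ precisely as multiplication by the scalar $\aug(x)^{p^{n}}$.

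For the final assertion I would note that $\sharp\Delta$ is itself a power of $p$, so the Frobenius identity gives $x^{\sharp\Delta}=\aug(x)^{\sharp\Delta}+y^{\sharp\Delta}$; applying the bound of the previous step to the regular representation $V=K[\Delta]$ (of dimension $\sharp\Delta$) shows $I_{\Delta}^{\sharp\Delta}=0$, hence $y^{\sharp\Delta}=0$ and $x^{\sharp\Delta}=\aug(x)^{\sharp\Delta}$. The one point that genuinely requires care is keeping the noncommutativity of $K[\Delta]$ honest: the Frobenius identity may be invoked only for the commuting pair $\aug(x)\cdot 1$, $y$ ---which is exactly why the scalar part must be split off first--- and one must resist deriving the last equation directly from the first part of the lemma applied to the regular representation, since that would only yield the weaker $x^{p\cdot\sharp\Delta}=\aug(x)^{p\cdot\sharp\Delta}$. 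Everything else is routine modular representation theory.
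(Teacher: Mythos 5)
Your proof is correct and is essentially the route taken in the paper: a composition series with trivial subquotients realises $x$ as $\aug(x)\cdot\id$ plus a nilpotent operator of index at most $\dim_K V$ (equivalently, $x-\aug(x)\cdot 1$ lies in the augmentation ideal, which is nilpotent on any finite-dimensional module), and the characteristic-$p$ Frobenius identity for this commuting pair then gives the conclusion. Your closing caveat also catches a small imprecision in the paper's parenthetical hint, which proposes taking $n$ to be the $p$-order of $\Delta$ and ``applying the claim'' to $V=K[\Delta]$ even though $p^n=\sharp\Delta=\dim_K V$ fails the strict inequality $p^n>\dim_K V$; your fix---re-running the nilpotency argument directly with the non-strict bound $I_\Delta^{\sharp\Delta}=0$, which is exactly what the composition series yields---is the clean way to close the gap.
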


\begin{proof}
Let $d$ be the $K$-dimension of $V$.
The group ring $K[\Delta]$ is a local ring whose maximal ideal is the
 augmentation ideal since $K$ is of characteristic~$p$ and $\Delta$ is a
 \p-group. Therefore the only simple 
$K[\Delta]$-module (up to isomorphisms) is $K$ endowed with trivial
 $\Delta$-action, and moreover
there exists a Jordan-Schreier composition series 
\begin{align*}
V=V_1 \supsetneq V_2 \supsetneq \cdots \supsetneq V_d \supsetneq V_{d+1}=\{0\}
\end{align*}
such that each quotient space $V_i/V_{i+1}$ is 
isomorphic to $K$. Take 
an arbitrary element $e_i$ in $V_i$ not contained in
 $V_{i+1}$ for each $1\leq i\leq d$. Then $\{ e_1, e_2. \dotsc , e_d\}$
 forms a basis of $V$ over $K$, with respect to which the action of $x$
 is represented by an upper triangular matrix all of whose diagonal
 components are equal to $\aug(x)$. This implies the first claim.
 The second claim is deduced from the first one (take $n$ as the 
 \p-order of $\Delta$ and apply the claim to the
 regular representation $V=K[\Delta]$).
\end{proof}

\begin{proof}[Proof of Proposition $\ref{prop:cong-jmodp}$]
Identify the modulo $p$ Iwasawa algebra $\riw{U/V}$ 
with the group ring $\riw{\Gamma}[U^f/V^f]$, and let $K$ be 
the fractional field of~$\riw{\Gamma}$. 
Then we may naturally regard each $t$ in $\riw{U/V}$ as an
 element in $K[U^f/V^f]$, and therefore the equation
\begin{align} \label{eq:nilp}
t^{\sharp (U^f/V^f)}=\overline{\aug}_{U,V}(t)^{\sharp (U^f/V^f)}
\end{align}
holds by Lemma \ref{lem:nilp}.
Now let $x$ be an arbitrary element in $\riw{G}^{\times}$, and set
 $z=\overline{\aug}_G(x)$ and $y=xz^{-1}$. 
Then we obtain $\overline{\aug}_G(y)=1$
 and 
\begin{align} \label{eq:norm}
 \bar{\theta}(x)=\bar{\theta}(y) \bar{\theta}(z)
\end{align}
by definition (here we denote the map $\bar{\theta}_{U,V}$ by 
 $\bar{\theta}$ to simplify the notation). Since $z$ is an element in
 $\riw{\Gamma}$ (and hence $z$ is contained in the centre of
 $\riw{G}$), the image of $z$ under the norm map $\bar{\theta}$ 
 coincides with $z^{(G:U)}$ by direct calculation. 
 On the other hand we may calculate
 $\bar{\varphi}(x)$ as follows: 
\begin{align} \label{eq:z^p}
 \bar{\varphi}(x) =\bar{\varphi}(y) \bar{\varphi}(z)
 =\bar{\varphi}(\overline{\aug}_G(y)) \bar{\varphi}(z)
 =z^p \qquad (\text{use }\overline{\aug}_G(y)=1). 
\end{align} 
Hence the equation $\bar{\varphi}(x)^{-(G:U)/p} \bar{\theta}(x)
 =\bar{\theta}(y)$ holds by (\ref{eq:norm}) and (\ref{eq:z^p}). 
Moreover (\ref{eq:nilp}) implies that $y^{p^N}$ is 
 equal to $\overline{\aug}_G(y)^{p^N}=1$, 
 and therefore $\bar{\theta}(y)^{p^N}$ is 
 also trivial. The same argument as above 
 derives a similar equation $\bar{\theta}(y)^{\sharp
 (U^f/V^f)}=\overline{\aug}_{U,V}(\bar{\theta}(y))^{\sharp (U^f/V^f)}$,
 and consequently the equation
 \begin{align*}
  \overline{\aug}_{U,V}(\bar{\theta}(y))^{p^N}=\bar{\theta}(y)^{p^N}=1
\end{align*} 
holds. Since $\riw{\Gamma}$ is a domain  
(recall that $\riw{\Gamma}$ is isomorphic to the formal power
 series ring $\F_p[[T]]$), the last equation implies
 that $\overline{\aug}_{U,V}(\bar{\theta}(y))$ itself is trivial; 
in other words $\bar{\theta}(y)$ is contained in $1+\bar{J}_{U,V}$.
\end{proof}

The last paragraph of the proof above implies that $\theta_{U,V} (y)$ 
is contained in
$1+J_{U,V}$ if $y$ is an element in $\iw{G}$ satisfying 
$\aug_G(y)\equiv 1 \, \mod p$. 
By replacing $G$ and $U$ appropriately, we obtain the following useful
corollary:

\begin{cor} \label{cor:aug-norm}
Let $(U,V)$ be an element in $\mathfrak{F}_B$ such that 
$U$ does not contain a non-trivial central element $c$. 
Then the norm map $\Nr_{\iw{U\times \langle c \rangle /V}/\iw{U/V}}$ 
induces a group homomorphism
 from $1+J_{U\times \langle c \rangle, V}$ to $1+J_{U,V}$.
\end{cor}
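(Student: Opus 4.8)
The plan is to obtain the statement directly from the assertion recorded in the paragraph immediately above the corollary — namely that $\theta_{U,V}(y)$ lies in $1+J_{U,V}$ whenever $y\in\iw{G}$ satisfies $\aug_G(y)\equiv 1\,\mod{p}$ (itself a consequence of Proposition~\ref{prop:cong-jmodp} via \p-adic completeness) — after the change of groups hinted at by the phrase ``by replacing $G$ and $U$ appropriately''.

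First I would set up the auxiliary group. Since the fixed central element $c$ has order $p$ and does not lie in $U$, the cyclic group $\langle c\rangle$ meets $U$ trivially, so $\widetilde{G}:=U\langle c\rangle$ is the internal direct product $U\times\langle c\rangle$ and $\widetilde{G}/V=(U/V)\times\langle c\rangle$, with $U/V$ an open subgroup of index $p$ containing $\Gamma$. As $c$ is central the commutator subgroup of $\widetilde{G}$ is still $V$, and since $U/V$ is abelian its commutator subgroup is trivial; hence $(U/V,\{e\})$ is a bona fide member of the Brauer family for the group $\widetilde{G}/V\cong(U/V)\times\langle c\rangle$, which is itself a finite \p-group times $\Gamma$, so that the conclusion of the remark preceding the corollary (equivalently, Proposition~\ref{prop:cong-jmodp}) applies to it verbatim. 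I would also record the two harmless identifications: the augmentation $\aug_{\widetilde{G}/V}$ of this group factors $\aug_{U\times\langle c\rangle,V}$, and the ideal $J_{U/V,\{e\}}$, the kernel of $\iw{U/V}\xrightarrow{\aug_{U/V}}\iw{\Gamma}\xrightarrow{\mod{p}}\riw{\Gamma}$, is literally $J_{U,V}$.

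Next I would invoke the cited assertion with $G$ replaced by $\widetilde{G}/V$ and $U$ replaced by $U/V$: for every $y\in\iw{\widetilde{G}/V}$ with $\aug_{\widetilde{G}/V}(y)\equiv 1\,\mod{p}$ one obtains $\theta_{U/V,\{e\}}(y)\in 1+J_{U/V,\{e\}}=1+J_{U,V}$; here the index being $p$, the exponent $(\widetilde G/V:U/V)/p$ equals $1$, so no Frobenius factor survives. Since $\iw{\widetilde{G}/V}$ is commutative, $\theta_{U/V,\{e\}}(y)$ is nothing but the norm $\Nr_{\iw{U\times\langle c\rangle/V}/\iw{U/V}}(y)$. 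Finally, $J_{U\times\langle c\rangle,V}$ is contained in the unique maximal ideal of the local ring $\iw{\widetilde{G}/V}$, so $1+J_{U\times\langle c\rangle,V}$ is a subgroup of $\iw{\widetilde{G}/V}^{\times}$, and it is exactly the set of $y$ with $\aug_{\widetilde{G}/V}(y)\equiv 1\,\mod{p}$; the norm being multiplicative and unit-preserving, it restricts to a group homomorphism $1+J_{U\times\langle c\rangle,V}\to\iw{U/V}^{\times}$, whose image lies in $1+J_{U,V}$ by the previous step, which is the claim.

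There is no serious obstacle here; the corollary is a repackaging of the material already established in Section~\ref{ssc:aug}. The only points deserving care are the bookkeeping above — that $c\notin U$ forces $\langle c\rangle\cap U=\{e\}$ (using that $p=\sharp\langle c\rangle$ is prime), that centrality of $c$ keeps the commutator subgroup of $\widetilde{G}$ equal to $V$ and makes $(U/V,\{e\})$ a Brauer member of $\widetilde{G}/V$, and that the ideal $J_{U/V,\{e\}}$ produced by the substitution coincides with the original $J_{U,V}$.
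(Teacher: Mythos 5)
Your proof is correct and fills in exactly the details the paper compresses into its one-line hint ``by replacing $G$ and $U$ appropriately'': you apply the observation recorded just before the corollary (that $\theta_{U,V}(y)\in 1+J_{U,V}$ whenever $\aug(y)\equiv 1\bmod p$) to the group $\widetilde{G}/V=(U/V)\times\langle c\rangle$ and the pair $(U/V,\{e\})$, after checking that $c\notin U$ forces $U\cap\langle c\rangle=\{e\}$ and that $(U/V,\{e\})$ is indeed a Brauer pair for $\widetilde{G}/V$. The one minor blemish is the parenthetical about ``the exponent $(\widetilde{G}/V:U/V)/p$ equals $1$, so no Frobenius factor survives'': the Frobenius factor disappears not because the exponent is $1$ but because the hypothesis $\aug(y)\equiv 1\bmod p$ makes $\varphi(y)\equiv 1$ (this is precisely what the cited remark packages up), though this does not affect the validity of your argument.
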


\begin{rem}
Both $1+J_{U\times \langle c \rangle,V}$ and $1+J_{U,V}$ 
are actually multiplicative groups; see
 Proposition~\ref{prop:log-j} for details. 
\end{rem}

%
\subsection{Logarithmic theory} \label{ssc:log}
%

Let us study the \p-adic logarithm 
on $1+J_{U,V}$ for each $(U,V)$ in $\mathfrak{F}_B$, as well as those on 
$1+I_U$ for each $U$ in $\mathfrak{F}_A^c$. 

\begin{prop} \label{prop:log-j}
For each $(U,V)$ in $\mathfrak{F}_B$, let $J_{U,V}$ be 
as in Proposition~$\ref{prop:cong-j}$. Then 
\begin{enumerate}[$(1)$]
\item the subset $1+J_{U,V}$ of $\iw{U/V}$ is 
      a multiplicative subgroup of $\iw{U/V}^{\times}${\upshape ;}
\item for each $y$ in $J_{U,V}$, the logarithm 
      $\log (1+y)=\sum_{m=1}^{\infty} (-1)^{m-1}
      y^m/m$ converges \p-adically 
      in $\iw{U/V}\otimes_{\Z_p}\Q_p${\upshape ;} 
\item the kernel $($resp.\  image$)$ of the induced homomorphism 
      \begin{align*}
       \log \colon 1+J_{U,V} \rightarrow \iw{U/V}\otimes_{\Z_p} \Q_p
      \end{align*} 
      is $\mu_p(\iw{U/V})$
      $($resp.\ is contained in $\iw{U/V})$ where $\mu_p(\iw{U/V})$ denotes
      the multiplicative subgroup of $\iw{U/V}^{\times}$ consisting of 
      all \p-power roots of unity.
\end{enumerate}
\end{prop}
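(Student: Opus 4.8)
The plan is to run the standard $p$-adic $\log/\exp$ calculus for the pro-$p$ group $U/V$ together with the Oliver--Taylor integral logarithm (cf.\ \cite{Oliver}), taking full advantage of the fact that the finite part of $U/V$ has exponent~$p$. First I would pin down the structure of $J_{U,V}$: writing $U=U^f\times\Gamma$ and $W=U^f/V$ (an abelian group killed by~$p$, since $G^f$ has exponent~$p$), one has $\iw{U/V}=\iw{\Gamma}[W]$ and $J_{U,V}=I_W\iw{U/V}+p\iw{U/V}$, where $I_W$ is the augmentation ideal of $\Z_p[W]$. As $\riw{W}=\F_p[W]$ is local with nilpotent maximal ideal (this is already invoked in the proof of Lemma~\ref{lem:nilp}), the ideal $\bar J_{U,V}$ of $\riw{U/V}$ is nilpotent, so there is an integer $r$ with $J_{U,V}^r\subseteq p\iw{U/V}$, and hence $J_{U,V}^{rk}\subseteq p^k\iw{U/V}$ for every $k\ge 1$; in particular $J_{U,V}$ is topologically nilpotent in the $p$-adically complete ring $\iw{U/V}$.

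Parts~(1) and~(2) then follow by routine estimates. For~(1): for $y\in J_{U,V}$ the geometric series $\sum_{n\ge 0}(-1)^ny^n$ converges $p$-adically to a two-sided inverse of $1+y$, and $1+J_{U,V}$ is closed under multiplication because $J_{U,V}$ is an ideal, so $1+J_{U,V}$ is a subgroup of $\iw{U/V}^{\times}$. For~(2): $y^m\in p^{\lfloor m/r\rfloor}\iw{U/V}$ gives $y^m/m\to 0$ in $\iw{U/V}\otimes_{\Z_p}\Q_p$, so $\log(1+y)$ converges there, and it is a homomorphism on the abelian group $1+J_{U,V}$ by the usual formal power series identity for $\log$.

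The substantive point is the integrality statement in~(3). The key observation is that, because $W$ has exponent~$p$, the $p$-th power map $\delta\mapsto\delta^p$ on the abelian group $U/V$ kills $W$ and therefore factors through $\aug_{U,V}$, inducing the ring endomorphism
\begin{align*}
\phi\colon\ \iw{U/V}\ \xrightarrow{\ \aug_{U,V}\ }\ \iw{\Gamma}\ \xrightarrow{\ \varphi_{\Gamma}\ }\ \iw{\Gamma}\ \hookrightarrow\ \iw{U/V},
\end{align*}
the analogue for $U/V$ of the Frobenius endomorphism of Section~\ref{ssc:aug}. Two facts will be exploited: (a) $\phi(I_W)=0$, whence $\phi(J_{U,V})=\varphi_{\Gamma}\bigl(\aug_{U,V}(J_{U,V})\bigr)=\varphi_{\Gamma}(p\iw{\Gamma})\subseteq p\iw{U/V}$; and (b) modulo~$p$ the map $\phi$ becomes the Frobenius $x\mapsto x^p$ of $\riw{U/V}$, the two agreeing on group elements. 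Given $u=1+y\in 1+J_{U,V}$ I would consider the integral logarithm
\begin{align*}
L(u)=\tfrac{1}{p}\log\!\bigl(u^p\,\phi(u)^{-1}\bigr)=\log u-\tfrac{1}{p}\,\phi(\log u),
\end{align*}
the second equality following from $\log u^p=p\log u$ together with the fact that the $p$-adically continuous ring map $\phi$ commutes with the logarithmic series (so that $\log\phi(u)=\phi(\log u)$). By~(b) the element $u^p\phi(u)^{-1}$ reduces to $1$ modulo $p$, hence lies in $1+p\iw{U/V}$, so $\log\bigl(u^p\phi(u)^{-1}\bigr)\in p\iw{U/V}$ ($p$ being odd and $\iw{U/V}$ being $p$-torsion-free) and $L(u)\in\iw{U/V}$; by~(a), $\phi(\log u)=\log\bigl(1+\phi(y)\bigr)$ with $\phi(y)\in p\iw{U/V}$, so $\phi(\log u)\in p\iw{U/V}$ and $\tfrac1p\phi(\log u)\in\iw{U/V}$. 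Adding the two terms gives $\log u\in\iw{U/V}$, the asserted inclusion.

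Finally I would identify the kernel of $\log$. The inclusion $\mu_p(\iw{U/V})\subseteq$ the kernel is easy: any $p$-power root of unity $\zeta\in\iw{U/V}^{\times}$ satisfies $\overline{\aug}_{U,V}(\bar\zeta)=1$ (the only $p$-power root of unity in the characteristic-$p$ domain $\riw{\Gamma}$ being~$1$), so $\zeta\in 1+J_{U,V}$, and $\log\zeta=0$ since $\log$ is valued in a torsion-free group. Conversely, if $\log(1+y)=0$, then since $\bar y\in\bar J_{U,V}$ is nilpotent we get $(1+\bar y)^{p^n}=1+\bar y^{\,p^n}=1$ in $\riw{U/V}$ for $p^n$ large, i.e.\ $(1+y)^{p^n}\in 1+p\iw{U/V}$; on $1+p\iw{U/V}$ the map $\log$ is injective with inverse $\exp$ ($p$ odd, $\iw{U/V}$ $p$-torsion-free and $p$-adically complete), so $\log\bigl((1+y)^{p^n}\bigr)=p^n\log(1+y)=0$ forces $(1+y)^{p^n}=1$, and $1+y\in\mu_p(\iw{U/V})$. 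I expect the genuine obstacle to be the integral-logarithm argument of the previous paragraph --- concretely, correctly recognising $\phi$ as the augmentation--Frobenius composite, which is precisely where the exponent-$p$ hypothesis is indispensable (for a general finite $p$-group one would instead have to iterate over the powers of $\phi$ in the Oliver--Taylor manner and control the resulting denominators, since $\phi$ would no longer annihilate $I_W$). The ancillary $p$-adic valuation estimates and the convergence of $\exp$ on $p\iw{U/V}$ are routine and I would not spell them out.
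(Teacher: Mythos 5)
Your proof is correct, and its structure overlaps heavily with the paper's; the one place you take a genuinely different route is the integrality half of part~(3). For the nilpotency of $\bar J_{U,V}$ you invoke a qualitative statement (``some $r$ with $J_{U,V}^r\subseteq p\iw{U/V}$''), whereas the paper computes directly, via the commutative Frobenius identity
\begin{align*}
\bar y^{\,p}=\Bigl(\sum_{u}\bar y_u u\Bigr)^{p}=\sum_u \bar y_u^{\,p}=\bigl(\overline{\aug}_{U,V}(\bar y)\bigr)^{p}=0,
\end{align*}
that $\bar y^{\,p}=0$ for every $\bar y\in\bar J_{U,V}$, hence $y^p\in p\iw{U/V}$ and $y^m\in p^{\lfloor m/p\rfloor}\iw{U/V}$; this is exactly your observation too once you chase through your own claim that $\bar\phi$ is Frobenius and $\bar\phi(\bar y)=0$, so you could have recorded the sharp $r=p$ for free. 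Parts~(1), (2), and the identification of the kernel then proceed as in the paper.

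For the image statement you take a detour through the Oliver--Taylor integral logarithm $L(u)=\log u-\tfrac1p\phi(\log u)$, proving separately that $L(u)\in\iw{U/V}$ (since $u^p\phi(u)^{-1}\equiv1\bmod p$) and $\tfrac1p\phi(\log u)\in\iw{\Gamma}$ (since $\phi(J_{U,V})\subseteq p\iw{\Gamma}$), and then adding. This is correct, and your recognition that $\bar\phi$ coincides with the $p$-power Frobenius on $\riw{U/V}$ --- which is where the exponent-$p$ hypothesis enters --- is a genuine structural point worth noting. But it is more machinery than the paper uses: from $\bar y^{\,p}=0$ one has directly $\bar x^{\,p}=(1+\bar y)^{p}=1+\bar y^{\,p}=1$, so $x^p\in1+p\iw{U/V}$ and $p\log x=\log(x^p)\in p\iw{U/V}$, giving $\log x\in\iw{U/V}$ with no Frobenius endomorphism or integral logarithm in sight. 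Both arguments buy the same conclusion; the paper's is shorter and stays entirely inside the group ring, while yours makes visible the connection to the integral logarithmic apparatus that the paper deploys later (Section~\ref{ssc:intlog}) --- a reasonable trade-off, but for this particular proposition the lighter argument is preferable.
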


\begin{proof} 
Define $\overline{\aug}_{U,V} \colon \riw{U/V} \rightarrow
 \riw{\Gamma}$ similarly to the previous subsection, and let
 $\bar{J}_{U,V}$ be the kernel of $\overline{\aug}_{U,V}$. 
Since $\riw{U/V}$
 is commutative, we have 
\begin{align*}
\bar{y}^p &= \sum_{u\in U^f/V^f} \bar{y}_u^p u^p =\sum_{u\in U^f/V^f}
 \bar{y}_u^p 
 = \left( \sum_{u\in U^f/V^f} \bar{y}_u \right)^p
 =(\overline{\aug}_{U,V}(\bar{y}))^p =0
\end{align*}
 for an element $\bar{y}=\sum_{u\in U^f/V^f} \bar{y}_u u$ in
 $\bar{J}_{U,V}$ (each $\bar{y}_u$ is an element in
 $\riw{\Gamma}$). 
 Therefore $y^p$ is contained in $p\iw{U/V}$ for each $y$ in
 $J_{U,V}$.
\begin{enumerate}[(1)]
\item By the remark above, $(1+y)^{-1}=\sum_{m=0}^{\infty} (-y)^m$ converges
      \p-adically in $1+J_{U,V}$ for each $1+y$ in $1+J_{U,V}$.
\item In a similar way the element 
      $y^m$ is contained in $p^{[m/p]}\iw{U/V}$
      for each $y$ in $J_{U,V}$ (for a real number $x$, we denote by $[x]$ the
      largest integer not greater than $x$), and hence the claim holds.
\item  If we take an element $x=1+y$ from $1+J_{U,V}$,  
       we may calculate as $\bar{x}^p=1+\bar{y}^p=1$ 
       where $\bar{x}=1+\bar{y}$ is the image of $x$ in
       $1+\bar{J}_{U,V}$. 
       This implies that $x^p$ is an element in
      $1+p\iw{U/V}$ since the \p-adic exponential map and 
       the \p-adic logarithmic map
      define an isomorphism between $p\iw{U/V}$ and $1+p\iw{U/V}$
      in general (recall that  
      $p$ is odd). Therefore $p\log x(=\log x^p)$ is contained in
      $p\iw{U/V}$, or equivalently $\log(1+J_{U,V})$ is contained in 
      $\iw{U/V}$. Furthermore if we assume that  
      $\log x =0$ holds for $x$ in $1+J_{U,V}$, 
      we obtain  $x^p=1$ by the calculation above, which implies that
      $x$ is an element in $\mu_p(\iw{U/V})$. 
      Conversely $\log x$ vanishes for an arbitrary
      element $x$ in $\mu_p(\iw{U/V})$ since
      $\iw{U/V}$ is free of \p-torsion.
\end{enumerate}
\end{proof}

\begin{lem} \label{lem:prop-is}
For each $U$ in $\mathfrak{F}_A^c$, let $I_U$ be the $\Z_p$-submodule of
 $\iw{U}$ defined as in Section~$3$. Then $I_U^2$ is contained in
 $I_U$. Moreover, 
\begin{enumerate}[$(1)$]
\item when $N$ is larger than $1$, the $\Z_p$-module 
      $I_{\Gamma}$ is contained in
      $J_{\Gamma}=p\iw{\Gamma}$$;$
\item when $N$ is larger than $2$,
      the $\Z_p$-module 
      $I_{U_h}$ is contained in $p\iw{U_h}$ $($hence also in $J_{U_h})$ 
      for each $h$ in $\mathfrak{H}\setminus \{e\}$, and 
	there exist canonical inclusions 
	$p^{k(N-1)} I_{U_h} \subseteq I_{U_h}^{k+1} \subseteq 
	p^{k(n_h-1)}I_{U_h}$ for an arbitrary natural number $k$$;$ 
\item when $N$ is larger than $3$, the $\Z_p$-module 
      $I_{U_{h,c}}$ is contained in
      $p\iw{U_{h,c}}$ $($hence also in~$J_{U_{h,c}})$ 
      for each $h$ in $\mathfrak{H}\setminus \{ e, c\}$ 
      satisfying the condition of Case~$(a)$, and 
	$p^{k(N-2)} I_{U_{h,c}} \subseteq I_{U_{h,c}}^{k+1} \subseteq 
	p^{k(n_h-2)}I_{U_{h,c}}$ holds for an arbitrary natural number $k$$;$ 
\item when $N$ is larger than $3$, the $\Z_p$-module 
      $I_{U_{h,c}}$ is contained in
      $J_{U_{h,c}}$ for each $h$ in $\mathfrak{H}\setminus \{ e, c\}$ 
      satisfying the
      condition of Case $(b)$, and there exist canonical inclusions  
      $p^{k(N-2)}I_{U_{h,c}}^2 \subseteq 
      I_{U_{h,c}}^{k+2} \subseteq p^{k(n_h-1)} I_{U_{h,c}}^2$ for an
      arbitrary natural number $k$. 
\end{enumerate}
\end{lem}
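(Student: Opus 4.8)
The plan is to deduce every assertion from the explicit $\iw{\Gamma}$-module (resp.\ $\iw{U_c}$-module) presentations of the modules $I_U$ established in Section~\ref{ssc:calculation}, so that the whole argument reduces to comparing $p$-adic valuations of coefficients. I would begin by recording the group-theoretic inequalities used throughout: $NU_h^f$ is a subgroup of $G^f$, so $n_h\le N$; for $h\ne e$ the group $U_h^f$ is a proper subgroup of the $p$-group $G^f$ (once $N\ge 2$), so its normaliser strictly contains it and $n_h\ge 2$; and since $c$ is central the abelian subgroup $U_{h,c}^f=\langle h,c\rangle$ is contained in $NU_h^f$, so $n_h\ge 2$ also in the situations of parts~(3) and~(4), while the cardinality count made in Case~(b) forces $n_h\le N-1$ there. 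Then I fix, for each $U$ in $\mathfrak{F}_A^c$, the distinguished module generators of $I_U$ appearing in Section~\ref{ssc:calculation}: $p^{N-1}$ together with the $p^{n_h-1}h^i$ ($1\le i\le p-1$) for $I_{U_h}$; $p^{N-2}$ together with the $p^{n_h-2}h^i$ for $I_{U_{h,c}}$ in Case~(a); and the $p^{N-2}c^j$ together with the $p^{n_h-2}h^i\sigma$ for $I_{U_{h,c}}$ in Case~(b), where I write $\sigma=1+c+\cdots+c^{p-1}$.

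To show $I_U^2\subseteq I_U$ it suffices to check that the product of any two distinguished generators lies again in $I_U$. The only relations needed are $h^p=e$, $c^p=e$, $c\sigma=\sigma$ and $\sigma^2=p\sigma$; using them one rewrites each such product as $p^{a}g$ times a unit of $\iw{\Gamma}$ with $g$ one of the distinguished monomials, and compares $a$ against the $p$-exponent attached to the corresponding component of $I_U$. The containments $I_\Gamma\subseteq p\iw{\Gamma}$, $I_{U_h}\subseteq p\iw{U_h}$ and $I_{U_{h,c}}\subseteq p\iw{U_{h,c}}$ in Case~(a) follow immediately, every distinguished generator being divisible by $p$ as soon as $N\ge 2$ (resp.\ $N\ge 3$) by the inequalities above; and $I_{U_h}\subseteq J_{U_h}$, $I_{U_{h,c}}\subseteq J_{U_{h,c}}$ are obtained by applying the relevant augmentation map, which kills $h$ and $c$ and, decisively in Case~(b), sends $\sigma$ to $p$ — the point being that this extra factor of $p$ compensates for the generators $h^i\sigma$ not being themselves divisible by $p$.

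The sandwich estimates $p^{k(N-1)}I_{U_h}\subseteq I_{U_h}^{k+1}\subseteq p^{k(n_h-1)}I_{U_h}$, and their analogues in parts~(3) and~(4), I would prove by induction on $k$, the case $k=0$ being trivial. For the lower inclusion one uses that the ``identity'' generator ($p^{N-1}$ for $I_{U_h}$, $p^{N-2}$ for $I_{U_{h,c}}$) lies in $I_U$, so multiplying the inductive hypothesis by it gives $p^{k(N-1)}I_{U_h}\subseteq I_{U_h}^{k}\cdot I_{U_h}=I_{U_h}^{k+1}$; in Case~(b) one multiplies by $p^{N-2}$ while carrying $I_{U_{h,c}}^2$ instead of $I_{U_{h,c}}$, the shift by one reflecting $\sigma^2=p\sigma$. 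For the upper inclusion I would argue directly on $(k+1)$-fold products of generators: such a product has the shape $p^{b}h^m$ (resp.\ $p^{b}c^j$, $p^{b}h^m\sigma$) with $b\ge (k+1)(n_h-1)$ since every generator contributes a $p$-exponent at least $n_h-1$ (here $N-1\ge n_h-1$ is used for the identity generator), and one checks this bound places the product inside $p^{k(n_h-1)}I_U$; in Case~(b) the extra power of $p$ produced each time two generators of the form $h^i\sigma$ are multiplied is exactly what justifies stating the bound in terms of $I_{U_{h,c}}^2$.

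The step I expect to be the genuine obstacle is the valuation bookkeeping for the ``wrap-around'' products $h^i\cdot h^j$ with $p\mid i+j$ (and $h^i\sigma\cdot h^j\sigma$ in Case~(b)): these are the products that leave the span of the monomials $h^\ell$ ($\ell\ne 0$) and fall back onto the identity component of $I_U$, whose attached exponent $p^{N-1}$ (resp.\ $p^{N-2}$) is the largest one present. Making the resulting inequality come out — $2(n_h-1)\ge N-1$ for $I_{U_h}$, and its counterpart for $I_{U_{h,c}}$ — requires controlling how small the normaliser $NU_h^f$ of an order-$p$ subgroup of $G^f$ can be relative to $|G^f|$, and it is precisely here that the hypotheses $N>2$, $N>3$ and the separation into Cases~(a) and~(b) are really used; once that inequality is in hand, the remaining verifications are purely mechanical.
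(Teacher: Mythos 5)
Your overall plan — read everything off the explicit $\iw{\Gamma}$- (resp.\ $\Z_p[[U_c]]$-) module presentations of $I_U$ from Section~\ref{ssc:calculation} and compare $p$-adic exponents of coefficients — is indeed the route the paper takes, and your handling of the lower sandwich inclusion (multiply the induction hypothesis by the identity generator, which lies in $I_U$) and of part~(4) (push through the augmentation, which sends $\sigma=1+c+\cdots+c^{p-1}$ to $p$) is correct and matches the paper. However, your treatment of part~(3) has a concrete gap. In Case~(a) the presentation is $I_{U_{h,c}}=p^{N-2}\Z_p[[U_c]]\oplus\bigoplus_{i=1}^{p-1}p^{n_h-2}h^i\Z_p[[U_c]]$, so putting $I_{U_{h,c}}$ inside $p\iw{U_{h,c}}$ requires $n_h-2\ge1$, i.e.\ $n_h\ge3$. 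Your inventory of group-theoretic inequalities only delivers $n_h\ge2$ (via $\langle h,c\rangle\subseteq NU_h^f$, or via proper subgroups of $p$-groups being proper in their normalisers), which is one short. The paper closes exactly this gap with an extra argument: pass to $\bar{G}^f=G^f/U_c^f$; either $\bar h$ is central in $\bar G^f$, in which case $NU_{h,c}^f=G^f$ and $n_h=N$, or a non-trivial central element $\bar a$ of $\bar G^f$ lifts to $a\notin\langle h,c\rangle$ with $\langle c,h,a\rangle\subseteq NU_{h,c}^f$, forcing $n_h\ge3$. Without something like this, part~(3) is not established.

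There is also a second, more structural issue. For the first claim $I_U^2\subseteq I_U$ and for the upper sandwich inclusion, the wrap-around products such as $p^{n_h-1}h^i\cdot p^{n_h-1}h^{p-i}=p^{2(n_h-1)}$ land in the identity component, whose attached exponent is $p^{N-1}$ (resp.\ $p^{N-2}$); so one genuinely needs an inequality of the shape $2(n_h-1)\ge N-1$. You flag this as ``the genuine obstacle'' but then simply assert that the hypotheses $N>2$, $N>3$ and the Case~(a)/(b) split make it come out, which they do not on their own: from $n_h\ge2$ one only gets $2(n_h-1)\ge2$, and this does not dominate $N-1$ once $N\ge4$. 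You would need to \emph{derive} a bound like $n_h\ge(N+1)/2$ (or give the exact description of $I_U^{k+1}$, as the paper does explicitly in part~(4)), not merely gesture at it. So while you have correctly identified where the real work lies, the proposal leaves the decisive step open rather than supplying it.
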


\begin{proof}
The first claim is easily checked by direct calculation.
\begin{enumerate}[(1)] 
\item Obvious from the exact description of $I_{\Gamma}$ (see Section~\ref{ssc:addtheta}). 
\item If $h$ is contained in the
 centre of $G^f$, the equation $n_h=N$ clearly holds. Otherwise $NU_h^f$
 has to contain the centre of $G^f$, and therefore $n_h$ is at
      least~$2$. 
      In both
 cases $I_{U_h}$ is contained in $p\iw{U_h}$. The last claim is obvious from 
	the explicit description of $I_{U_h}$.
\item Recall that $\sharp NU_{h,c}^f=p^{n_h}$ holds in Case
      (a). Let $\bar{U}^f_h$ denote the quotient
      group $U^f_{h,c}/U_c^f$. If $\bar{h}$ (the image of
      $h$ in $\bar{U}^f_h$) is contained in the centre of
      $\bar{G}^f=G^f/U_c^f$, 
      the normaliser of $\bar{U}_h^f$ 
      obviously coincides with~$\bar{G}^f$, which implies 
      that $n_h$ is equal to $N$. 
      Otherwise there exists a non-trivial element $\bar{a}$ in the
      centre of $\bar{G}^f$. Let $a$ be its lift to $G^f$, then the finite 
      subgroup of $G^f$ generated by $c,h$ and $a$ is contained in $NU_{h,c}^f$
      by construction. This implies that $n_h$
      is at least $3$. In both cases we may conclude 
      that $I_{U_{h,c}}$ is 
      contained in $p\iw{U_{h,c}}$. The last claim is obvious from 
	the explicit description of $I_{U_{h,c}}$.
\item First note that $2\leq n_h\leq N-1$ holds since the cardinality of
      $NU_{h,c}^f$  (which is smaller than $p^N$) is equal to $p^{n_h+1}$.
      By using this fact, we may exactly calculate as
      \begin{align*}
       I_{U_{h,c}}^k&=(p^{k(N-2)}\Z_p[[U_c]]+p^{k(n_h-1)-1}(1+c+\cdots
       +c^{p-1}) \Z_p[[\Gamma]])  \\
       & \qquad \oplus \bigoplus_{i=1}^{p-1} 
       p^{k(n_h-1)-1}h^i (1+c+\cdots +c^{p-1})\Z_p[[\Gamma]].
      \end{align*}
      for each $k$ larger than $2$. 
      The claim holds by this calculation.
\end{enumerate}
\end{proof}

\begin{prop} \label{prop:log-i}
Let $U$ be an element in $\mathfrak{F}_A^c$ and assume that $U$ does not
 coincide with $G$ if $N$ equals either $0$, $1$ or $2$. Then
\begin{enumerate}[$(1)$]
 \item the subset $1+I_U$ of $\iw{U}$ is 
       a multiplicative subgroup of $\iw{U}^{\times}${\upshape ;}
\item for each $y$ in $I_U$, the logarithm 
      $\log (1+y)=\sum_{m=1}^{\infty} (-1)^{m-1}
      y^m/m$ converges \p-adically in $I_U${\upshape ;}
\item the \p-adic logarithmic homomorphism induces an isomorphism between
      $1+I_U$ and $I_U$.
\end{enumerate}
\end{prop}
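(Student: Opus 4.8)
The plan is to deduce all three assertions from the structural properties of $I_U$ established in Lemma~\ref{lem:prop-is}, paralleling the treatment of $J_{U,V}$ in Proposition~\ref{prop:log-j}. Three facts will be used throughout. First, $I_U$ is a $\Z_p$-submodule of $\iw{U}$ with $I_U^2\subseteq I_U$, so $I_U$ is a (non-unital) subring. Second, $I_U$ is closed in $\iw{U}$ for the $p$-adic topology, being the continuous image under the trace map $\theta^+_U$ of the compact module $\Z_p[[\conj{G}]]$. Third, Lemma~\ref{lem:prop-is} supplies an integer $t\geq 1$ --- namely $t=N$ for $U=\Gamma$, $t=n_h-1$ for $U=U_h$, and $t=n_h-2$ or $t=n_h-1$ for the two types of $U=U_{h,c}$ --- such that $I_U^m\subseteq p^{(m-1)t}I_U$ for all $m\geq 1$; in the exceptional Case~(b) for the groups $U_{h,c}$ one instead has $I_U^{k+2}\subseteq p^{kt}I_U^2\subseteq p^{kt}I_U$, whence $I_U^m\subseteq p^{(m-2)t}I_U$ for $m\geq 2$. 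The positivity of $t$ is exactly where the hypothesis excluding $U=G$ when $N\in\{0,1,2\}$ is needed. Finally, each $U$ in $\mathfrak{F}_A^c$ is abelian, so $\iw{U}$ is commutative and $\log$ will automatically be a homomorphism from the multiplicative group $1+I_U$ to the additive group $I_U$.

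Granting this, (1) follows at once: $(1+y)(1+z)=1+(y+z+yz)$ with $y+z+yz\in I_U$, and $(1+y)^{-1}=\sum_{m\geq 0}(-y)^m$ converges $p$-adically because $y^m\to 0$, with value in the closed subset $1+I_U$. For (2) and the convergence statement in (3) I would run the usual term-by-term $p$-adic estimates: from $y^m\in p^{(m-1)t}I_U$ (resp.\ $p^{(m-2)t}I_U$ for $m\geq 2$ in Case~(b), the term $y^1=y$ being trivially in $I_U$) and the elementary bound $v_p(m)\leq m-1$ (resp.\ $\leq m-2$ for $m\geq 2$, using $p\geq 3$) one gets $v_p(y^m/m)\geq 0$ for every $m\geq 1$, tending to $+\infty$; hence $\log(1+y)=\sum_{m\geq 1}(-1)^{m-1}y^m/m$ converges, every term lies in $I_U$, and so does the sum by closedness of $I_U$.

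For (3) I would exhibit the inverse $\exp\colon I_U\to 1+I_U$, $x\mapsto\sum_{m\geq 0}x^m/m!$, and argue identically, now invoking $v_p(m!)\leq (m-1)/(p-1)\leq m-1$ in place of $v_p(m)$: the series converges and each term $x^m/m!$ lies in $I_U$, so $\exp$ maps $I_U$ into $1+I_U$. Since $\iw{U}$ is commutative, the relations $\exp\circ\log=\mathrm{id}$, $\log\circ\exp=\mathrm{id}$ and $\log((1+y)(1+z))=\log(1+y)+\log(1+z)$ are formal power-series identities that hold in $\iw{U}\otimes_{\Z_p}\Q_p$, so $\log$ restricts to a group isomorphism $1+I_U\xrightarrow{\sim}I_U$, as claimed. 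The step I expect to be the main obstacle is the Case~(b) groups $U_{h,c}$ (those whose conjugation action on $U_{h,c}^f$ has image of order $p$): there $I_U$ lies only in $J_{U_{h,c}}$, not in $p\iw{U}$, so the crude bound ``$y\in p\iw{U}\Rightarrow y^m\in p^m\iw{U}$'' is unavailable, and pinning the series inside $I_U$ --- rather than merely inside $\iw{U}$ or $\iw{U}\otimes_{\Z_p}\Q_p$ --- requires extracting the linear-in-$m$ $p$-adic decay of $I_U^m$ from the nested inclusions $p^{k(N-2)}I_U^2\subseteq I_U^{k+2}\subseteq p^{k(n_h-1)}I_U^2$ of Lemma~\ref{lem:prop-is}~(4). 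A smaller but essential point is the case-by-case check that $t\geq 1$; for the excluded $U=G$ with $N\leq 2$ one would have $I_U=\iw{U}$ and the statement would be false.
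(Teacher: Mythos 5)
Your proof is correct, and for parts (1) and (2) it runs along essentially the same lines as the paper: you combine the filtered estimates of Lemma~\ref{lem:prop-is} (repackaged as $I_U^m\subseteq p^{(m-1)t}I_U$ with $t\geq 1$, or the shifted version in Case~(b)) with the elementary bound on $v_p(m)$ to pin the series inside $I_U$; the paper's one-line hint that $y^{p^m}/p^m\in I_U$ is precisely the dominant term of such an estimate. For part (3), however, your route genuinely diverges from the paper's. You invert $\log$ explicitly by exhibiting $\exp\colon I_U\to 1+I_U$ and running the analogous term-by-term estimate with $v_p(m!)\leq (m-1)/(p-1)$. The paper never invokes $\exp$: it instead shows $\log$ carries $1+I_U^k$ into $I_U^k$ for every $k$, notes that the $I_U$-adic and $p$-adic topologies on $I_U$ agree (again by Lemma~\ref{lem:prop-is}), and reduces isomorphy to the statement that $\log$ induces the identity on each graded quotient $(1+I_U^k)/(1+I_U^{k+1})\to I_U^k/I_U^{k+1}$, which boils down to the single inclusion $p^{-m}I_U^{kp^m}\subseteq I_U^{k+1}$. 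Both are standard and valid: your $\exp$-argument is more self-contained but forces you to control $v_p(m!)$ rather than $v_p(m)$, while the paper's filtration argument is more structural and localizes all the combinatorics in one ideal-theoretic inclusion checked case by case. Your identification of the Case~(b) groups $U_{h,c}$ as the delicate spot is apt, and your resolution --- passing to $I_U^m\subseteq p^{(m-2)(n_h-1)}I_U$ for $m\geq 2$, checking $m=2$ directly, and using $v_p(m!)\leq (m-1)/2\leq m-2$ for $m\geq 3$ and $p\geq 3$ --- is sound; likewise your remark that the statement fails for $U=G$ with $N\leq 2$ (since then $I_G=\iw{G}$) is exactly the point of the remark the paper records after the proposition.
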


\begin{proof}
The claims of (1) and (2) follow from 
 Lemma \ref{lem:prop-is} (use the fact that
 $y^{p^m}/p^m$ is contained in $I_U$ for each $y$ in $I_U$ if $p$ is odd).
For (3), first note that $1+I_U^k$ is a multiplicative
 subgroup of $1+I_U$ and the \p-adic logarithm induces a homomorphism 
from $1+I_U^k$ to $I_U^k$ for each natural number $k$ 
 (similarly to (1) and (2)). 
 Moreover the $I_U$-adic topology on $I_U$ coincides with the \p-adic
 topology by Lemma~\ref{lem:prop-is}. 
 Therefore it suffices to show that the \p-adic
 logarithm induces an isomorphism
\begin{align*}
\log \colon 1+I_U^k/1+I_U^{k+1} \rightarrow I_U^k/I_U^{k+1} ;
 1+y \mapsto y
\end{align*}
for each natural number $k$. 
 Let $y$ be an element in $I_U^k$. We have only 
 to show that $y^{p^m}/p^m$ is contained in $I_U^{k+1}$ for each
 $m\geq 1$, or equivalently, $p^{-m}I_U^{kp^m}$ is contained 
 in $I_U^{k+1}$ for every $k$ and $m$. We may verify it 
 by direct calculation.\footnote{In this calculation we use the fact that $p$ is {\em odd}.}

\end{proof}

\begin{rem}
Suppose that $N$ equals either $0,1$ or $2$. 
 Then the Artinian family $\mathfrak{F}_A^c$
 contains the whole group $G$ by definition. The $\Z_p$-module $I_G$ 
 obviously coincides
 with $\iw{G}$. Thus the \p-adic logarithm never converges on
 $1+I_G$. We remark that the $\Z_p$-module 
 $I_G=\iw{G}$ is the only exception to our
 logarithmic theory discussed in this subsection.
\end{rem}

%
%
\section{Construction of the theta isomorphism II ---translation---} \label{sc:trans}
%
%

In this section we shall construct the multiplicative theta isomorphism 
by using the facts studied in Section~\ref{sc:prelim}.

%
\subsection{The multiplicative theta isomorphism} \label{ssc:mult}
%

Let $(U,V)$ be an element in $\mathfrak{F}_B$.
We use the notion 
``$x\equiv y \, \pmod{\mathfrak{I}}$'' for elements $x$ and $y$ 
in $\piw{U/V}^{\times}$ such that $xy^{-1}$ is contained in 
$1+\mathfrak{I}^{\, \widetilde{\,}}$ \nobreakdash---the image 
of $1+\mathfrak{I}$ under the canonical surjection 
$\iw{U/V}^{\times}\rightarrow \piw{U/V}^{\times}$\nobreakdash--- 
if $\mathfrak{I}$ is a $\Z_p$-submodule
of $\iw{U/V}$ such that $1+\mathfrak{I}$ is a multiplicative
subgroup of $\iw{U/V}^{\times}$.
Let $\widetilde{\Psi}'$ denote the subgroup of $\prod_{(U,V)\in
\mathfrak{F}_B} \piw{U/V}^{\times}$ consisting of all elements
$\eta_{\bullet}=(\eta_{U,V})_{(U,V)\in \mathfrak{F}_B}$ 
satisfying the following three conditions:

\begin{itemize}
 \item (norm compatibility condition, NCC)

\noindent the
       equation $\Nr_{\iw{U/V}/\iw{U'/V}}(\eta_{U,V})=\can^{V'}_V(\eta_{U',V'})$
       holds for 
       $(U,V)$ and $(U',V')$ in $\mathfrak{F}_B$ such that
       $U$ contains $U'$ and $U'$ contains $V$ respectively
       (here $\can^{V'}_V$ is the canonical map 
       $\iw{U'/V'}\rightarrow \iw{U'/V}$);

 \item (conjugacy compatibility condition, CCC)

\noindent the equation $\eta_{U',V'}=\psi_a(\eta_{U,V})$ holds 
       for $(U,V)$ and $(U',V')$ in $\mathfrak{F}_B$ such that 
       $U'=a^{-1}Ua$ and $V'=a^{-1}Va$ hold
       for a certain element $a$ in $G$ 
       (we denote by $\psi_a$ the isomorphism $\iw{U/V}^{\times}
       \xrightarrow{\sim} \iw{U'/V'}^{\times}$ induced by the
       conjugation 
       $U/V \rightarrow U'/V'; u \mapsto a^{-1}ua$);

 \item (congruence condition)

\noindent the congruence $\eta_{U,V} \equiv \varphi(\eta_{\mathrm{ab}})^{{(G:U)}/p} \,
       \pmod{J_{U,V}}$ holds for
       $(U,V)$ in $\mathfrak{F}_B$ except for $(G,[G,G])$
       where $\eta_{\mathrm{ab}}$ denotes 
       the $\piw{G^{\mathrm{ab}}}^{\times}$-component of
       $\eta_{\bullet}$ (see the previous section for the 
       definition of $J_{U,V}$). 
\end{itemize}

Let $\widetilde{\Psi}$ (resp.\ $\widetilde{\Psi}_c$) be the subgroup of
$\widetilde{\Psi}'$ consisting of all elements $\eta_{\bullet}$ 
satisfying the following {\em additional congruence condition} (see
Section~\ref{sc:additive} for the definition of $I_U$):

\begin{quotation}
 (additional congruence condition)

\noindent the congruence 
 $\eta_U \equiv \varphi(\eta_{\mathrm{ab}})^{(G:U)/p} \,
 \pmod{I_U}$ holds for each $U$ in $\mathfrak{F}_A$ (resp.\ $\mathfrak{F}_A^c$).
\end{quotation}

\begin{rem} \label{rem:G}
When $N$ equals either $0$, $1$ or $2$, we regard 
the additional congruence condition for the total group $G$ 
as {\em the trivial condition} (in other words, we do not impose 
any congruence condition upon $G$). Therefore we have only to consider 
an element $(U,V)$ in $\mathfrak{F}_B$ (resp.\ $U$ in $\mathfrak{F}_A^c$) 
such that $U$ is a {\em proper} subgroup of $G$ in arguments 
concerning with congruence conditions.
\end{rem}

\begin{rem} \label{rem:psic-psi}
For each $U$ in $\mathfrak{F}_A^c$, we may easily check that 
the ideal $J_U$ contains $I_U$ unless $U$ coincides with $G$
by using the explicit description of $I_U$ 
given in Section~\ref{ssc:calculation}; 
in particular $\widetilde{\Psi}_c$ is a subgroup of $\widetilde{\Psi}$. 
\end{rem}

Let $\theta_{U,V}$ be as in Section~\ref{ssc:aug} and set
$\theta=(\theta_{U,V})_{(U,V)\in \mathfrak{F}_B}$, then the map 
$\theta$ induces a group homomorphism $\tilde{\theta} \colon
\pK_1(\iw{G}) \rightarrow \prod_{(U,V)\in \mathfrak{F}_B}
\piw{U/V}^{\times}$.

\begin{prop} \label{prop:mult-theta}
The multiplicative group $\widetilde{\Psi}$ coincides with
 $\widetilde{\Psi}_c$. Moreover the map $\tilde{\theta}$ induces an isomorphism
\begin{align*}
\tilde{\theta} \colon \pK_1(\iw{G}) \xrightarrow{\sim} \widetilde{\Psi}
 \quad (=\widetilde{\Psi}_c).
\end{align*}
\end{prop}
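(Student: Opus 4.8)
The plan is to transport the additive theta isomorphism of Propositions~\ref{prop:addtheta} and~\ref{prop:addtheta-B} to the multiplicative world through $p$-adic logarithms, in the spirit of the translation carried out in \cite{H}. By Remark~\ref{rem:G} one may and should restrict throughout to pairs $(U,V)\in\mathfrak{F}_B$ (and subgroups $U\in\mathfrak{F}_A^c$) with $U$ a \emph{proper} subgroup of $G$; the cases $N\in\{0,1,2\}$, where $G$ is abelian and $\pK_1(\iw{G})=\piw{G}^{\times}$ and $\tilde\theta$ is just the tuple of norm $(=$ inclusion-induced$)$ maps, will be handled separately at the end by a direct computation using the explicit shapes of the $I_U$ from Section~\ref{ssc:calculation} and the norm relations, so assume $N\ge 3$ for the main argument.

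The core device is the following square, which I would first show commutes:
\[
\xymatrix{
\pK_1(\iw{G}) \ar[r]^(0.42){\tilde{\theta}} \ar[d]_{\Gamma_G} & \prod_{(U,V)\in\mathfrak{F}_B}\piw{U/V}^{\times} \ar[d]^{\prod\log} \\
\Z_p[[\conj{G}]] \ar[r]_(0.46){\theta^+_B} & \prod_{(U,V)\in\mathfrak{F}_B}\iw{U/V}
}
\]
Here, on the right, $\log$ is the $p$-adic logarithm, which by Proposition~\ref{prop:log-j} is defined on $1+J_{U,V}$ with kernel $\mu_p(\iw{U/V})$ and by Proposition~\ref{prop:log-i} restricts to an isomorphism $1+I_U\xrightarrow{\sim}I_U$ for $U\in\mathfrak{F}_A^c$; on the left, $\Gamma_G$ is the Oliver--Taylor integral logarithm adapted to $\iw{G}$, the homomorphism of the shape $\log-\tfrac1p\varphi_{\conj{}}\circ\log$ (using that $G$ is pro-$p$, so $\iw{G}$ is local). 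The essential computation is that, for $x\in K_1(\iw{G})$, the element $\theta_{U,V}(x)\,\varphi(\theta_{\mathrm{ab}}(x))^{-(G:U)/p}$ lies in $1+J_{U,V}$ (this is precisely Proposition~\ref{prop:cong-j}), and that applying $\log$ to it yields the $(U,V)$-component of $\theta^+_B(\Gamma_G(x))$ — the Frobenius correction term in $\Gamma_G$ being exactly what is needed to cancel the twist $\varphi(\theta_{\mathrm{ab}}(x))^{(G:U)/p}$; one also needs the compatibility $\log\circ\Nr=\Tr\circ\log$ and $\log\circ\psi_a=\psi_a\circ\log$ on the relevant subgroups.

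Granting the square, the proof assembles as follows. \textbf{Well-definedness into $\widetilde\Psi_c$:} conditions (NCC) and (CCC) come from functoriality of the norm maps and conjugation-equivariance; the congruence modulo $J_{U,V}$ is Proposition~\ref{prop:cong-j}; and the additional congruence modulo $I_U$ for every $U\in\mathfrak{F}_A^c$ holds because, by the square, the logarithm of the corresponding ratio is the $U$-component of $\theta^+_B(\Gamma_G(x))$, which factors through $\theta^+_U$ and hence lands in $I_U$ by the computations of Section~\ref{ssc:calculation}. \textbf{Injectivity:} the $(G,[G,G])$-component of $\tilde\theta$ is the canonical surjection $\pK_1(\iw{G})\to\piw{G^{\mathrm{ab}}}^{\times}$, so $\ker\tilde\theta$ lies in $\pK_1'(\iw{G}):=\ker(\pK_1(\iw{G})\to\piw{G^{\mathrm{ab}}}^{\times})$, on which $\Gamma_G$ is injective after killing $p$-torsion (here one uses that $SK_1(\iw{G})$ is $p$-torsion); if $\tilde\theta(x)=1$ then $\theta^+_B(\Gamma_G(x))=0$, hence $\Gamma_G(x)=0$ by projecting to the $\mathfrak{F}_A$-components and invoking injectivity of $\theta^+_A$ (Proposition~\ref{prop:addtheta}), hence $x=1$. \textbf{Surjectivity onto $\widetilde\Psi$:} given $\eta_\bullet\in\widetilde\Psi$, first lift $\eta_{\mathrm{ab}}$ along the surjection $\pK_1(\iw{G})\twoheadrightarrow\piw{G^{\mathrm{ab}}}^{\times}$ and multiply it away, so that all components lie in $1+J_{U,V}$ and, for $U\in\mathfrak{F}_A$, in $1+I_U$; then $(\log\eta_{U,V})_{(U,V)}$ satisfies (TCC) (from (NCC)), (CCC$+$) (from (CCC)), and its $\mathfrak{F}_A$-part lies in $\Phi$ (trace relations from (NCC), membership in $I_U$ from Proposition~\ref{prop:log-i} and Lemma~\ref{lem:prop-is}), so Proposition~\ref{prop:addtheta-B} produces $y\in\Z_p[[\conj{G}]]$ with $\theta^+_B(y)$ equal to this tuple; lifting $y$ through the integral logarithm (the congruence conditions on $\eta_\bullet$ guaranteeing that $y$ lies in the image of $\Gamma_G|_{\pK_1'}$) gives $x$, and $\tilde\theta(x)\eta_\bullet^{-1}$ then lies in $\prod\ker(\log)$, a tuple of compatible $p$-power roots of unity coming from the abelian part, which is already in the image of $\tilde\theta$. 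Combining, $\mathrm{Im}\,\tilde\theta=\widetilde\Psi$ and $\mathrm{Im}\,\tilde\theta\subseteq\widetilde\Psi_c\subseteq\widetilde\Psi$ (Remark~\ref{rem:psic-psi}), whence $\widetilde\Psi=\widetilde\Psi_c$ and $\tilde\theta$ is the claimed isomorphism.

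The main obstacle I anticipate is establishing the commutativity of the logarithmic square with the Frobenius normalisation chosen so precisely that the $\varphi$-twisted congruence conditions cutting out $\widetilde\Psi$ match, after $\log$, the submodule conditions cutting out $\Phi_B$ — together with the closely related point of pinning down the kernel and image of $\Gamma_G$ on $\pK_1'(\iw{G})$ (the role of $SK_1$ and of the component at $(G,[G,G])$) so that injectivity and the lifting step in surjectivity go through. The separate treatment of $N\le 2$ and the bookkeeping of the residual roots-of-unity ambiguity are routine by comparison.
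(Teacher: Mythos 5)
Your proposal follows essentially the same route as the paper: the key commutative square relating $\tilde\theta$ to $\theta^+_B$ via the Oliver--Taylor integral logarithm $\Gamma_G$ and the $p$-adic logarithm on $1+J_{U,V}$ (this is exactly equation (\ref{eq:logtheta})), well-definedness via Proposition~\ref{prop:cong-j} together with Propositions~\ref{prop:log-j} and~\ref{prop:log-i}, injectivity from the kernel of $\Gamma_G$ being torsion plus injectivity of the additive theta map, and surjectivity by producing $y\in\Z_p[[\conj{G}]]$ from Proposition~\ref{prop:addtheta-B} and lifting through (\ref{eq:basicseq}) with a final torsion correction. The only cosmetic difference is that you normalize $\eta_{\mathrm{ab}}$ to $1$ before lifting through $\Gamma_G$, whereas the paper lifts first and then adjusts by $\tau\in\mu_{p-1}(\Z_p)\times G^{\mathrm{ab}}=\ker\Gamma_{G^{\mathrm{ab}}}$.
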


In order to prove Proposition~\ref{prop:mult-theta}, 
it suffices to verify surjectivity of
$\pK(\iw{G}) \rightarrow \widetilde{\Psi}$ and injectivity of 
$\pK(\iw{G}) \rightarrow \widetilde{\Psi}_c$ (see Remark~\ref{rem:psic-psi}). 
The arguments to verify 
these two claims will occupy the rest of this section.

%
\subsection{Integral logarithmic homomorphism} \label{ssc:intlog}
%

We now introduce {\em the integral logarithmic homomorphisms}; 
for an arbitrary finite \p-group $\Delta$, Robert Oliver and Laurence
Robert Taylor defined a homomorphism of abelian groups (called 
the integral logarithm)
\begin{align*}
\Gamma_{\Delta} \colon K_1(\Z_p[\Delta]) \rightarrow
 \Z_p[\conj{\Delta}]; \, x \mapsto \log(x)-p^{-1} \varphi (\log(x))
\end{align*}
where $\varphi$ is ``the Frobenius correspondence'' on
$\Z_p[\conj{\Delta}]$ characterised by
\begin{align*}
\varphi \left(\sum_{[d]\in \conj{\Delta}} a_{[d]} [d] \right) =\sum_{[d] \in
\conj{\Delta}} a_{[d]} [d^p].
\end{align*} 
The integral logarithmic homomorphisms are compatible with
group homomorphisms; that is, the diagram
\begin{align} \label{eq:comphom}
\xymatrix{
K_1(\Z_p[\Delta]) \ar[r]^(0.475){\Gamma_{\Delta}} \ar[d]_(0.45){f_*} &
 \Z_p[\conj{\Delta}] \ar[d]^(0.45){f_*} \\
K_1(\Z_p[\Delta']) \ar[r]_(0.475){\Gamma_{\Delta'}} & \Z_p[\conj{\Delta'}]
}
\end{align}
commutes for an arbitrary homomorphism 
$f \colon \Delta \rightarrow \Delta'$ of finite $p$-groups 
(the symbol $f_*$ denotes 
the homomorphism of abelian groups induced by $f$). It is known that
the sequence
\begin{align} \label{eq:intlogstr}
1 \rightarrow K_1(\Z_p[\Delta])/K_1(\Z_p[\Delta])_{\mathrm{tors}}
 \xrightarrow{\Gamma_{\Delta}} \Z_p[\conj{\Delta}] \xrightarrow{\omega_{\Delta}}
 \Delta^{\mathrm{ab}} \rightarrow1
\end{align}
is exact where $\omega_{\Delta}$ is the homomorphism of abelian groups 
defined by 
\begin{align*}
\omega_{\Delta} \left( \sum_{[d]\in \conj{\Delta}} a_{[d]}[d] \right)=\prod_{[d] \in
\conj{\Delta}} \bar{d}^{a_{[d]}}
\end{align*}
(here we denote by $\bar{d}$ the image of $[d]$
in $\Delta^{\mathrm{ab}}$). Refer to \cite{Oliver, OT} 
for details of the properties of integral logarithms.

Now consider the case $G=G^f \times \Gamma$: let us apply the exact sequence (\ref{eq:intlogstr}) to the finite
\p-group $G^{(n)}=G^f \times \Gamma/\Gamma^{p^n}$ for each natural number
$n$. The structure of the torsion part of $K_1(\Z_p[G^{(n)}])$ has been
well studied in \cite[Section~4.4]{H}; in~fact, it is described as 
\begin{align} \label{eq:wall}
K_1(\Z_p[G^{(n)}])_{\mathrm{tors}}\cong \mu_{p-1}(\Z_p)\times
 G^{(n),\mathrm{ab}} \times SK_1(\Z_p[G^f])
\end{align}
by the theorem of Charles Terence Clegg Wall \cite[Theorem~4.1]{Wall} 
where $\mu_{p-1}(\Z_p)$ denotes the 
subgroup of $\Z_p^{\times}$ consisting of all ($p-1$)-th roots of unity. 
By taking the projective limit,\footnote{Since
$K_1(\Z_p[G^{(n+1)}])/K_1(\Z_p[G^{(n+1)}])_{\text{tors}} \rightarrow
K_1(\Z_p[G^{(n)}])/K_1(\Z_p[G^{(n)}])_{\text{tors}}$ is surjective, the
exact sequence (\ref{eq:intlogstr}) for the projective system with
respect to $\{ G^{(n)}\}_{n\in \N}$ satisfies the Mittag-Leffler
condition. Therefore we may take the projective limit.} 
we obtain the following exact sequence (note that 
the projective limit $\varprojlim_n K_1(\Z_p[G^{(n)}])$ actually
coincides with $K_1(\iw{G})$; see \cite[Proposition~1.5.1]{FK}):
\begin{align} \label{eq:basicseq}
1 \rightarrow K_1(\iw{G})/\varprojlim_n K_1(\Z_p[G^{(n)}])_{\mathrm{tors}} \xrightarrow{\Gamma_G} \Z_p[[\conj{G}]]
 \xrightarrow{\omega_G} G^{\mathrm{ab}} \rightarrow 1.
\end{align} 
Moreover (\ref{eq:wall}) implies that the projective limit  
$\varprojlim_n K_1(\Z_p[G^{(n)}])_{\mathrm{tors}}$ is 
isomorphic to the direct product 
$\mu_{p-1}(\Z_p) \times G^{\mathrm{ab}} \times SK_1(\Z_p[G^f])$.
We may, therefore, identify the \p-torsion part
 $K_1(\iw{G})_{p\text{-tors}}$ of the Whitehead group 
$K_1(\iw{G})$ with
$G^{f,\mathrm{ab}} \times SK_1(\Z_p[G^f])$ (recall that $SK_1(\Z_p[G^f])$ is a
finite \p-group \cite[Theorem~2.5]{Wall}). 

We remark that the \p-th power Frobenius
endomorphism $g\mapsto g^p$ is well defined on $G$ in our case 
since the exponent of $G^f$ equals $p$. We use the same symbol 
$\varphi$ for the Frobenius endomorphism on $G$, then it 
obviously induces the Frobenius correspondence on $\Z_p[[\conj{G}]]$. 
The notion $\varphi$ introduced here is compatible with the one 
defined in Section~\ref{sc:prelim}.

%
\subsection{The group $\widetilde{\Psi}_c$ contains the image of
  $\tilde{\theta}$} \label{ssc:contain}
%

In this subsection we prove that $\widetilde{\Psi}_c$ contains the image
of $\tilde{\theta}$ (and hence $\widetilde{\Psi}$ also does 
by Remark~\ref{rem:G}).

\begin{lem} \label{lem:xicontain}
The multiplicative group 
 $\widetilde{\Psi}'$ contains the image of $\tilde{\theta}$.
\end{lem}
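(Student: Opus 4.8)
The plan is to verify that each component of $\tilde{\theta}(x)$ for $x$ in $\pK_1(\iw{G})$ satisfies the norm compatibility condition (NCC), the conjugacy compatibility condition (CCC), and the congruence condition modulo $J_{U,V}$, since these three conditions are exactly what define $\widetilde{\Psi}'$. First I would observe that NCC for $(\theta_{U,V}(x))$ follows immediately from the transitivity and functoriality of norm maps in algebraic $K$-theory together with the compatibility of the canonical maps $K_1(\iw{U})\rightarrow \iw{U/V}^{\times}$ with the canonical surjections $\iw{U'/V'}\rightarrow \iw{U'/V}$: the composite $K_1(\iw{G})\rightarrow K_1(\iw{U})\rightarrow K_1(\iw{U'})\rightarrow \iw{U'/V}^{\times}$ can be computed in two ways, and both give the same element. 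Likewise CCC is a formal consequence of the fact that conjugation by $a\in G$ is an inner automorphism of $\iw{G}$, hence acts trivially on $K_1(\iw{G})$, while it carries the norm-then-canonical map for $(U,V)$ to that for $(a^{-1}Ua,a^{-1}Va)$ compatibly with $\psi_a$; one just chases the relevant square.

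The substantive point is the congruence condition: for each $(U,V)$ in $\mathfrak{F}_B$ other than $(G,[G,G])$ one must show
\begin{align*}
\theta_{U,V}(x) \equiv \varphi(\eta_{\mathrm{ab}})^{(G:U)/p} \pmod{J_{U,V}},
\end{align*}
where $\eta_{\mathrm{ab}}=\theta_{G,[G,G]}(x)$ is the image of $x$ in $\piw{G^{\mathrm{ab}}}^{\times}$. But this is precisely the content of Proposition~\ref{prop:cong-j}: applying it to the element $x$ of $K_1(\iw{G})$ (choosing a lift to $\iw{G}^{\times}$ as in the footnote there), one has $\theta_{U,V}(x)\equiv \varphi(x)^{(G:U)/p}\pmod{J_{U,V}}$ whenever $U$ is a proper subgroup of $G$, and the Frobenius endomorphism $\varphi$ on $\iw{G}$ factors through $\aug_G\colon \iw{G}\rightarrow \iw{\Gamma}$, hence through $\iw{G^{\mathrm{ab}}}$, so $\varphi(x)=\varphi(\eta_{\mathrm{ab}})$ after passing to $\piw{U/V}^{\times}$. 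The only $(U,V)\in\mathfrak{F}_B$ with $U=G$ is $(G,[G,G])$ itself — for which no congruence is demanded — so no further cases arise. Finally one checks that passing from $K_1(\iw{G})$ to the quotient $\pK_1(\iw{G})$ is harmless: the three conditions defining $\widetilde{\Psi}'$ involve only the classes in $\piw{U/V}^{\times}$, so $\tilde{\theta}$ is well defined on $\pK_1(\iw{G})$ and lands in $\widetilde{\Psi}'$.

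I do not expect any of these steps to be a genuine obstacle — the lemma is essentially a bookkeeping assembly of Proposition~\ref{prop:cong-j} with the elementary functoriality of norm and canonical maps. The one place demanding a little care is the identification $\varphi(x)=\varphi(\eta_{\mathrm{ab}})$ in $\piw{U/V}^{\times}$: one must track that the ``Frobenius endomorphism'' $\varphi$ appearing in the congruence condition (defined in Section~\ref{ssc:aug} as $\varphi_{\Gamma}\circ\aug_{U,V}$ composed appropriately, landing in $\iw{\Gamma}\subseteq\iw{U/V}$) agrees with the composite $\iw{G}\xrightarrow{\aug_G}\iw{\Gamma}\xrightarrow{\varphi_{\Gamma}}\iw{\Gamma}$ evaluated on $x$, which follows since $\aug_G$ factors through $\iw{G^{\mathrm{ab}}}$ and then through $\aug_{G,[G,G]}$. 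With that in hand the lemma is complete; the harder conditions — the trace/norm relations tying $\mathfrak{F}_A$-components together and membership in the $I_U$'s, i.e.\ the passage from $\widetilde{\Psi}'$ to $\widetilde{\Psi}_c$ — are deferred to the subsequent arguments in this subsection.
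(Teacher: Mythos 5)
Your proposal matches the paper's proof: both verify (NCC) and (CCC) by elementary functoriality of norm maps (with (CCC) coming from inner automorphisms acting trivially on $K_1$), and both reduce the congruence condition to Proposition~\ref{prop:cong-j}, noting that $\varphi(\tilde{\theta}_{\mathrm{ab}}(\eta))=\varphi(\eta)$ since $\varphi$ factors through the augmentation. Your expansion of the last identification and the remark that $(G,[G,G])$ is the only pair with $U=G$ are just spelled-out versions of what the paper treats as immediate.
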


\begin{proof}
 The element 
 $(\tilde{\theta}_{U,V}(\eta))_{(U,V)\in \mathfrak{F}_B}$ satisfies
 both (NCC) and (CCC) for each $\eta$ in $\pK_1(\iw{G})$ 
 by the basic properties of norm maps in algebraic \mbox{$K$-theory}. 
 Moreover the congruence $\tilde{\theta}_{U,V}(\eta) \equiv
 \varphi(\tilde{\theta}_{\mathrm{ab}}(\eta))^{(G:U)/p} \,
 \pmod{J_{U,V}}$ holds unless $U$ coincides with $G$ 
 by Proposition~\ref{prop:cong-j} 
 (we denote by $\tilde{\theta}_{\mathrm{ab}}$ the 
 homomorphism $\pK_1(\iw{G})\rightarrow \piw{G^{\mathrm{ab}}}^{\times}$ 
 induced by the abelisation map; 
 note that $\varphi(\tilde{\theta}_{\mathrm{ab}}(\eta))$ obviously coincides
 with $\varphi(\eta)$ by definition).
\end{proof}

By virtue of Lemma~\ref{lem:xicontain} we have only to verify 
the following proposition to show that
$\widetilde{\Psi}_c$ contains the image of $\tilde{\theta}$.

\begin{prop} \label{prop:normcong}
Let $\eta$ be an element in $\pK_1(\iw{G})$. Then the
 congruence $\tilde{\theta}_U(\eta) \equiv 
\varphi(\tilde{\theta}_{\mathrm{ab}}(\eta))^{(G:U)/p} \,
 \pmod{I_U}$ holds for each $U$ in $\mathfrak{F}_A^c$. 
\end{prop}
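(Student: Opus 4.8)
The plan is to translate the stated multiplicative congruence into an additive membership via the logarithm, and then to recognise the resulting additive element as $\theta^+_U(\Gamma_G(x))$ for a lift $x$ of $\eta$, where $\Gamma_G$ is the Oliver--Taylor integral logarithm of Section~\ref{ssc:intlog} and $\theta^+_U=\Tr_{\Z_p[[\conj{G}]]/\Z_p[[U]]}$ the trace homomorphism of Section~\ref{ssc:calculation}. Since $I_U$ is \emph{by definition} the image of $\theta^+_U$, membership will then be automatic.

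First I would fix a lift $x\in K_1(\iw{G})$ of $\eta$. By Remark~\ref{rem:G} we may assume that $U$ is a proper subgroup of $G$ and that the exponent $(G:U)/p$ is a non-negative integer power of $p$; in particular $(U,\{e\})$ lies in $\mathfrak{F}_B$, so Proposition~\ref{prop:cong-j} shows that $w:=\theta_U(x)\,\varphi(x)^{-(G:U)/p}$ lies in $1+J_U$, whence $\log w$ converges and belongs to $\iw{U}$ by Proposition~\ref{prop:log-j}~(2),(3). Now $\log$ restricts to an isomorphism $1+I_U\xrightarrow{\sim}I_U$ (Proposition~\ref{prop:log-i}) and has kernel the $p$-power torsion group $\mu_p(\iw{U})$ on $1+J_U$ (Proposition~\ref{prop:log-j}~(3)), while $I_U\subseteq J_U$ (Remark~\ref{rem:psic-psi}); hence it suffices to prove $\log w\in I_U$, for then $w$ is forced into $(1+I_U)\cdot\mu_p(\iw{U})$, which is exactly the assertion $\tilde\theta_U(\eta)\equiv\varphi(\eta)^{(G:U)/p}\pmod{I_U}$ in $\piw{U}^{\times}$.

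The heart of the matter is then the identity
\[
\theta^+_U(\Gamma_G(x))\;=\;\log\theta_U(x)-\tfrac{(G:U)}{p}\,\log\varphi(x)\;=\;\log w
\]
in $\iw{U}$. To prove it I would work over $\Q_p$-coefficients and write $\Gamma_G(x)=L-p^{-1}\varphi(L)$, where $L$ is the logarithm of $x$ in $\Z_p[[\conj{G}]]\otimes_{\Z_p}\Q_p$ and $\varphi$ is the Frobenius correspondence $[g]\mapsto[g^p]$, treating the two summands separately. For the first: the trace homomorphism $\theta^+_U$ coincides with the $\iw{U}$-relative trace of the right-multiplication representation of $\iw{G}$ as a free $\iw{U}$-module of rank $(G:U)$, followed by projection onto conjugacy classes (one checks this on group elements, matching the left/right-coset and inversion conventions of Section~\ref{ssc:calculation}); since $\theta_U$ is the determinant of that same representation, the relation $\log\det=\tr\circ\log$ gives $\theta^+_U(L)=\log\theta_U(x)$. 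This is precisely the compatibility of the integral logarithm with the norm map $\Nr_{\iw{G}/\iw{U}}$ in the sense of Oliver and Taylor, transported to the profinite setting by the Mittag-Leffler argument already invoked in Section~\ref{ssc:intlog}. For the second summand I would use the hypothesis that $G^f$ has exponent $p$: then $g^p\in\Gamma$ for every $g\in G$, so $\varphi(L)$ lies in $\iw{\Gamma}$, on which $\theta^+_U$ acts simply as multiplication by $(G:U)$; moreover compatibility of $L$ with the projection $G\twoheadrightarrow\Gamma$ (and with $\varphi_{\Gamma}$) identifies $\varphi(L)$ with $\log\varphi(x)$, so $\theta^+_U(\varphi(L))=(G:U)\log\varphi(x)$. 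Combining the two summands yields the displayed identity, and since $\theta^+_U(\Gamma_G(x))$ lies in the image $I_U$ of $\theta^+_U$, we obtain $\log w\in I_U$, as required.

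I expect the main obstacle to be the identification $\theta^+_U(L)=\log\theta_U(x)$: one has to verify carefully that the combinatorially defined trace homomorphism of Section~\ref{ssc:calculation} really is the relative trace of the multiplication representation (after projecting to conjugacy classes), and that the $K$-theoretic norm is compatible with the integral logarithm at each finite level $G^{(n)}$, so that the identity survives passage to the projective limit. By contrast, the exponent-$p$ input for the second summand and the passage between the congruences modulo $J_U$ and modulo $I_U$ are comparatively routine, resting only on the logarithmic isomorphisms established in Section~\ref{ssc:log}.
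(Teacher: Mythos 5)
Your proof is correct and follows essentially the same route as the paper's argument in Section~\ref{ssc:contain}: the key identity $\theta^+_U(\Gamma_G(x))=\log w$ is exactly equation~(\ref{eq:logtheta}), and the final passage across the logarithmic isomorphisms of Propositions~\ref{prop:log-j} and~\ref{prop:log-i} is the same. Where you sketch a direct verification of the trace--log compatibility and the Frobenius step, the paper isolates these as Lemma~\ref{lem:lognorm} and the commuting square~(\ref{eq:comm}), but the mathematical content is identical.
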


The following lemma relates 
norm maps in algebraic \mbox{$K$-theory} to trace homomorphisms 
defined in Section~\ref{ssc:calculation} via \p-adic logarithms.

\begin{lem}[compatibility lemma] \label{lem:lognorm}
Let $(U,V)$ and $(U',V')$ be elements in $\mathfrak{F}_B$ such
 that $U$ contains $U'$. Then the following diagram commutes$:$
\begin{align*}
\xymatrix{
K_1(\iw{U}) \ar[r]^(0.45){\log} \ar[d]_{\Nr_{\iw{U}/\iw{U'}}} &
 \Q_p[[\conj{U}]] \ar[d]^{\Tr_{\Q_p[[\conj{U}]]/\Q_p[[\conj{U'}]]}}  \\
K_1(\iw{U'}) \ar[r]_(0.45){\log} & \Q_p[[\conj{U'}]].
}
\end{align*}
\end{lem}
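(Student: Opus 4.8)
The plan is to reduce the claim to the corresponding compatibility statement for \emph{finite} $p$-groups, where it is essentially Oliver--Taylor's computation of the behaviour of the integral logarithm under norm (induction/transfer) maps, and then pass to the projective limit. First I would recall that $K_1(\iw{U})=\varprojlim_n K_1(\Z_p[U^{(n)}])$ with $U^{(n)}=U^f\times \Gamma/\Gamma^{p^n}$ (by \cite[Proposition~1.5.1]{FK}, as used in Section~\ref{ssc:intlog}), and similarly for $U'$; both sides of the diagram are continuous for the profinite/$p$-adic topologies, so it suffices to establish commutativity of the analogous diagram
\begin{align*}
\xymatrix{
K_1(\Z_p[U^{(n)}]) \ar[r]^(0.45){\log} \ar[d]_{\Nr} &
 \Q_p[\conj{U^{(n)}}] \ar[d]^{\Tr}  \\
K_1(\Z_p[{U'}^{(n)}]) \ar[r]_(0.45){\log} & \Q_p[\conj{{U'}^{(n)}}]
}
\end{align*}
for each $n$, compatibly in $n$. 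The compatibility in $n$ is automatic from functoriality of both $\log$ and the norm/trace maps under the surjections $U^{(n+1)}\twoheadrightarrow U^{(n)}$, together with the Mittag-Leffler property already observed in Section~\ref{ssc:intlog}.

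For a fixed finite $p$-group $\Delta=U^{(n)}$ with subgroup $\Delta'={U'}^{(n)}$ of index $r=(\Delta:\Delta')=(U:U')$, the key point is the following description of the norm (transfer) map on $K_1$. An element $x\in K_1(\Z_p[\Delta])$, lifted to $\Z_p[\Delta]^\times$, acts on $\Z_p[\Delta]$ regarded as a free left $\Z_p[\Delta']$-module of rank $r$ by right multiplication; choosing coset representatives $\{a_1,\dots,a_r\}$ for $\Delta/\Delta'$ gives an invertible matrix $A_x\in\GL_r(\Z_p[\Delta'])$ with $\Nr(x)=\det(A_x)$. Next I would invoke the standard fact that, after inverting $p$, the composite
\begin{align*}
K_1(\Z_p[\Delta])\otimes\Q_p \xrightarrow{\ \log\ } \Q_p[\conj{\Delta}] \xrightarrow{\ \Tr\ } \Q_p[\conj{\Delta'}]
\end{align*}
and the composite
\begin{align*}
K_1(\Z_p[\Delta])\otimes\Q_p \xrightarrow{\Nr} K_1(\Z_p[\Delta'])\otimes\Q_p \xrightarrow{\ \log\ } \Q_p[\conj{\Delta'}]
\end{align*}
agree; the cleanest way to see this is via the character-theoretic identification $\Q_p[\conj{\Delta}]\otimes\overline{\Q}_p\cong \prod_\rho \overline{\Q}_p$ by $y\mapsto(\chi_\rho(y))_\rho$, under which $\Tr_{\Q_p[\conj{\Delta}]/\Q_p[\conj{\Delta'}]}$ becomes restriction of characters and $\log\circ\,\det$ on a matrix $A_x$ becomes $\log\det\rho(A_x)=\log(\chi_\rho$ evaluated on the induced representation$)$. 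Concretely: $\chi_\rho(\log\Nr(x))$ for $\rho$ a character of $\Delta'$ equals $\sum_j \log\big(\chi_\rho$ of the relevant block$\big)$, which unwinds to $\chi_{\ind{\Delta'}{}{}}$-type identities; comparing with the explicit formula for induced characters \cite[Section~7.2]{Serre1} — exactly the identity already used in the proof of Proposition~\ref{prop:addtheta} — gives $\chi_\rho(\Tr(\log x))=\chi_\rho(\log\Nr(x))$ for all $\rho$, hence equality in $\Q_p[\conj{\Delta'}]$ since characters separate elements of $\Q_p[\conj{\Delta'}]\otimes\overline{\Q}_p$.

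The main obstacle I anticipate is the bookkeeping in the character computation that identifies $\chi_\rho(\log\det A_x)$ with the ``induced'' expression: one must be careful that $\log$ of a determinant of an $r\times r$ matrix over $\Z_p[\Delta']$, evaluated via a representation $\rho$ of $\Delta'$, reproduces precisely the trace of $\log$ of the $\Delta$-action on $\ind{\Delta'}{\Delta}{V_\rho}$, and this requires knowing that the $\Delta'$-module $\Z_p[\Delta]\otimes_{\Z_p[\Delta']}V_\rho$ built from the coset-matrix presentation genuinely is the induced module with its natural $\Delta$-action — a statement that is standard but whose verification is the technical heart. Once that is in place, everything else (the $p$-adic convergence of $\log$ on the relevant subgroups, continuity, passage to the limit) follows from the estimates already established in Section~\ref{sc:prelim}, in particular Propositions~\ref{prop:log-j} and~\ref{prop:log-i}, which guarantee that the diagram makes sense integrally on $1+J_{U,V}$ and $1+I_U$ and not merely after tensoring with $\Q_p$.
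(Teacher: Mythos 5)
Your reduction to the finite quotients $U^{(n)}=U^f\times\Gamma/\Gamma^{p^n}$, ${U'}^{(n)}={U'}^f\times\Gamma/\Gamma^{p^n}$ and passage to the projective limit is exactly the paper's proof, which then defers the finite-group case to \cite[Lemma~4.7]{H}. For that finite case you propose a character-theoretic check after tensoring with $\overline{\Q}_p$; this can be made to work, but a shorter route --- and in all likelihood the one behind \cite[Lemma~4.7]{H} --- never leaves the group algebra: the norm $\Nr$ is induced by the restriction-of-scalars ring homomorphism $\iota\colon\M_m(\Z_p[\Delta])\to\M_{mr}(\Z_p[\Delta'])$ ($\Delta=U^{(n)}$, $\Delta'={U'}^{(n)}$, $r=(\Delta:\Delta')$), which commutes with $\log$ simply because it is a ring homomorphism, and the lemma then reduces to the elementary identity $\tr\circ\iota=\Tr_{\Q_p[\conj{\Delta}]/\Q_p[\conj{\Delta'}]}\circ\tr$ on $\Q_p[\Delta]$, checked directly on group elements $g\in\Delta$ from the very definition of the trace homomorphism --- no characters, no Wedderburn, no induced modules. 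If you retain the character route, two points need tightening. First, under Wedderburn $\Tr_{\Q_p[\conj{\Delta}]/\Q_p[\conj{\Delta'}]}$ does not ``become restriction of characters'': what holds is $\chi_\rho\circ\Tr=\mathrm{Ind}^{\Delta}_{\Delta'}\chi_\rho$ (the induced-character formula, as used in the proof of Proposition~\ref{prop:addtheta}), so $\Tr$ is \emph{adjoint to induction} of class functions, and its matrix in the coordinates $z\mapsto(\chi_\rho(z))_\rho$ has entries $\langle\mathrm{Res}^{\Delta}_{\Delta'}\sigma,\rho\rangle$. Second, the ``technical heart'' you flag --- that the $\rho$-extension of the coset matrix $A_x$ computes $\ind{\Delta}{\Delta'}{\rho}(x)$ up to conjugacy --- does require reconciling a genuine left/right mismatch: here $\Nr$ is computed via \emph{right} multiplication on a \emph{left}-free $\iw{U}$-module (cf.\ the discussion preceding the proof of Proposition~\ref{prop:cong-j}), whereas the induced module $\Z_p[\Delta]\otimes_{\Z_p[\Delta']}V_\rho$ carries the \emph{left} $\Delta$-action, and matching the two is precisely the bookkeeping the $\iota$-argument sidesteps. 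Lastly, the appeal to Propositions~\ref{prop:log-j} and~\ref{prop:log-i} in your closing paragraph is out of place for this lemma, whose codomain is $\Q_p[[\conj{U'}]]$; those propositions concern integrality of $\log$ on $1+J_{U,V}$ and $1+I_U$, which is needed later (Proposition~\ref{prop:normcong}, Section~\ref{ssc:isomorphy}) but is not part of the present statement.
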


\begin{proof}
We may prove that the diagram commutes for each finite quotient
 $U^{(n)}=U^f \times \Gamma/\Gamma^{p^n}$ and ${U'}^{(n)}={U'}^f \times
 \Gamma/\Gamma^{p^n}$ by the same argument as that in
 \cite[Lemma~4.7]{H}. 
 Hence the claim holds by taking the projective limit.
\end{proof}

\begin{proof}[Proof of Proposition~$\ref{prop:normcong}$]
We may assume that $U$ does not coincide with $G$ without loss of generality 
(see Remark~\ref{rem:G}). 
Let $\theta_{\mathrm{ab}}$ (resp.\
 $\theta_{\mathrm{ab}}^+$) be the homomorphism 
 $K_1(\iw{G})\rightarrow \iw{G^{\mathrm{ab}}}^{\times}$ (resp.\
 $\Z_p[[\conj{G}]] \rightarrow \Z_p[[G^{\mathrm{ab}}]]$) induced by the
 abelisation map $G\rightarrow G^{\mathrm{ab}}$. Then we may easily check that the following
diagram commutes for each $(U,V)$ in $\mathfrak{F}_B$:

\begin{align} \label{eq:comm}
\begin{CD}
\Q_p[[\conj{G}]] @>{\qquad \Q_p \otimes_{\Z_p} \theta_{\mathrm{ab}}^+ \qquad}>>  \Q_p[[G^{\mathrm{ab}}]]  \\
@V{\frac{1}{p}\varphi}VV @VV{\frac{(G:U)}{p}\varphi}V \\
\Q_p[[\conj{G}]] @>>{\qquad \Q_p \otimes_{\Z_p} \theta_{U,V}^+ \qquad }> \Q_p[[U/V]]. \\
\end{CD}
\end{align}
Note that $\varphi(\tilde{\theta}_{\mathrm{ab}}(\eta))^{-(G:U)/p} 
\tilde{\theta}_U(\eta)$ is 
contained in $1+J_U^{\, \widetilde{\,}}$ for each $U$ in
 $\mathfrak{F}_A^c$ because $( \tilde{\theta}_{U,V}(\eta))_{(U,V)\in \mathfrak{F}_B}$ is an
 element in $\widetilde{\Psi}'$ (Lemma~\ref{lem:xicontain}).
 Then Proposition~\ref{prop:log-j} (3) asserts that the element 
 $\log (\varphi(\tilde{\theta}_{\mathrm{ab}}(\eta))^{-(G:U)/p}
\tilde{\theta}_U(\eta))$ is contained in $\iw{U}$. 
On the other hand, we may calculate as
\begin{equation} \label{eq:logtheta}
\begin{aligned}
\theta^+_{U,V} \circ \Gamma_G (\eta) &= (\Q_p \otimes_{\Z_p} \theta^+_{U,V})(\log (\eta))
 -(\Q_p \otimes_{\Z_p} \theta^+_{U,V})(p^{-1}\varphi(\log(\eta))) \\
 &=\log(\theta_{U,V}(\eta))-\frac{(G:U)}{p}\varphi( \log
 (\theta_{\mathrm{ab}}(\eta))) \\
&= \log 
\frac{\theta_{U,V}(\eta)}{\varphi(\theta_{\mathrm{ab}}(\eta))^{(G:U)/p}}
\end{aligned} 
\end{equation}
for each $(U,V)$ in $\mathfrak{F}_B$ (the first equality is 
 nothing but the
 definition of the integral logarithm and the second follows
 from~Lemma~\ref{lem:lognorm} 
and (\ref{eq:comm})).\footnote{For the abelisation
$\theta_{\mathrm{ab}}=\theta_{G,[G,G]}$, 
we use the notation 
$\log(\varphi(\eta)^{-1/p} \theta_{\mathrm{ab}}(\eta))$ 
for an element defined as 
$\Gamma_{G_{\mathrm{ab}}}(\theta_{\mathrm{ab}}(\eta))=
\log(\theta_{\mathrm{ab}}(\eta))-p^{-1}\log(\varphi(\theta_{\mathrm{ab}}(\eta)))$ 
by abuse of notation.} 
In particular
 $\log(\varphi(\theta_{\mathrm{ab}}(\eta))^{-(G:U)/p}\theta_U(\eta))$ 
is contained in $I_U$
 for each $U$ in $\mathfrak{F}_A^c$ by definition. Recall that 
 for each $U$ in $\mathfrak{F}_A^c$ 
the \p-adic logarithm is injective on $1+J_U^{\, \widetilde{\,}}$  
 (Proposition~\ref{prop:log-j}) and
it induces an isomorphism between $1+I_U^{\, \widetilde{\,}}$ and $I_U$
 (Proposition~\ref{prop:log-i} (3)) unless $U$ coincides with $G$. Therefore
we may conclude that 
$\varphi(\tilde{\theta}_{\mathrm{ab}}(\eta))^{-(G:U)/p}\tilde{\theta}_U(\eta)$ 
is contained in $1+I_U^{\,\widetilde{\,}}$, which implies the desired
 additional congruence for $U$.
\end{proof} 

By Lemma~\ref{lem:xicontain} and Proposition~\ref{prop:normcong}, we may
conclude that $\widetilde{\Psi}$ (resp.\ $\widetilde{\Psi}_c$) contains
the image of $\tilde{\theta}$; in other words, $\tilde{\theta}$ induces
a homomorphism 
\begin{align*}
\tilde{\theta} \colon \pK_1(\iw{G}) \rightarrow \widetilde{\Psi} \quad (\text{resp.\
 $\widetilde{\Psi}_c$}).
\end{align*}

%
\subsection{Proof of the isomorphy of $\tilde{\theta}$} \label{ssc:isomorphy}
%

We shall verify the isomorphy of $\tilde{\theta}$ in this subsection.

\begin{prop} \label{prop:injection}
The homomorphism $\tilde{\theta} \colon \pK_1(\iw{G}) \rightarrow
 \widetilde{\Psi}_c$ is injective.
\end{prop}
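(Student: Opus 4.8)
The plan is to combine the Oliver--Taylor integral logarithm sequence (\ref{eq:basicseq}) with the additive theta isomorphism. Suppose $\eta$ is an element of $\pK_1(\iw{G})$ with $\tilde{\theta}(\eta)=1$, and lift it to $K_1(\iw{G})$ (still denoted $\eta$). First I would dispose of the case where $G$ is abelian, which happens automatically when $N\leq 2$ since a group of exponent $p$ and order at most $p^2$ is abelian: then $(G,\{e\})=(G,[G,G])$ belongs to $\mathfrak{F}_B$ and the corresponding component of $\tilde{\theta}$ is the identity map of $\piw{G}^{\times}=\pK_1(\iw{G})$, whence $\eta=1$ at once. I may therefore assume $G$ is non-abelian; in particular the finite part $G^f$ has order at least $p^3$, so that every $U$ in $\mathfrak{F}_A^c$ is a \emph{proper} open subgroup of $G$ and Proposition~\ref{prop:cong-j} together with the computation (\ref{eq:logtheta}) is available for every such $U$.

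Next I would unwind the hypothesis $\tilde{\theta}(\eta)=1$ componentwise. At the pair $(G,[G,G])$ it says that $\theta_{\mathrm{ab}}(\eta)$ is a $p$-power torsion unit of $\iw{G^{\mathrm{ab}}}$; since $\iw{\Gamma}\cong\Z_p[[T]]$ has no non-trivial $p$-power torsion units, applying the Frobenius $\varphi\colon\iw{G^{\mathrm{ab}}}\rightarrow\iw{\Gamma}$ gives $\varphi(\theta_{\mathrm{ab}}(\eta))=1$. At each pair $(U,\{e\})$ with $U$ in $\mathfrak{F}_A^c$ it says that $\theta_U(\eta)$ is a $p$-power torsion unit of $\iw{U}$, and hence $\log\theta_U(\eta)=0$. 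Feeding these two facts into (\ref{eq:logtheta}) I obtain
\begin{align*}
\theta^+_U(\Gamma_G(\eta))=\log\frac{\theta_U(\eta)}{\varphi(\theta_{\mathrm{ab}}(\eta))^{(G:U)/p}}=\log\theta_U(\eta)=0
\end{align*}
for every $U$ in $\mathfrak{F}_A^c$, in particular for every $U$ in $\mathfrak{F}_A$. Corollary~\ref{cor:determine} (equivalently, injectivity of $\theta_A^+$ in Proposition~\ref{prop:addtheta}) then forces $\Gamma_G(\eta)=0$.

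It remains to interpret $\Gamma_G(\eta)=0$ inside $\pK_1(\iw{G})$. By exactness of (\ref{eq:basicseq}) the kernel of $\Gamma_G$ on $K_1(\iw{G})$ is $\varprojlim_n K_1(\Z_p[G^{(n)}])_{\mathrm{tors}}\cong\mu_{p-1}(\Z_p)\times G^{\mathrm{ab}}\times SK_1(\Z_p[G^f])$, so after dividing by $K_1(\iw{G})_{p\text{-tors}}=G^{f,\mathrm{ab}}\times SK_1(\Z_p[G^f])$ the class of $\eta$ in $\pK_1(\iw{G})$ lies in the subgroup generated by the images of the scalar roots of unity $\mu_{p-1}(\Z_p)$ and of $\Gamma\subset G$. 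Composing the inclusion of this subgroup with the abelianisation component $\tilde{\theta}_{\mathrm{ab}}\colon\pK_1(\iw{G})\rightarrow\piw{G^{\mathrm{ab}}}^{\times}$, a scalar $\zeta$ in $\mu_{p-1}(\Z_p)$ maps to $\zeta$ and an element $\gamma$ of $\Gamma$ maps to its image in $G^{\mathrm{ab}}$, i.e.\ under the Iwasawa isomorphism to an element of the form $(1+T)^t$; since no non-trivial element of the form $\zeta(1+T)^t$ is $p$-power torsion in $\iw{G^{\mathrm{ab}}}^{\times}$, this composite is injective, and as $\tilde{\theta}_{\mathrm{ab}}(\eta)=1$ by hypothesis I conclude $\eta=1$. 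I expect the one delicate point to be precisely this last step --- checking that the abelianisation component of $\tilde{\theta}$ still separates the residual prime-to-$p$ and $\Gamma$-directions of $K_1(\iw{G})$ that survive in $\pK_1(\iw{G})$; the remainder is routine torsion bookkeeping layered on the additive theta isomorphism and the integral logarithm sequence already established.
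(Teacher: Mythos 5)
Your argument is correct and follows essentially the same route as the paper's: use (\ref{eq:logtheta}) to show $\theta^+_U\circ\Gamma_G(\eta)=0$, invoke injectivity of the additive theta map to get $\Gamma_G(\eta)=0$, then use the integral-logarithm exact sequence (\ref{eq:basicseq}) and the abelianisation component to pin the lift down to $p$-power torsion. The only cosmetic differences are that you invoke Corollary~\ref{cor:determine} (the $\mathfrak{F}_A$ version) rather than Proposition~\ref{prop:addtheta-B}, and you split off the abelian case and prove $\varphi(\theta_{\mathrm{ab}}(\eta))=1$ explicitly; both steps are harmless but not actually needed, since the paper's single appeal to (\ref{eq:logtheta}) (with the footnote convention at $(G,[G,G])$) already covers all cases uniformly.
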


\begin{proof} 
Take an arbitrary element from the kernel of $\tilde{\theta}$ and 
let $\eta$ denote its lift to $K_1(\iw{G})$. Then 
 $\theta_{U,V}^+\circ \Gamma_G(\eta)$ vanishes for each $(U,V)$ in
 $\mathfrak{F}_B$ by~(\ref{eq:logtheta}). 
Hence $\Gamma_G(\eta)$ coincides with zero since
 $\theta^+_B$ is injective (Proposition~\ref{prop:addtheta-B}); 
 equivalently the element $\eta$ is contained in the kernel of 
 the integral logarithm~$\Gamma_G$. Combining this fact with Wall's theorem
 (see \cite[Theorem~4.1]{Wall} and (\ref{eq:wall})), 
 we may regard $\eta$ 
 as an element in $\mu_{p-1}(\Z_p)\times G^{\mathrm{ab}} \times SK_1(\Z_p[G^f])$. 
 Furthermore the abelisation map $\theta_{\mathrm{ab}}$ induces 
 the canonical projection from \mbox{$\mu_{p-1}(\Z_p)\times
 G^{\mathrm{ab}} \times SK_1(\Z_p[G^f])$} onto $\mu_{p-1}(\Z_p)
 \times G^{\mathrm{ab}}$ when it is restricted to the kernel of
 $\Gamma_G$. Since $\tilde{\theta}_{\mathrm{ab}}(\eta)$
 vanishes by assumption, the element $\eta$ is contained in
 $G^{\mathrm{ab},f} \times SK_1(\Z_p[G^f])$, 
 and in particular $\eta$ is a p-torsion element. 
This implies that the image of $\eta$ in
 $\pK_1(\iw{G})$ reduces to be trivial.
\end{proof}

\begin{prop} \label{prop:surjection}
The homomorphism $\tilde{\theta} \colon \pK_1(\iw{G}) \rightarrow
 \widetilde{\Psi}$ is surjective.
\end{prop}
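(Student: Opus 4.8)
The plan is to invert $\tilde\theta$ by transporting the data to the additive side through logarithms, solving there by means of the additive theta isomorphism (Proposition~\ref{prop:addtheta-B}), and lifting the resulting element of $\Z_p[[\conj{G}]]$ back to $\pK_1(\iw{G})$ through the integral logarithm sequence (\ref{eq:basicseq}). The cases $N\leq 1$, where $G$ is abelian and $\tilde\theta$ degenerates to an essentially tautological map (the congruences defining $\widetilde{\Psi}$ being automatic by Proposition~\ref{prop:cong-j}), are disposed of directly; so I concentrate on $N\geq 2$, although the argument is essentially uniform once the component at $(G,[G,G])$ is handled separately.

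First I would fix $\eta_{\bullet}=(\eta_{U,V})_{(U,V)\in\mathfrak{F}_B}\in\widetilde{\Psi}$, write $\eta_{\mathrm{ab}}=\eta_{G,[G,G]}$, and produce the additive candidate. For every $(U,V)\in\mathfrak{F}_B$ with $U$ a proper subgroup of $G$ the congruence condition gives $\zeta_{U,V}:=\varphi(\eta_{\mathrm{ab}})^{-(G:U)/p}\eta_{U,V}\in 1+J_{U,V}^{\,\widetilde{\,}}$, so by Proposition~\ref{prop:log-j} one may set $x_{U,V}:=\log\zeta_{U,V}\in\iw{U/V}$; for the remaining pair put $x_{G,[G,G]}:=\Gamma_{G^{\mathrm{ab}}}(\eta_{\mathrm{ab}})$, which is well defined on $\piw{G^{\mathrm{ab}}}^{\times}$ since the integral logarithm kills torsion. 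By (\ref{eq:logtheta}) these are exactly the formulas which would compute $\theta_B^+(\Gamma_G(\eta))$ if $\eta_{\bullet}$ were $\tilde\theta(\eta)$, so the goal becomes: show $(x_{U,V})_{(U,V)}$ lies in the image of $\theta_B^+$, i.e.\ (by Proposition~\ref{prop:addtheta-B}) that it satisfies (TCC), (CCC$+$) and that its $\mathfrak{F}_A$-truncation lies in $\Phi$. Here (CCC$+$) is immediate from (CCC) for $\eta_{\bullet}$ since $\psi_a$ commutes with the logarithms and fixes the central factor $\varphi(\eta_{\mathrm{ab}})^{(G:U)/p}\in\iw{\Gamma}$; (TCC) is deduced from (NCC) exactly as in the proof of Proposition~\ref{prop:normcong}, using Lemma~\ref{lem:lognorm} to convert the norm compatibility of the $\log\eta_{U,V}$-parts into trace compatibility, and using that the Frobenius-twist parts, being central, are multiplied by the relevant indices by the trace maps so that the exponent $(G:U)/p$ is carried to $(G:U')/p$ (the pair $(G,[G,G])$ being treated via compatibility of $\Gamma_{G^{\mathrm{ab}}}$ with group homomorphisms, diagram~(\ref{eq:comphom}), combined with Lemma~\ref{lem:lognorm}); and finally $x_{U,\{e\}}=\log\zeta_U\in I_U$ for $U\in\mathfrak{F}_A$ follows from the additional congruence condition ($\zeta_U\in 1+I_U^{\,\widetilde{\,}}$) and Proposition~\ref{prop:log-i}(3), which applies because each $U\in\mathfrak{F}_A$ is proper in $G$, while the trace relations inside $\Phi$ are a special case of (TCC). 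Hence Proposition~\ref{prop:addtheta-B} yields a unique $y\in\Z_p[[\conj{G}]]$ with $\theta_B^+(y)=(x_{U,V})_{(U,V)}$.

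To conclude I would lift $y$ to $K_1$ and check $\tilde\theta$ of the lift. One checks directly that $\omega_{G^{\mathrm{ab}}}\circ\theta_{\mathrm{ab}}^+=\omega_G$ on $\Z_p[[\conj{G}]]$ (both send $\sum a_{[d]}[d]$ to $\prod\bar{d}^{\,a_{[d]}}$), so $\omega_G(y)=\omega_{G^{\mathrm{ab}}}(x_{G,[G,G]})=\omega_{G^{\mathrm{ab}}}(\Gamma_{G^{\mathrm{ab}}}(\eta_{\mathrm{ab}}))=1$ by exactness of (\ref{eq:intlogstr}) for $G^{\mathrm{ab}}$; by (\ref{eq:basicseq}) there is then $\eta\in\pK_1(\iw{G})$ with $\Gamma_G(\eta)=y$. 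For this $\eta$, (\ref{eq:logtheta}) gives $\Gamma_{G^{\mathrm{ab}}}(\tilde\theta_{\mathrm{ab}}(\eta))=\theta_{\mathrm{ab}}^+(y)=x_{G,[G,G]}=\Gamma_{G^{\mathrm{ab}}}(\eta_{\mathrm{ab}})$, whence $\tilde\theta_{\mathrm{ab}}(\eta)=\eta_{\mathrm{ab}}$ by injectivity of $\Gamma_{G^{\mathrm{ab}}}$ on $\piw{G^{\mathrm{ab}}}^{\times}$; and then $\log\bigl(\tilde\theta_{U,V}(\eta)\varphi(\eta_{\mathrm{ab}})^{-(G:U)/p}\bigr)=x_{U,V}=\log\zeta_{U,V}$ for every proper $U$, so $\tilde\theta_{U,V}(\eta)=\eta_{U,V}$ by injectivity of $\log$ on $1+J_{U,V}^{\,\widetilde{\,}}$ (Proposition~\ref{prop:log-j}; the left-hand argument lies there by Proposition~\ref{prop:cong-j}, since $\tilde\theta(\eta)\in\widetilde{\Psi}'$). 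Therefore $\tilde\theta(\eta)=\eta_{\bullet}$.

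The hard part is the verification of (TCC) for the additive candidate: although Lemma~\ref{lem:lognorm} and the structural results of Section~\ref{sc:prelim} supply all the needed tools, one must carefully keep track of the central Frobenius-twist factors as they pass through the trace and norm compatibilities, and handle the pair $(G,[G,G])$ separately through the integral logarithm rather than the naive $p$-adic logarithm (which does not converge there).
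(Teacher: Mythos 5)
Your proof follows the paper's strategy step by step (transport to the additive side via logarithms, solve there with Proposition~\ref{prop:addtheta-B}, and lift back through the integral logarithm), and most of it is fine: the identification of the additive candidate, the verification of (TCC), (CCC$+$) and $\Phi$-membership via Lemma~\ref{lem:lognorm} and Proposition~\ref{prop:log-i}, the computation of $\omega_G(y)=1$, and the final comparison for proper $(U,V)$ via injectivity of $\log$ on $1+J_{U,V}^{\,\widetilde{\,}}$. There is, however, a genuine error in the abelianization-matching step: you assert that $\Gamma_{G^{\mathrm{ab}}}$ is injective on $\piw{G^{\mathrm{ab}}}^{\times}$, and that is false. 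By Higman's theorem (cf.\ the exact sequence~(\ref{eq:intlogstr})), the kernel of $\Gamma_{G^{\mathrm{ab}}}$ on $\iw{G^{\mathrm{ab}}}^{\times}$ is the full torsion subgroup $\mu_{p-1}(\Z_p)\times G^{\mathrm{ab}}$. Dividing only by the $p$-power torsion $G^{f,\mathrm{ab}}$ leaves a residual kernel $\mu_{p-1}(\Z_p)\times\Gamma$ on $\piw{G^{\mathrm{ab}}}^{\times}$ (indeed $\Gamma_{G^{\mathrm{ab}}}(\gamma)=\log\gamma-p^{-1}\varphi(\log\gamma)=\log\gamma-p^{-1}\cdot p\log\gamma=0$, so $\Gamma$ genuinely sits in the kernel). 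Thus from $\Gamma_{G^{\mathrm{ab}}}(\tilde\theta_{\mathrm{ab}}(\eta))=\Gamma_{G^{\mathrm{ab}}}(\eta_{\mathrm{ab}})$ you may only conclude that $\tilde\theta_{\mathrm{ab}}(\eta)$ and $\eta_{\mathrm{ab}}$ differ by some $\tau\in\mu_{p-1}(\Z_p)\times\Gamma$, not that they are equal; and this unknown $\tau$ contaminates the subsequent log-comparison because $\varphi(\tau)\neq 1$ in general.

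The repair is the one the paper uses: after producing $\eta'$ with $\Gamma_G(\eta')=y$, take $\tau\in\mu_{p-1}(\Z_p)\times G^{\mathrm{ab}}$ with $\tilde\theta_{\mathrm{ab}}(\eta')\tau=\eta_{\mathrm{ab}}$, lift $\tau$ to $K_1(\iw{G})$, and set $\eta=\eta'\tau$. Since $\tau$ lies in the kernel of $\Gamma_G$, one still has $\Gamma_G(\eta)=y$, but now $\tilde\theta_{\mathrm{ab}}(\eta)=\eta_{\mathrm{ab}}$ on the nose, and your final comparison via $\log$ on $1+J_{U,V}^{\,\widetilde{\,}}$ goes through unchanged. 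With this correction your argument coincides with the paper's proof.
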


Let $\eta_{\bullet}$ be an element in $\widetilde{\Psi}$. Since
$\eta_{\bullet}$ is in particular contained in $\widetilde{\Psi}'$, 
the element  
$\log (\varphi(\eta_{\mathrm{ab}})^{-(G:U)/p} \eta_{U,V})$ can be defined as
an element in $\iw{U/V}$ 
for each $(U,V)$ in $\mathfrak{F}_B$
(Proposition~\ref{prop:log-j} (2) and the definition of the 
integral logarithm for $G^{\mathrm{ab}}$).

\begin{lem} \label{lem:logcontained} 
The element $(\log(\varphi(\eta_{\mathrm{ab}})^{-(G:U)/p} \eta_{U,V}))_{(U,V)\in
 \mathfrak{F}_B}$ is contained in~$\Phi_B$. Moreover $(\log
 (\varphi(\eta_{\mathrm{ab}})^{-(G:U)/p} \eta_U))_{U\in \mathfrak{F}_A}$
 is contained in $\Phi$.
\end{lem}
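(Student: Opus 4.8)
The plan is to check the two defining conditions of $\Phi_B$ --- the trace compatibility condition (TCC) and the conjugacy compatibility condition (CCC$+$) --- for the family $x_{\bullet}=(x_{U,V})_{(U,V)\in\mathfrak{F}_B}$ with $x_{U,V}:=\log\bigl(\varphi(\eta_{\mathrm{ab}})^{-(G:U)/p}\eta_{U,V}\bigr)$. When $(U,V)\ne(G,[G,G])$ the element $\varphi(\eta_{\mathrm{ab}})^{-(G:U)/p}\eta_{U,V}$ lies in $1+J_{U,V}^{\, \widetilde{\,}}$ by the congruence condition built into $\widetilde{\Psi}'$, so $x_{U,V}\in\iw{U/V}$ is well defined by Proposition~\ref{prop:log-j}~(2),~(3); and $x_{G,[G,G]}$ is, by the notational convention fixed above, the Oliver--Taylor integral logarithm $\Gamma_{G^{\mathrm{ab}}}(\eta_{\mathrm{ab}})\in\Z_p[[G^{\mathrm{ab}}]]$. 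The idea is that (TCC) will come from the norm compatibility condition (NCC) on $\eta_{\bullet}$ together with the compatibility of the logarithm with norm and trace maps, (CCC$+$) from the conjugacy compatibility condition (CCC), and the membership of the restriction $(x_U)_{U\in\mathfrak{F}_A}$ in $\Phi$ from the additional congruence condition together with Proposition~\ref{prop:log-i}~(3).

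First I would verify (TCC) away from the corner. Fix pairs $(U,V)\supseteq(U',V')$ in $\mathfrak{F}_B$ with $U'\supseteq V$; then $V'\subseteq V$ (as $[U',U']\subseteq[U,U]$) and $U'/V$ is a subgroup of $U/V$, both being abelian of the form (a finite $p$-group) $\times\Gamma$. Assuming $U\ne G$, the argument of Lemma~\ref{lem:lognorm}, applied to the inclusion $U'/V\subseteq U/V$ at each finite level and passed to the limit, gives
\[
\Tr_{\Z_p[[U/V]]/\Z_p[[U'/V]]}(x_{U,V})=\log\Bigl(\Nr_{\iw{U/V}/\iw{U'/V}}\bigl(\varphi(\eta_{\mathrm{ab}})^{-(G:U)/p}\eta_{U,V}\bigr)\Bigr).
\]
Since $\varphi(\eta_{\mathrm{ab}})$ is a central unit coming from $\iw{\Gamma}$, its norm down to $\iw{U'/V}$ is its $(U:U')$-th power (as $\iw{U/V}$ is free of rank $(U:U')$ over $\iw{U'/V}$), so the central factor contributes $\varphi(\eta_{\mathrm{ab}})^{-(G:U')/p}$; on the remaining factor (NCC) gives $\Nr_{\iw{U/V}/\iw{U'/V}}(\eta_{U,V})=\can^{V'}_V(\eta_{U',V'})$, and since $\can^{V'}_V$ restricts to the identity on $\iw{\Gamma}$ the right-hand side equals $\can^{V'}_V(x_{U',V'})$, which is (TCC). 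For the corner $(U,V)=(G,[G,G])$ (with $(U',V')$ any pair such that $[G,G]\subseteq U'$), the exponent $(G:U)/p=1/p$ is not integral, and I would instead apply the $G^{\mathrm{ab}}$-analogue of equation~(\ref{eq:logtheta}) --- which holds by the same argument, via Lemma~\ref{lem:lognorm} and diagram~(\ref{eq:comm}), and is in essence the transfer compatibility of the integral logarithm recorded in \cite{Oliver,OT} --- to the subgroup $U'/[G,G]\subseteq G^{\mathrm{ab}}$ and a lift of $\eta_{\mathrm{ab}}$ to $K_1(\iw{G^{\mathrm{ab}}})$: this expresses $\Tr(x_{G,[G,G]})$ as $\log\bigl(\Nr_{\iw{G^{\mathrm{ab}}}/\iw{U'/[G,G]}}(\eta_{\mathrm{ab}})\cdot\varphi(\eta_{\mathrm{ab}})^{-(G:U')/p}\bigr)$, and (NCC) for the pair $(G,[G,G])\supseteq(U',V')$ identifies $\Nr_{\iw{G^{\mathrm{ab}}}/\iw{U'/[G,G]}}(\eta_{\mathrm{ab}})$ with $\can^{V'}_{[G,G]}(\eta_{U',V'})$; then (TCC) follows exactly as before.

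Next I would dispatch (CCC$+$), which is immediate: if $U'=a^{-1}Ua$ and $V'=a^{-1}Va$ for $a\in G$ then $(G:U')=(G:U)$, conjugation by $a$ acts trivially on $\iw{\Gamma}$ and hence on $\varphi(\eta_{\mathrm{ab}})$, while $\psi_a(\eta_{U,V})=\eta_{U',V'}$ by (CCC) and $\psi_a$ (being a ring isomorphism) commutes with $\log$, so $\psi_a(x_{U,V})=x_{U',V'}$; in the corner case $\psi_a$ fixes $\Gamma_{G^{\mathrm{ab}}}(\eta_{\mathrm{ab}})$. This establishes $x_{\bullet}\in\Phi_B$. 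For the final assertion I would regard $\mathfrak{F}_A$ as the subfamily $\{(U,\{e\})\}$ of $\mathfrak{F}_B$; the trace relation required of $\Phi$ is then the instance of (TCC) with $U_h\supseteq\Gamma\supseteq\{e\}$, and for each $U\in\mathfrak{F}_A$ the additional congruence condition says exactly that $\varphi(\eta_{\mathrm{ab}})^{-(G:U)/p}\eta_U$ is the image of an element of $1+I_U$; picking such a lift and invoking Proposition~\ref{prop:log-i}~(3) --- legitimate since $I_U\subseteq J_U$, while in the remaining case $U=G$, which forces $N\le 1$, one has $x_G=\Gamma_G(\eta_G)\in\Z_p[[G]]=I_G$ outright --- yields $x_U\in I_U$, so $(x_U)_{U\in\mathfrak{F}_A}\in\Phi$.

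The step I expect to demand the most care is the corner pair $(G,[G,G])$: there the exponent $(G:U)/p$ is not an integer, ``$\log$'' must be interpreted as the integral logarithm $\Gamma_{G^{\mathrm{ab}}}$, and Lemma~\ref{lem:lognorm} has to be replaced by the transfer compatibility of $\Gamma$ (conveniently packaged as the $G^{\mathrm{ab}}$-analogue of~(\ref{eq:logtheta})). Once this bookkeeping is set up, the rest is a routine translation of (NCC), (CCC) and the congruence conditions defining $\widetilde{\Psi}$ through the logarithm.
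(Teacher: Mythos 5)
Your proposal is correct and follows essentially the same route as the paper's (very terse) proof: both derive (TCC) and (CCC$+$) from (NCC), (CCC) and the log--norm/trace compatibility of Lemma~\ref{lem:lognorm}, and both deduce membership in $\Phi$ from the additional congruence condition via Proposition~\ref{prop:log-i}. You have merely spelled out the details the paper leaves implicit --- in particular the corner pair $(G,[G,G])$ via the convention $\log(\varphi(\eta_{\mathrm{ab}})^{-1/p}\eta_{\mathrm{ab}})=\Gamma_{G^{\mathrm{ab}}}(\eta_{\mathrm{ab}})$ together with the $G^{\mathrm{ab}}$-analogue of~(\ref{eq:logtheta}), and the degenerate case $U=G$ with $N\le1$ where $I_G=\iw{G}$ --- so the expansion is welcome but not a different argument.
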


\begin{proof}
Set $y_{U,V}=\log (\varphi(\eta_{\mathrm{ab}})^{-(G:U)/p} \eta_{U,V})$
 for each $(U,V)$ in $\mathfrak{F}_B$. Then we may easily verify that 
$(y_{U,V})_{(U,V)\in
 \mathfrak{F}_B}$ satisfies both (TCC) and (CCC$+$) (due to (NCC), (CCC) and
 Lemma~\ref{lem:lognorm}). Hence $(y_{U,V})_{(U,V)\in
 \mathfrak{F}_B}$  is contained in $\Phi_B$. Moreover
 $\varphi(\eta_{\mathrm{ab}})^{-(G:U)/p} \eta_U$ is contained in
 $1+I_U^{\, \widetilde{\,}}$ for each $U$ in~$\mathfrak{F}_A$ by 
 additional congruence condition, and thus
 $y_U=\log(\varphi(\eta_{\mathrm{ab}})^{-(G:U)/p} \eta_U)$ is contained in
 $I_U$ by~Proposition~\ref{prop:log-i}. This implies that $(y_U)_{U\in
 \mathfrak{F}_A}$ is an element in $\Phi$.
\end{proof}

\begin{proof}[Proof of Proposition~$\ref{prop:surjection}$]
First note that there exists a unique element 
 $y$ in $\Z_p[[\conj{G}]]$ which satisfies
 $\theta_B^+(y)=(\log(\varphi(\eta_{\mathrm{ab}})^{-(G:U)/p}\eta_{U,V}))_{(U,V)\in \mathfrak{F}_B}$
 by~Proposition~\ref{prop:addtheta-B} and Lemma~\ref{lem:logcontained}. 
In particular the equation
\begin{equation} \label{eq:abelisation}
 \theta^+_{\mathrm{ab}} (y) =\log \eta_{\mathrm{ab}}-\frac{1}{p}\varphi
 (\log \eta_{\mathrm{ab}})=\Gamma_{G_{\mathrm{ab}}}(\eta_{\mathrm{ab}})
\end{equation}
holds. Then we may calculate as
\begin{align*}
\omega_{G}(y)=\omega_{G^{\mathrm{ab}}} \circ \theta^+_{\mathrm{ab}}
 (y) 
 =\omega_{G^{\mathrm{ab}}} \circ \Gamma_{G^{\mathrm{ab}}}
 (\eta_{\mathrm{ab}}) =1
\end{align*}
where the first equality directly follows from the definition of
 $\omega_{G}$ and $\omega_{G^{\mathrm{ab}}}$ (see
 Section~\ref{ssc:intlog}), the second follows from
 (\ref{eq:abelisation}) and the last follows from
 (\ref{eq:basicseq}). The sequence (\ref{eq:basicseq}) also asserts that 
 there exists an
 element $\eta'$ in $K_1(\iw{G})$ which satisfies $\Gamma_G(\eta')=y$.
Furthermore we obtain
\begin{align*}
\Gamma_{G^{\mathrm{ab}}} (\tilde{\theta}_{\mathrm{ab}}(\eta')) =
 \theta^+_{\mathrm{ab}} \circ \Gamma_G(\eta')=\theta^+_{\mathrm{ab}}(y)
 =\Gamma_{G^{\mathrm{ab}}} (y_{\mathrm{ab}})
\end{align*}
by using (\ref{eq:abelisation}).
Since the kernel of $\Gamma_{G^{\mathrm{ab}}}$ is identified with 
 $\mu_{p-1}(\Z_p)
 \times G^{\mathrm{ab}}$ by the theorem of Graham Higman \cite{Higman}, 
there exists an element $\tau$ in $\mu_{p-1}(\Z_p)\times
 G^{\mathrm{ab}}$ such that the
 equation 
 $\tilde{\theta}_{\mathrm{ab}}(\eta')\tau=\eta_{\mathrm{ab}}$ holds. 
 Set $\eta=\eta'\tau$. By construction, the abelisation 
 $\tilde{\theta}_{\mathrm{ab}}(\eta)$ of~$\eta$ coincides with
 $\eta_{\mathrm{ab}}$ and 
\begin{align*}
\log \frac{\eta_{U,V}}{\varphi(\eta_{\mathrm{ab}})^{(G:U)/p}} =
 \theta_{U,V}^+(y)  
= \theta_{U,V}^+ \circ\Gamma_{G}(\eta) =\log
 \frac{\tilde{\theta}_{U,V}(\eta)}{\varphi(\tilde{\theta}_{\mathrm{ab}}(\eta))^{(G:U)/p}}
\end{align*}
holds for each $(U,V)$ in $\mathfrak{F}_B$ except for $(G,[G,G])$ 
(the first equality is due to the construction of $y$ and the last 
is due to (\ref{eq:logtheta})). Then
 $\tilde{\theta}_{U,V}(\eta)$ coincides with $\eta_{U,V}$ because the
 \p-adic logarithm induces an injection on $1+J_{U,V}^{\,
 \widetilde{\,}}$ (Proposition~\ref{prop:log-j}); 
 in other words the image of $\eta$ under the map $\tilde{\theta}$ 
 coincides with $\eta_{\bullet}$, which asserts that 
$\tilde{\theta}\colon \pK_1(\iw{G})\rightarrow \widetilde{\Psi}$ is surjective. 
\end{proof}

%
%
\section{Localized version} \label{sc:localise}
%
%

In this section we study ``the localised theta map;'' 
more precisely, let $\theta_{S,U,V}$ be the 
composition of the norm map $\Nr_{\iw{G}_S/\iw{U}_S}$ 
with the canonical homomorphism 
$K_1(\iw{U}_S) \rightarrow \iw{U/V}_S^{\times}$ 
for each $(U,V)$ in $\mathfrak{F}_B$ and 
set $\theta_S=(\theta_{S,U,V})_{(U,V)\in \mathfrak{F}_B}$. It is obvious that 
$\theta_{S}$ induces a group homomorphism $\tilde{\theta}_{S} \colon \pK_1(\iw{G}_S) \rightarrow \prod_{(U,V)\in \mathfrak{F}_B} 
\piw{U/V}_S^{\times}$. We shall study the image of $\tilde{\theta}_S$.

Let $\iw{\Gamma}_{(p)}$ denote the localisation of 
the Iwasawa algebra $\iw{\Gamma}$ with respect to 
the prime ideal $p\iw{\Gamma}$, and let $R$ denote its 
\p-adic completion $\iw{\Gamma}_{(p)}^{\, \widehat{\,}}$
for simplicity. 
We remark that for each finite \p-group $\Delta$, 
the localised 
Iwasawa algebra $\iw{\Delta\times \Gamma}_S$ is identified with the group ring 
$\iw{\Gamma}_{(p)}[\Delta]$ under the identification 
$\iw{\Delta\times \Gamma}\cong \iw{\Gamma}[\Delta]$ 
(see \cite[Lemma~2.1]{CFKSV}). 
Now for each $(U,V)$ in $\mathfrak{F}_B$, let $J_{S,U,V}$ (resp.\ $J_{U,V}^{\, \widehat{\,}}$) be the kernel of the composition
\begin{align*}
& \iw{U/V}_S \xrightarrow{\text{augmentation}} \iw{\Gamma}_{(p)} \rightarrow \iw{\Gamma}_{(p)}/p\iw{\Gamma}_{(p)} \\
(\text{resp. } & R[U^f/V^f] \xrightarrow{\text{augmentation}} R 
\xrightarrow{\hspace*{1.5cm}} R/pR).
\end{align*} 
Then we may easily verify that the intersection of $J_{U,V}^{\, \widehat{\,}}$ 
and $\iw{U/V}_S$ (resp.\ $J_{S,U,V}$ and $\iw{U/V}$) 
coincides with $J_{S,U,V}$ (resp.\ $J_{U,V}$) 
under the identification $\iw{U/V}_S \cong
\iw{\Gamma}_{(p)}[U^f/V^f]$. 
Since the group ring $R[U^f/V^f]$ is \mbox{\p-adically} complete, 
the \p-adic logarithm converges on 
$1+J_{U,V}^{\, \widehat{\,}}$ and 
induces an injection
$\log \colon (1+J_{U,V}^{\, \widehat{\,}})^{\, \widetilde{\,}} 
\rightarrow R[U^f/V^f]$ unless $U$ coincides with $G$ 
(similarly to Proposition~\ref{prop:log-j}).
Let $\widetilde{\Psi}'_S$ be the subgroup 
of the direct product 
$\prod_{(U,V)\in \mathfrak{F}_B} \piw{U/V}_S^{\times}$ 
consisting of all elements 
$\eta_{S,\bullet}$ satisfying norm compatibility condition (NCC)${}_S$,
conjugacy compatibility condition (CCC)${}_S$ and 
the following congruence for each 
$(U,V)$ in $\mathfrak{F}_B$ 
except for $(G,[G,G])$:\footnote{We may naturally extend both (NCC) and (CCC) 
to the localised versions (NCC)${}_S$ and (CCC)${}_S$ in an obvious manner.}
\begin{align*}
\eta_{S,U,V} \equiv \varphi(\eta_{S,\mathrm{ab}})^{(G:U)/p} \, \pmod{J_{S,U,V}}.
\end{align*}
Let $\widetilde{\Psi}_S$ (resp.\ $\widetilde{\Psi}_{S,c}$) be 
the subgroup of $\widetilde{\Psi}'_S$ consisting of all elements
$\eta_{S,\bullet}$ satisfying the following additional 
congruence condition:
\begin{quotation}
(additional congruence condition)

\noindent the congruence $\eta_{S,U}\equiv \varphi(\eta_{S,\mathrm{ab}})^{(G:U)/p} \, \pmod{I_{S,U}}$ holds for each $U$ in $\mathfrak{F}_A$ 
(resp.\ $\mathfrak{F}_A^c$) where $I_{S,U}$ is the
 $\iw{\Gamma}_{(p)}$-module 
defined as $I_U \otimes_{\iw{\Gamma}} \iw{\Gamma}_{(p)}$.
\end{quotation}
The group $\widetilde{\Psi}_{S,c}$ is a subgroup of $\widetilde{\Psi}_S$
(as $\widetilde{\Psi}_c$ is that of $\widetilde{\Psi}$; see also 
Remark~\ref{rem:G}).

\begin{lem} \label{lem:intersection} 
The intersection of $I_U^{\, \widehat{\,}}$ and $\iw{U}_S$ $($resp.\
 $I_{S,U}$ and $\iw{U})$ coincides with $I_{S,U}$ $($resp.\ $I_U)$.
\end{lem}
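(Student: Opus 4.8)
The plan is to reduce the statement to two elementary facts about the flat tower of rings $\iw{\Gamma}\subseteq \iw{\Gamma}_{(p)}\subseteq R$, after putting $I_U$ in a convenient normal form. Recall first that for $U$ in $\mathfrak{F}_A^c$ the group $U$ is abelian and equals $U^f\times\Gamma$, so that $\iw{U}\cong \iw{\Gamma}[U^f]$, $\iw{U}_S\cong \iw{\Gamma}_{(p)}[U^f]$, and (since $\iw{\Gamma}_{(p)}\hookrightarrow R$) there is an inclusion $\iw{U}_S\hookrightarrow R[U^f]$; all the intersections in the statement are taken inside $R[U^f]$, along the chain of flat ring extensions $\iw{U}\hookrightarrow\iw{U}_S\hookrightarrow R[U^f]$. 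Next, inspecting the explicit descriptions of $I_U$ in Section~\ref{ssc:calculation}, one sees that $I_U$ is a $\iw{\Gamma}$-submodule of $\iw{U}$, and --- after performing, only in Case~(b), the unitriangular $\iw{\Gamma}$-basis change of $\iw{U_{h,c}}$ that replaces $h^i$ by $h^i(1+c+\cdots+c^{p-1})$ for $1\le i\le p-1$ --- there exist a $\iw{\Gamma}$-basis $e_1,\dots,e_m$ of $\iw{U}$ (with $m=\sharp U^f$) and exponents $a_1,\dots,a_m\in\Z_{\ge0}\cup\{\infty\}$ such that
\begin{align*}
I_U=\bigoplus_{k=1}^m p^{a_k}\iw{\Gamma}\,e_k
\end{align*}
where $p^{\infty}\iw{\Gamma}:=0$ (the value $\infty$ occurring only in Case~(b), for the basis elements $h^ic^j$ with $i,j\ge 1$). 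By flatness, $I_{S,U}=\bigoplus_k p^{a_k}\iw{\Gamma}_{(p)}e_k$ and $I_U^{\,\widehat{\,}}=\bigoplus_k p^{a_k}R\,e_k$ inside $R[U^f]=\bigoplus_k R\,e_k$, so both intersections may be computed one coordinate at a time.

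It therefore suffices to establish the two facts (i) $p^a\iw{\Gamma}_{(p)}\cap\iw{\Gamma}=p^a\iw{\Gamma}$ and (ii) $p^aR\cap\iw{\Gamma}_{(p)}=p^a\iw{\Gamma}_{(p)}$ for every $a\ge 0$ (the cases $a=\infty$, where both sides are $0$, being vacuous). For (i): $\iw{\Gamma}\cong\Z_p[[T]]$ is a regular local ring, hence a unique factorisation domain, in which $p$ is a prime element; so if $p^ay=x\in\iw{\Gamma}$ with $y\in\iw{\Gamma}_{(p)}$, writing $y=s/t$ with $s\in\iw{\Gamma}$ and $t\notin p\iw{\Gamma}$ gives $tx=p^as$, and since $p\nmid t$ we conclude $p^a\mid x$ in $\iw{\Gamma}$. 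For (ii): $\iw{\Gamma}_{(p)}$ is a discrete valuation ring with uniformiser $p$ (it is the localisation of the regular local ring $\iw{\Gamma}$ at the height-one prime $p\iw{\Gamma}$), and $R$ is its \p-adic completion, hence again a discrete valuation ring with uniformiser $p$; the natural map $\iw{\Gamma}_{(p)}/p^a\iw{\Gamma}_{(p)}\to R/p^aR$ is an isomorphism, which is exactly the assertion $p^aR\cap\iw{\Gamma}_{(p)}=p^a\iw{\Gamma}_{(p)}$.

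Combining these coordinate-wise gives
\begin{align*}
I_{S,U}\cap\iw{U}=\bigoplus_k\bigl(p^{a_k}\iw{\Gamma}_{(p)}\cap\iw{\Gamma}\bigr)e_k=\bigoplus_k p^{a_k}\iw{\Gamma}\,e_k=I_U
\end{align*}
and likewise $I_U^{\,\widehat{\,}}\cap\iw{U}_S=I_{S,U}$, which is the claim. The only non-formal part of the argument is the bookkeeping in the first paragraph: one must check case by case, from the formulas in Section~\ref{ssc:calculation}, that $I_U$ admits the stated normal form, the mildly delicate point being Case~(b), where the basis change involving $1+c+\cdots+c^{p-1}$ is needed and where some exponents are $\infty$. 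Everything else is routine commutative algebra about the tower $\iw{\Gamma}\subseteq\iw{\Gamma}_{(p)}\subseteq R$.
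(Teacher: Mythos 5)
Your proof is correct, and it takes essentially the same route as the paper: both rely on the free $\iw{\Gamma}$-module structure of $I_U$ with explicit generators from Section~4.2, and both reduce to elementary commutative algebra in the tower $\iw{\Gamma}\subseteq\iw{\Gamma}_{(p)}\subseteq R$ (the paper phrases the key ingredient as $\iw{\Gamma}_{(p)}\cap\iw{\Gamma}[p^{-1}]=\iw{\Gamma}$, which is your fact (i) at exponent $0$ together with clearing denominators). What your write-up adds is the explicit diagonalisation of $I_U$ as $\bigoplus_k p^{a_k}\iw{\Gamma}\,e_k$ after the unitriangular basis change in Case~(b); this makes precise the step the paper compresses into the phrase ``an arbitrary element of $I_{S,U}\cap\iw{U}$ is a $\iw{\Gamma}_{(p)}\cap\iw{\Gamma}[p^{-1}]$-linear combination of generators,'' which otherwise requires one to see that the generators can be taken compatible with an $\iw{U}$-basis before one can read off that the coefficients lie in $\iw{\Gamma}[p^{-1}]$. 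So the normal-form reduction is a genuine clarification, though not a different method.
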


\begin{proof}
We shall only prove the claim $I_{S,U}\cap \iw{U}=I_U$ (the other
 one is verified by much simpler calculation). The $\Z_p$-module $I_U$
 is obviously contained in the intersection $I_{S,U}\cap \iw{U}$ by
 construction. Note that $I_{S,U}$ is a free
 $\iw{\Gamma}_{(p)}$-submodule of $\iw{U}_S$ each of whose generators is
 obtained as finite sum of $\{p^j u\}_{0\leq j\leq N, u\in U^f}$ 
 (see the explicit description of $I_U$ given in
 Section~\ref{ssc:calculation}). 
 Hence an arbitrary element in $I_{S,U}\cap \iw{U}$ is
 described as a 
 $\iw{\Gamma}_{(p)}\cap \iw{\Gamma}[p^{-1}]$-linear
 combination of generators of $I_{S,U}$, which implies that 
 the intersection $I_{S,U}\cap \iw{U}$ is contained in $I_U$ (observe
 that $\iw{\Gamma}_{(p)}\cap \iw{\Gamma}[p^{-1}]$
 coincides with $\iw{\Gamma}$ and generators of $I_{S,U}$ over
 $\iw{\Gamma}_{(p)}$ coincides with those of $I_U$ over $\iw{\Gamma}$).
\end{proof}

\begin{prop} \label{prop:thetaloc}
Both $\widetilde{\Psi}_S$ and 
$\widetilde{\Psi}_{S,c}$ contain the image of $\tilde{\theta}_S$.
\end{prop}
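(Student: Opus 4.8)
The plan is to prove Proposition~\ref{prop:thetaloc} as the $S$-localized counterpart of Lemma~\ref{lem:xicontain} together with Proposition~\ref{prop:normcong}. Fix $\eta$ in $\pK_1(\iw{G}_S)$ and write $\eta_{S,\bullet}=\tilde\theta_S(\eta)$, with $\eta_{S,\mathrm{ab}}$ its $\piw{G^{\mathrm{ab}}}_S^{\times}$-component. That $\eta_{S,\bullet}$ satisfies (NCC)${}_S$ and (CCC)${}_S$ is immediate from the functoriality of norm maps in algebraic $K$-theory and the compatibility of the localization and conjugation isomorphisms with them, exactly as in the proof of Lemma~\ref{lem:xicontain}; so the substance lies in the two families of congruences.

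For the congruence $\eta_{S,U,V}\equiv\varphi(\eta_{S,\mathrm{ab}})^{(G:U)/p}\pmod{J_{S,U,V}}$ with $(U,V)\neq(G,[G,G])$, I would prove the localized analogue of Proposition~\ref{prop:cong-j}: for $x$ in $K_1(\iw{G}_S)$ and $U$ a proper subgroup of $G$, the element $\varphi(x)^{-(G:U)/p}\theta_{S,U,V}(x)$ lies in $1+J_{S,U,V}$. As in the reduction of Proposition~\ref{prop:cong-j} to Proposition~\ref{prop:cong-jmodp}, I would work modulo $p$. Under $\iw{U/V}_S\cong\iw{\Gamma}_{(p)}[U^f/V^f]$ the reduction modulo $p$ is the group ring $\bar K[U^f/V^f]$ over the field $\bar K=\iw{\Gamma}_{(p)}/p\iw{\Gamma}_{(p)}=\mathrm{Frac}(\riw{\Gamma})$, which has characteristic $p$; hence Lemma~\ref{lem:nilp} applies with this $\bar K$, and the computation in the proof of Proposition~\ref{prop:cong-jmodp} carries over word for word, the key point being that $\bar K$ (hence $\riw{\Gamma}$) is a domain, so that the augmentation of the relevant norm term is forced to be trivial. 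Since $R[U^f/V^f]$ is $p$-adically complete, the mod-$p$ statement lifts to $1+J_{U,V}^{\,\widehat{\,}}$, and intersecting with $\iw{U/V}_S$ via the identity $J_{U,V}^{\,\widehat{\,}}\cap\iw{U/V}_S=J_{S,U,V}$ yields the claim. This already shows that $\widetilde{\Psi}'_S$ contains the image of $\tilde\theta_S$.

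It then remains to verify the additional congruence $\eta_{S,U}\equiv\varphi(\eta_{S,\mathrm{ab}})^{(G:U)/p}\pmod{I_{S,U}}$ for $U$ in $\mathfrak{F}_A$ (resp.\ $\mathfrak{F}_A^c$); by Remark~\ref{rem:G} I may take $U$ a proper subgroup of $G$. Put $z_U=\varphi(\eta_{S,\mathrm{ab}})^{-(G:U)/p}\eta_{S,U}$, which lies in $(1+J_{S,U})^{\,\widetilde{\,}}$ by the previous paragraph, so $\log z_U$ is defined. Running the computation~(\ref{eq:logtheta}) over the localized rings — using the localized forms of Lemma~\ref{lem:lognorm} and of the diagram~(\ref{eq:comm}), which hold by flatness of $\iw{\Gamma}_{(p)}$ over $\iw{\Gamma}$ and functoriality, and the integral logarithm for $\iw{\Gamma}_{(p)}[G^f]$ set up over $R$ just as $\Gamma_G$ is set up in Section~\ref{ssc:intlog} — I would identify $\log z_U$ with the image, under the $\iw{\Gamma}_{(p)}$-linear extension of the trace homomorphism $\theta_U^+$, of an element of $\Z_p[[\conj{G}]]\otimes_{\iw{\Gamma}}\iw{\Gamma}_{(p)}$; since $\theta_U^+$ has image $I_U$, this gives $\log z_U\in I_{S,U}$. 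Finally I would invoke the localized analogues of Proposition~\ref{prop:log-j}(3) (injectivity of $\log$ on $(1+J_{U,V}^{\,\widehat{\,}})^{\,\widetilde{\,}}$ for $U\neq G$, already recorded in this section) and of Proposition~\ref{prop:log-i}(3) (that $\log$ induces an isomorphism $(1+I_U^{\,\widehat{\,}})^{\,\widetilde{\,}}\xrightarrow{\sim}I_U^{\,\widehat{\,}}$, obtained by base change to $\iw{\Gamma}_{(p)}$ followed by $p$-adic completion, with Lemma~\ref{lem:prop-is} ensuring the $I_U$-adic and $p$-adic topologies still agree), together with Lemma~\ref{lem:intersection}, to conclude that $z_U$ itself lies in $(1+I_{S,U})^{\,\widetilde{\,}}$. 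Hence both $\widetilde{\Psi}_S$ and $\widetilde{\Psi}_{S,c}$ contain the image of $\tilde\theta_S$.

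I expect the main obstacle to be transplanting the logarithmic machinery of Section~\ref{sc:prelim} and Section~\ref{ssc:intlog} to the $S$-localized, $p$-adically completed setting and keeping the bookkeeping straight: one must distinguish $\iw{\Gamma}_{(p)}$ from its completion $R$ throughout, re-prove that $\log$ still yields the isomorphism $1+I_U^{\,\widehat{\,}}\cong I_U^{\,\widehat{\,}}$ (which rests on Lemma~\ref{lem:prop-is} surviving localization and completion), set up the integral logarithm over $R$ carefully, and use the intersection identities (Lemma~\ref{lem:intersection} and its $J$-analogue) correctly when passing between integral, localized and completed coefficients. By contrast, the $K$-theoretic input (functoriality of norm maps) and the modular-representation-theoretic input (Lemma~\ref{lem:nilp}) are entirely routine once these reductions are in place.
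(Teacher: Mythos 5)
Your proposal follows essentially the same route as the paper: (NCC)${}_S$, (CCC)${}_S$ from functoriality, a localized Proposition~\ref{prop:cong-j} proved mod~$p$ via Lemma~\ref{lem:nilp} over the residue field of $\iw{\Gamma}_{(p)}$ and lifted using $p$-adic completeness, and then the additional congruences via the integral logarithm over $R$, the localized Lemma~\ref{lem:lognorm} and diagram~(\ref{eq:comm}), and the intersection identities of Lemma~\ref{lem:intersection}. One small inaccuracy: you assert $\log z_U\in I_{S,U}$ directly, but the integral logarithm $\Gamma_{R,G^f}$ has target $R[\conj{G^f}]$ (it needs the $p$-adic completion $R$ of $\iw{\Gamma}_{(p)}$ to converge), so the trace image a priori lies only in $I_U^{\,\widehat{\,}}=I_U\otimes_{\iw{\Gamma}}R$; the correct order of steps, which your final sentence does reach, is $\log z_U\in I_U^{\,\widehat{\,}}$, then $z_U\in 1+I_U^{\,\widehat{\,}}$ by the completed logarithmic isomorphism, and finally $z_U\in 1+I_{S,U}$ by intersecting with $\iw{U}_S$ via Lemma~\ref{lem:intersection}.
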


\begin{proof}[Sketch of the proof]
Let $\eta_S$ be an arbitrary element in $\pK_1(\iw{G}_S)$. 
By the same argument as that in the proof of Lemma~\ref{lem:xicontain}, 
we may verify that $\widetilde{\Psi}'_S$ 
contains the image of $\tilde{\theta}_S$. 
Then  the element $\varphi(\theta_{S,\mathrm{ab}}(\eta_S))^{-(G:U)/p}
 \theta_{S,U,V}(\eta_S)$ (which we denote by $\eta'_{S,U,V}$ in the
 following) 
is contained in $1+J_{S,U,V}$ for $(U,V)$ in $\mathfrak{F}_B$ 
except for $(G,[G,G])$ by congruence condition, 
and it is regarded as an element in $1+J_{U,V}^{\, \widehat{\,}}$ 
in a natural way.
Hence we may define $\log \eta'_{S,U,V}$ as an element in 
$R[U^f/V^f]$. 
On the other hand we may easily show that for each $U$ in $\mathfrak{F}_A^c$ 
the image of the trace map $\Tr_{R[\conj{G^f}]/R[U^f]}$ coincides 
with the $R$-module $I_U^{\, \widehat{\,}}$ defined as 
$I_U \otimes_{\iw{\Gamma}} R$ 
(here we assume that $U$ does not coincide with $G$; see Remark~\ref{rem:G}). 
Moreover the image of $\eta_S$ under the composite map
$\Tr_{R[\conj{G^f}]/R[U^f]}\circ \Gamma_{R,G^f}$ is 
calculated as $\log \eta'_{S,U}$ 
by calculation similar to (\ref{eq:logtheta}) where 
$\Gamma_{R,G^f}$ is the integral logarithm 
$K_1(R[G^f])\rightarrow R[\conj{G^f}]$ 
with coefficient in $R$ (see \cite[Section~1.1 and Remark~5.2]{H}). 
This implies that $\log \eta'_{S,U}$ is  
contained in $I_U^{\, \widehat{\,}}$ for each $U$ in~$\mathfrak{F}_A^c$. 
Then we obtain the congruence $\tilde{\theta}_{S,U}(\eta_S)\equiv
 \varphi(\tilde{\theta}_{S,\mathrm{ab}}(\eta_S))^{(G:U)/p} \, \pmod{I_{S,U}}$
 by the logarithmic isomorphism 
$1+I_U^{\, \widehat{\,}} \xrightarrow{\sim} I_U^{\, \widehat{\,}}$ 
(readily verified in the same manner as Proposition~\ref{prop:log-i}) 
and  the relation
$I_U^{\, \widehat{\,}}\cap \iw{U}_S=I_{S,U}$
 (Lemma~\ref{lem:intersection}). 
 Consequently
 $(\tilde{\theta}_{S,U,V}(\eta_S))_{(U,V)\in \mathfrak{F}_B}$ is contained
 in $\widetilde{\Psi}_{S,c}$ (and hence in $\widetilde{\Psi}_S$).
\end{proof}

\begin{prop} \label{prop:intersection}
The intersection of $\widetilde{\Psi}_S$ $($resp.\ $\widetilde{\Psi}_{S,c})$ 
and the direct product $\prod_{(U,V)\in \mathfrak{F}_B} \piw{U/V}^{\times}$ 
coincides with $\widetilde{\Psi}$ $(=\widetilde{\Psi}_c)$. 
\end{prop}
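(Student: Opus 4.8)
The plan is to prove each of the two asserted set-equalities by showing both inclusions, ``$\supseteq$'' being formal and ``$\subseteq$'' carrying all the content. Since $\widetilde{\Psi}=\widetilde{\Psi}_c$ by Proposition~\ref{prop:mult-theta} and $\widetilde{\Psi}_{S,c}\subseteq\widetilde{\Psi}_S$ by construction, it is enough to establish the single equality $\widetilde{\Psi}_S\cap\prod_{(U,V)\in\mathfrak{F}_B}\piw{U/V}^{\times}=\widetilde{\Psi}$: granting it one gets $\widetilde{\Psi}_{S,c}\cap\prod\piw{U/V}^{\times}\subseteq\widetilde{\Psi}_S\cap\prod\piw{U/V}^{\times}=\widetilde{\Psi}=\widetilde{\Psi}_c$, while the reverse inclusion $\widetilde{\Psi}_c\subseteq\widetilde{\Psi}_{S,c}\cap\prod\piw{U/V}^{\times}$ is formal. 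The formal inclusions (including $\widetilde{\Psi}\subseteq\widetilde{\Psi}_S\cap\prod\piw{U/V}^{\times}$) hold because, under $\iw{U/V}\hookrightarrow\iw{U/V}_S$, the subgroup $1+J_{U,V}$ is carried into $1+J_{S,U,V}$ and $1+I_U$ into $1+I_{S,U}$ (recall $J_{S,U,V}=J_{U,V}\otimes_{\iw{\Gamma}}\iw{\Gamma}_{(p)}$ and $I_{S,U}=I_U\otimes_{\iw{\Gamma}}\iw{\Gamma}_{(p)}$), (NCC) and (CCC) visibly imply (NCC)$_S$ and (CCC)$_S$, and integrality of the components is built into the definitions.

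For the essential inclusion, fix $\eta_{\bullet}=(\eta_{U,V})_{(U,V)\in\mathfrak{F}_B}\in\widetilde{\Psi}_S$ with every $\eta_{U,V}$ lying in $\piw{U/V}^{\times}$; I would verify the four defining conditions of $\widetilde{\Psi}$ in turn. The conditions (NCC) and (CCC) descend from (NCC)$_S$ and (CCC)$_S$ once one knows that $\piw{U/V}^{\times}\to\piw{U/V}_S^{\times}$ is injective; this is so because the localisation $\iw{U/V}\hookrightarrow\iw{U/V}_S$ at the canonical Ore set---which consists of non-zero-divisors by \cite[Theorem~2.4]{CFKSV}---is injective, hence so is $\iw{U/V}^{\times}\hookrightarrow\iw{U/V}_S^{\times}$, and an element of $\iw{U/V}^{\times}$ that becomes $p$-power torsion in $\iw{U/V}_S^{\times}$ was already so in $\iw{U/V}^{\times}$.

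Next comes the congruence modulo $J_{U,V}$ for $(U,V)\neq(G,[G,G])$. Put $z=\eta_{U,V}\varphi(\eta_{\mathrm{ab}})^{-(G:U)/p}$ and choose a lift $\tilde z\in\iw{U/V}^{\times}$. By hypothesis $[\tilde z]$ lies in $(1+J_{S,U,V})^{\, \widetilde{\,}}$, so there are $v\in 1+J_{S,U,V}$ and an integer $k$ with $\tilde z v^{-1}$ $p$-power torsion in $\iw{U/V}_S^{\times}$, hence $\tilde z^{p^{k}}=v^{p^{k}}$ (using commutativity of $\iw{U/V}$); since $1+J_{S,U,V}$ is a multiplicative group (as in Proposition~\ref{prop:log-j}(1)) and $\tilde z^{p^{k}}\in\iw{U/V}$, this gives $\tilde z^{p^{k}}\in(1+J_{S,U,V})\cap\iw{U/V}=1+J_{U,V}$, the equality using the identity $J_{S,U,V}\cap\iw{U/V}=J_{U,V}$ recorded in Section~\ref{sc:localise}. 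Applying $\overline{\aug}_{U,V}$ yields $\overline{\aug}_{U,V}(\tilde z)^{p^{k}}=1$ in $\riw{\Gamma}^{\times}$; but $\riw{\Gamma}\cong\F_p[[T]]$ is a domain, so $\riw{\Gamma}^{\times}=\F_p^{\times}\times(1+T\F_p[[T]])$ has no nontrivial $p$-power torsion, whence $\overline{\aug}_{U,V}(\tilde z)=1$, i.e.\ $\tilde z\in 1+J_{U,V}$. This proves the congruence and moreover lets us henceforth take the lift $\tilde z$ inside $1+J_U$.

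The hard part will be the additional congruence modulo $I_U$ for $U\in\mathfrak{F}_A$ with $U\neq G$ (the case $U=G$ being vacuous by Remark~\ref{rem:G}), because $I_U$ is not $p$-saturated in $\iw{U}$ and a direct argument by raising to $p^{k}$-th powers breaks down; the way around is to route through the logarithm. With $z=\eta_U\varphi(\eta_{\mathrm{ab}})^{-(G:U)/p}$ and the lift $\tilde z\in 1+J_U$ just produced, $\log\tilde z$ is a well-defined element of $\iw{U}$ by Proposition~\ref{prop:log-j}(3). On the other hand $[\tilde z]\in(1+I_{S,U})^{\, \widetilde{\,}}$, so there is $v\in 1+I_{S,U}\subseteq 1+I_U^{\, \widehat{\,}}$ with $\tilde z v^{-1}$ $p$-power torsion; since $\tilde z v^{-1}\in 1+J_U^{\, \widehat{\,}}$ and the $p$-adic logarithm converges on $1+J_U^{\, \widehat{\,}}$ and annihilates $p$-power torsion (see the discussion preceding Lemma~\ref{lem:intersection}), one gets $\log\tilde z=\log v\in I_U^{\, \widehat{\,}}$, the membership by the logarithmic isomorphism $1+I_U^{\, \widehat{\,}}\xrightarrow{\sim}I_U^{\, \widehat{\,}}$ used in the proof of Proposition~\ref{prop:thetaloc}. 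Therefore $\log\tilde z\in\iw{U}\cap I_U^{\, \widehat{\,}}=I_U$, the equality obtained by intersecting the two identities $I_U^{\, \widehat{\,}}\cap\iw{U}_S=I_{S,U}$ and $I_{S,U}\cap\iw{U}=I_U$ of Lemma~\ref{lem:intersection}. By Proposition~\ref{prop:log-i}(3) then $\exp(\log\tilde z)\in 1+I_U$, while $\tilde z$ differs from $\exp(\log\tilde z)$ by an element of $\mu_p(\iw{U})$, which lies in the kernel of $\log$ on $1+J_U$, so $[\tilde z]\in(1+I_U)^{\, \widetilde{\,}}$ --- the required congruence. This completes the proof of $\widetilde{\Psi}_S\cap\prod\piw{U/V}^{\times}=\widetilde{\Psi}$, and with it, by the reduction of the first paragraph, the $c$-statement as well.
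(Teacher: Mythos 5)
Your proof is correct, and it handles explicitly a point that the paper's one-line proof (which merely cites Lemma~\ref{lem:intersection} together with $J_{S,U,V}\cap\iw{U/V}=J_{U,V}$) glosses over: the conditions cutting out $\widetilde{\Psi}$ and $\widetilde{\Psi}_S$ live in the $p$-torsion quotients $\piw{\,\cdot\,}^{\times}$, so one cannot intersect naively at the level of $1+J$ and $1+I$. Your reduction to the single equality $\widetilde{\Psi}_S\cap\prod_{(U,V)\in\mathfrak{F}_B}\piw{U/V}^{\times}=\widetilde{\Psi}$, the injectivity of $\piw{U/V}^{\times}\rightarrow\piw{U/V}_S^{\times}$, the descent of (NCC) and (CCC), and the two congruence arguments are all sound. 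Both congruence steps can, however, be streamlined so that one really does ``use the relations'' of Lemma~\ref{lem:intersection} and nothing more. For the $J$-congruence, the preimage of $(1+J_{S,U,V})^{\,\widetilde{\,}}$ in $\iw{U/V}_S^{\times}$ is exactly $1+J_{S,U,V}$, because $\mu_p(\iw{U/V}_S)\subseteq 1+J_{S,U,V}$: the composite $\iw{U/V}_S\rightarrow\iw{\Gamma}_{(p)}\rightarrow\iw{\Gamma}_{(p)}/p\iw{\Gamma}_{(p)}$ lands in a field of characteristic $p$, where $u^{p^n}=1$ forces $u=1$. Hence any lift $\tilde z\in\iw{U/V}^{\times}$ of $z$ lies in $(1+J_{S,U,V})\cap\iw{U/V}=1+J_{U,V}$ directly, with no $p^{k}$-power manipulation. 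For the $I$-congruence you correctly observe that $1+I_{S,U}$ need not absorb all $p$-power torsion, which blocks the same preimage argument; but the Higman--Wall description gives $\mu_p(\iw{U}_S)=U^f=\mu_p(\iw{U})\subseteq\iw{U}^{\times}$, so the torsion factor $t$ in $\tilde z=vt$ (with $v\in 1+I_{S,U}$) can be absorbed into a change of integral lift: $\tilde z t^{-1}\in\iw{U}^{\times}$ and $\tilde z t^{-1}=v\in(1+I_{S,U})\cap\iw{U}=1+I_U$, so $[\tilde z]\in(1+I_U)^{\,\widetilde{\,}}$. Your detour through $\log$, the isomorphism $1+I_U^{\,\widehat{\,}}\xrightarrow{\sim}I_U^{\,\widehat{\,}}$, and the triple intersection $\iw{U}\cap I_U^{\,\widehat{\,}}=I_U$ reaches the same conclusion but leans on considerably more machinery than the intersection lemma was designed to require.
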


\begin{proof}
Use the relations $I_{S,U}\cap \iw{U}=I_U$ for each $U$ 
in $\mathfrak{F}_A^c$ (Lemma~\ref{lem:intersection}) 
and $J_{S,U,V}\cap \iw{U/V}=J_{U,V}$ for each 
$(U,V)$ in $\mathfrak{F}_B$.
\end{proof}

%
%
\section{Weak congruences upon abelian $p$-adic zeta functions} \label{sc:cong}
%
%

In this section we study properties of the \p-adic zeta pseudomeasures
for extensions corresponding to certain abelian subquotients of $G$, especially 
{\em congruences} which they satisfy. 
In the rest of this article, {\em we fix embeddings 
$\overline{\Q} \hookrightarrow \C$ and 
$\overline{\Q} \hookrightarrow \overline{\Q}_p$}.

%
\subsection{Weak Congruences} \label{ssc:wcong}
%

For each $(U,V)$ in $\mathfrak{F}_B$, let $\xi_{U,V}$ denote 
Serre's \p-adic zeta pseudomeasure for the abelian extension $F_V/F_U$ 
(which is an element in $\iw{U/V}_S^{\times}$). 

\begin{lem} \label{lem:ncc-ccc}
The element $(\xi_{U,V})_{(U,V)\in \mathfrak{F}_B}$ in $\prod_{(U,V)\in
 \mathfrak{F}_B} \piw{U/V}_S^{\times}$ satisfies both norm compatibility
 condition $(\mathrm{NCC})$ and conjugacy compatibility 
 condition $(\mathrm{CCC})$.
\end{lem}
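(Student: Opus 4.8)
The plan is to verify the two compatibility conditions separately, reducing each to the corresponding functoriality property of Serre's $p$-adic zeta pseudomeasures together with the interpolation formula~(\ref{eq:interpab}) that characterises them. The key observation is that the collection $(\xi_{U,V})_{(U,V)\in\mathfrak{F}_B}$ lives in a direct product of localised Iwasawa algebras, and both (NCC) and (CCC) are equalities between elements of such algebras; since each $\iw{U/V}_S^{\times}$ embeds (after killing $p$-power torsion) into a ring on which the evaluation maps at finite-order characters twisted by powers of $\kappa$ are jointly injective---this is exactly the uniqueness half of Serre's characterisation---it suffices to check that both sides of each proposed identity satisfy the same interpolation formula at all such characters.

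First I would treat (NCC). Let $(U,V)$ and $(U',V')$ be elements of $\mathfrak{F}_B$ with $U\supseteq U'\supseteq V$. Both $\Nr_{\iw{U/V}/\iw{U'/V}}(\xi_{U,V})$ and $\can^{V'}_V(\xi_{U',V'})$ are elements of $\iw{U'/V}_S^{\times}$, so I would evaluate each at $\chi\rho^r$ for an arbitrary finite-order character $\chi$ of $U'/V$ and an arbitrary natural number $r$ divisible by $p-1$. For the left-hand side, the behaviour of the evaluation map under the norm map $\Nr_{\iw{U/V}/\iw{U'/V}}$ shows that $\Nr_{\iw{U/V}/\iw{U'/V}}(\xi_{U,V})(\chi\rho^r)$ equals $\xi_{U,V}(\mathrm{Ind}^{U/V}_{U'/V}(\chi)\cdot\rho^r)$, which by additivity of $L$-functions in the representation variable and the interpolation formula~(\ref{eq:interpab}) equals $L_\Sigma(1-r;F_V/F_U,\mathrm{Ind}^{U/V}_{U'/V}\chi)$. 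By the standard inductivity of Artin $L$-functions under the field tower $F_{U'}\supseteq F_U$ (more precisely, $L_\Sigma(s;F_V/F_U,\mathrm{Ind}\chi)=L_\Sigma(s;F_V/F_{U'},\chi)$, the relevant $L$-function here being that of the abelian extension $F_V/F_{U'}$ viewed through $F_{U'/V}$), this equals $L_\Sigma(1-r;F_V/F_{U'},\chi)$. For the right-hand side, $\can^{V'}_V(\xi_{U',V'})(\chi\rho^r)$ is simply $\xi_{U',V'}$ evaluated at the character of $U'/V'$ inflated from $\chi$, which by~(\ref{eq:interpab}) for the pair $(U',V')$ equals $L_\Sigma(1-r;F_{V'}/F_{U'},\text{infl}\,\chi)=L_\Sigma(1-r;F_V/F_{U'},\chi)$, since inflating a character to a larger quotient does not change the associated $L$-function. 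Thus both sides agree on all test characters, hence coincide in $\piw{U'/V}_S^{\times}$.

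For (CCC), let $U'=a^{-1}Ua$, $V'=a^{-1}Va$ for $a\in G$. The isomorphism $\psi_a\colon \iw{U/V}^{\times}\xrightarrow{\sim}\iw{U'/V'}^{\times}$ corresponds on the field side to the identification $F_U=F_{U'}$, $F_V=F_{V'}$ (conjugate subgroups fix the same subfields), under which $\xi_{U,V}$ and $\xi_{U',V'}$ are literally the same $p$-adic zeta pseudomeasure attached to the same abelian extension; concretely, evaluating $\psi_a(\xi_{U,V})$ at a character $\chi'$ of $U'/V'$ gives $\xi_{U,V}(\chi'\circ\psi_a)$, and $\chi'\circ\psi_a$ runs over all characters of $U/V$ as $\chi'$ runs over those of $U'/V'$, with matching $L$-values since conjugation is an inner automorphism of $G$ and does not alter Artin $L$-functions. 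So again the interpolation formulae force $\psi_a(\xi_{U,V})=\xi_{U',V'}$. The main obstacle I anticipate is purely bookkeeping: one must be careful that the localisation set $S$ is used compatibly for all the extensions $F_V/F_U$ involved (as flagged in the footnote to the definition of $\theta_S$), so that $\Nr_{\iw{U/V}/\iw{U'/V}}$ and $\can^{V'}_V$ genuinely land in the same localised ring $\iw{U'/V}_S$ and the evaluation maps there separate elements modulo $p$-power torsion; granting this, the argument is a routine translation between the $K$-theoretic functoriality of evaluation maps and the inductivity/inflation behaviour of Artin $L$-functions.
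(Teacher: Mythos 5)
Your argument is correct and is essentially identical to the paper's proof: both verify (NCC) by evaluating the norm image $\Nr_{\iw{U/V}_S/\iw{U'/V}_S}(\xi_{U,V})$ and $\can^{V'}_V(\xi_{U',V'})$ at $\chi\kappa^r$, invoking the compatibility of evaluation with the norm map, the interpolation formula~(\ref{eq:interpab}), inductivity of Artin $L$-functions, and the uniqueness of the abelian pseudomeasure, and both dispatch (CCC) by the parallel observation that conjugation by an element of $G$ is an inner automorphism that leaves the associated $L$-values unchanged. The only difference is that the paper declares the $\can^{V'}_V$ step and (CCC) to be ``verified straightforwardly'' and ``similar,'' whereas you write out the details.
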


\begin{proof}
Let $(U,V)$ and $(U',V')$ 
be elements in $\mathfrak{F}_B$ such that $U$ contains $U'$ and $U'$
 contains $V$ respectively. Then we may easily verify that
\begin{align*}
\Nr_{\iw{U/V}_S/\iw{U'/V}_S}(f)(\rho)=f(\ind{U}{U'}{\rho})
\end{align*} 
holds for an arbitrary element $f$ in $\iw{U/V}_S^{\times}$ and 
an arbitrary continuous \p-adic character $\rho$ of the abelian group $U'/V$ 
(due to  the definition of the evaluation map). 
Hence for an arbitrary finite-order character $\chi$ of $U'/V$ and 
an arbitrary natural number $r$ divisible by $p-1$, the following
 equation holds by the interpolation property (\ref{eq:interpab}) 
of $\xi_{U,V}$:
\begin{align*}
\Nr_{\iw{U/V}_S/\iw{U'/V}_S}(\xi_{U,V})(\chi \kappa^r) 
&= \xi_{U,V}(\ind{U}{U'}{\chi \kappa^r}) \\
&= L_{\Sigma} (1-r; F_V/F_U , \ind{U}{U'}{\chi}) \\
&= L_{\Sigma} (1-r; F_V/F_{U'}, \chi) 
=\xi_{U',V}(\chi\kappa^r).
\end{align*}
Then uniqueness of the abelian \p-adic zeta pseudomeasures 
 for $F_V/F_{U'}$ asserts 
 that the norm image $\Nr_{\iw{U/V}_S/\iw{U'/V}_S}(\xi_{U,V})$ 
 of $\xi_{U,V}$ coincides
 with $\xi_{U',V}$. The equation $\can^{V'}_V(\xi_{U',V'})=\xi_{U',V}$
 is also verified straightforwardly, 
 and therefore $(\xi_{U,V})_{(U,V)\in \mathfrak{F}_B}$ 
 satisfies (NCC). By a similar formal argument 
 we may also prove that 
 $(\xi_{U,V})_{(U,V)\in \mathfrak{F}_B}$ satisfies (CCC), 
 but we omit the details.
\end{proof}

Therefore if $(\xi_{U,V})_{(U,V) \in \mathfrak{F}_B}$ satisfies both 
congruence condition and additional congruence condition, we may
conclude that $(\xi_{U,V})_{(U,V)\in \mathfrak{F}_B}$ is contained in
$\widetilde{\Psi}_{S,c}$ (hence also in $\widetilde{\Psi}_S$). 
It is, however, difficult to prove the desired congruences 
for $\{\xi_{U,V}\}_{(U,V)\in \mathfrak{F}_B}$ directly. 
In the rest of this section we shall prove the following {\em weak congruences} 
by using Deligne-Ribet's theory upon Hilbert modular forms \cite{DR} 
(especially using {\em the \q-expansion principle}).

\begin{prop}[weak congruences] \label{prop:congzeta}
Let $(U,V)$ be an element in $\mathfrak{F}_B$ such that $U$ does not coincide 
with $G$, then there exists an element $c_{U,V}$
 in $\piw{\Gamma}_{(p)}^{\times}$ and the congruence 
\begin{align} \label{eq:congzetaj}
\xi_{U,V} \equiv c_{U,V} \quad \pmod{J_{S,U,V}}
\end{align}
holds. If $U$ is an element in $\mathfrak{F}_A^c$, the congruence 
\begin{align} \label{eq:congzetai}
\xi_U \equiv c_U \quad \pmod{I_{S,U}'}
\end{align}
also holds where $I_U'$ is the image of the trace map from
 $\Z_p[[\conj{NU}]]$ to $\Z_p[[U]]$ 
and $I_{S,U}'$ is its scalar extension 
 $I'_U \otimes_{\iw{\Gamma}}\iw{\Gamma}_{(p)}$.
\end{prop}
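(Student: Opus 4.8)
The plan is to reduce the claimed congruences to statements that can be verified directly via Deligne–Ribet's $q$-expansion principle, following the strategy of \cite{RW6} and \cite{H}. First I would fix $(U,V)$ in $\mathfrak{F}_B$ with $U\subsetneq G$ and unwind the definitions of $J_{S,U,V}$ and $I'_{S,U}$: an element of $\iw{U/V}_S^{\times}$ lies in $c+J_{S,U,V}$ for some $c\in\piw{\Gamma}_{(p)}^{\times}$ precisely when its augmentation-reduction modulo $p$ (along $\overline{\aug}_{U,V}$) determines it modulo the augmentation kernel. So the first reduction is: it suffices to compare $\xi_{U,V}$ with its ``abelian-at-$\Gamma$'' approximation modulo $p$ in the group ring $R[U^f/V^f]$. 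Here I would use the identification $\iw{U/V}_S\cong\iw{\Gamma}_{(p)}[U^f/V^f]$ from \cite[Lemma~2.1]{CFKSV}, so that the congruence $(\ref{eq:congzetaj})$ becomes a statement about the coefficients of $\xi_{U,V}$ indexed by the non-identity elements of the finite abelian group $U^f/V^f$, namely that they are divisible by $p$ (equivalently, killed in the residue ring).

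The heart of the argument is then Deligne–Ribet's theory. Recall that $\xi_{U,V}$ is built from a Hilbert modular form (an Eisenstein-type measure) over $F_U$, and its value at a character $\chi\kappa^r$ interpolates $L_\Sigma(1-r;F_V/F_U,\chi)$. For the congruence modulo $J_{S,U,V}$ I would invoke the $q$-expansion principle in the form used by Ritter and Weiss \cite{RW6}: the difference $\xi_{U,V}-\overline{\aug}_{U,V}(\xi_{U,V})$, lifted suitably, corresponds to a Hilbert modular form all of whose $q$-expansion coefficients at the relevant cusps vanish modulo $p$, because the defining Eisenstein series for $F_V/F_U$ and for the ``trivial-at-the-finite-part'' twist agree modulo $p$ on $q$-expansions. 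Concretely, one shows the two pseudomeasures interpolate $L$-values whose difference is $p$-integrally trivial modulo $p$ by Deligne–Ribet's congruences between special values of Hilbert zeta functions; Ritter–Weiss' approximation technique (replacing the infinite level by finite truncations $U^{(n)}$ and passing to the limit) then upgrades this to the module $J_{S,U,V}$. For the refined congruence $(\ref{eq:congzetai})$ modulo $I'_{S,U}$, the point is that $I'_U$ is the image of the trace $\Tr_{\Z_p[[\conj{NU}]]/\Z_p[[U]]}$, so the congruence says the coefficients of $\xi_U$ are constrained exactly by the $\inn$-action of the normaliser $NU^f$ computed in Section~\ref{ssc:calculation}; this is again a $q$-expansion statement, now using that the Eisenstein series is invariant under the Galois conjugation by elements of $NU^f$, which forces the corresponding symmetry on $q$-expansion coefficients and hence on $\xi_U$. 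The element $c_{U,V}$ (resp. $c_U$) is then simply $\overline{\aug}_{U,V}(\xi_{U,V})$ read in $\piw{\Gamma}_{(p)}^{\times}$, which one checks is a unit because the abelian $\Gamma$-part of a $p$-adic zeta pseudomeasure is a unit after inverting $S$ (it is the abelian $p$-adic $L$-function for $F$, nonvanishing by Deligne–Ribet).

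The main obstacle I anticipate is the passage from the finite-level statements to the projective limit while keeping track of $p$-integrality: at each finite layer $U^{(n)}$ the $q$-expansion principle gives a congruence of modular forms over $\mathcal{O}/p$, but one must ensure these are compatible as $n$ varies (a Mittag-Leffler / exactness issue, handled as in the footnote in Section~\ref{ssc:intlog}) and that localizing at $p\iw{\Gamma}$ does not destroy the congruence — this is exactly why the statement is phrased with $I_{S,U}'=I'_U\otimes_{\iw{\Gamma}}\iw{\Gamma}_{(p)}$ rather than $I'_U$ itself, and one needs the intersection lemma (analogous to Lemma~\ref{lem:intersection}) $I'_{S,U}\cap\iw{U}=I'_U$ to transfer back and forth. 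A secondary technical point is identifying precisely which Eisenstein series represents $\xi_{U,V}$ versus $\xi_{U',V}$ so that their $q$-expansions can be compared coefficient-by-coefficient; I would follow the normalization in \cite{Serre2} and \cite{H} here rather than rederiving it. I expect the ``weak'' nature of the congruence — only obtaining divisibility modulo $J_{S,U,V}$ and $I'_{S,U}$, not the sharper modules $J_{S,U,V}$ refined by $I_{S,U}$ needed for $\widetilde{\Psi}_{S,c}$ — to be genuinely forced by Deligne–Ribet's method, with the gap to be closed later in Section~\ref{sc:construction} by the inductive technique.
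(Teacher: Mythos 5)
Your high-level plan — Ritter--Weiss' approximation lemma to reduce to statements about special values $\Delta^w_{F_V/F_U}(1-k,\delta^{(y)})$, then Deligne--Ribet's $q$-expansion principle, with the normaliser $NU$ entering to produce the extra $p$-power divisibility — is indeed what the paper does in Sections 7.2--7.4. But the mechanism by which the $q$-expansion principle is applied is substantially mischaracterized, and the missing step is exactly the one that makes the argument close.

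The paper does not compare two Eisenstein series ``agreeing modulo $p$,'' and the Eisenstein series is \emph{not} invariant under conjugation by $NU^f$ (conjugation by $\sigma\in NU/U$ sends $\delta^{(y)}$ to $\delta^{(\sigma y\sigma^{-1})}$, which is generally a different function). The crucial move, taken from \cite{RW6} and \cite{H}, is to form the Hilbert--Eisenstein series $G_{k,\delta^{(y)}}$ over $\mathfrak{h}_{F_U}$ attached to a single coset $y$ of $W/\Gamma^{p^j}$ not contained in $\Gamma$, and then \emph{restrict} it to the smaller Hilbert upper-half-space $\mathfrak{h}_{F_{NU}}$. The restriction $\mathscr{G}=G_{k,\delta^{(y)}}|_{\mathfrak{h}_{F_{NU}}}$ is a modular form of parallel weight $p^{n_U}k$ on $\Gamma_{00}(\mathfrak{f})$, and its non-constant $q$-expansion coefficients organize into $NU/U$-orbital sums over pairs $(\mathfrak{b},\nu)$. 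Each orbital sum is shown to lie in $\sharp(NU/U)_y\,\Z_{(p)}$: when the isotropy group $(NU/U)_{(\mathfrak{b},\nu)}$ is trivial this is immediate from the orbit count; when it is non-trivial one uses the Verlagerung (which, since the finite part is of exponent $p$, lands in $\Gamma$) to deduce that $\delta^{(y)}(\mathfrak{b})=0$ outright because $y\not\in\Gamma$. Only after this $p$-integral divisibility of all non-constant coefficients is established does the $q$-expansion principle (Corollary~\ref{cor:DRpri}) transfer the divisibility to the constant-term difference, which is $2^{-[F_U:\Q]}\Delta^w_{F_V/F_U}(1-k,\delta^{(y)})$. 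This gives sufficient condition $(\ref{eq:suffzeta})$, and then Proposition~\ref{prop:sufficient} (the orbital decomposition of the approximating sum) yields both (\ref{eq:congzetaj}) and (\ref{eq:congzetai}) at once, with the trace image $I'_{S,U}$ appearing precisely because the orbital sums $P_y$ are trace images over $NU$. Without the restriction trick and the Verlagerung case, your appeal to the $q$-expansion principle does not actually produce the stated divisibility; and the ``invariance of the Eisenstein series under $NU^f$-conjugation'' you invoke for (\ref{eq:congzetai}) is simply not true as stated.

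One further small correction: you propose $c_{U,V}=\overline{\aug}_{U,V}(\xi_{U,V})$, but $\overline{\aug}_{U,V}$ lands in the characteristic-$p$ algebra $\riw{\Gamma}$, which cannot give a unit of $\piw{\Gamma}_{(p)}^{\times}$. The correct candidate is (essentially) $\aug_{U,V}(\xi_{U,V})$, or more precisely what the proof of Proposition~\ref{prop:sufficient} actually produces, namely the $\iw{\Gamma}$-component of the decomposition $(1-w)\xi_{U,V}\in\iw{\Gamma}+J_{U,V}$ divided by $(1-w)$, which is a unit in $\iw{\Gamma}_{(p)}^{\times}$ because $\xi_{U,V}$ is a unit in $\iw{U/V}_S^{\times}$.
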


\begin{rem}
We may obtain the explicit description of each $I'_{U}$ by  
 calculation similar to that in Section~\ref{ssc:calculation} as
 follows:
\begin{align*}
I'_{\Gamma}&= p^N \Z_p[[\Gamma]],  \\
I'_{U_h} &=p^{n_h-1} \Z_p[[U_h]] \qquad \qquad \quad
 \text{for $h$ in }  \mathfrak{H} \setminus \{e\},  \\
I'_{U_{h,c}} &= p^{n_h-2} \Z_p[[U_{h,c}]] \qquad \quad 
 \text{for $h$ in } \mathfrak{H}\setminus \{e,c\}  \text{ satisfying (Case-1)},  \\
I'_{U_{h,c}} &= p^{n_h-1} \Z_p[[U_c]] \oplus
 \bigoplus_{i=1}^{p-1} p^{n_h-2} h^i(1+c^2 +\cdots +c^{p-1})
 \Z_p[[\Gamma]] \\
 & \qquad \qquad \qquad \qquad \qquad \quad
  \text{for $h$ in }  \mathfrak{H}\setminus \{e,c\}  \text{ satisfying (Case-2)}.  
\end{align*}
Each $I_U'$ (resp.\ $I_{S,U}'$) obviously contains $I_U$ (resp.\ $I_{S,U}$). 
Moreover the \p-adic logarithm induces an
 isomorphism between $1+I_U'$ and $I_U'$ 
 (resp.\ between $1+(I_U')^{\, \widehat{\,}}$ and 
 $(I_U')^{ \, \widehat{\,}}$ where 
 $(I_U')^{ \, \widehat{\,}}$ is defined as 
 $I'_U \otimes_{\iw{\Gamma}}R$) by the same
 argument as that in the proof of Proposition~\ref{prop:log-i}.
\end{rem}

%
\subsection{Ritter-Weiss' approximation technique} \label{ssc:ritterweiss}
%

In \cite{RW6}, J\"urgen Ritter and Alfred Weiss approximated the \p-adic 
zeta pseudomeasure by using special values of partial zeta functions,
and derived certain congruences among \p-adic zeta pseudomeasures. 
In this section we shall derive sufficient 
condition for Proposition~\ref{prop:congzeta} to hold by applying their 
approximation technique. 
Fix an element $(U,V)$ in $\mathfrak{F}_B$ 
such that $U$ does not coincide with $G$, and 
let $W$ denote the quotient group $U/V$ 
(which is abelian by definition). 
For an arbitrary open subgroup $\mathcal{U}$ of~$W$, we define the 
natural number  $m(\mathcal{U})$ 
by $\kappa^{p-1}(\mathcal{U})=1+p^{m(\mathcal{U})}\Z_p$
where $\kappa$ is the \p-adic cyclotomic character. 
Then we obtain an isomorphism
\begin{align} \label{eq:projlim}
 \Z_p[[W]] \xrightarrow{\sim} \varprojlim_{\mathcal{U} \trianglelefteq W \colon \text{open}}\Z_p[W/\mathcal{U}]/p^{m(\mathcal{U})}\Z_p[W/\mathcal{U}]
\end{align}
(see \cite[Lemma~1]{RW6} for details). 

\begin{defn}[partial zeta function]
Let $\varepsilon$ be a $\C$-valued locally constant function on $W$. If
 $\varepsilon$ is constant on an open subgroup $\mathcal{U}$ of $W$, we
 may identify $\varepsilon$ with a $\C$-linear combination 
 $\sum_{x\in W/\mathcal{U}}
 \varepsilon(x)\delta^{(x)}$ where $\delta^{(x)}$ is ``the Dirac delta
 function at~$x$'' (that is, $\delta^{(x)}(w)$ equals $1$ if $w$ is in the
 coset $x$ and $0$ otherwise). Then we define 
{\em the $(\Sigma$-truncated$)$ partial zeta
 function $\zeta^{\Sigma}_{F_V/F_U}(s,\varepsilon)$ 
 for $F_V/F_U$ with respect to the locally constant function
 $\varepsilon$} as 
 $\sum_{x\in
 W/\mathcal{U}} \varepsilon(x)
 \zeta^{\Sigma}_{F_V/F_U}(s,\delta^{(x)})$ 
 where $\zeta^{\Sigma}_{F_V/F_U}(s,\delta^{(x)})$ is defined as the Dirichlet 
series
\begin{align*}
\zeta^{\Sigma}_{F_V/F_U}(s,\delta^{(x)})=\sum_{0\neq \mathfrak{a}
 \subseteq \mathcal{O}_{F_U}\colon  \text{integral ideal prime to }\Sigma} 
\dfrac{\delta^{(x)} ( ( F_V/F_U , \mathfrak{a} ))}{(\mathcal{N}
 \mathfrak{a})^s}
\end{align*}
$($the symbol $(F_V/F_U, -)$ denotes 
 the Artin symbol for the abelian extension
 $F_V/F_U$ and $\mathcal{N}\mathfrak{a}$ denotes the absolute norm of the
 ideal $\mathfrak{a})$. It is meromorphically continued to the whole 
 complex plane $\C$.
\end{defn}

For an arbitrary natural number $k$ divisible by $p-1$ and an
arbitrary element $w$ in $W$, set
\begin{align*}
\Delta_{F_V/F_U}^w(1-k, \varepsilon)=\zeta^{\Sigma}_{F_V/F_U}(1-k, \varepsilon)-
 \kappa(w)^k\zeta^{\Sigma}_{F_V/F_U}(1-k, \varepsilon_w)
\end{align*}
which is a \p-adic rational number due to the results of 
 Helmut Klingen and Carl Ludwig Siegel \cite{Klingen, Siegel} 
(we denote by $\varepsilon_w$ the
function defined by $\varepsilon_w(w')=\varepsilon(ww')$).

\begin{prop}[approximation lemma, Ritter-Weiss] \label{prop:approximation}
Let $\mathcal{U}$ be an arbitrary open normal subgroup of $W$. 
Then for each $k$ divisible by $p-1$ and each $w$ in $W$, 
the image of the element $(1-w)\xi_{U,V}$ 
 under the canonical surjection
 $\Z_p[[W]] \rightarrow
 \Z_p[W/\mathcal{U}]/p^{m(\mathcal{U})}\Z_p[W/\mathcal{U}]$ is described as
\begin{align*}
\sum_{x\in W/\mathcal{U}} \Delta_{F_V/F_U}^w(1-k, \delta^{(x)})
 \kappa(x)^{-k} x \qquad \mod{p^{m(\mathcal{U})}}.
\end{align*}
\end{prop}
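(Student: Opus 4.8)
\medskip
\noindent\emph{Plan of proof.} The approach follows Ritter and Weiss \cite{RW6}: first translate the interpolation property of the pseudomeasure into an identity for partial zeta values, and then recover the ``value at the trivial cyclotomic twist'' modulo $p^{m(\mathcal{U})}$ by means of a twisting automorphism. Put $W=U/V$, which is abelian by the definition of $\mathfrak{F}_B$, and set $\mu=(1-w)\xi_{U,V}$; by the defining property of a Serre pseudomeasure (cf.\ \cite{Serre2}) the element $\mu$ lies in $\iw{W}$, so the task is to identify its image under the canonical surjection $\iw{W}\rightarrow\Z_p[W/\mathcal{U}]/p^{m(\mathcal{U})}\Z_p[W/\mathcal{U}]$.

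First I would compute, for a finite-order character $\chi$ of $W$ factoring through $W/\mathcal{U}$ and a natural number $k$ divisible by $p-1$, the value $\mu(\chi\kappa^{k})$. Combining the interpolation property (\ref{eq:interpab}) of $\xi_{U,V}$, the identity $\psi(1-w)=1-\psi(w)$ for $\psi=\chi\kappa^{k}$, and the decomposition
\begin{align*}
L_{\Sigma}(1-k;F_V/F_U,\chi)=\sum_{x\in W/\mathcal{U}}\chi(x)\,\zeta^{\Sigma}_{F_V/F_U}(1-k,\delta^{(x)})
\end{align*}
of the $\Sigma$-truncated Artin $L$-value into partial zeta values, and then re-indexing the second of the two resulting sums via $\delta^{(x)}_{w}=\delta^{(w^{-1}x)}$, one obtains
\begin{align*}
\mu(\chi\kappa^{k})=\sum_{x\in W/\mathcal{U}}\chi(x)\,\Delta^{w}_{F_V/F_U}(1-k,\delta^{(x)}).
\end{align*}

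Next I would introduce the $\Z_p$-algebra automorphism $t_{k}$ of $\iw{W}$ determined by $w'\mapsto\kappa(w')^{k}w'$; it is well defined because $\kappa$ is a character, and for every character $\psi$ of $W$ one has $\psi\circ t_{k}=\kappa^{k}\psi$, so that $t_{k}(\mu)$ again lies in $\iw{W}$ and has value $\mu(\chi\kappa^{k})$ at each $\chi$ as above. Applying orthogonality of the characters of the finite abelian group $W/\mathcal{U}$ to the formula just obtained, the image of $t_{k}(\mu)$ in $\Z_p[W/\mathcal{U}]$ is exactly $\sum_{x\in W/\mathcal{U}}\Delta^{w}_{F_V/F_U}(1-k,\delta^{(x)})\,x$; in particular each coefficient $\Delta^{w}_{F_V/F_U}(1-k,\delta^{(x)})$, a priori merely a $p$-adic rational number by Klingen--Siegel, in fact lies in $\Z_p$ because $t_{k}(\mu)\in\iw{W}$.

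Finally, since $p-1$ divides $k$, the character $\kappa^{k}$ takes values in $1+p^{m(\mathcal{U})}\Z_p$ on $\mathcal{U}$ by the very definition of $m(\mathcal{U})$; hence $t_{k}$ and $t_{-k}$ induce mutually inverse automorphisms $\bar{t}_{\pm k}$ of $\Z_p[W/\mathcal{U}]/p^{m(\mathcal{U})}\Z_p[W/\mathcal{U}]$ sending $x\mapsto\kappa(x)^{\pm k}x$, and the resulting square commutes. Applying $\bar{t}_{-k}$ to the reduction of $t_{k}(\mu)$ recovers the reduction of $\mu$, which therefore equals $\sum_{x\in W/\mathcal{U}}\Delta^{w}_{F_V/F_U}(1-k,\delta^{(x)})\,\kappa(x)^{-k}\,x$ in $\Z_p[W/\mathcal{U}]/p^{m(\mathcal{U})}\Z_p[W/\mathcal{U}]$; this is the assertion. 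I expect the main difficulty to be purely organisational---keeping track, in the first two steps, of how the finite-order character $\chi$, the cyclotomic twist $\kappa^{k}$ and the chosen level $\mathcal{U}$ interact (in particular that the decomposition of $L_{\Sigma}$ may be taken over the single quotient $W/\mathcal{U}$)---while the passage modulo $p^{m(\mathcal{U})}$ in the last step is forced by the definition of $m(\mathcal{U})$.
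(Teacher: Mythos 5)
Your proof is correct, and it reconstructs the argument of Ritter and Weiss essentially verbatim: interpolate $(1-w)\xi_{U,V}$ against characters $\chi\kappa^{k}$ of $W/\mathcal{U}$, decompose the $L$-value into partial zeta values, then use the twisting automorphism $t_k\colon w'\mapsto\kappa(w')^k w'$ (which descends to $\Z_p[W/\mathcal{U}]/p^{m(\mathcal{U})}$ because $p-1\mid k$) to remove the cyclotomic factor. The paper does not give its own proof of this proposition but simply cites \cite[Proposition~2]{RW6}, so there is nothing internal to compare against; your argument is the standard one from that reference.
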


\begin{proof}
See \cite[Proposition~2]{RW6}.
\end{proof}

Let $j$ be a natural number and $NU$ the normaliser of $U$. 
Then the quotient group $NU/U$ acts upon $W/\Gamma^{p^j}$ by conjugation 
(recall that $\Gamma^{p^j}$ is abelian). 
For each coset $y$ of $W/\Gamma^{p^j}$, let $(NU/U)_y$
denote the isotropy subgroup of $NU/U$ at $y$ under this action. 

\begin{prop}[sufficient condition] \label{prop:sufficient}
Let $(U,V)$ be an element in $\mathfrak{F}_B$ except for $(G,[G,G])$. 
Then the congruence
 $(\ref{eq:congzetaj})$ holds if the congruence
\begin{align} \label{eq:suffzeta}
 \Delta_{F_V/F_U}^w (1-k, \delta^{(y)}) \equiv 0 \qquad \mod \sharp(NU/U)_y\Z_p 
\end{align}
holds for an arbitrary element $w$ in $\Gamma$, an arbitrary natural
 number 
 $k$ divisible by~$p-1$ and an arbitrary coset $y$ 
 of $W/\Gamma^{p^j}$ not contained in $\Gamma$. 
If $U=(U,\{e\})$ is an element in $\mathfrak{F}_A^c$, the congruence
 $(\ref{eq:suffzeta})$ also gives sufficient condition for the 
congruence $(\ref{eq:congzetai})$ to hold. 
\end{prop}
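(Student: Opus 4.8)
The plan is to combine Ritter--Weiss' approximation lemma (Proposition~\ref{prop:approximation}) with the projective limit description (\ref{eq:projlim}) and a descent along the conjugation action of $NU/U$. First I would observe that, by (\ref{eq:projlim}), to prove the congruence $(\ref{eq:congzetaj})$ it suffices to control the image of $\xi_{U,V}$ in each finite layer $\Z_p[W/\mathcal{U}]/p^{m(\mathcal{U})}$, where I take $\mathcal{U}=\Gamma^{p^j}$ for $j$ large. The strategy is to test against the element $(1-w)$ for $w$ in $\Gamma$: since multiplication by $1-w$ is injective on the relevant quotients after tensoring with $\Q_p$ (here one uses that $w$ is a non-trivial element of the torsion-free group $\Gamma$, hence $1-w$ is a non-zero-divisor in $\iw{\Gamma}_{(p)}$), proving a congruence for $(1-w)\xi_{U,V}$ modulo $(1-w)J_{S,U,V}$ will recover the congruence for $\xi_{U,V}$ modulo $J_{S,U,V}$, after also pinning down the augmentation component (which furnishes the element $c_{U,V}$ in $\piw{\Gamma}_{(p)}^{\times}$). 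By Proposition~\ref{prop:approximation}, the image of $(1-w)\xi_{U,V}$ in the layer $\Z_p[W/\Gamma^{p^j}]/p^{m(\Gamma^{p^j})}$ is $\sum_{x\in W/\Gamma^{p^j}} \Delta_{F_V/F_U}^w(1-k,\delta^{(x)})\kappa(x)^{-k} x$ for any $k$ divisible by $p-1$.

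Next I would unwind what $J_{S,U,V}$ (equivalently, in the finite layers, the kernel of the augmentation to $\iw{\Gamma}_{(p)}/p$) cuts out: an element of $\Z_p[W/\Gamma^{p^j}]$ lies in the relevant submodule precisely when, grouped by cosets of $W/\Gamma^{p^j}$ lying over a fixed coset of $U^f/V^f$, the coefficients sum appropriately; the ``non-trivial'' cosets $y$ of $W/\Gamma^{p^j}$ not contained in $\Gamma$ are the ones that need to be killed. The key point is that the coefficients $\Delta_{F_V/F_U}^w(1-k,\delta^{(x)})$ are \emph{not} themselves individually controllable, but one may invoke the conjugation action of $NU/U$ on $W/\Gamma^{p^j}$: the orbit of a coset $y$ has size $(NU/U)/(NU/U)_y$, and by functoriality of partial zeta functions under the Galois action (Galois-equivariance of the Artin symbol) the values $\Delta^w_{F_V/F_U}(1-k,\delta^{(x)})$ are constant along each orbit. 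Summing over an orbit therefore multiplies $\Delta^w_{F_V/F_U}(1-k,\delta^{(y)})$ by $\sharp\big((NU/U)/(NU/U)_y\big)$. Hence the hypothesis $(\ref{eq:suffzeta})$, namely $\Delta^w_{F_V/F_U}(1-k,\delta^{(y)})\equiv 0 \bmod \sharp(NU/U)_y\Z_p$, is exactly what is needed to land the whole orbit-sum in $\sharp(NU/U)\Z_p$, which is divisible enough to lie in the image of the trace map and hence in $J_{S,U,V}$ (resp.\ $I'_{S,U}$). I would make this orbit-counting precise by choosing representatives of $W/\Gamma^{p^j}$ adapted to the $NU/U$-orbits, exactly in parallel with the computations of $I_U$ and $I'_U$ in Section~\ref{ssc:calculation}.

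For the second assertion, when $U=(U,\{e\})$ lies in $\mathfrak{F}_A^c$, the module $I'_{S,U}$ is by definition the scalar extension of the image of the trace map $\Z_p[[\conj{NU}]]\to \Z_p[[U]]$, and the explicit description of $I'_U$ recalled in the remark after Proposition~\ref{prop:congzeta} shows that $I'_{S,U}$ is precisely the $\iw{\Gamma}_{(p)}$-span of the orbit-sums $\sum_{i} h^i(1+c+\cdots)$ with the appropriate $p$-power coefficients; so the same orbit-summation argument, now carried out inside $\Z_p[[U]]$ rather than $\Z_p[[U/V]]$, yields $(\ref{eq:congzetai})$ from $(\ref{eq:suffzeta})$. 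The main obstacle I expect is bookkeeping: carefully matching the submodules $J_{S,U,V}$ and $I'_{S,U}$ with the lattice generated by $NU/U$-orbit-sums in each finite layer, and checking that passing to the projective limit over $j$ (with the twists $\kappa(x)^{-k}$, which are units, harmless) is compatible with these identifications and with extracting the augmentation-component unit $c_{U,V}$ (resp.\ $c_U$). None of the individual steps is deep, but the index-chasing with the normaliser action and the precise divisibilities must be done with care; this is where the hypotheses on $U$ (proper in $G$, and the exponent-$p$ structure of $G^f$ making all the relevant $\Aut$-images $p$-groups, cf.\ Section~\ref{ssc:calculation}) are used.
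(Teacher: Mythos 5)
Your proposal reproduces the paper's argument: apply the Ritter--Weiss approximation lemma to $(1-w)\xi_{U,V}$ in the layers $\Z_p[W/\Gamma^{p^j}]/p^{m(\Gamma^{p^j})}$, group the terms $\Delta_{F_V/F_U}^w(1-k,\delta^{(y)})\kappa(y)^{-k}y$ into $NU/U$-conjugacy orbits, use Galois-equivariance of the Artin symbol to see the coefficient is constant along an orbit, invoke the hypothesis, pass to the limit, and finally divide by the unit $1-w$. The one place your wording drifts is the inference \emph{orbit-sum is divisible by $\sharp(NU/U)$, hence lies in the trace image, hence in $J_{S,U,V}$}: the correct chain is that the orbit sum is $\bigl(\Delta^w/\sharp(NU/U)_y\bigr)\kappa(y)^{-k}$ times the trace element $P_y=\sharp(NU/U)_y\sum_\sigma \sigma^{-1}y\sigma$ (so membership in the trace image is by construction once the hypothesis cancels the factor $\sharp(NU/U)_y$), and then one separately checks $P_y\in J_{U,V}$ because $\aug_{U,V}(P_y)=\sharp(NU/U)\aug_{U,V}(y)$ is $p$-divisible, using $NU\supsetneq U$ (i.e.\ $U$ proper open in the pro-$p$ group $G$); since all the ingredients you cite are present, this is a presentation issue rather than a gap.
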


\begin{proof}
Apply the approximation lemma (Proposition~\ref{prop:approximation}) to
 the element $(1-w) \xi_{U,V}$, and then its image under 
the canonical surjection from $\Z_p[[W]]$ onto 
$\Z_p[W/\Gamma^{p^j}]/p^{m(\Gamma^{p^j})} \Z_p[W/\Gamma^{p^j}]$ is described as
\begin{align} \label{eq:orbit}
\sum_{y\in W/\Gamma^{p^j}} \Delta_{F_V/F_U}^w(1-k, \delta^{(y)})\kappa(y)^{-k}
 y && \mod p^{m(\Gamma^{p^j})}
\end{align} 
for an arbitrary natural number $k$ divisible by $p-1$. 
Let $y$ be a coset of
 $W/\Gamma^{p^j}$ not contained in $\Gamma$, and consider the
 $NU/U$-orbital sum in (\ref{eq:orbit}) containing the term 
 associated to $y$. 
 We may calculate it by applying (\ref{eq:suffzeta}) as 
follows:
\begin{align*}
 & \qquad \sum_{\sigma \in (NU/U)/(NU/U)_y} \Delta_{F_V/F_U}^w
 (1-k,\delta^{(\sigma^{-1}y\sigma)}) \kappa(\sigma^{-1}y\sigma)^{-k} \sigma^{-1}y\sigma  \\
 &= \Delta_{F_V/F_U}^w (1-k,\delta^{(y)}) \kappa(y)^{-k} \sum_{\sigma \in
 (NU/U)/(NU/U)_y} \sigma^{-1}y\sigma \\
 &\equiv \sharp(NU/U)_y \sum_{\sigma \in (NU/U)/(NU/U)_y} \sigma^{-1} y
 \sigma \qquad \qquad \mod \sharp(NU/U)_y.
\end{align*} 
 The element $\aug_{U,V}(P_y)=\sharp (NU/U) \aug_{U,V}(y)$ 
 is obviously divisible by $p$ if we set $P_y$ 
 as the last expression of the equation above 
 (note that the normaliser of $U$ is
 strictly larger than $U$ since $U$ is a proper open subgroup of 
 the pro-$p$ group $G$). 
This calculation implies that $P_y$ is an element 
in $J_{U,V}\otimes_{\iw{\Gamma}}\Z_p[\Gamma/\Gamma^{p^j}]
 /p^{m(\Gamma^{p^j})}$. If $U$ is an element in
 $\mathfrak{F}_A^c$, 
the element $P_y$ is no other than the image of 
 $y$ under the trace map from $\Z_p[[\conj{NU}]]$ to $\Z_p[[U]]$. 
Therefore $P_y$
 is contained in $I'_U\otimes_{\iw{\Gamma}}
 \Z_p[\Gamma/\Gamma^{p^j}]/p^{m(\Gamma^{p^j})}$. 
 Clearly $\Delta_{F_V/F_U}^w(1-k,\delta^{(y)})\kappa(y)^{-k}y$ is 
an element in $\Z_p[\Gamma/\Gamma^{p^j}]/p^{m(\Gamma^{p^j})}$ 
if $y$ is a coset contained in $\Gamma$, and hence
 we may show by taking the projective limit that the element  
 $(1-w)\xi_{U,V}$ (resp.\ $(1-w)\xi_U$ for $U$ in $\mathfrak{F}_A^c$) 
 is contained in $\iw{\Gamma}+J_{U,V}$ (resp.\
 $\iw{\Gamma}+I'_U$). Since $1-w$ is
 an invertible element in~$\iw{U/V}_S$, we obtain the desired congruences 
$(\ref{eq:congzetaj})$ and $(\ref{eq:congzetai})$.
\end{proof}

%
\subsection{Deligne-Ribet's theory upon Hilbert modular forms} \label{ssc:DR}
%

 We briefly summarise the theory of 
Pierre Deligne and Kenneth Alan Ribet upon Hilbert modular forms \cite{DR} 
in this subsection, 
which we shall use in verification of sufficient condition (\ref{eq:suffzeta}).

Let $K$ be a totally real number field of degree~$r$ and 
$K_{\infty}/K$ an {\em abelian} totally real \p-adic Lie extension. 
Let $\mathfrak{D}$ be the different of $K$ and $\Sigma$  
a finite set of prime ideals of $K$ and assume that $\Sigma$ 
contains all primes which ramify in $K_{\infty}$ (we fix such a finite
 set $\Sigma$ throughout the following argument).
We denote by $\mathfrak{h}_K$ the Hilbert upper-half space associated to $K$ 
defined as $\{\tau \in K\otimes \C \mid \mathrm{Im}(\tau) \gg 0\}$. 
For an even natural number $k$, we define the action of 
$\GL_2(K)^+$---subgroup of $\GL_2(K)$ consisting of all matrices
with totally positive determinants--- upon the set of $\C$-valued 
functions on $\mathfrak{h}_K$ by 
\begin{align*}
(F|_k \begin{pmatrix} a & b \\ c & d \end{pmatrix}) (\tau)=\mathcal{N}(ad-bc)^{k/2}\mathcal{N}(c\tau +d)^{-k} F(\dfrac{a\tau +b}{c\tau +d} )
\end{align*}
where $\mathcal{N} \colon K\otimes \C \rightarrow \C$ denotes the usual norm map.

\begin{defn}[Hilbert modular forms]
Let $\mathfrak{f}$ be an integral ideal of $\mathcal{O}_K$ 
all of whose prime factors are contained in $\Sigma$ and set
\begin{align*}
\Gamma_{00}(\mathfrak{f})=\{ \begin{pmatrix} a & b \\ c & d \end{pmatrix} \in \SL_2(K) \mid 
a,d \in 1+\mathfrak{f}, b \in \mathfrak{D}^{-1}, c \in \mathfrak{fD} \}.
\end{align*} 
 Then {\em a Hilbert modular form $F$ of $($parallel$)$ weight $k$ on
 $\Gamma_{00}(\mathfrak{f})$} is defined as a holomorphic function $F \colon
 \mathfrak{h}_K\rightarrow \C$ which is fixed by the action of
 $\Gamma_{00}(\mathfrak{f})$ (namely $F|_k M=F$ holds for an arbitrary
 element $M$ in $\Gamma_{00}(\mathfrak{f})$).\footnote{If $K$ is the
 rational number field $\Q$, we assume that $F$ is holomorphic at the
 cusp $\infty$.} 
\end{defn}

Let $\A^{\mathrm{fin}}_K$ denote the finite ad\`ele ring of $K$. 
Then $\SL_2(\A^{\mathrm{fin}}_K)$ is decomposed as
$\hat{\Gamma}_{00}(\mathfrak{f}) \cdot \SL_2(K)$ 
by the strong approximation theorem (we denote 
by $\hat{\Gamma}_{00}(\mathfrak{f})$ the topological closure 
of $\Gamma_{00}(\mathfrak{f})$ in $\SL_2(\A^{\mathrm{fin}}_K)$). 
We define 
the action of $\SL_2(\A_K^{\mathrm{fin}})$ upon the set of all $\C$-valued
functions on $\mathfrak{h}_K$ by $F|_k M=F|_k M_{\SL_2(K)}$ 
where $M_{\SL_2(K)}$ is the $\SL_2(K)$-factor of $M$ in 
$\SL_2(\A_K^{\mathrm{fin}})$. For a 
finite id\`ele $\alpha$ of $K$ and a Hilbert modular form $F$ of weight
$k$ on $\Gamma_{00}(\mathfrak{f})$, set
\begin{align*}
F_{\alpha}=F|_k \begin{pmatrix} \alpha & 0 \\ 0 & \alpha^{-1} \end{pmatrix}.
\end{align*} 
Then $F_{\alpha}$ has a Fourier series expansion 
\begin{align*}
F_{\alpha}=c(0,\alpha)+\sum_{\mu \in \mathcal{O}_K, \mu \gg 0} c(\mu,\alpha) q_K^{\mu}, \quad q_K^{\mu}=\exp(2\pi \sqrt{-1} \Tr_{K/\Q}(\mu \tau))
\end{align*}
which we call {\em the \q-expansion of $F$ at the cusp determined by 
$\alpha$}. 
Especially, the \q-expansion of $F$ at the cusp $\infty$ (determined by
$1$) is called {\em the 
standard \q-expansion of $F$}. Deligne and Ribet proved the following
deep theorem.

\begin{thm}[{\upshape \cite[Theorem~(0.2)]{DR}}] \label{thm:DR}
Let $F_k$ be a Hilbert modular form of weight $k$ on 
$\Gamma_{00}(\mathfrak{f})$. Assume that all coefficients of 
the \q-expansion of $F_k$ at an arbitrary cusp are rational numbers, 
and assume also that $F_k$ is equal to zero for all but finitely many $k$.  
Set $\mathscr{F}(\alpha)=\sum_{k\geq 0} \mathcal{N} \alpha_p^{-k}
 F_{k,\alpha}$ for a finite id\`ele $\alpha$ of $K$ whose \p-th component
 we denote by $\alpha_p$.
Then if the \q-expansion of $\mathscr{F}(\gamma)$ has all its coefficients in
 $p^j\Z_p$ 
 for a certain finite id\`ele $\gamma$ and a certain integer $j$, 
 the \q-expansion of $\mathscr{F}(\alpha)$ 
 for an arbitrary finite id\`ele $\alpha$ 
 also has all its coefficients in $p^j\Z_p$.
\end{thm}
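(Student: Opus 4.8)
The plan is to reinterpret the statement inside the arithmetic theory of Hilbert modular forms and to deduce it from the $p$-adic $q$-expansion principle in the sense of Serre and Katz; the essential geometric input will be an irreducibility (connectedness) statement for the Igusa tower over the Hilbert modular variety attached to $K$.

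First I would fix an integer $N$ divisible by all rational primes below $\Sigma$ and recall that, by the geometric $q$-expansion principle, a Hilbert modular form $F_k$ of weight $k$ on $\Gamma_{00}(\mathfrak{f})$ all of whose $q$-expansions at the various cusps have rational coefficients arises from a section of $\omega^{\otimes k}$ over the generic fibre $\mathcal{M}_{\Q}$ of the Hilbert modular scheme $\mathcal{M}$ over $\Z[1/N]$, the $q$-expansion of $F_k$ at the cusp determined by a finite id\`ele $\alpha$ being the pullback of that section along the corresponding Tate object. Next I would pass to the ordinary locus of $\mathcal{M}_{\Z_p}$ and form the Igusa tower: the normalising factor $\mathcal{N}\alpha_p^{-k}$ is exactly what converts the weight-$k$ form $F_k$, evaluated at the cusp labelled by $\alpha$, into its canonical value in the Serre--Katz ring $V$ of $p$-adic Hilbert modular forms over $\Z_p$, the component $\alpha_p$ playing the role of a trivialisation of the $p$-divisible part of the universal abelian scheme. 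Since $V$ carries the weight only through an action of $\mathcal{O}_{K,p}^{\times}$ and not as a grading, the finite sum $\mathscr{F}=\sum_{k\ge 0}\mathcal{N}\alpha_p^{-k}F_{k,\alpha}$ is then the family of $q$-expansions of a single element $\mathcal{F}\in V\otimes_{\Z_p}\Q_p$, with $\mathscr{F}(\alpha)$ equal to the $q$-expansion of $\mathcal{F}$ at the cusp labelled by $\alpha$; moreover the matrices $\begin{pmatrix}\alpha & 0 \\ 0 & \alpha^{-1}\end{pmatrix}$ act as deck transformations of the tower, so that as $\alpha$ varies over the finite id\`eles the series $\mathscr{F}(\alpha)$ run over the $q$-expansions of $\mathcal{F}$ at all the cusps lying on the connected components of the tower.

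With this set-up the theorem becomes the $p$-adic $q$-expansion principle: the $q$-expansion map $V\to\Z_p[[q_K]]$ attached to one fixed cusp is injective and reflects $p$-power divisibility, that is $p^j V=V\cap p^j\Z_p[[q_K]]$, and more generally $V=(V\otimes_{\Z_p}\Q_p)\cap\Z_p[[q_K]]$. I would reduce this, as is standard, to the statement that $V\otimes_{\Z_p}\overline{\F}_p$ is an integral domain, equivalently that the Igusa formal scheme over the ordinary locus of $\mathcal{M}_{\overline{\F}_p}$ is irreducible; granting that, the hypothesis that $\mathscr{F}(\gamma)$ has all coefficients in $p^j\Z_p$ forces $\mathcal{F}\in p^j V$, whence $\mathscr{F}(\alpha)\in p^j\Z_p[[q_K]]$ for every finite id\`ele $\alpha$. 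The hard part will be precisely this irreducibility of the Igusa tower for Hilbert modular varieties --- for $K=\Q$ it is the classical connectedness of the Igusa curves, but for a general totally real $K$ it needs a monodromy argument on the ordinary locus --- together with the bookkeeping required to handle the non-standard level structure $\Gamma_{00}(\mathfrak{f})$, the combinatorics of cusps and connected components under the torus operators, the comparison between sections over $\Q$, $\Z[1/N]$ and $\Z_p$, and the verification that the weight-dependent normalisation genuinely glues the individual $F_k$ into one element of $V\otimes_{\Z_p}\Q_p$.
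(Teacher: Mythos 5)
The paper itself does not prove this theorem: it is quoted as Theorem~(0.2) of Deligne and Ribet \cite{DR} and invoked as a black box (exactly as \cite{DR} is cited elsewhere in the paper for the approximation technique). There is therefore no proof in the paper against which to compare your argument, and any assessment has to be made against the strategy of \cite{DR} directly.

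Your sketch is in the right spirit of the Katz--Deligne--Ribet geometric argument: passing from rational $q$-expansions at all cusps to a section over the generic fibre of a Hilbert--Blumenthal moduli scheme; recognising that the factor $\mathcal{N}\alpha_p^{-k}$ is precisely the normalisation that packages the finite collection $\{F_k\}_k$ into a single $p$-adic Hilbert modular form whose $q$-expansion at the cusp labelled by $\alpha$ is $\mathscr{F}(\alpha)$; and reducing the $p$-integrality statement to the $p$-adic $q$-expansion principle, i.e.\ to the equality $p^j V = V\cap p^j\Z_p[[q_K]]$. Where the sketch stops short is exactly where the difficulty lies. You \emph{state} that the decisive input is irreducibility of the Igusa tower (equivalently, that $V\otimes_{\Z_p}\overline{\F}_p$ is a domain) over the ordinary locus of the Hilbert modular variety, but you do not prove it nor reduce it to a cited result; this is the single hard geometric fact in \cite{DR}, established via Rapoport's compactifications and a connectedness/monodromy argument over $\overline{\F}_p$, and without it the reduction to the $q$-expansion map is vacuous. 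Two further items in your ``bookkeeping'' clause are in fact non-trivial: (i) the assertion that the diagonal matrices $\bigl(\begin{smallmatrix}\alpha&0\\0&\alpha^{-1}\end{smallmatrix}\bigr)$ act as deck transformations of the Igusa tower is only a heuristic --- what one really needs, and what \cite{DR} prove, is that these operators permute the cusps transitively within each geometric connected component, so that a single $q$-expansion controls all of $\mathscr{F}(\alpha)$; and (ii) the choice of the level structure $\Gamma_{00}(\mathfrak{f})$ in \cite{DR} is specifically engineered so that the cusps are rational and the moduli problem is representable away from $\mathfrak{f}$, and verifying that the rational $q$-expansions really do cut out a $\Q$-form of the space of sections at this level requires an argument, not just a reference to ``the geometric $q$-expansion principle.'' As an outline of the structure of Deligne--Ribet's proof your proposal is serviceable; as a self-contained proof it leaves precisely the load-bearing steps to be filled in.
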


The following corollary---so-called {\em the \q-expansion principle}--- 
plays the most important role in verification of 
sufficient condition (\ref{eq:suffzeta}).

\begin{cor}[\q-expansion principle] \label{cor:DRpri}
Let $F_k$ and $\mathscr{F}(\alpha)$ be as in Theorem~$\ref{thm:DR}$ and $j$ 
 an integer. Suppose that the \q-expansion of $\mathscr{F}(\gamma)$ has 
 all its non-constant coefficients in $p^j\Z_{(p)}$ for a certain finite
 id\`ele~$\gamma$. Then for arbitrary two distinct finite id\`eles $\alpha$
 and $\beta$, the difference between the constant terms of the
 \q-expansions of 
 $\mathscr{F}(\alpha)$ and $\mathscr{F}(\beta)$ is also contained 
 in $p^j\Z_{(p)}$.
\end{cor}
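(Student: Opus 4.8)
The plan is to deduce the corollary directly from Theorem~\ref{thm:DR} by modifying the family $\{F_k\}$ so as to annihilate the constant Fourier coefficient at the cusp determined by $\gamma$. First I would record the harmless reduction $p^j\Z_{(p)}=\Q\cap p^j\Z_p$: since by the standing hypothesis of Theorem~\ref{thm:DR} every $F_k$ (hence every $\mathscr{F}(\alpha)$) has rational \q-expansion coefficients at every cusp, a rational number lies in $p^j\Z_{(p)}$ exactly when it lies in $p^j\Z_p$. Thus it suffices to prove the statement with $\Z_p$ in place of $\Z_{(p)}$, and the hypothesis on $\mathscr{F}(\gamma)$ may be read as ``all non\nobreakdash-constant coefficients lie in $p^j\Z_p$''.

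Next I would set $c=c(0,\gamma)\in\Q$, the constant term of the \q-expansion of $\mathscr{F}(\gamma)$, and introduce the auxiliary family $\{F_k'\}$ with $F_0'=F_0-c$ and $F_k'=F_k$ for $k\geq 1$; here I regard the constant $c$ as a Hilbert modular form of weight~$0$ on $\Gamma_{00}(\mathfrak{f})$, so that $F_0'$ is again such a form and all hypotheses of Theorem~\ref{thm:DR} remain in force (rational \q-expansions at every cusp, vanishing for all but finitely many $k$). For the associated family $\mathscr{F}'(\alpha)=\sum_{k\geq 0}\mathcal{N}\alpha_p^{-k}F'_{k,\alpha}$ one has $\mathscr{F}'(\alpha)=\mathscr{F}(\alpha)-c$ for every finite id\`ele $\alpha$, because the weight\nobreakdash-$0$ slash operator fixes the constant $c$ and the factor $\mathcal{N}\alpha_p^{-k}$ is $1$ at $k=0$. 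Hence the \q-expansion of $\mathscr{F}'(\gamma)$ has constant term $0$ and the same non\nobreakdash-constant coefficients as that of $\mathscr{F}(\gamma)$, so \emph{all} of its coefficients lie in $p^j\Z_p$.

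Theorem~\ref{thm:DR}, applied to $\mathscr{F}'$ with the id\`ele $\gamma$, then gives that for \emph{every} finite id\`ele $\alpha$ the \q-expansion of $\mathscr{F}'(\alpha)$ has all coefficients in $p^j\Z_p$; in particular its constant term, namely $c(0,\alpha)-c$ where $c(0,\alpha)$ denotes the constant term of $\mathscr{F}(\alpha)$, lies in $p^j\Z_p$. Taking this for $\alpha$ and for $\beta$ and subtracting yields $c(0,\alpha)-c(0,\beta)=(c(0,\alpha)-c)-(c(0,\beta)-c)\in p^j\Z_p$, and this difference is rational, hence lies in $p^j\Z_{(p)}$, as required. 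The argument is essentially formal once Theorem~\ref{thm:DR} is available; the only points needing care are the verification that the constant function is genuinely a weight\nobreakdash-$0$ Hilbert modular form on $\Gamma_{00}(\mathfrak{f})$ (including holomorphy at the cusp when $K=\Q$), the Fourier\nobreakdash-level identity $\mathscr{F}'(\alpha)=\mathscr{F}(\alpha)-c$, and the routine $\Z_{(p)}$-versus-$\Z_p$ bookkeeping, and I do not expect any genuine obstacle beyond these.
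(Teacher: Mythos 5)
Your proof is correct and follows essentially the same approach as the paper, which simply instructs the reader to apply Theorem~\ref{thm:DR} to $\mathscr{F}(\alpha)-c(0,\gamma)$ and $\mathscr{F}(\beta)-c(0,\gamma)$. You have merely made explicit the points the paper leaves implicit, namely that the constant $c(0,\gamma)$ can be absorbed into $F_0$ as a weight-$0$ Hilbert modular form so that the hypotheses of Theorem~\ref{thm:DR} are preserved, and the routine passage between $p^j\Z_{(p)}$ and $p^j\Z_p$ via rationality of the coefficients.
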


\begin{proof}
 Just apply Theorem~\ref{thm:DR} to $\mathscr{F}(\alpha)-c(0,\gamma)$ 
 and $\mathscr{F}(\beta)-c(0,\gamma)$ 
 where $c(0,\gamma)$ is the constant term of the \q-expansion of 
 $\mathscr{F}(\gamma)$. See also \cite[Corollary~(0.3)]{DR}.
\end{proof}

Finally we introduce the Hilbert-Eisenstein series 
attached to a locally constant $\C$-valued function $\varepsilon$ 
on $\Gal(K_{\infty}/K)$.

\begin{thm}[Hilbert-Eisenstein series] Let $\varepsilon$ be 
a locally constant function on $\Gal(K_{\infty}/K)$ and 
$k$ an even natural number. Then there exists an integral ideal 
$\mathfrak{f}$ of $\mathcal{O}_K$ all of whose prime factors 
are contained in $\Sigma$, and there exists 
a Hilbert modular form $G_{k,\varepsilon}$ of weight $k$ 
on $\Gamma_{00}(\mathfrak{f})$ $($which is called 
{\em the Hilbert-Eisenstein series of weight $k$ 
attached to $\varepsilon$}$)$ whose standard \q-expansion is given by 
\begin{align*}
2^{-r} \zeta^{\Sigma}_{K_{\infty}/K}(1-k, \varepsilon)+\sum_{\mu \in \mathcal{O}_K, \mu 
\gg 0} \left( \sum_{\mu \in \mathfrak{a} \subseteq \mathcal{O}_K,
 \text{\upshape prime to } \Sigma} \varepsilon(\mathfrak{a})\kappa(\mathfrak{a})^{k-1} \right) q_K^{\mu}
\end{align*}
$($we use the notation $\varepsilon(\mathfrak{a})$ and
 $\kappa(\mathfrak{a})$ for elements defined as 
$\varepsilon ( ( K_{\infty}/K, \mathfrak{a}))$ and 
$\kappa(( K_{\infty}/K, \mathfrak{a}))$ respectively 
where $(K_{\infty}/K,-)$ denotes the Artin symbol 
for the abelian extension $K_{\infty}/K)$. 
The \q-expansion of $G_{k,\varepsilon}$ at the cusp determined
 by a finite id\`ele $\alpha$ is given by
\begin{align} \label{eq:q-exp}
\mathcal{N}((\alpha))^k \left\{ 2^{-r} \zeta^{\Sigma}_{K_{\infty}/K}(1-k,
 \varepsilon_{a}) +\sum_{\tiny \begin{array}{@{}c@{}} 
\mu\in \mathcal{O}_K\\ \mu \gg 0 \end{array}} \left(
 \sum_{\tiny \begin{array}{@{}c@{}} 
  \mu \in \mathfrak{a}\subseteq \mathcal{O}_K \\ 
		    \text{\upshape prime to }\Sigma \end{array}}
 \varepsilon_a (\mathfrak{a})
 \kappa(\mathfrak{a})^{k-1} \right) q_K^{\mu} \right\}
\end{align}
where $(\alpha)$ is the ideal generated by $\alpha$ and $a$ is 
 an element in $\Gal(K_{\infty}/K)$ defined as 
 $(K_{\infty}/K, (\alpha)\alpha^{-1})$.
\end{thm}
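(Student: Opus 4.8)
This is essentially a theorem of Deligne and Ribet \cite{DR}; here I only indicate the plan of the construction and of the $q$-expansion computation, leaving the analytic details to \emph{loc.\ cit.}

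First I would convert the datum $\varepsilon$ into a function on a ray class group. Since $\varepsilon$ is locally constant it factors through $\Gal(K_{\varepsilon}/K)$ for a finite subextension $K_{\varepsilon}$ of $K_{\infty}/K$; choosing an integral ideal $\mathfrak{f}_0$ all of whose prime factors lie in $\Sigma$ and which is divisible by the conductor of $K_{\varepsilon}/K$, Artin reciprocity identifies $\Gal(K_{\varepsilon}/K)$ with a quotient of the narrow ray class group modulo $\mathfrak{f}_0$, so that $\varepsilon=\sum_{\mathfrak{c}}\varepsilon(\mathfrak{c})\,\delta^{(\mathfrak{c})}$ becomes a finite $\C$-linear combination of characteristic functions of narrow ray classes $\mathfrak{c}$. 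For each $\mathfrak{c}$ I would take the classical Hecke--Eisenstein series of parallel weight $k$ attached to the lattice data prescribed by $\mathfrak{c}$ and $\mathfrak{f}_0$; this is a holomorphic Hilbert modular form of weight $k$ on $\Gamma_{00}(\mathfrak{f})$ for a suitable multiple $\mathfrak{f}$ of $\mathfrak{f}_0$ (this is where the level $\mathfrak{f}$ of the statement comes from), and its standard $q$-expansion is given by Hecke's classical formula: the constant term equals $2^{-r}\zeta^{\Sigma}_{K_{\infty}/K}(1-k,\delta^{(\mathfrak{c})})$ (the point where the value of the analytically continued Dirichlet series enters), and the $q_K^{\mu}$-coefficient for $\mu\gg 0$ equals the divisor-type sum of $\kappa(\mathfrak{a})^{k-1}$ over integral ideals $\mathfrak{a}\ni\mu$ prime to $\Sigma$ lying in the class $\mathfrak{c}$. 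Setting $G_{k,\varepsilon}=\sum_{\mathfrak{c}}\varepsilon(\mathfrak{c})\,G_k(\mathfrak{c})$ then yields a form with exactly the asserted standard $q$-expansion.

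Next I would read off the $q$-expansion at the cusp determined by a finite id\`ele $\alpha$, which by definition is the standard $q$-expansion of $G_{k,\varepsilon}|_k\begin{pmatrix}\alpha & 0\\ 0 & \alpha^{-1}\end{pmatrix}$. The matrix $\begin{pmatrix}\alpha & 0\\ 0 & \alpha^{-1}\end{pmatrix}$ replaces the lattice data of the elementary Eisenstein series indexed by $\mathfrak{c}$ by that indexed by $\mathfrak{c}\cdot(\alpha)$, which on the Galois side amounts to translating $\varepsilon$ by $a=(K_{\infty}/K,(\alpha)\alpha^{-1})$; meanwhile the weight-$k$ slash normalization, through the factor $\mathcal{N}(ad-bc)^{k/2}$ together with the rescaling of the period lattice, contributes precisely the scalar $\mathcal{N}((\alpha))^k$. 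Combining the two effects gives formula (\ref{eq:q-exp}).

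The main obstacle will be the holomorphy (equivalently, absolute convergence) of the Eisenstein series in the boundary case $\deg K=1$, $k=2$: there the naive lattice sum converges only conditionally, so one must invoke Hecke's convergence trick, obtaining a priori only a nearly holomorphic form, and then check that its non-holomorphic term cancels out of the particular combination $G_{k,\varepsilon}$ (this uses that $\varepsilon$ is supported on totally real data, so no residual non-holomorphic Eisenstein contribution survives). This is exactly the point handled in \cite{DR}; granting it, the construction above completes the proof.
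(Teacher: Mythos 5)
The paper does not prove this statement at all: immediately after the theorem it says only ``For details, see [DR, Theorem~(6.1)].'' So there is no internal argument to compare your sketch against; the relevant question is whether your outline matches what Deligne and Ribet actually do, and for the most part it does. Your reduction of $\varepsilon$ to a finite $\C$-linear combination of characteristic functions $\delta^{(\mathfrak{c})}$ of narrow ray classes via Artin reciprocity, the identification of each building block with a restricted Hecke--Eisenstein series whose constant term is the Klingen--Siegel value $2^{-r}\zeta^{\Sigma}(1-k,\delta^{(\mathfrak{c})})$, the assembly of $G_{k,\varepsilon}$ by linearity, and the cusp-change computation giving the translation by $a=(K_\infty/K,(\alpha)\alpha^{-1})$ and the scalar $\mathcal{N}((\alpha))^k$ are all faithful to [DR, \S5--6].

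One caveat about your final paragraph. The absolute-convergence problem at $k=2$ is not specific to $[K:\Q]=1$; the naive series $\sum \mathcal{N}(c\tau+d)^{-k}$ diverges at $k=2$ for every $r$. What is special to $r=1$ is that the non-holomorphic correction term produced by Hecke's trick survives, whereas for $r>1$ it vanishes identically (a result of Shimura), so holomorphy is automatic. For $r=1$ the reason it disappears from $G_{2,\varepsilon}$ is not that ``$\varepsilon$ is supported on totally real data'' --- that is true by construction in every case here and carries no information --- but is a separate argument in Deligne--Ribet. This is a minor point in a sketch, but as written the sentence gives a wrong impression of where the difficulty lies and how it is resolved.
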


For details, see \cite[Theorem~(6.1)]{DR}.

%
\subsection{Proof of sufficient conditions} \label{ssc:proof-suff}
%

In the rest of this section we shall verify sufficient condition
(\ref{eq:suffzeta}). This part is a subtle generalisation of the argument
in \cite[Section~6.6]{H}. 
Let $j$ be a sufficiently large integer and $y$ a coset of 
$W/\Gamma^{p^j}$ not contained in $\Gamma$. 
Choose an integral ideal $\mathfrak{f}$ of $\mathcal{O}_{F_{NU}}$  such
that the Hilbert-Eisenstein series $G_{k,\delta^{(y)}}$ over
$\mathfrak{h}_{F_U}$ is defined on $\Gamma_{00}(\mathfrak{f}
\mathcal{O}_{F_U})$. Then it is easy to see that the restriction 
$\mathscr{G}=G_{k,\delta^{(y)}}|_{\mathfrak{h}_{F_{NU}}}$ 
of~$G_{k,\delta^{(y)}}$
to~$\mathfrak{h}_{F_{NU}}$ is also a Hilbert modular form of weight
$p^{n_U}k$ on $\Gamma_{00}(\mathfrak{f})$ where $p^{n_U}$ is the cardinality
of the quotient group $NU/U$. The \q-expansion of $\mathscr{G}$ is 
directly calculated as  
\begin{align*}
2^{-[F_U:\Q]}  \zeta^{\Sigma}_{F_V/F_U} 
 (1-k, \delta^{(y)}) + \sum_{\tiny\begin{array}{@{}c@{}}
  \nu \in \mathcal{O}_{F_U} \\ \nu \gg 0\end{array}}
 \left( \sum_{\tiny\begin{array}{@{}c@{}} \nu \in \mathfrak{b} \subseteq
  \mathcal{O}_{F_U} \\ \text{prime to }\Sigma \end{array}}
 \delta^{(y)}( \mathfrak{b}) \kappa(\mathfrak{b})^{k-1} \right)
 q_{F_{NU}}^{\mathfrak{tr}(\nu)}
\end{align*}
where $q_{F_{NU}}^{\mathfrak{tr}(\nu)}$ denotes $\exp(2\pi {\sqrt{-1}}
\Tr_{F_{NU}/\Q}(\Tr_{F_U/F_{NU}}(\nu) \tau))$. 
Note that the quotient group $NU/U$ naturally
acts upon the set of all pairs $(\mathfrak{b}, \nu)$ such that $\mathfrak{b}$
is a non-zero integral ideal of $\mathcal{O}_{F_U}$ prime to $\Sigma$ 
and $\nu$ is a
totally positive element in $\mathfrak{b}$. 
First suppose that the isotropy subgroup $(NU/U)_{(\mathfrak{b}, \nu)}$ is
 trivial. Then we can easily calculate the 
 $NU/U$-orbital sum in the \q-expansion of $\mathscr{G}$ 
 containing the term associated to $(\mathfrak{b},\nu)$ as follows:
\begin{align*}
 \sum_{\sigma\in NU/U}  \! \! \! \! \! \delta^{(y)}(\mathfrak{b}^{\sigma})
      \kappa(\mathfrak{b}^{\sigma})^{k-1} q_{F_{NU}}^{\tr(\nu^{\sigma})} 
=\sharp(NU/U)_y \! \! \! \! \! \! \! \! \! \! \! \sum_{\sigma\in
      (NU/U)/(NU/U)_y} \! \! \! \! \! \! \! \! \! \! \!\delta^{(\sigma y \sigma^{-1})}(\mathfrak{b})
      \kappa(\mathfrak{b})^{k-1} q_{F_{NU}}^{\mathfrak{tr}(\nu)}
\end{align*}
(use the obvious formula $\Tr_{F_U/F_{NU}}(\nu^{\sigma})=\Tr_{F_U/F_{NU}}(\nu)$).

Next suppose that the isotropy subgroup $(NU/U)_{(\mathfrak{b}, \nu)}$ is
 not trivial. Let $F_{(\mathfrak{b},\nu)}$ be the fixed subfield of
 $F_U$ by $(NU/U)_{(\mathfrak{b}, \nu)}$ and
 $F^{\mathrm{comm}}_{(\mathfrak{b},\nu)}$ the fixed subfield 
 of $F_{\infty}$ by the
 commutator subgroup of $NU_{(\mathfrak{b}, \nu)}$. Then $(\mathfrak{b}, \nu)$
 is fixed by the action of $\Gal(F_U/F_{(\mathfrak{b}, \nu)})$, and hence
 $\nu$ is an element in $F_{(\mathfrak{b}, \nu)}$ and there
 exists a non-zero integral ideal $\mathfrak{a}$ of
 $\mathcal{O}_{F_{(\mathfrak{b}, \nu)}}$ such that
 $\mathfrak{a}\mathcal{O}_{F_U}$ coincides with $\mathfrak{b}$. 
 For such $(\mathfrak{a}, \nu)$, the equation
\begin{align*}
 \delta^{(y)} (\mathfrak{b}) =\delta^{(y)} (( F_V/F_U,\mathfrak{a}\mathcal{O}_{F_U})) = \delta^{(y)} \circ \Ver (( F_{(\mathfrak{b},\nu)}^{\mathrm{comm}}/F_{(\mathfrak{b},
 \nu)}, \mathfrak{a} )) =0
\end{align*}
holds because the image of the Verlagerung homomorphism 
is contained in $\Gamma$ (indeed the Verlagerung coincides 
with the $n_U$-th power of the 
Frobenius endomorphism $\varphi^{n_U}$ 
if the finite part of the Galois group
is of exponent $p$; see \cite[Lemma~4.3]{H} for details)
but $y$ is not contained in $\Gamma$.

The calculation above implies that $\mathscr{G}$ has all 
non-constant coefficients in $\sharp(NU/U)_y \Z_{(p)}$. Take a finite
id\`ele $\gamma$ such that 
$(F_V/F_U, (\gamma){\gamma}^{-1})$ coincides with $w$. Then 
by Deligne-Ribet's \q-expansion
principle (Corollary~\ref{cor:DRpri}) the constant term of
$\mathscr{G}-\mathscr{G}(\gamma)$ is also contained 
in $\sharp(NU/U)_y\Z_{(p)}$, which we may calculate as
$2^{-[F_U:\Q]} \Delta_{F_V/F_U}^{w} (1-k, \delta^{(y)})$ (use the
explicit formula (\ref{eq:q-exp}) 
for the \q-expansion of $\mathscr{G}(\gamma)$). Therefore 
sufficient condition $(\ref{eq:suffzeta})$ holds 
(recall that $2$ is invertible since $p$ is odd).

%
%
\section{Inductive construction of the $p$-adic zeta functions} \label{sc:construction}
%
%

We shall complete the proof of our main theorem 
(Theorem~\ref{thm:maintheorem}). We first construct the \p-adic zeta
function ``modulo \p-torsion'' for $F_{\infty}/F$, and 
then eliminate ambiguity of the \p-torsion part.

%
\subsection{Choice of the central element $c$} \label{ssc:choice}
%

In order to let induction work effectively, we have to choose 
a ``good'' central element $c$ 
which is used in the construction of the Artinian family $\mathfrak{F}_A^c$ 
(see Section~\ref{ssc:family}). 
The following elementary lemma implies how to choose 
such a ``good'' central element.

\begin{lem} \label{lem:non-comm}
Let $\Delta$ be a finite \p-group with exponent $p$ and $c$ a non-trivial 
central element in $\Delta$. 
If $c$ is not contained in the commutator subgroup of $\Delta$, 
the \p-group $\Delta$ is isomorphic to the direct product 
of the cyclic group $\langle c \rangle$ generated by $c$ and 
the quotient group $\Delta/\langle c \rangle$.
\end{lem}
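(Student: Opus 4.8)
The plan is to exhibit an internal semidirect (in fact direct) product decomposition $\Delta \cong \langle c \rangle \times N$ for a suitable normal complement $N$. First I would observe that since $\Delta$ has exponent $p$, the subgroup $\langle c \rangle$ is cyclic of order $p$, and since $c$ is central, $\langle c \rangle$ is normal (indeed central). The whole content is to produce a normal subgroup $N$ with $\langle c \rangle \cap N = \{e\}$ and $\langle c \rangle N = \Delta$; once we have that, centrality of $c$ makes the product direct.

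The key step is a cohomological (or elementary) splitting argument. Consider the quotient map $\pi \colon \Delta \to \bar{\Delta} = \Delta/\langle c \rangle$. I would first reduce to the case where $c$ has a complement by examining the central extension
\begin{align*}
1 \to \langle c \rangle \to \Delta \xrightarrow{\pi} \bar{\Delta} \to 1.
\end{align*}
The hypothesis that $c \notin [\Delta,\Delta]$ is exactly what is needed: it says that the composite $\langle c \rangle \hookrightarrow \Delta \twoheadrightarrow \Delta^{\mathrm{ab}}$ is injective, so the image of $\langle c\rangle$ is a direct summand of the elementary abelian group $\Delta^{\mathrm{ab}}$ (every subgroup of an $\F_p$-vector space is a direct summand). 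Choose a complementary subspace, pull it back to a subgroup $M \le \Delta$ containing $[\Delta,\Delta]$; then $M$ is normal (it contains the commutator subgroup), $M \langle c \rangle = \Delta$, and $M \cap \langle c \rangle \subseteq [\Delta,\Delta] \cap \langle c \rangle = \{e\}$ by the hypothesis. Setting $N = M$ gives the desired complement. Since $c$ is central, conjugation of $N$ by $c$ is trivial, so $\Delta = \langle c \rangle \times N$ internally, and $N \cong \Delta/\langle c \rangle$.

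I expect the main (minor) obstacle to be stating cleanly why $M \cap \langle c \rangle = \{e\}$: one must note that any element of $\langle c \rangle \cap M$ has trivial image in $\Delta^{\mathrm{ab}}$ under the chosen splitting of $\Delta^{\mathrm{ab}} = \overline{\langle c\rangle} \oplus (M/[\Delta,\Delta])$, hence lies in $[\Delta,\Delta]$, but $\langle c\rangle \cap [\Delta,\Delta] = \{e\}$ precisely because $c \notin [\Delta,\Delta]$ and $\langle c\rangle$ has prime order $p$ (so its only proper subgroup is trivial). Everything else is routine: centrality gives the direct (not just semidirect) product, and the second projection $\Delta \to \Delta/\langle c\rangle$ restricts to an isomorphism on $N$. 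I would write this up in three short steps---(i) $\langle c\rangle$ embeds in $\Delta^{\mathrm{ab}}$; (ii) pick a complement and pull back to get a normal $N$; (iii) check $\langle c\rangle \cap N = \{e\}$ and conclude $\Delta = \langle c\rangle \times N$---without invoking any machinery beyond the structure of finite abelian $p$-groups of exponent $p$ as $\F_p$-vector spaces.
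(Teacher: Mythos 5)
Your proof is correct and follows essentially the same route as the paper's: both pull back a direct complement of the image of $\langle c\rangle$ in the elementary abelian group $\Delta^{\mathrm{ab}}$ and use centrality of $c$ to conclude the product is direct. You simply spell out the verifications (injectivity of $\langle c\rangle \hookrightarrow \Delta^{\mathrm{ab}}$ and triviality of the intersection) that the paper leaves as ``easily verified.''
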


\begin{proof}
By the structure theorem of finite abelian groups, 
the abelisation $\Delta^{\mathrm{ab}}$ of $\Delta$ is 
decomposed as the direct product of the image of the cyclic group 
$\langle c \rangle$ and a certain finite abelian \p-group 
$\bar{H}$ with exponent $p$. Let $H$ denote the inverse image of 
$\bar{H}$ under the abelisation map 
$\Delta \rightarrow \Delta^{\mathrm{ab}}$. 
Then one may easily verify that $H$ and $\langle c \rangle$ 
generate $\Delta$. The intersection of $H$ and 
$\langle c \rangle$ is obviously trivial, and $H$ commutes with 
elements in $\langle c\rangle$ since $c$ is central.
\end{proof}

This lemma asserts that there exists a non-trivial central 
element $c$ which is contained in the commutator 
subgroup of $G^f$ if $G$ is not abelian. 
We may assume that $G$ is non-commutative without loss of generality 
(abelian cases are just the results of Deligne, Ribet and Wiles), 
and thus we may always find a non-trivial central element 
contained in $[G,G]$. 

In the following argument we take a non-trivial central element $c$ 
from the commutator subgroup of $G$ and fix it.

%
\subsection{Construction of the $p$-adic zeta function  ``modulo
  $p$-torsion''} \label{ssc:mod-p-construct}
%

In this subsection we construct the \p-adic zeta function 
``modulo \p-torsion'' for $F_{\infty}/F$, by mimicking Burns' technique
(see Section~\ref{sc:burns}).
First consider the following commutative diagram with exact rows:
\[
\footnotesize
 \xymatrix{
    K_1(\iw{G}) \ar[r] \ar@{->}[d]_(0.45){\theta}   &   K_1(\iw{G}_S)  
\ar[r]^(0.45){\partial} \ar[d]^(0.45){\theta_S}    &   K_0(\iw{G},
\iw{G}_S) \ar[r] \ar[d]^(0.45){\mathrm{norm}}  &  0 \\
  \prod_{\mathfrak{F}_B} \iw{U/V}^{\times} \ar@{^(->}[r]
 & \prod_{\mathfrak{F}_B} \iw{U/V}_S^{\times}
 \ar[r]_(0.355){\partial}  &   \prod_{\mathfrak{F}_B} 
K_0(\iw{U/V}, \iw{U/V}_S) \ar[r] & 0.}
\]
Let $f$ be an arbitrary characteristic element for $F_{\infty}/F$ (see
Section~\ref{ssc:review-iwa}) 
and set $\theta_S(f)=(f_{U,V})_{(U,V)\in \mathfrak{F}_B}$. 
For each $(U,V)$ in $\mathfrak{F}_B$ let $w_{U,V}$ be the 
element defined as $\xi_{U,V}f_{U,V}^{-1}$, which is contained 
in $\iw{U/V}^{\times}$ by an argument similar to Burns' technique. 
Let $\tilde{w}_{U,V}$ denote the image of $w_{U,V}$ in $\piw{U/V}^{\times}$.
Since both $(f_{U,V})_{(U,V)\in \mathfrak{F}_B}$ and
$(\xi_{U,V})_{(U,V) \in \mathfrak{F}_B}$ satisfy conditions 
(NCC) and (CCC) (see
Proposition~\ref{prop:thetaloc} and Lemma~\ref{lem:ncc-ccc}),
the element $(\tilde{w}_{U,V})_{(U,V)\in \mathfrak{F}_B}$ also
satisfies them. Moreover there exists an element
$\tilde{d}_{U,V}$ (resp.\ $\tilde{d}_U$) in $\piw{\Gamma}_{(p)}^{\times}$ 
such that the congruence 
\begin{align} \label{eq:cong-w}
 \tilde{w}_{U,V} \equiv \tilde{d}_{U,V} \quad \pmod{J_{S,U,V}} \qquad (\text{resp.\ } & \tilde{w}_U \equiv \tilde{d}_U \quad \pmod{I'_{S,U}}) 
\end{align}
holds for each $(U,V)$ in $\mathfrak{F}_B$ except for $(G,[G,G])$ 
(resp.\ for each $U$ in $\mathfrak{F}_A^c$) by Proposition~\ref{prop:thetaloc} 
and Proposition~\ref{prop:congzeta}. 
We remark that these congruences are {\em not} sufficient to prove that
$(\tilde{w}_{U,V})_{(U,V)\in \mathfrak{F}_B}$ is contained in
$\widetilde{\Psi}$ (or equivalently in $\widetilde{\Psi}_c$).

\begin{rem} \label{rem:eliminate}
Unfortunately the congruences (\ref{eq:cong-w}) hold
not in $\iw{U/V}$ (resp.\ $\iw{U}$) but in $\iw{U/V}_S$ (resp.\
$\iw{U}_S$), but we may obtain the {\em integral} congruences
(\ref{eq:integral-cong}) later by ``eliminating $\tilde{d}_{U,V}$ and 
$\tilde{d}_U$.'' The author 
would like to appreciate Mahesh Kakde for pointing out the wrong
 arguments around these phenomena in the preliminary version of this article.
\end{rem}

\begin{thm}[strong congruences modulo \p-torsion] \label{thm:str-cong}
The congruence 
\begin{align*}
 \tilde{w}_{U,V} &\equiv \varphi(\tilde{w}_{\mathrm{ab}})^{(G:U)/p} &&\pmod{J_{U,V}} &
(\text{resp.\quad }  \tilde{w}_U &\equiv
 \varphi(\tilde{w}_{\mathrm{ab}})^{(G:U)/p} &&\pmod{I_U})
\end{align*}
holds for each $(U,V)$ in $\mathfrak{F}_B$ except for $(G,[G,G])$ 
$($resp.\ for each $U$ in $\mathfrak{F}_A^c)$.
\end{thm}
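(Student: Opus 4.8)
The plan is to prove Theorem~\ref{thm:str-cong} by an inductive argument on the cardinality of $U^f$, running in parallel over the Brauer family $\mathfrak{F}_B$ and the Artinian family $\mathfrak{F}_A^c$. The crucial observation is that, combining Lemma~\ref{lem:ncc-ccc} and Proposition~\ref{prop:thetaloc}, the element $(\tilde w_{U,V})_{(U,V)\in\mathfrak{F}_B}$ satisfies (NCC), (CCC) and the \emph{weak} congruences \eqref{eq:cong-w} with constants $\tilde d_{U,V},\tilde d_U$ in $\piw{\Gamma}_{(p)}^{\times}$; what is missing, compared with membership in $\widetilde{\Psi}_{S,c}$, is precisely that those constants should be replaced by $\varphi(\tilde w_{\mathrm{ab}})^{(G:U)/p}$ and that the congruences should hold \emph{integrally} rather than only after inverting $S$. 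So the first step is to take logarithms: since $\tilde w_{U,V}\equiv \tilde d_{U,V}\ \pmod{J_{S,U,V}}$, the element $\tilde d_{U,V}^{-1}\tilde w_{U,V}$ lies in $1+J_{U,V}^{\,\widehat{\,}}$, hence $y_{U,V}:=\log(\tilde d_{U,V}^{-1}\tilde w_{U,V})$ is defined in $R[U^f/V^f]$; by Lemma~\ref{lem:lognorm} (the compatibility lemma) and the logarithmic translation of (NCC), (CCC), the family $(y_{U,V})$ satisfies the trace compatibility condition (TCC) and (CCC$+$), i.e.\ it lies in the localized analogue $\Phi_{B}^{\,\widehat{\,}}$ of $\Phi_B$.

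Next I would compare $y_{U,V}$ with the value predicted by the additive theta formalism. Write $\eta_{\mathrm{ab}}=\tilde w_{\mathrm{ab}}$ and let $z_{U,V}:=\log\bigl(\varphi(\eta_{\mathrm{ab}})^{-(G:U)/p}\tilde w_{U,V}\bigr)$, which is a priori an element of $R[U^f/V^f]\otimes\Q_p$ but by Proposition~\ref{prop:congzeta} (weak congruences) lies in $I_{S,U}'\otimes R$ for $U\in\mathfrak{F}_A^c$, and in $J_{U,V}^{\,\widehat{\,}}$ in general. The difference $z_{U,V}-y_{U,V}=\log\bigl(\varphi(\eta_{\mathrm{ab}})^{-(G:U)/p}\tilde d_{U,V}\bigr)$ is the image of an element of $\piw{\Gamma}_{(p)}^{\times}$ under $\log$, hence lies in the ``central'' piece $R$ (the $\Z_p[[\Gamma]]$-component) for every $(U,V)$. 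The key compatibility to exploit is that applying the abelianization trace $\theta^+_{\mathrm{ab}}$ to $y_{\bullet}$ recovers $\Gamma_{G^{\mathrm{ab}}}(\tilde w_{\mathrm{ab}}/\tilde d_{\mathrm{ab}})$, which forces the central correction terms to be mutually consistent via the trace relations of $\Phi_B$; running the trace relation from $(U,V)$ down to $(\Gamma,\{e\})$ shows that $z_{U,V}-y_{U,V}$ is controlled by a single central element, and then one argues it must in fact be $0$. This is where the induction on $\sharp U^f$ enters: for $U$ of small index one checks the statement directly (the $N\le 2$ cases and the base cases of Remark~\ref{rem:G}), and for larger $U$ one uses that $\tilde w_{U',V'}$ already satisfies the strong congruence for all smaller $U'$, together with the (TCC) relating $\Z_p[[U/V]]$ to $\Z_p[[U_x/V]]$ exactly as in the surjectivity argument of Proposition~\ref{prop:addtheta-B}, to pin down the correction term on $U$.

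Concretely, here is the order of steps I would carry out. (1) Reduce to the case $U\subsetneq G$ using Remark~\ref{rem:G}; dispose of $N\in\{0,1,2\}$ by direct inspection. (2) For $U\in\mathfrak{F}_A^c$, show $z_U:=\log(\varphi(\tilde w_{\mathrm{ab}})^{-(G:U)/p}\tilde w_U)$ lies in $I_{S,U}'$ (weak congruence) and that $z_U-y_U\in R$ is the log of a central unit; by Lemma~\ref{lem:intersection} together with the relation $I_U'\otimes_{\iw\Gamma}R\cap\iw{U}_S = I_{S,U}'$ one reduces the strong congruence $\tilde w_U\equiv\varphi(\tilde w_{\mathrm{ab}})^{(G:U)/p}\pmod{I_U}$ to showing that this central unit's log lands in $I_U$. (3) Pin down the central unit: apply $\omega_G$ (or equivalently the fact that $\theta^+_{\mathrm{ab}}(y_\bullet)=\Gamma_{G^{\mathrm{ab}}}(\tilde w_{\mathrm{ab}}\tilde d_{\mathrm{ab}}^{-1})$ and that $\tilde w_{\mathrm{ab}}$, being the abelianized ratio of zeta-measure to characteristic element, maps to $1$ under $\omega_{G^{\mathrm{ab}}}$) to conclude the $\Gamma$-component is forced, hence each $\tilde d_{U,V}$ can be taken to be exactly $\varphi(\tilde w_{\mathrm{ab}})^{(G:U)/p}$ and the congruences become integral. (4) Propagate from $\mathfrak{F}_A^c$ to all of $\mathfrak{F}_B$ by the induction on $\sharp U^f$ using (TCC), mimicking the diagram-chase in Proposition~\ref{prop:addtheta-B}.

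The main obstacle I expect is step (3)–(4): showing that the spurious central units $\tilde d_{U,V}$ arising from the \emph{localized} weak congruences can genuinely be absorbed into $\varphi(\tilde w_{\mathrm{ab}})^{(G:U)/p}$, i.e.\ that the congruences descend from $\iw{U/V}_S$ back to $\iw{U/V}$. The weak congruences of Proposition~\ref{prop:congzeta} only give information modulo $I_{S,U}'$ and $J_{S,U,V}$, which are strictly larger than $I_U$ and $J_{U,V}$; the gain must come from combining \emph{all} the weak congruences simultaneously through the trace-compatibility structure of $\Phi_B$ and the exactness of the integral logarithm sequence \eqref{eq:basicseq}, exactly the inductive mechanism alluded to in the introduction. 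Making the bookkeeping of the central correction terms precise across the two families, while keeping track of the exceptional role of $(G,[G,G])$ and of $I_G=\iw G$, is the delicate part; the rest is formal manipulation with logarithms, norms, and the already-established Propositions~\ref{prop:log-j}, \ref{prop:log-i}, \ref{prop:log-i}, \ref{prop:addtheta-B} and Lemma~\ref{lem:intersection}.
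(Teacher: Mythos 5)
Your overall reading of the situation is right — the obstruction is the family of unknown central units $\tilde d_{U,V}$ coming from Proposition~\ref{prop:congzeta}, the congruences live only in the $S$-localization, and one must combine all the weak congruences to pin them down — but the mechanism you propose for resolving it has a genuine gap. You propose an induction on $\sharp U^f$ and try to pin down $\tilde d_{U,V}\pmod p$ by ``running the trace relation from $(U,V)$ down to $(\Gamma,\{e\})$'' and invoking $\omega_G$. This will not work. First, the trace map $\Z_p[[U/V]]\to \Z_p[[\Gamma]]$ multiplies the $\Gamma$-component by $\sharp(U^f/V^f)$, so tracing down destroys exactly the information about the central piece that you need; it produces a tautology, not a constraint. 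Second, $\omega_G$ is a map on $\Z_p[[\conj{G}]]$ constructed for the exact sequence~\ref{eq:basicseq} and has nothing to say about the residue of $\tilde d_{U,V}$ modulo $p\iw{\Gamma}_{(p)}$. Third, there is no $\tilde d_{\mathrm{ab}}$: $(G,[G,G])$ is explicitly excluded from Proposition~\ref{prop:congzeta}, so the comparison $\theta^+_{\mathrm{ab}}(y_\bullet)=\Gamma_{G^{\mathrm{ab}}}(\tilde w_{\mathrm{ab}}/\tilde d_{\mathrm{ab}})$ you invoke does not exist, and the whole difficulty is precisely that the ``abelianization direction'' provides no free anchor. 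Finally, your claim that the central correction $z_{U,V}-y_{U,V}$ ``must in fact be $0$'' is too strong; the theorem only asserts a congruence modulo $J_{U,V}$ (resp.\ $I_U$), i.e.\ modulo $p$ on the central direction.

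The missing idea is to induct on $N=\log_p\sharp G^f$ (the size of the \emph{whole} group, not the subgroup $U$) and, in the inductive step, to pass to the quotient $\bar G=G/\langle c\rangle$ for a non-trivial central $c$ chosen \emph{inside} $[G,G]$ (Lemma~\ref{lem:non-comm} makes this possible). Because $c\in[G,G]$ one has $\bar G^{\mathrm{ab}}\cong G^{\mathrm{ab}}$, $\varphi(\tilde{\bar w}_{\mathrm{ab}})=\varphi(\tilde w_{\mathrm{ab}})$ and $(\bar G:\bar U)=(G:U)$, and the induction hypothesis applied to $\bar G$ gives the \emph{strong} congruence for $\tilde{\bar w}_{\bar U,\bar V}$; comparing it with the image of the weak congruence for $(U,V)$ under $\iw{U/V}\twoheadrightarrow\iw{\bar U/\bar V}$ forces $\tilde d_{U,V}\equiv\varphi(\tilde w_{\mathrm{ab}})^{(G:U)/p}\pmod{p\iw{\Gamma}_{(p)}}$, which one then substitutes back into the weak congruence and descends from $\iw{U/V}_S$ to $\iw{U/V}$ via Lemma~\ref{lem:intersection}. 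Pairs $(U,V)$ with $c\notin U$ are then handled by the norm $\Nr_{\iw{U\times\langle c\rangle/V}/\iw{U/V}}$ and Corollary~\ref{cor:aug-norm}, and there is a separate trace computation (using (TCC) between $U_{h,c}$ and $U_c$) needed to upgrade the congruence modulo $I'_{U_{h,c}}$ to one modulo $I_{U_{h,c}}$; none of these steps appear in your plan. Without the passage to $\bar G$, the weak congruences plus trace compatibility simply do not determine $\tilde d_{U,V}\bmod p$, and your step (3)–(4) has no way to close.
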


\begin{proof}
Recall that the non-negative integer $N$ is defined by $\sharp
 G^f=p^N$. We shall prove the claim by induction on $N$. 
We first assume that $G$ is abelian. Then 
the element $(\xi_{U,\{ e\}})_{(U, \{e\}) \in \mathfrak{F}_B}$ 
is in fact contained in the image 
of $\tilde{\theta}_S$ (use the existence of the \p-adic zeta pseudomeasure 
for $F_{\infty}/F$), and hence 
$(\xi_{U,\{e\}})_{(U,\{e\}) \in \mathfrak{F}_B}$ satisfies 
desired congruence condition and additional congruence condition. 
This implies that $(\tilde{w}_{U,\{e\}})_{(U,\{e\}) \in \mathfrak{F}_B}$ also 
satisfies them. In particular 
the cases where $N$ equals either $0$, $1$ or $2$ are done. 
Therefore we assume that $N$ is larger than $3$ and $G$ is non-commutative 
in the following argument.

 Now let $(U,V)$ (resp.\ $U$) be an
 element in  
$\mathfrak{F}_B$ (resp.\ $\mathfrak{F}_A^c$) such that $U$ contains the
 fixed central element $c$ chosen as in Section~\ref{ssc:choice}. 
Set $\bar{G}=G/\langle c \rangle$,
 $\bar{U}=U/\langle c \rangle$ and $\bar{V}=V\langle c \rangle / \langle
 c \rangle$ respectively. 
 Clearly the set of all such $(\bar{U}, \bar{V})$ is a Brauer family 
 $\bar{\mathfrak{F}}_B$ for $\bar{G}$, and the set of 
$\bar{U}_h=U_{h,c}/\langle c\rangle$ for all $h$ in $\mathfrak{H}$ 
 is an Artinian family $\bar{\mathfrak{F}}_A$ for $\bar{G}$. 
Note that the norm map $\Nr_{\iw{G}_S/\iw{U}_S}$ and 
the canonical homomorphism 
$K_1(\iw{G}_S)\rightarrow K_1(\iw{\bar{G}}_S)$ are compatible, and 
note also that the image of $\xi_{U,V}$ 
under the canonical quotient map 
$\iw{U/V}_S^{\times} \rightarrow \iw{\bar{U}/\bar{V}}_S^{\times}$ 
coincides with the \p-adic zeta pseudomeasure 
 $\xi_{\bar{U},\bar{V}}$ for $F_{\bar{V}}/F_{\bar{U}}$ 
(easily follows from its interpolation property). 
Hence we may apply the induction hypothesis to the image 
$\tilde{\bar{w}}_{\bar{U}, \bar{V}}$ of 
$\tilde{w}_{U,V}$ in $\piw{\bar{U}/\bar{V}}^{\times}$; 
in other words, the congruences 
\begin{align} \label{eq:cong-hyp}
 \tilde{\bar{w}}_{\bar{U}, \bar{V}} &\equiv
 \varphi(\tilde{\bar{w}}_{\mathrm{ab}})^{(\bar{G}:\bar{U})/p} &&
\pmod{J_{\bar{U}, \bar{V}}}, &
  \tilde{\bar{w}}_{\bar{U}} &\equiv \varphi(\tilde{\bar{w}}_{\mathrm{ab}})^{(\bar{G}:\bar{U})/p} &&\pmod{I_{\bar{U}}} 
\end{align}
hold for each $(\bar{U}, \bar{V})$ in $\bar{\mathfrak{F}}_B$ 
except for $(G,[G,G])$ and for each $\bar{U}$ in $\bar{\mathfrak{F}}_A$ 
if we define $J_{\bar{U}, \bar{V}}$ and $I_{\bar{U}}$ analogously to 
$J_{U,V}$ and $I_U$. On the other hand 
 we may readily verify that the natural surjection 
 $\iw{U/V}\rightarrow \iw{\bar{U}/\bar{V}}$ maps $J_{U,V}$ 
 to $J_{\bar{U}, \bar{V}}$ and $I_U$ to
 $I_{\bar{U}}$ respectively (use the definition of $J_{U,V}$ 
 and the explicit description of $I_U$). Let
 $I'_{\bar{U}}$ denote
 the image of the trace map
 $\Tr_{\Z_p[[\conj{N\bar{U}}]]/\Z_p[[\bar{U}]]}$ 
 for each $\bar{U}$ in $\bar{\mathfrak{F}}_A$, and let
 $J_{S,\bar{U},\bar{V}}$ (resp.\ $I'_{S,\bar{U}}$) denote
 the scalar extension 
 $J_{\bar{U}, \bar{V}}\otimes_{\iw{\Gamma}}\iw{\Gamma}_{(p)}$ 
 (resp.\ $I'_{\bar{U}}\otimes_{\iw{\Gamma}}\iw{\Gamma}_{(p)}$).
 Then we obtain the congruences 
\begin{align} \label{eq:cong-red}
\tilde{\bar{w}}_{\bar{U}, \bar{V}} &\equiv \tilde{d}_{U,V} &&
 \pmod{J_{S,\bar{U}, \bar{V}}}, &    \tilde{\bar{w}}_{\bar{U}} 
& \equiv \tilde{d}_U && \pmod{I_{S,\bar{U}}'} 
\end{align}
by applying the canonical surjection $\iw{U/V} \rightarrow
 \iw{\bar{U}/\bar{V}}$ to (\ref{eq:cong-w})
 (recall that $I_{S, \bar{U}}'$ contains
 $I_{S, \bar{U}}$). The congruences 
 (\ref{eq:cong-hyp}) and (\ref{eq:cong-red}) implies that 
 for $(U,V)$ in $\mathfrak{F}_B$ except for $(G,[G,G])$ 
 the element 
 $\varphi(\tilde{\bar{w}}_{\mathrm{ab}})^{-(\bar{G}:\bar{U})/p}\tilde{d}_{U,V}$
 is contained in $1+J_{S,\bar{U}, \bar{V}}^{\, \widetilde{\,}} \cap
 \piw{\Gamma}_{(p)}^{\times}$, which coincides 
 with $1+p\iw{\Gamma}_{(p)}^{\,\widetilde{\,}}$ 
 by definition. Furthermore for $U$ in $\mathfrak{F}_A^c$ the element 
 $\varphi(\tilde{\bar{w}}_{\mathrm{ab}})^{-(\bar{G}:\bar{U})/p}\tilde{d}_U$
 is contained in $(1+I'_{S, \bar{U}})^{\, \widetilde{\,}}
 \cap \piw{\Gamma}_{(p)}^{\times}$, which coincides with
 $1+p^{n_h-\epsilon}\iw{\Gamma}_{(p)}^{\, \widetilde{\,}}$ by the explicit
 description of $I'_{S,\bar{U}}$ (the integer $\epsilon$ is defined 
 as $2$ for (Case-1) and $1$ for (Case-2)). 
 Obviously the equations 
 $\varphi(\tilde{\bar{w}}_{\mathrm{ab}})=\varphi(\tilde{w}_{\mathrm{ab}})$ 
 and $(\bar{G}:\bar{U})=(G:U)$ hold by construction, 
 and therefore the congruences
\begin{equation} \label{eq:congd}
\begin{aligned}
\tilde{d}_{U,V} &\equiv \varphi(\tilde{w}_{\mathrm{ab}})^{(G:U)/p}
 && \pmod{\quad p\iw{\Gamma}_{(p)}}, \\ 
\tilde{d}_U &\equiv 
\varphi(\tilde{w}_{\mathrm{ab}})^{(G:U)/p} &&
 \pmod{p^{n_h-\epsilon}\iw{\Gamma}_{(p)}}.
\end{aligned}
\end{equation}
hold. Combining (\ref{eq:congd}) with (\ref{eq:cong-w}), we obtain the
 following congruences:\footnote{Since both $\tilde{w}_{U,V}$
 (resp.\ $\tilde{w}_U$) and $\varphi(\tilde{w}_{\mathrm{ab}})^{(G:U)/p}$
 are contained in $\piw{U/V}^{\times}$, the congruence
 (\ref{eq:integral-cong}) actually holds in $\iw{U/V}$ (resp.\ in
 $\iw{U}$) and we may remove the sub-index $S$ from the congruence. 
 This is the ``eliminating~$\tilde{d}$'' procedure mentioned 
 in Remark~\ref{rem:eliminate}.}
\begin{align} \label{eq:integral-cong}
 \tilde{w}_{U,V} &\equiv \varphi(\tilde{w}_{\mathrm{ab}})^{(G:U)/p}
 &&\pmod{J_{U,V}},  &
 \tilde{w}_U &\equiv \varphi(\tilde{w}_{\mathrm{ab}})^{(G:U)/p} &&\pmod{I'_U}.
\end{align}
The former congruence is no other than the desired one. The latter one
 for $U_c$ is also the desired one because $I'_{U_c}$ coincides
 with $I_{U_c}$ by definition. Now consider the congruence for
 $U_{h,c}$. It suffices to consider the case where $U_{h,c}$ is a proper 
subgroup of $G$ (see Remark~\ref{rem:G}). Since $\log
 (\varphi(\tilde{w}_{\mathrm{ab}})^{-p^{N-3}} \tilde{w}_{U_{h,c}})$ is
 contained in $I'_{U_{h,c}}$ by (\ref{eq:integral-cong}), it is
 explicitly described as 
\begin{align*}
\log \frac{\tilde{w}_{U_{h,c}}}{\varphi(\tilde{w}_{\mathrm{ab}})^{p^{N-3}}}
= \sum_{i=0}^{p-1} p^{n_h-\epsilon} a_i c^i +(\text{terms containing $h$})
\end{align*}
where each $a_i$ is an element in $\iw{\Gamma}$. Furthermore the equation 
\begin{align*}
\log \frac{\tilde{w}_{U_c}}{\varphi(\tilde{w}_{\mathrm{ab}})^{p^{N-2}}}
= \Tr_{\Z_p[[U_{h,c}]]/\Z_p[[U_c]]}(\log \frac{\tilde{w}_{U_{h,c}}}{\varphi(\tilde{w}_{\mathrm{ab}})^{p^{N-3}}})
= \sum_{i=0}^{p-1} p^{n_h-\epsilon+1} a_i c^i
\end{align*}
holds by (TCC). The first expression of the equation above 
is contained in $I_{U_c}=p^{N-1}\Z_p[[U_c]]$
 as we have already remarked, 
and hence there exists an element $b_i$ in $\iw{\Gamma}$ such that
 $p^{n_h-\epsilon+1}a_i$ coincides with $p^{N-1}b_i$ for each $i$. 
 Therefore we may conclude that the element $\log
 (\varphi(\tilde{w}_{\mathrm{ab}})^{-p^{N-3}} \tilde{w}_{U_{h,c}})$ is 
 contained in~$I_{U_{h,c}}$. This implies the desired
 congruence for $U_{h,c}$ because the logarithm induces an injection on $1+J_{U_{h,c}}^{\,
 \widetilde{\,}}$ (Proposition~\ref{prop:log-j}) 
 and an isomorphism between $1+I_{U_{h,c}}^{\, \widetilde{\,}}$ 
 and $I_{U_{h,c}}$ (Proposition~\ref{prop:log-i}).

Next let $(U,V)$ be an element in $\mathfrak{F}_B$ 
such that $U$ does not contain the fixed central element $c$. 
We claim that $U\times \langle c \rangle$ does not coincide with $G$; 
indeed if it does, the commutator subgroup of $G$ automatically 
coincides with $V$ which does not contain $c$. This is contradiction 
since we choose such $c$ as contained in $[G,G]$ in Section~\ref{ssc:choice}.
Now we apply the argument above to the pair 
 $(U\times \langle c \rangle, V)$ and obtain the congruence 
\begin{align*}
\tilde{w}_{U\times \langle c \rangle, V} \equiv \varphi(\tilde{w}_{\mathrm{ab}})^{(G:U)/p^2} \quad \pmod{J_{U\times \langle c \rangle, V}}
\end{align*}
(use the obvious equation 
$(G:U\times \langle c \rangle)=(G:U)/p$). By using (NCC) 
and the fact that $\varphi(\tilde{w}_{\mathrm{ab}})$ is contained in the centre of $\piw{U\times \langle c \rangle}^{\times}$, we have
\begin{align*}
\Nr_{\iw{U\times \langle c \rangle /V}/\iw{U/V}} (\varphi(\tilde{w}_{\mathrm{ab}})^{-(G:U)/p^2}\tilde{w}_{U\times \langle c \rangle, V})=\varphi(\tilde{w}_{\mathrm{ab}})^{-(G:U)/p}\tilde{w}_{U,V}.
\end{align*}
On the other hand the left hand side of the equation above 
is contained in $1+J_{U,V}^{\, \widetilde{\,}}$ 
by Corollary~\ref{cor:aug-norm}. The desired congruence thus holds for $(U,V)$.

Finally let $U_h$ be an element in $\mathfrak{F}_A$ and assume that $h$ does not coincide with $c$. By the same argument as above, we may conclude that 
$\varphi(\tilde{w}_{\mathrm{ab}})^{-p^{N-2}}\tilde{w}_{U_h}$ is
 contained in $1+J_{U_h}^{\, \widetilde{\,}}$. On the other hand the element
 $\varphi(\tilde{w}_{\mathrm{ab}})^{-p^{N-3}}\tilde{w}_{U_{h,c}}$ is
 contained in $1+I_{U_{h,c}}^{\, \widetilde{\,}}$ by the argument
 above. Now the compatibility lemma (Lemma~\ref{lem:lognorm}) 
 enables us to calculate as follows:
\begin{align*}
\Tr_{\Z_p[[U_{h,c}]]/\Z_p[[U_h]]} (\log \frac{\tilde{w}_{U_{h,c}}}{\varphi(\tilde{w}_{\mathrm{ab}})^{p^{N-3}}}) &=\log (\Nr_{\iw{U_{h,c}}/\iw{U_h}} (\frac{\tilde{w}_{U_{h,c}}}{\varphi(\tilde{w}_{\mathrm{ab}})^{p^{N-3}}}))\\  
&= \log \frac{\tilde{w}_{U_h}}{\varphi(\tilde{w}_{\mathrm{ab}})^{p^{N-2}}}.
\end{align*}
The $\Z_p$-module 
$\Tr_{\Z_p[[U_{h,c}]]/\Z_p[[U_h]]}(I_{U_{h,c}})$ is 
contained in $I_{U_h}$ by definition, 
and thus 
$\log (\varphi(\tilde{w}_{\mathrm{ab}})^{-p^{N-2}}\tilde{w}_{U_h})$ 
is also contained in $I_{U_h}$.  
The desired congruence now holds for $U_h$ 
because the logarithm induces an injection on $1+J_{U_h}^{\,
 \widetilde{\,}}$ (Proposition~\ref{prop:log-j})
 and an isomorphism between $1+I_{U_h}^{\,
 \widetilde{\,}}$ and $I_{U_h}$ (Proposition~\ref{prop:log-i}).
\end{proof}

By virtue of Theorem~\ref{thm:str-cong}, we may conclude that 
$(\tilde{w}_{U,V})_{(U,V)\in \mathfrak{F}_B}$ is an~element in 
$\widetilde{\Psi}_c$. 
Hence there exists a unique element $\tilde{w}$ in $\pK_1(\iw{G})$ such
that $\tilde{\theta}(\tilde{w})=(\tilde{w}_{U,V})_{(U,V)\in
\mathfrak{F}_B}$ holds (Proposition~\ref{prop:injection} and
Proposition~\ref{prop:surjection}). 
 Take an arbitrary lift of $\tilde{w}$ to $K_1(\iw{G})$ and
 set $\tilde{\xi}=f\tilde{w}$. Then by construction, 
 we may easily check that $\tilde{\xi}$ satisfies the following two properties:
\begin{enumerate}[$(\tilde{\xi}\text{-}1)$]
\item the equation $\partial(\tilde{\xi})=-[C_{F_{\infty}/F}]$ holds;
\item there exists an element
 $\tau_{U,V}$ in $\iw{U/V}^{\times}_{p\text{-tors}}$ 
      for each $(U,V)$ in~$\mathfrak{F}_B$ such that the equation
      $\theta_S(\tilde{\xi})=(\xi_{U,V}\tau_{U,V})_{(U,V)\in \mathfrak{F}_B}$ holds.
\end{enumerate}

By using $(\sharp)_A$ and $(\tilde{\xi}\text{-}2)$, we may
show that there exists a
\p-power root of unity $\zeta_{\rho, r}$ such that the equation 
$\tilde{\xi}(\rho\kappa^r) =\zeta_{\rho,r} L_{\Sigma}(1-r; F_{\infty}/F, \rho)$
holds for an arbitrary Artin representation $\rho$ of $G$ and an
arbitrary natural number $r$ divisible by~${p-1}$.
Roughly speaking, the element $\tilde{\xi}$ is 
the \p-adic zeta function ``modulo
\p-torsion'' for $F_{\infty}/F$ which interpolates special values 
of complex 
Artin $L$-functions {\em up to multiplication by a \p-power root of unity}. 

%
\subsection{Refinement of the $p$-torsion part}
%

We shall finally modify $\tilde{\xi}$ and reconstruct 
the \p-adic zeta function 
$\xi$ for $F_{\infty}/F$ without any ambiguity upon \p-torsion elements. 
The author strongly believes that 
our argument to remove ambiguity of the \p-torsion part 
is based upon essentially the same spirits 
as ``the torsion congruence method''  
used by J\"urgen Ritter and Alfred Weiss \cite{RW5}. We shall, however, adopt 
somewhat different formalism from theirs.

Let $\tilde{\xi}$ be the $p$-adic zeta function ``modulo \p-torsion'' 
for $F_{\infty}/F$ and set 
$\tau_{U,V}=\xi_{U,V}\theta_{S,U,V}(\tilde{\xi})^{-1}$ 
for each $(U,V)$ in $\mathfrak{F}_B$. 
Then $\tau_{U,V}$ is a \p-torsion element by definition. 
Moreover $\tau_{U,V}$ is an element in $\iw{U/V}^{\times}$ 
by the same argument as Burns' technique. 
Since the \p-torsion part of $K_1(\iw{G})$ is 
identified with $G^{f,\mathrm{ab}}\times SK_1(\Z_p[G^f])$ and that of 
$\iw{G^{\mathrm{ab}}}^{\times}$ is identified with $G^{f,\mathrm{ab}}$
respectively (see Section~\ref{ssc:intlog}), we may naturally regard
$\tau_{\mathrm{ab}}$ as an element in $K_1(\iw{G})_{p\text{-tors}}$. 
Set $\xi=\tau_{\mathrm{ab}} \tilde{\xi}$. 
Then $\theta_{S,\mathrm{ab}}(\xi)=\xi_{\mathrm{ab}}$ obviously holds 
by construction.

\begin{thm} \label{thm:p-tors}
The equation 
$\theta_{S,U,V}(\xi)=\xi_{U,V}$ holds for each $(U,V)$ in $\mathfrak{F}_B$.
\end{thm}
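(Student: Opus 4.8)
The plan is to reduce the statement to an application of the injectivity of $\tilde\theta$ (Proposition~\ref{prop:injection}) together with the characterisation of $\widetilde{\Psi}$ via the additive theta isomorphism. First I would observe that the family $(\tau_{U,V})_{(U,V)\in\mathfrak{F}_B}$ consists of \p-torsion elements of $\iw{U/V}^{\times}$, so it descends to a family $(\tilde\tau_{U,V})$ in $\prod_{(U,V)}\piw{U/V}^{\times}$ which is actually trivial coordinate by coordinate after passing to the torsion-free quotients; but the point is that $\xi=\tau_{\mathrm{ab}}\tilde\xi$ and $\tilde\xi$ already satisfies $(\tilde\xi\text{-}2)$, i.e.\ $\theta_S(\tilde\xi)=(\xi_{U,V}\tau_{U,V})_{(U,V)\in\mathfrak{F}_B}$. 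Therefore $\theta_S(\xi)=\theta_S(\tau_{\mathrm{ab}})\cdot(\xi_{U,V}\tau_{U,V})_{(U,V)}$, and the claim is equivalent to the assertion that $\theta_{S,U,V}(\tau_{\mathrm{ab}})=\tau_{U,V}^{-1}$ for every $(U,V)$ in $\mathfrak{F}_B$.

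To establish this, I would work entirely with the \p-torsion subgroups, which are all finite \p-groups. Recall from Section~\ref{ssc:intlog} that $K_1(\iw{G})_{p\text{-tors}}\cong G^{f,\mathrm{ab}}\times SK_1(\Z_p[G^f])$ and that the abelisation map sends this onto $G^{f,\mathrm{ab}}=\iw{G^{\mathrm{ab}}}^{\times}_{p\text{-tors}}$; the element $\tau_{\mathrm{ab}}$ is by construction the image of $\xi_{\mathrm{ab}}\theta_{S,\mathrm{ab}}(\tilde\xi)^{-1}$, regarded inside $K_1(\iw{G})_{p\text{-tors}}$ via this identification. The key input is that the family $(\tau_{U,V})_{(U,V)\in\mathfrak{F}_B}$ itself satisfies the norm and conjugacy compatibility conditions (NCC) and (CCC): this follows because both $(\xi_{U,V})_{(U,V)}$ and $(\theta_{S,U,V}(\tilde\xi))_{(U,V)}=(\tilde\theta_{S,U,V}(\tilde\xi))_{(U,V)}\cdot(\text{torsion})$ satisfy (NCC) and (CCC) (Lemma~\ref{lem:ncc-ccc} and Proposition~\ref{prop:thetaloc}), hence so does their quotient. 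Moreover, since each $\tau_{U,V}$ is a \p-torsion unit of $\iw{U/V}$, it lies in $\mu_p(\iw{U/V})$, whence $\tau_{U,V}\equiv 1\pmod{J_{U,V}}$ automatically (the augmentation of a root of unity is again a root of unity, and $\riw{\Gamma}$ is a domain). In particular $(\tau_{U,V}^{-1})_{(U,V)}$ trivially satisfies the congruence conditions defining $\widetilde{\Psi}'$, hence lies in $\widetilde{\Psi}_c=\widetilde{\Psi}$ provided the additional congruences hold — but these too are automatic for the same reason.

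Next I would show that $(\tau_{U,V}^{-1})_{(U,V)}$ and $\tilde\theta(\tau_{\mathrm{ab}})$ are both elements of $\widetilde{\Psi}$ with the same abelian component. Indeed $\theta_{G,[G,G]}(\tau_{\mathrm{ab}})=\tau_{\mathrm{ab}}$ by definition of $\theta_{\mathrm{ab}}$ on the abelisation, and on the other hand the $(G,[G,G])$-component of $(\tau_{U,V}^{-1})$ is $\tau_{\mathrm{ab}}^{-1}$; so I would instead compare $\tilde\theta(\tau_{\mathrm{ab}}^{-1})$ with $(\tau_{U,V}^{-1})$, both having abelian component $\tau_{\mathrm{ab}}^{-1}$. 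Since $\tilde\theta$ is an isomorphism onto $\widetilde{\Psi}$ (Proposition~\ref{prop:mult-theta}), and since the abelian component is recovered functorially through $\theta_B^+$ and the additive theta isomorphism (Proposition~\ref{prop:addtheta-B}), any two elements of $\widetilde{\Psi}$ with equal abelian component and equal images under every $\theta_{U,\{e\}}$, $U\in\mathfrak{F}_A$, must coincide. So it suffices to verify $\theta_{U,\{e\}}(\tau_{\mathrm{ab}}^{-1})=\tau_{U,\{e\}}^{-1}$ for $U$ abelian in $\mathfrak{F}_A$ — and for such $U$ the map $\theta_{U,\{e\}}$ on $p$-torsion is the explicit norm/restriction on finite \p-groups, which one computes directly using $(\tilde\xi\text{-}2)$ at the abelian level together with (NCC): $\theta_{S,U,\{e\}}(\xi)=\mathrm{Nr}(\theta_{S,\mathrm{ab}}(\xi))$-type compatibility forces $\theta_{S,U,\{e\}}(\xi)(\chi\kappa^r)=\xi_{U,\{e\}}(\chi\kappa^r)$ by the interpolation property and the transitivity of induction, whence $\theta_{S,U,\{e\}}(\xi)=\xi_{U,\{e\}}$ by uniqueness of abelian \p-adic zeta pseudomeasures, i.e.\ $\tau_{U,\{e\}}=\theta_{S,U,\{e\}}(\tau_{\mathrm{ab}})$ as required. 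Propagating from $\mathfrak{F}_A$ to all of $\mathfrak{F}_B$ is then exactly the diagram-chasing already used in the surjectivity proof of Proposition~\ref{prop:surjection} via (TCC)/(CCC$+$).

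The main obstacle I anticipate is not any single computation but keeping the bookkeeping straight between the three parallel structures — $K_1$, its \p-torsion subgroup, and the groups $\widetilde{\Psi},\widetilde{\Psi}_S$ — and in particular checking that the torsion family $(\tau_{U,V})$ genuinely satisfies (NCC) and (CCC) with values in the \emph{integral} groups $\iw{U/V}^{\times}$ rather than only in $\iw{U/V}_S^{\times}$; this is where one must invoke that characteristic elements and \p-adic zeta pseudomeasures have the same boundary, so that the quotient $w_{U,V}$ (and hence $\tau_{U,V}$) is integral, together with the fact that a \p-torsion unit is automatically a \p-power root of unity since $\iw{U/V}$ is \p-torsion-free. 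Once that is in place, the identification $\theta_{S,U,V}(\xi)=\xi_{U,V}$ follows by reducing, via injectivity of $\tilde\theta$, to the abelian subquotients, where everything is governed by uniqueness of Deligne--Ribet--Serre pseudomeasures.
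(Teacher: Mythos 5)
Your approach has a fundamental obstruction: the entire framework $\tilde{\theta}$, $\widetilde{\Psi}$, $\widetilde{\Psi}_c$ built in Sections~\ref{sc:additive}--\ref{sc:localise} lives in $\pK_1(\iw{G})$ and $\prod \piw{U/V}^{\times}$, which are precisely the quotients by $p$-power torsion. The family $(\tau_{U,V})$ and the element $\tau_{\mathrm{ab}}$ consist \emph{of} $p$-power torsion elements, so their images in these quotients are trivial. In particular $\tilde{\theta}(\tau_{\mathrm{ab}}^{-1})$ is the identity and tells you nothing, and the question of whether $(\tau_{U,V}^{-1})$ ``lies in $\widetilde{\Psi}$'' is vacuous because $\widetilde{\Psi}\subseteq\prod \piw{U/V}^{\times}$ and each $\tau_{U,V}$ maps to $1$ there. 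Injectivity of $\tilde{\theta}$ (Proposition~\ref{prop:injection}) is injectivity \emph{on the torsion-free quotient} --- it is exactly the statement that the integral logarithm determines an element up to $p$-torsion, and so it cannot, even in principle, pin down $\tau_{U,V}$. This is not a bookkeeping issue but the essential reason the paper has a separate Section~8.3: the ambiguity is invisible to the machinery of integral logarithms.

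The second gap is circularity at the point where you try to verify $\theta_{S,U,\{e\}}(\xi)=\xi_{U,\{e\}}$ on $\mathfrak{F}_A$. You invoke ``the interpolation property'' of $\xi$, but $\xi$ is constructed to have the correct abelisation only; its interpolation at a general Artin $\rho$ is known only up to a $p$-power root of unity $\zeta_{\rho,r}$, and establishing that $\zeta_{\rho,r}=1$ is exactly the content of Theorem~\ref{thm:p-tors}. You also appeal to a ``$\theta_{S,U,\{e\}}(\xi)=\Nr(\theta_{S,\mathrm{ab}}(\xi))$-type compatibility,'' but for $U_h\in\mathfrak{F}_A$ with $h\notin [G,G]$ there is no map relating $\iw{G^{\mathrm{ab}}}_S$ and $\iw{U_h}_S$; the abelisation and the norm to a small subgroup are not compatible in this sense. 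And while your observation that $\tau_{U,V}\equiv 1\pmod{J_{U,V}}$ is correct (a $p$-power root of unity augments to $1$), the \emph{additional} congruence condition involves the much smaller modules $I_U$ (e.g.\ $I_\Gamma=p^N\iw{\Gamma}$), and $h^a-1\in I_{U_h}$ fails for $a\neq 0$ once $N\geq 2$; so that condition is not ``automatic for the same reason'' --- rather, it would have to be \emph{proved}, which again needs something the logarithmic framework cannot supply.

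The paper resolves the torsion ambiguity by induction on $N=\log_p\sharp G^f$, descending to $\bar{G}=G/\langle c\rangle$ for a central $c\in[G,G]$ (Lemma~\ref{lem:non-comm} guarantees such a $c$ exists when $G$ is non-abelian), and --- crucially --- by importing an \emph{independent arithmetic input}, Theorem~\ref{thm:exi-u'} (Ritter--Weiss): the unconditional existence and uniqueness of the $p$-adic zeta function $\xi_{U',V}$ when $U'/V$ has an abelian normal subgroup of index $p$. That theorem, combined with the triviality of $SK_1(\Z_p[{U'}^f/V^f])$ in this situation, is what produces a genuine equality of characteristic classes rather than an equality modulo $p$-torsion. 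Without something of that kind your argument cannot close.
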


If the claim is verified, we may conclude that $\xi$ satisfies the
interpolation formula (\ref{eq:interp}) without any ambiguity by Brauer
induction (see
Section~\ref{sc:burns}). Therefore $\xi$ is no other than the ``true'' 
\p-adic zeta function for $F_{\infty}/F$.

We shall prove Theorem~\ref{thm:p-tors} by induction on $N$.
First assume that $G$ is abelian. Then the obvious equation 
$\xi=\theta_{S,\mathrm{ab}}(\xi)=\xi_{\mathrm{ab}}$ implies that 
$\xi$ is actually the \p-adic zeta function for $F_{\infty}/F$. 
In particular the cases in which $N$ equals either $0,1$ or $2$ 
are done. 

Now suppose that $N$ is larger than $3$ and $G$ is non-commutative. 
Let $c$ be a non-trivial central element in $G$ 
chosen as in Section~\ref{ssc:choice} and set $\bar{G}=G/\langle c \rangle$.
Let $\bar{\xi}$ be the image of $\xi$ 
under the canonical map 
$K_1(\iw{G}_S) \rightarrow K_1(\iw{\bar{G}}_S)$. 
Then the element $\bar{\xi}$ is the \p-adic zeta function 
``modulo \p-torsion'' for $F_{\langle c \rangle}/F$ 
by~construction. 
Furthermore the following diagram commutes since $c$ is contained 
in the commutator subgroup of $G$ (here $\bar{\theta}_{S,\mathrm{ab}}$ 
denotes the abelisation map for $\iw{\bar{G}}_S$): 
\begin{align*}
\xymatrix{
K_1(\iw{G}_S) \ar[r]^{\theta_{S,\mathrm{ab}}} \ar[d]_(0.45){\text{canonical}} & \iw{G^{\mathrm{ab}}}_S^{\times} \ar@{=}[d] \\
K_1(\iw{\bar{G}}_S) \ar[r]_{\bar{\theta}_{S, \mathrm{ab}}} & 
\iw{\bar{G}^{\mathrm{ab}}}_S^{\times}.
}
\end{align*}
This asserts that 
$\bar{\theta}_{S,\mathrm{ab}}(\bar{\xi})=\xi_{\mathrm{ab}}$ holds, 
and we may thus apply the induction hypothesis to $\bar{\xi}$; 
in other words we may assume that $\bar{\xi}$ is the ``true'' 
\p-adic zeta function for $F_{\langle c \rangle}/F$.
Now take an arbitrary pair $(U,V)$ in $\mathfrak{F}_B$.

\medskip
\noindent {\bfseries (Case-1).} Suppose that $c$ is contained in
$V$. Let $\bar{U}$ and $\bar{V}$ denote the quotient groups
$U/\langle c \rangle$ and $V/\langle c \rangle$ respectively. 
Let $\bar{\theta}_{S,\bar{U},\bar{V}}$ be 
the composition of the norm map 
$\Nr_{\iw{\bar{G}}_S/\iw{\bar{U}}_S}$ 
with the canonical homomorphism
$K_1(\iw{\bar{U}}_S)\rightarrow \iw{\bar{U}/\bar{V}}_S^{\times}$. 
Then it is clear that $U/V$ coincides with $\bar{U}/\bar{V}$ 
and the theta maps $\theta_{S,U,V}$ and 
$\bar{\theta}_{S,\bar{U}, \bar{V}}$ are compatible. Hence we obtain 
\begin{align*}
\theta_{S,U,V}(\xi)= \bar{\theta}_{S,\bar{U}, \bar{V}}(\bar{\xi})=\xi_{\bar{U},\bar{V}}=\xi_{U, V}
\end{align*}  
which is the desired result (the last equality follows from the fact
that 
$F_{\bar{V}}/F_{\bar{U}}$ is the completely same extension as $F_V/F_U$).

\medskip
\noindent {\bfseries (Case-2).} Suppose that $c$ is contained in $U$ but
not contained in $V$. Let $U'$ be an open subgroup of $G$ which contains $U$ 
as a subgroup of index $p$ (and $U$ is hence normal in $U'$). Let $V'$ denote
the commutator subgroup of $U'$. We claim that we may reduce to the case 
in which 
$\theta_{S,U',V'}(\xi)=\xi_{U',V'}$ holds; indeed the desired equation
holds if $V'$ contains $c$ by (Case-1). Assume that $V'$ does not contain $c$.
Then the pair $(U_1, V_1)=(U',V')$ also satisfies the condition of (Case-2), 
and recursively we may obtain a sequence of pairs 
$\{(U_i, V_i)\}_{i \in \Z_{\geq 0}}$ 
such that $(U_0, V_0)$ is equal to $(U,V)$ and 
$U_{i+1}$ contains $U_i$ as its normal
subgroup of index $p$ (each $V_i$ denotes the commutator subgroup of $U_i$). 
Therefore there exists a natural number $n$ such that
$V_n$ contains $c$ because we now assume that the commutator subgroup 
of $G$ contains $c$.

We now apply the following theorem:

\begin{thm} \label{thm:exi-u'}
The \p-adic zeta function $\xi_{U',V}$ for
 $F_V/F_{U'}$ exists uniquely as an element in $K_1(\iw{U'/V}_S)$.
\end{thm}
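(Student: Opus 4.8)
The plan is to recognise the extension $F_V/F_{U'}$ as a one‑dimensional totally real $p$‑adic Lie extension of ``Ritter--Weiss type'' and then appeal to the existence result of \cite{RW7}. Write $W'=\Gal(F_V/F_{U'})=U'/V$. Since $\Gamma\subseteq U\subseteq U'$ and $G=G^f\times\Gamma$, one has $U'=U'^f\times\Gamma$ with $U'^f=U'\cap G^f$; and because $V=[U,U]=[U^f,U^f]\subseteq U^f\subseteq U'^f$ is finite, $W'\cong W'^f\times\Gamma$ with $W'^f=U'^f/V$. The group $W'^f$ is a finite $p$‑group of exponent dividing $p$ (being a quotient of $U'^f\le G^f$), and it contains the \emph{abelian} subgroup $U^f/V=(U^f)^{\mathrm{ab}}$ of index $[U':U]=p$; equivalently $W'$ has the abelian normal subgroup $U/V$ of index $p$. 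Thus the finite part of $\Gal(F_V/F_{U'})$ is abelian‑by‑(cyclic of order $p$), which is precisely the class of groups for which the equivariant main conjecture was established by Ritter and Weiss.

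Before invoking \cite{RW7} one must check that $F_V/F_{U'}$ satisfies the admissibility conditions $(F_{\infty}\text{-}1)$--$(F_{\infty}\text{-}3)$ of Section~\ref{ssc:review-iwa}. The field $F_{U'}$ is a \emph{finite} extension of $F$ because $\Gamma\subseteq U'$, and $F_{U',\cyc}=F_{U'}F_{\cyc}$ is the fixed field of $U'^f$, hence contained in $F_V$ (the fixed field of $V\subseteq U'^f$); this gives $(F_{\infty}\text{-}1)$ and at the same time identifies the $\Gamma$‑factor of $W'$ with $\Gal(F_{U',\cyc}/F_{U'})$. The primes of $F_{U'}$ ramifying in $F_V$ lie above the finitely many primes of $F$ ramifying in $F_{\infty}$, which gives $(F_{\infty}\text{-}2)$. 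For $(F_{\infty}\text{-}3)$ the extension $F_V/F_{U'}$ is itself pro‑$p$ (because $W'$ is), so it suffices that the Iwasawa $\mu$‑invariant of $F_{U',\cyc}/F_{U'}$ vanish; this follows from condition $(F_{\infty}\text{-}3)$ for $F_{\infty}/F$ together with the standard descent of the $\mu$‑invariant along finite $p$‑extensions (cf.\ \cite[Section~1.2]{H}), applied to the compositum of $F_{U'}$ with the finite layer $F'$ furnished by $(F_{\infty}\text{-}3)$. I expect this inheritance of the $\mu=0$ hypothesis to be the only genuinely delicate point of the argument.

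Granting the admissibility, \cite{RW7} establishes Conjecture~\ref{conj:iwasawa} for $F_V/F_{U'}$; in particular the $p$‑adic zeta function $\xi_{U',V}$ exists, is characterised by the interpolation formula (\ref{eq:interp}) for $F_V/F_{U'}$, and is a characteristic element with $\partial(\xi_{U',V})=-[C_{F_V/F_{U'}}]$. (When moreover $|W'^f|<p^N$ one may alternatively obtain this from Theorem~\ref{thm:maintheorem} by induction on $N$; only the borderline case $U'=G$ with $U$ abelian genuinely requires an external input.) For the uniqueness clause in $K_1(\iw{U'/V}_S)$ one uses that the image of $SK_1(\Z_p[W'^f])$ in $K_1(\iw{U'/V}_S)$ is trivial, which holds here because $W'^f$ carries an abelian subgroup of index $p$, so that Oliver's computations \cite{Oliver} (generalising the case of the Heisenberg group recalled after Theorem~\ref{thm:maintheorem}) apply. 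The potential worry of circularity — since a priori $U'$ may equal $G$ — is dispelled precisely by this structural remark: $W'^f$ is abelian‑by‑(cyclic of order $p$), a property that a general finite $p$‑group of exponent $p$ such as $G^f$ need \emph{not} have, so the Ritter--Weiss input is logically independent of the construction being carried out for $G$. Should one prefer to avoid \cite{RW7} entirely, one can instead run Burns' technique (Section~\ref{sc:burns}) directly for $W'$: its Brauer family is generated by $(W',[W',W'])$ and the index‑$p$ abelian subgroup $U/V$ together with conjugates, so the only congruence to be verified is the one of prime index $(W':U/V)=p$, which is exactly the Deligne--Ribet congruence proved by Kato \cite{Kato2}, Ritter--Weiss \cite{RW6} and Kakde \cite{Kakde1}.
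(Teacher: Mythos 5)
Your proposal is correct and takes essentially the same approach as the paper: recognise that $W'=U'/V$ has the abelian subgroup $U/V=(U/[U,U])$ of index $p$, so that the result is a special case of Ritter--Weiss (Theorem~\ref{thm:R-W}, quoted from \cite{RW7} and sketched in Section~\ref{ssc:proof}). You simply make explicit a few points the paper leaves tacit — verifying $(F_{\infty}\text{-}1)$--$(F_{\infty}\text{-}3)$ for $F_V/F_{U'}$, recording that uniqueness comes from the vanishing of $SK_1(\Z_p[W'^f])$ for abelian-by-cyclic $p$-groups (\cite[Theorem~8.10]{Oliver}), and dispelling the apparent circularity when $U'=G$ — all of which are sound.
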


This is the special case of 
the deep results of J\"urgen Ritter and 
Alfred Weiss \cite{RW7}. 
For the convenience of the readers, we shall give the sketch of the proof in the
following subsection.

Now assume that Theorem~\ref{thm:exi-u'} is valid for a moment. 
Let $\can_{U',V}$ denote the canonical homomorphism 
$K_1(\iw{U'}_S) \rightarrow K_1(\iw{U'/V}_S)$. 
Note that the element $\can_{U',V} \circ \Nr_{\iw{G}_S/\iw{U'}_S}(\xi)$ 
is the \p-adic zeta function
``modulo \p-torsion'' for $F_V/F_{U'}$ by the interpolation property, 
and hence there exists an
element $\tau$ in~${U'}^f/{V'}^f$ 
such that  $\can_{U',V} \circ
\Nr_{\iw{G}_S/\iw{U'}_S}(\xi)$ coincides with  
$\tau \xi_{U',V}$ (here we remark that the \p-torsion part of
$K_1(\iw{U'/V})$ coincides with 
$({U'}^f/V^f)^{\mathrm{ab}}={U'}^f/{V'}^f$ because
$SK_1(\Z_p[{U'}^f/V^f])$ is trivial by \cite[Theorem~8.10]{Oliver}). 
Then easy calculation verifies that the equation
\begin{align*}
\xi_{U',V'} &=\theta_{S,U',V'} (\xi) =\can^V_{V'} \circ \can_{U',V}\circ 
 \Nr_{\iw{G}_S/\iw{U'}_S}(\xi) \\
 &= \can^V_{V'} (\tau \xi_{U',V}) =\tau \xi_{U',V'}
\end{align*}
holds where $\can^V_{V'} \colon K_1(\iw{U'/V}_S)
\rightarrow \iw{U'/V'}_S^{\times}$ denotes the canonical homomorphism. 
This implies that $\tau$ is trivial. On the other hand, the norm relation 
$\Nr_{\iw{U'/V}_S/\iw{U/V}_S}(\xi_{U',V})=\xi_{U,V}$
holds since
$\xi_{U',V}$ is the \p-adic zeta function for $F_V/F_{U'}$. 
Therefore we obtain the desired equation
\begin{align*}
\theta_{S,U,V}(\xi) &=\Nr_{\iw{U'/V}_S/\iw{U/V}_S} \circ \can_{U',V}
 \circ\Nr_{\iw{G}_S/\iw{U'}_S}(\xi) \\
&= \Nr_{\iw{U'/V}_S/\iw{U/V}_S}(\xi_{U',V}) = \xi_{U,V}.
\end{align*}

\noindent {\bfseries (Case-3).} Suppose that $c$ is contained in
neither $U$ nor $V$. In this case the pair 
$(U\times \langle c \rangle, V)$ satisfies the condition of (Case-2), 
and thus the equation $\theta_{S,U\times \langle c \rangle, V}(\xi)=
\xi_{U\times \langle c \rangle, V}$ holds. Then by using the commutative 
diagram
\begin{align*}
\xymatrix{
K_1(\iw{G}_S) \ar[rr]^{\theta_{S, U\times \langle c \rangle, V}}
 \ar[drr]_{\theta_{S,U,V}} & &\iw{U\times \langle c \rangle/V}_S^{\times}
 \ar[d]^{\Nr_{\iw{U\times \langle c \rangle/V}_S/\iw{U/V}_S}} \\
& &\iw{U/V}_S^{\times},
}
\end{align*}
we obtain
\begin{align*}
\theta_{S,U,V}(\xi)&=\Nr_{\iw{U\times \langle c \rangle/V}_S/\iw{U/V}_S}
 \circ \theta_{S, U\times \langle c \rangle, V}(\xi) \\
 &= 
 \Nr_{\iw{U\times \langle c \rangle/V}_S/\iw{U/V}_S}(\xi_{U\times
 \langle c \rangle, V}) =\xi_{U,V},
\end{align*}
which is the desired result.\footnote{We may derive the desired result 
for (Case-3) even if we only assume that $\theta_{S,U\times \langle c
\rangle, V} (\xi)=c^j\xi_{U\times
\langle c \rangle,V}$ holds for  certain $j$ 
(which we may verify by the arguments similar to (Case-1));
hence the essentially difficult part is just (Case-2). 
Note that $\Nr_{\iw{U\times \langle c
\rangle/V}_S/\iw{U/V}_S}(c^j)$ coincides with $(c^j)^p=1$ because
$c^j$ is contained in
the centre of $\iw{U\times \langle c
\rangle/V}_S^{\times}$.}

%
%
\subsection{Outline of the proof of Theorem~\ref{thm:exi-u'}} \label{ssc:proof}
%
%

In this subsection we shall give the rough sketch of the proof of 
the following theorem.

\begin{thm}[Ritter-Weiss] \label{thm:R-W}
Let $p$ be a positive odd prime number and $F$ a totally real number field.
Let $F_{\infty}$ be a totally real \p-adic Lie extension of $F$
 satisfying conditions $(F_{\infty}\text{-}1)$, $(F_{\infty}\text{-}2)$ and
 $(F_{\infty}\text{-}3)$ in Section~$\ref{ssc:review-iwa}$. 
Let $G$ denote the Galois group of $F_{\infty}/F$ and suppose that the
 following two conditions are satisfied$:$
\begin{enumerate}[$($\upshape i$)$]
\item the Galois group $G$ is isomorphic to the direct product 
      of a finite \p-group $G^f$
      and the commutative \p-adic Lie group $\Gamma$$;$
\item the finite part $G^f$ has an abelian subgroup $W^f$ of index $p$
      $($which is automatically normal in $G)$. 
\end{enumerate}
Then the \p-adic zeta function $\xi_{F_{\infty}/F}$ for $F_{\infty}/F$
 exists uniquely as an element in $K_1(\iw{G}_S)$.
\end{thm}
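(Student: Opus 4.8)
The plan is to follow the strategy of Ritter and Weiss \cite{RW7}, which simplifies considerably under hypothesis~(ii). First I would reduce to the case that $G$ is non-commutative, the commutative case being the classical theorem of Deligne--Ribet, Serre and Wiles. Put $W=W^f\times\Gamma$, an abelian open normal subgroup of $G$ of index~$p$; since $G/W$ is cyclic one has $[G,G]\subseteq W$. For such a $G$ the representation theory is elementary: every irreducible Artin representation of $G$ either factorises through $G^{\mathrm{ab}}$ or is of the form $\ind{G}{W}{\chi}$ for a finite-order character $\chi$ of $W$ with pairwise distinct $G/W$-conjugates, the latter being irreducible of degree~$p$. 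Hence the family $\mathfrak{F}_B$ consisting of $(G,[G,G])$, of the pairs $(U,\{e\})$ with $\Gamma\subseteq U\subseteq W$, and of their $G$-conjugates, is a Brauer family for $G$, and the only datum in it not already abelian in an evident way is $W$, for which $(G:W)/p=1$. The plan is then to run Burns' patching technique of Section~\ref{sc:burns} over $\mathfrak{F}_B$; this is the substance of the Hom-description method of Ritter and Weiss \cite{RW7}, and when $G^f$ has exponent~$p$ it is literally the argument of Sections~\ref{sc:additive}--\ref{sc:construction}.

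Concretely, I would fix a characteristic element $f\in K_1(\iw{G}_S)$ for $F_\infty/F$, set $(f_{U,V})=\theta_S(f)$, and form $w_{U,V}=\xi_{U,V}f_{U,V}^{-1}\in\iw{U/V}^{\times}$, where $\xi_{U,V}$ is Serre's abelian \p-adic zeta pseudomeasure for $F_V/F_U$. By Lemma~\ref{lem:ncc-ccc}, Proposition~\ref{prop:thetaloc} and functoriality of the connecting homomorphism the element $(w_{U,V})_{(U,V)\in\mathfrak{F}_B}$ satisfies the norm- and conjugacy-compatibility conditions automatically, so that the existence of $\xi_{F_\infty/F}$ is reduced, through the description of the image of the theta map, to the Frobenius congruences $w_{U,V}\equiv\varphi(w_{\mathrm{ab}})^{(G:U)/p}\, \mod{J_{U,V}}$ together with their additional analogues on the Artinian family. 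I would verify these congruences by induction on $\sharp G^f$, at each step dividing out a non-trivial central element of $[G^f,G^f]$ --- such an element exists, by Lemma~\ref{lem:non-comm}, whenever $G^f$ is non-abelian, and its removal keeps $G^f$ within the class of $p$-groups having an abelian subgroup of index~$p$ --- the base case being the abelian one. Since $(G:W)=p$, each inductive step reduces to the classical index-$p$ congruence between abelian \p-adic zeta pseudomeasures, precisely the one verified by Kato, Ritter--Weiss and Kakde \cite{Kato2, RW6, Kakde1}. Granting these congruences, Burns' technique yields $\tilde w\in K_1(\iw{G})$ with $\theta(\tilde w)=(w_{U,V})$ modulo $p$-torsion, so that $\tilde\xi:=f\tilde w$ is a characteristic element interpolating the complex Artin $L$-values up to $p$-power roots of unity; the refinement of Section~\ref{sc:construction} then upgrades $\tilde\xi$ to the genuine \p-adic zeta function $\xi_{F_\infty/F}$. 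This last step is unobstructed here because $SK_1(\Z_p[G^f])=0$ by Oliver \cite[Theorem~8.10]{Oliver} (as $G^f$ has an abelian subgroup of index~$p$), and the same vanishing removes the only indeterminacy left in the uniqueness statement of Theorem~\ref{thm:maintheorem}, giving uniqueness in $K_1(\iw{G}_S)$.

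The main obstacle is the index-$p$ congruence among abelian \p-adic zeta pseudomeasures that feeds the inductive step: as in every known construction of a non-commutative \p-adic zeta function it is not formal and rests on the full strength of Deligne--Ribet's theory of Hilbert modular forms. I would prove it by the method of Section~\ref{sc:cong}, that is, by realising the relevant partial zeta values through suitable Hilbert--Eisenstein series attached to $F_V/F_U$ and restricting them to $\mathfrak{h}_F$, transferring the vanishing of the non-constant \q-expansion coefficients to the constant terms via Deligne--Ribet's \q-expansion principle (Corollary~\ref{cor:DRpri}), and extracting the congruence from the Klingen--Siegel rationality of partial zeta values. Everything else --- the reduction to the non-abelian case, the diagram chasing in the localisation exact sequence, and the elimination of the $p$-power-torsion ambiguity --- is formal.
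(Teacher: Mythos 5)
Your overall framing---Burns' technique plus the Ritter--Weiss congruence for the abelian index-$p$ subgroup $W$---identifies the right ingredients, but the route you take is genuinely different from the paper's and has a gap. The paper does \emph{not} induct on $\sharp G^f$ here; it instead builds a self-contained two-element Brauer family $\mathfrak{F}=\{(G,[G,G]),(W,\{e\})\}$ (Step~1 of Section~\ref{ssc:proof}, using Serre's Th\'eor\`eme~17 for $G^f=W^f\rtimes\langle\lambda\rangle$), and then constructs an ad~hoc additive/multiplicative theta description $\theta^+=(\theta_{\mathrm{ab}}^+,\theta_W^+)$ and $\theta=(\theta_{\mathrm{ab}},\theta_W)$ specifically for that family (Steps~2--5). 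Because the only non-abelian datum is $G$ itself and $(G:W)=p$, the single congruence that feeds the machine is $\xi_W\equiv\varphi(\xi_{\mathrm{ab}})\,\mod{I_{S,W}}$, which is literally Ritter--Weiss' congruence theorem \cite[Theorem]{RW6}; the smaller $(U,\{e\})$'s with $U\subsetneq W$ that you put into your Brauer family carry no additional content (norm compatibility forces $\theta_{S,U}$ once $\theta_{S,W}$ is known), so enlarging the family and inducting is both unnecessary and, as written, unjustified: the trace computations for $I_U$, the Lemma~\ref{lem:prop-is} estimates, and Theorem~\ref{thm:str-cong} are all proved under the exponent-$p$ hypothesis, which $G^f$ need \emph{not} satisfy in the Ritter--Weiss setting. (Incidentally, Lemma~\ref{lem:non-comm} is also stated only for exponent-$p$ groups; the existence of a non-trivial central element inside $[G^f,G^f]$ when $G^f$ is non-abelian is an elementary $p$-group fact and does not require that lemma.)

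The more serious gap is in your last step. You propose to obtain the $p$-adic zeta function ``modulo $p$-torsion'' and then cite ``the refinement of Section~\ref{sc:construction}'' to upgrade $\tilde\xi$ to $\xi$. But that refinement (the proof of Theorem~\ref{thm:p-tors}) explicitly invokes Theorem~\ref{thm:exi-u'}---i.e.\ the very Ritter--Weiss theorem you are trying to prove---in Case~(2), so your argument is circular. Observing that $SK_1(\Z_p[G^f])=0$ does not by itself remove the $G^{f,\mathrm{ab}}$-torsion ambiguity; in the main theorem's setting that ambiguity is genuinely present and is what Section~\ref{sc:construction} is designed to kill. The paper avoids the issue completely: in Step~3 it shows that $W^f\cap(1+I_W)$ is trivial (the free $\iw{\Gamma}$-basis of $I_W$ contains $p$ as the coefficient of $e$), so the $p$-adic logarithm is already \emph{injective} on $1+I_W$, and in Step~4 the kernel of $\theta\colon K_1(\iw{G})\to\Psi$ collapses to $SK_1(\Z_p[G^f])=0$. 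Hence $\theta$ is an isomorphism on the full $K_1(\iw{G})$, not merely on $\pK_1(\iw{G})$, and no separate refinement of the $p$-torsion part is needed. This injectivity-on-$1+I_W$ observation is the point you are missing; without it, your proof either uses circular machinery or leaves the $p$-power-root-of-unity ambiguity unresolved.
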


Theorem~\ref{thm:exi-u'} is the direct consequence of the claim above. 
Here we shall give the proof of this theorem which is based upon 
the method of Kazuya Kato in \cite{Kato2}. In \cite{RW7} 
J\"urgen Ritter and Alfred Weiss proved the more general claim 
in another manner (refer also to \cite{RW5, RW6}).

\begin{proof}[Sketch of the proof]
Set $W=W^f \times \Gamma$ and choose a generator $\lambda$ of the 
quotient group $G^f/W^f$. Then we obtain the splitting exact sequence
\begin{align*}
1\rightarrow W^f \rightarrow G^f \rightarrow \langle \lambda \rangle
 \rightarrow 1.
\end{align*}
Recall that Serre's  \p-adic zeta pseudomeasure 
$\xi_{\mathrm{ab}}$ (resp.\ $\xi_W$) for $F_{[G,G]}/F$ 
(resp.\ $F_{\infty}/F_W$) exists as a unique element in
 $\iw{G^{\mathrm{ab}}}_S^{\times}$ (resp.\ $\iw{W}_S^{\times}$).

\medskip
\noindent {\bfseries Step 1.} Construction of the Brauer family
 $\mathfrak{F}$. 

\nobreak
By identifying $W^f$ with a $d$-dimensional 
$\F_p$-vector space, 
the action of~$\lambda$ upon $W^f$ (which we regard as a {\em left}
 action) is described as the Jordan normal form 
$J_{\lambda}=\bigoplus_{i=1}^t J_i$ where each 
$J_i$ is the Jordan block of rank $m_i$ with eigenvalue~$1$. 
Then the summation of $\{m_i\}_{i=1}^t$ equals $d$ by definition. 
Moreover each $m_i$ is 
less than $p$ since the order of $J_{\lambda}-\id$ is just
 equal to $p$. Let $W^f=\bigoplus_{i=1}^t W_i^f$ 
be the corresponding generalised eigenspace decomposition of $W^f$.
Fix a Jordan basis $\{e_{i,j}\}_{j=1}^{m_i}$ of each
 $W_i$. Note that the abelisation of $G^f$ is identified with 
 $\bar{W}\times \langle \lambda \rangle$ where $\bar{W}$ is
 the quotient space of $W^f$ isomorphic to the subspace $W'$ of $W^f$ 
spanned by 
 $\{e_{i, m_i}\}_{i=1}^t$. 
Now let $\mathfrak{X}(W^f)$ denote the space of characters on~$W^f$. The 
cyclic group $\langle \lambda \rangle$ also acts upon 
$\mathfrak{X}(W^f)$ from the right by 
\begin{align*}
\mathfrak{X}(W^f) \times \langle \lambda \rangle \rightarrow
 \mathfrak{X}(W^f); \quad (\chi, \lambda) \mapsto
 (\chi*\lambda \colon w \mapsto \chi(\lambda w)).
\end{align*}
It is clear that the fixed subspace 
of $\mathfrak{X}(W^f)$ under this action is identified with 
$\mathfrak{X}(\bar{W})$ (use the Jordan basis above). 
By \cite[Th\'eor\`eme~17]{Serre1}, every irreducible representation of 
 $G^f=W^f \rtimes \langle \lambda \rangle$ is isomorphic to one of the 
 following types of induced representations:
\begin{itemize}
\item $\ind{G^f}{W^f}{\chi_1}$ where $\chi_1$ is an element in
 $\mathfrak{X}(W^f) \setminus \mathfrak{X}(\bar{W})$,

\item $\chi_2 \otimes \chi_2'$ where $\chi_2$ is an element in 
      $\mathfrak{X}(\bar{W})$ and $\chi_2'$
      is a character of $\langle \lambda \rangle$.
\end{itemize}
We may, however, each $\chi_2\otimes \chi_2'$ as a character of the 
abelisation $G^{f,\mathrm{ab}}$ of~$G^f$. 
Therefore an arbitrary irreducible representation of $G^f$ 
is obtained as an induced representation of a character of either $W^f$
 or $G^{f, \mathrm{ab}}$; in other words, the family 
 $\mathfrak{F}=\{ (G,[G,G]), (W, \{e \}) \}$ is a Brauer family for $G$.

\medskip
\noindent {\bfseries Step 2.} Additive theory.

\nobreak
Let $\theta_{\mathrm{ab}}^+ \colon \Z_p[[\conj{G}]]\rightarrow
 \Z_p[[G^{\mathrm{ab}}]]$ denote the abelisation homomorphism and 
 $\theta_{W}^+$ the trace homomorphism from 
 $\Z_p[[\conj{G}]]$ to $\Z_p[[W]]$. Let 
\begin{align*}
I_W=\langle \theta_W^+(w)=\sum_{k=0}^{p-1} \lambda^k w \mid w\in W^f \rangle_{\Z_p[[\Gamma]]} 
\end{align*}
be the image of $\theta_W^+$ in $\Z_p[[W]]$ and $\Phi$ the $\Z_p$-submodule of $\Z_p[[G^{\mathrm{ab}}]]\times \Z_p[[W]]$ 
consisting of all pairs $(y_{\mathrm{ab}}, y_W)$ satisfying the
 following two conditions:
\begin{itemize}
\item the equation
      $\Tr_{\Z_p[[G^{\mathrm{ab}}]]/\Z_p[[W/W_{\mathfrak{c}}]]}(y_{\mathrm{ab}})=\can_{W,W_{\mathfrak{c}}}(y_W)$
      holds;
\item the element $y_W$ is contained in $I_W$
\end{itemize}
where $W_{\mathfrak{c}}$ denotes the intersection of $W$ and $[G,G]$, 
and $\can_{W,W_{\mathfrak{c}}}$ denotes the canonical map
 $\Z_p[[W]] \rightarrow \Z_p[[W/W_{\mathfrak{c}}]]$. 
Then $\theta^+=(\theta_{\mathrm{ab}}^+, \theta_W^+)$ induces an
 isomorphism between $\Z_p[[\conj{G}]]$ and $\Phi$; indeed we may prove the
 injectivity of $\theta^+$ by the same argument as that in the proof of
 Proposition~\ref{prop:addtheta}. Moreover we may also prove that 
\begin{align*}
\theta^+_W|_{\Z_p[[\conj{W;G}]]} \colon \Z_p[[\conj{W;G}]] \rightarrow \Z_p[[W]]
\end{align*}
is injective where $\conj{W;G}$ denotes the subset of $\conj{G}$
 consisting of all conjugacy classes contained in $W$ 
(namely $I_W$ is a 
{\em free} $\Z_p[[\Gamma]]$-submodule of $\Z_p[[W]]$ spanned by
 $\{\theta^+_W([w]) \mid [w]\in \conj{W^f;G^f} \}$). 
Now let $(y_{\mathrm{ab}}, y_W)$ be an element in $\Phi$ described as
the following $\iw{\Gamma}$-linear combinations:
\begin{align*}
y_{\mathrm{ab}}=\sum_{\bar{w}\in \bar{W}}
 \sum_{j=0}^{p-1} a_{\bar{w},j} (\bar{w}, \lambda^j), \qquad 
y_W = \sum_{[w]\in \conj{W^f;G^f}}a_{[w]} \theta_W^+([w]).
\end{align*}
Then the element $y=\sum_{\bar{w}\in \bar{W}}
 \sum_{j=0}^{p-1} a_{\bar{w},j} [(w', \lambda^j)]+
 \sum_{[w]\in \conj{W^f;G^f}}  \! \!  a_{[w]} [w]$ satisfies
$\theta^+(y)=(y_{\mathrm{ab}}, y_W)$ where $w'$ is an element in $W'$ 
corresponding to $\bar{w}$ via the (non-canonical) isomorphism 
 $W' \xrightarrow{\sim} \bar{W}$. Clearly $\Phi$ contains the image
 of~$\theta^+$, and they thus 
 coincide.

\medskip
\noindent {\bfseries Step 3.} Logarithmic theory.

\nobreak
First note that $I_W$ is {\em an ideal} of the $\Z_p$-algebra 
 $\Z_p[[W]]^{\langle \lambda \rangle}$ ---the maximal subalgebra 
 of $\Z_p[[W]]$ fixed by the action of $\langle \lambda \rangle$---,
 and thus $I_W$ obviously contains $I_W^2$. Moreover $I_W$ is 
 contained in the augmentation kernel 
\begin{align*}
J_W=\ker (\Z_p[[W]] \xrightarrow{\aug_W} \Z_p[[\Gamma]]
 \rightarrow \F_p[[\Gamma]])
\end{align*}
because $\aug_W\circ \theta^+_W(w)=p$ holds for each $w$ in
 $W^f$. By the same
 argument as that in Proposition~\ref{prop:log-j}, we may prove that $1+J_W$ is
 a multiplicative subgroup of $\iw{W}^{\times}$ and the \p-adic
 logarithm induces a homomorphism
\begin{align} \label{eq:log-w}
\log \colon 1+J_W \rightarrow \iw{W}.
\end{align}

Next we shall prove that the restriction of the map 
(\ref{eq:log-w}) to the multiplicative subgroup $1+I_W$ is
 injective.\footnote{By the argument in the proof of
 Proposition~\ref{prop:log-j}, the element $x^p$ is especially contained
 in $p\iw{W}\cap I_W$ for $x$ in $I_W$. It implies that
 $(1+x)^{-1}=\sum_{j=0}^{\infty} (-x)^j$ converges \p-adically
 in $1+I_W$, and hence $1+I_W$ is a multiplicative subgroup of 
 $\piw{W}^{\times}$.} 
The kernel of the homomorphism (\ref{eq:log-w}) coincides 
 with $\mu_p(\iw{W})$ by Proposition~\ref{prop:log-j}, 
 which is isomorphic to 
 $\mu_p(\iw{\Gamma}) \times W^f$ by the theorem of Graham Higman
 \cite{Higman} and Charles Terence Clegg Wall \cite[Theorem~4.1]{Wall}. 
Note that $W^f$ is not contained in $1+I_W$; 
 if $x$ is an element in $I_W$, its coefficient of the unit of $W^f$ 
 is divisible by $p$ because $p$ is one of the free basis of 
 $I_W$ over $\iw{\Gamma}$. Therefore $w-1$ is 
 not contained in $I_W$ for each $w$ in $W^f$ 
 except for the unit. 
 Combining this fact with triviality of 
 $\mu_p(\iw{\Gamma})$, the homomorphism 
 $1+I_W \rightarrow \iw{W}$ induced by the \p-adic logarithm 
 is injective.

\medskip
\noindent {\bfseries Step 4.} Translation. 

\nobreak
Let $\theta_{\mathrm{ab}} \colon K_1(\iw{G})\rightarrow
 \iw{G^{\mathrm{ab}}}^{\times}$ denote the abelisation homomorphism and 
 $\theta_{W}$ the norm homomorphism
 $\Nr_{\iw{G}/\iw{W}}$. Let $\Psi$ be the subgroup of 
 $\iw{G^{\mathrm{ab}}}^{\times}\times \iw{W}^{\times}$ 
 consisting of all pairs $(\eta_{\mathrm{ab}}, \eta_W)$ satisfying the
 following two conditions:
\begin{itemize}
\item the equation
      $\Nr_{\iw{G^{\mathrm{ab}}}/\iw{W/W_{\mathfrak{c}}}}(\eta_{\mathrm{ab}})=\can_{W,W_{\mathfrak{c}}}(\eta_W)$
      holds;
\item the congruence 
$\eta_W \equiv \varphi(\eta_{\mathrm{ab}}) \, \mod{I_W}$ holds.
\end{itemize}
Then we may prove that $\theta=(\theta_{\mathrm{ab}}, \theta_W)$ induces
 a surjective homomorphism $K_1(\iw{G}) \rightarrow \Psi$ with kernel
 $SK_1(\Z_p[G^f])$ by the same arguments as those in 
 Sections \ref{ssc:contain}--\ref{ssc:isomorphy}; more precisely,
\begin{itemize}
\stepcounter{enumi}
\item[$-$] the congruence
      $\theta_W(\eta) \equiv \varphi(\theta_{\mathrm{ab}}(\eta)) \,
	   \mod{I_W}$  holds for  $\eta$ in~$K_1(\iw{G})$ 
	   by direct calculation 
      (refer to \cite[Section 5, Lemma~1.7]{Taylor});
      in other words, 
      $\Psi$ contains the image of $\theta$;
\item[$-$] for an arbitrary element $(\eta_{\mathrm{ab}}, \eta_W)$ in $\Psi$, 
      there exists a unique element 
      $y$ in $\Z_p[[\conj{G}]]$ which satisfies 
      \begin{align*}
       \theta^+(y)=(\Gamma_{G^{\mathrm{ab}}}(\eta_{\mathrm{ab}}), \log(
       \varphi(\eta_{\mathrm{ab}})^{-1}\eta_W))
      \end{align*}
	   (recall that $\theta^+$ is isomorphic). By applying 
	   theory upon integral
	   logarithmic homomorphisms, we may prove that 
	   there exists a unique element $\eta$ in $K_1(\iw{G})$ 
	   satisfying $\Gamma_G(\eta)=y$
	   up to multiplication by an element in $SK_1(\Z_p[G^f])$. 
	   The desired relation $\theta(\eta)=(\eta_{\mathrm{ab}},\eta_W)$ 
	   follows from the construction of $\eta$
	   (mimic the argument in the proof of 
	   Proposition~\ref{prop:surjection}).\footnote{Here we use the
	   injectivity of $\log \colon 1+I_W \rightarrow \iw{W}$.}
\end{itemize}
 Note that $SK_1(\Z_p[G^f])$ is trivial because $G^f$ has an abelian normal
 subgroup with cyclic quotient (see \cite[Theorem~8.10]{Oliver}),
 and therefore the induced homomorphism $\theta\colon K_1(\iw{G})\rightarrow
 \Psi$ is  an isomorphism.

\medskip
\noindent {\bfseries Step 5.} Localised version.

\nobreak
Let $\theta_{S,\mathrm{ab}} \colon K_1(\iw{G}_S)\rightarrow
 \iw{G^{\mathrm{ab}}}_S^{\times}$ be the abelisation homomorphism and 
 $\theta_{S,W}$ the norm homomorphism
 $\Nr_{\iw{G}_S/\iw{W}_S}$. 
 Set $I_{S,W}=I_W \otimes_{\iw{\Gamma}}\iw{\Gamma}_{(p)}$ and let 
 $\Psi_S$ be the subgroup of 
 $\iw{G^{\mathrm{ab}}}_S^{\times}\times \iw{W}^{\times}_S$ 
 consisting of all pairs $(\eta_{S,\mathrm{ab}}, \eta_{S,W})$ satisfying the
 following two conditions:
\begin{itemize}
\item the equation
      $\Nr_{\iw{G^{\mathrm{ab}}}_S/\iw{W/W_{\mathfrak{c}}}_S}(\eta_{S,\mathrm{ab}})=\can_{W,W_{\mathfrak{c}}}(\eta_{S,W})$
      holds;
\item the congruence $\eta_{S,W} \equiv \varphi(\eta_{S,\mathrm{ab}}) \,
      \mod{I_{S,W}}$ holds.
\end{itemize}
Then we may prove that $\Psi_S$ contains the image of
 $\theta_S=(\theta_{S,\mathrm{ab}}, \theta_{S,W})$ by mimicking 
 the argument in Step~4 
 (see also Section~\ref{sc:localise}). 
One of the most important points is the following fact: since Higman-Wall's
 theorem also holds for $R[W^f]$,\footnote{This is because $R$ is the \p-adic
 completion of  $\iw{\Gamma}_{(p)}$; 
 refer to \cite[the remark after Theorem~4.1]{Wall}.} 
 the intersection of $1+I_W^{\, \widehat{\,}}$ and 
 $\mu_p(R[W^f])=\mu_p(R) \times W^f$ is trivial 
 if we set $I_W^{\,
 \widehat{\,}}=I_W\otimes_{\iw{\Gamma}}R$ (use triviality of $\mu_p(R)$ 
 and mimic the argument in Step~3), and hence the induced
 homomorphism 
\begin{align*}
\log \colon 1+I_W^{\, \widehat{\,}} \rightarrow R[W^f]
\end{align*}
is also {\em injective}.

\medskip
\noindent {\bfseries Step 6.} Verification of the congruence. 

\nobreak
Under these settings, we may prove the claim if we verify the congruence
\begin{align*}
\xi_{W} &\equiv \varphi(\xi_{\mathrm{ab}}) & \mod{I_{S,W}}
\end{align*}
by virtue of Burns' technique (see Section~\ref{sc:burns}). 
It is, however, no other than a special case of the Ritter-Weiss'
 congruence \cite[Theorem]{RW6}.
\end{proof}


\end{document}